\newtheorem{thm}{Theorem}[section]
\newtheorem{prop}[thm]{Proposition}
\newtheorem{lem}[thm]{Lemma}
\newtheorem{lem-def}[thm]{Lemma-Definition}
\newtheorem{thm-def}[thm]{Theoem-Definition}
\newtheorem{cor}[thm]{Corollary}
\newtheorem{rmk}[thm]{Remark}
\theoremstyle{remark}
\newtheorem{ex}[thm]{Example}
\newtheorem{Rmk}{Remark}[section]
\theoremstyle{definition}
\newtheorem{dfn}{Definition}[section]
\numberwithin{equation}{section}
\newcommand{\quash}[1]{}  
\newcommand{\nc}{\newcommand}
\nc{\on}{\operatorname}
\newcommand{\fraka}{{\mathfrak a}}
\newcommand{\frakh}{{\mathfrak h}}
\newcommand{\frakl}{{\mathfrak l}}
\newcommand{\fraks}{{\mathfrak s}}
\newcommand{\frakt}{{\mathfrak t}}
\newcommand{\frakz}{{\mathfrak z}}
\newcommand{\bbA}{{\mathbb A}}
\newcommand{\bbB}{{\mathbb B}}
\newcommand{\bbC}{{\mathbb C}}
\newcommand{\bbF}{{\mathbb F}}
\newcommand{\bbG}{{\mathbb G}}
\newcommand{\bbN}{{\mathbb N}}
\newcommand{\bbP}{{\mathbb P}}
\newcommand{\bbQ}{{\mathbb Q}}
\newcommand{\bbR}{{\mathbb R}}
\newcommand{\bbZ}{{\mathbb Z}}
\newcommand{\calB}{{\mathcal B}}
\newcommand{\calC}{{\mathcal C}}
\newcommand{\calD}{{\mathcal D}}
\newcommand{\calE}{{\mathcal E}}
\newcommand{\calF}{{\mathcal F}}
\newcommand{\calG}{{\mathcal G}}
\newcommand{\calH}{{\mathcal H}}
\newcommand{\calL}{{\mathcal L}}
\newcommand{\calO}{{\mathcal O}}
\newcommand{\calP}{{\mathcal P}}
\newcommand{\calR}{{\mathcal R}}
\newcommand{\calS}{{\mathcal S}}
\newcommand{\calX}{{\mathcal X}}
\newcommand{\calY}{{\mathcal Y}}
\newcommand{\calZ}{{\mathcal Z}}
\nc{\al}{{\alpha}} \nc{\be}{{\beta}}
\newcommand{\ga}{{\gamma}}
\nc{\ve}{{\varepsilon}} \nc{\Ga}{{\Gamma}}
\newcommand{\la}{{\lambda}}
\nc{\La}{{\Lambda}}
\nc{\ad}{{\on{ad}}}
\newcommand{\Ad}{{\on{Ad}}}
\nc{\Adm}{{\on{Adm}}} \nc{\aff}{{\on{aff}}}
\nc{\Aff}{{\mathbf{Aff}}}
\newcommand{\Aut}{{\on{Aut}}}
\nc{\Bun}{{\on{Bun}}}
\nc{\der}{{\on{der}}}
\nc{\diag}{{\on{diag}}}
\newcommand{\End}{{\on{End}}}
\nc{\Fl}{{\calF\ell}}
\newcommand{\Gal}{{\on{Gal}}}
\newcommand{\Gr}{{\on{Gr}}}
\nc{\rH}{{\on{H}}}
\newcommand{\Hom}{{\on{Hom}}}
\nc{\IC}{{\on{IC}}}
\newcommand{\id}{{\on{id}}}
\nc{\Id}{{\on{Id}}}
\newcommand{\ind}{{\on{ind}}}
\nc{\Ind}{{\on{Ind}}}
\newcommand{\Lie}{{\on{Lie}}}
\nc{\Rep}{{\on{Rep}}}
\newcommand{\Res}{{\on{Res}}}
\nc{\res}{{\on{res}}} \nc{\Sat}{{\on{Sat}}}
\newcommand{\s}{{\on{sc}}}
\newcommand{\Spec}{{\on{Spec}}}
\nc{\Sph}{\on{Sph}}
\nc{\tr}{{\on{tr}}}
\newcommand{\Tr}{{\on{Tr}}}
\newcommand{\Mod}{{\mathrm{-Mod}}}
\newcommand{\GL}{{\on{GL}}}
\nc{\GSp}{{\on{GSp}}}
\nc{\SL}{{\on{SL}}} \nc{\SU}{{\on{SU}}} \nc{\SO}{{\on{SO}}}
\newcommand{\bGr}{{\overline{\Gr}}}
\nc{\bFl}{{\overline{\Fl}}} \nc{\bU}{{\overline{U}}}
\nc{\wGr}{{\widetilde{\Gr}}}
\nc{\ppars}{(\!(s)\!)}
\newcommand{\ppart}{(\!(t)\!)}
\newcommand{\pparu}{(\!(u)\!)}
\def\xcoch{\mathbb{X}_\bullet}
\def\xch{\mathbb{X}^\bullet}
\title{The geometric Satake correspondence for ramified groups}
\author{Xinwen Zhu}
\address{Timo Richarz: Mathematisches Institut der Universit\"at Bonn, Endenicher Allee 60, 53115 Bonn, Germany}
\email{richarz@math.uni-bonn.de}
\address{Xinwen Zhu: Department of Mathematics, Northwestern University, 2033 Sheridan Road, Evanston, IL 60208, USA}
\email{xinwenz@math.northwestern.edu}
\subjclass[2010]{22E57, 14M15, 14G35}
\begin{document}
\date{July 2011}
\maketitle \maketitle

\begin{abstract}
We prove the geometric Satake isomorphism for a reductive group
defined over $F=k\ppart$, and split over a tamely ramified
extension. As an application, we give a description of the nearby
cycles on certain Shimura varieties via the Rapoport-Zink-Pappas
local models.
\end{abstract}

\tableofcontents

\section*{Introduction}
The Satake isomorphism (for unramified groups) is the starting point
of the Langlands duality. Let us first recall its statement. Let $F$
be a non-archimedean local field with ring of integers $\calO$ and
residue field $k$, and let $G$ be a connected unramified reductive
group over $F$ (e.g. $G=\GL_n$). Let $A\subset G$ a maximal split
torus of $G$, and $W_0$ be the Weyl group of $(G,A)$. Let $K$ be a
hyperspecial subgroup of $G(F)$ containing $A(\calO)$ (e.g.
$K=\GL_n(\calO)$). Then the classical Satake isomorphism describes
the spherical Hecke algebra $\Sph=C_c(K\!\setminus\! G(F)/K)$, the
algebra of compactly supported bi-$K$-invariant functions on $G(F)$
under convolution. Namely, there is an isomorphism of algebras
\[\Sph\simeq \bbC[\xcoch(A)]^{W_0},\]
where $\xcoch(A)$ is the coweight lattice of $A$, and
$\bbC[\xcoch(A)]^{W_0}$ denotes the $W_0$-invariants of the group
algebra of $\xcoch(A)$.

If $F$ has positive characteristic $p>0$, then the classical Satake
correspondence has a vast enhancement. For simplicity, let us assume
that $G$ is split over $F$ (for the general case, see Theorem
\ref{unramified Satake}). Let us write $G=H\otimes_k F$ for some
split group $H$ over $k$ so that $K=H(\calO)$. Let
$\Gr_H=H(F)/H(\calO)$ be the affine Grassmannian of $H$. Choose
$\ell$ a prime different from $p$, and let $\Sat_H$ be the category
of $(K\otimes \bar{k})$-equivariant perverse sheaves with
$\overline{\bbQ}_\ell$-coefficients on $\Gr_H\otimes\bar{k}$. Then
this is a Tannakian category and there is an equivalence
\[\Sat_H\simeq\Rep (G^\vee_{\overline{\bbQ}_\ell}),\]
where $G^\vee_{\overline{\bbQ}_\ell}$ is the dual group of $G$ and
$\Rep (G^\vee_{\overline{\bbQ}_\ell})$ is the tensor category of
algebraic representations of $G^\vee_{\overline{\bbQ}_\ell}$ (cf.
\cite{Gi,MV}).

There is also a version of Satake isomorphism for an arbitrary
reductive group over $F$, as recently proved by Haines and Rostami
(cf. \cite{HR})\footnote{There is another version, known earlier, as
in \cite{Car}.}. Namely, let $\calB(G)$ be the Bruhat-Tits building
of $G$ and $v\in \calB(G)$ be a special vertex. Let $K_v\subset
G(F)$ be the special parahoric subgroup of $G(F)$ corresponding to
$v$. Let $A$ be a maximal split $F$-torus of $G$ such that
$K_v\supset A(\calO)$, let $M$ be the centralizer of $A$ in $G$ and
$W_0=N_G(A)/M$ be the Weyl group as before. Let $M_1$ be the unique
parahoric subgroup of $M(F)$, and $\La_M=M(F)/M_1$, which is a
finitely generated abelian group. Then
\begin{equation}\label{ramified}
C_c(K_v\!\setminus\! G(F)/K_v)\simeq \bbC[\La_M]^{W_0}.
\end{equation}
More explicitly, suppose that $G$ is quasi-split so that $M=T$ is a
maximal torus. Then \[\La_M=(\xcoch(T)_I)^\sigma,\] where $I$ is the
inertial group and $\sigma$ is the Frobenius, and
$(\xcoch(T)_I)^\sigma$ denotes the $\sigma$-invariants of the
$I$-coinvariants of the group $\xcoch(T)$.

The goal of this paper is to provide a geometric version of the
above isomorphism when $F$ has positive characteristic $p$ and the
group $G$ is quasi-split and splits over a \emph{tamely} ramified
extension. More precisely, let $k$ be an algebraically closed field
and let $\ell\neq \on{char}k$ be a prime. Let $G$ be a group over
the local field $F=k\ppart$ (so that $G$ is quasi-split
automatically), which is split over a tamely ramified extension.
That is, there is a finite extension $\tilde{F}/F$ such that
$G_{\tilde{F}}$ is split and $\on{char}k\nmid[\tilde{F}:F]$. Let
$v\in\calB(G)$ be a special vertex in the building of $G$ and let
$\underline{G}_{v}$ be the parahoric group scheme over
$\calO=k[[t]]$ (in the sense of Bruhat-Tits), determined by $v$. We
write $LG$ for the loop space of $G$ and $K_v=L^+\underline{G}_v$
for the jet space of $\underline{G}_v$. By definition, for any
$k$-algebra $R$, $LG(R)=G(R\hat{\otimes}_k F)$ and
$K_v(R)=\underline{G}_v(R\hat{\otimes}_k\calO)$. Let
\[{\Fl_v}=LG/K_v\]
be the (twisted) affine flag variety\footnote{One would call $\Fl_v$
the affine Grassmannian of $G$. However, we reserve the name
``affine Grassmannian" of $G$ for another object, as defined in
Definition \ref{affGrass}.}, which is an ind-scheme over $k$. Let
$\calP_v=\calP_{K_v}({\Fl_v})$ be the category of $K_v$-equivariant
perverse sheaf on ${\Fl_v}$, with coefficients in
$\overline{\bbQ}_\ell$. Let $H$ be a split Chevalley group over
$\bbZ$ such that $G\otimes_FF^s\simeq H\otimes F^s$, where $F^s$ is
a (fixed) separable closure of $F$. Then there is a natural action
of $I=\Gal(F^{s}/F)$ on $H^\vee:=H^\vee_{\overline{\bbQ}_\ell}$
(preserving a fixed pinning).

\begin{thm}\label{main}
The category $\calP_v$ has a natural tensor structure. In addition,
as tensor categories, there is an equivalence
\[\calR\calS: \Rep((H^\vee)^I)\simeq\calP_v,\]
such that $\rH^*\circ\calR\calS$ is isomorphic to the forgetful
functor, where $\rH^*$ is the hypercohomology functor.
\end{thm}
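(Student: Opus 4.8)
The plan is to follow the general strategy of Mirković–Vilonen, but carried out for the twisted affine flag variety $\Fl_v$ attached to a special parahoric of a tamely ramified quasi-split group, reducing whenever possible to the split (untwisted) case via the tamely ramified Galois action. First I would construct the convolution product on $\calP_v$: using the convolution diagram $LG\times^{K_v}\Fl_v\to\Fl_v$, together with the fact that the $K_v$-orbits on $\Fl_v$ are affine spaces bundles over partial flag-type bases (so that semismallness and the decomposition theorem apply), one shows that $A_1\ast A_2$ is again perverse for $A_1,A_2\in\calP_v$. The associativity constraint comes from the iterated convolution Grassmannian, exactly as in the split case. The nontrivial point here is that $\Fl_v$ is not reduced in general and the affine Schubert varieties can be singular in ways not seen for split groups; one needs that the orbits are still parametrized by $\La_M^+ = (\xcoch(T)_I)^{\sigma,+}$ and that the relevant stratified semismallness holds — this should be quotable from the Kac–Moody / Pappas–Rapoport description of $\Fl_v$ and its Schubert cells referred to earlier in the paper.

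Next I would establish the commutativity constraint. This is the place where the Beilinson–Drinfeld / Gaitsgory fusion construction is needed: one builds a global (over a curve, or rather $\Spec k[[t]]$-adically, a version over $\bbA^1$) deformation $\Gr_{\underline{G}_v,\bbA^1}$ of $\Fl_v\times\Fl_v$ to $\Fl_v$ itself, using that a tamely ramified group scheme spreads out over the affine line with the generic and special fibres having the expected shape. The key technical input is the degeneration: over the locus where two points collide, the fibre becomes $\Fl_v$, and over the complement it is a product; applying nearby cycles (or rather the Beilinson construction via specialization) produces the symmetry isomorphism $A_1\ast A_2\simeq A_2\ast A_1$, and an ULA/constructibility argument shows it preserves perversity. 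One must check the hexagon and the compatibility with the associativity constraint — this is a long but essentially formal verification once the global object is in place.

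Then comes the fiber functor: $\rH^*=\bigoplus_i \rH^i$ is exact on $\calP_v$ because each $\rH^i$ of an object is concentrated in the correct parity (cohomology of affine-space bundles over the Schubert varieties, using a Bott–Samelson / Demazure resolution and the decomposition theorem to control $\IC$-sheaves), it is faithful, and it is a tensor functor by the Künneth formula applied to the convolution diagram together with a dimension/cohomology count showing no cancellation (this is the content of the analogue of the Mirković–Vilonen weight-functor decomposition, now with semi-infinite orbits adapted to $A$, $M$ and the $\sigma$-twist). Granting these, Tannakian reconstruction gives $\calP_v\simeq\Rep(\check G_v)$ for a pro-algebraic group $\check G_v$ over $\overline{\bbQ}_\ell$ with $\rH^*$ corresponding to the forgetful functor.

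The final and hardest step is to identify $\check G_v$ with $(H^\vee)^I$. One first checks reductivity of $\check G_v$ (self-duality of $\calP_v$ via Verdier duality, plus semisimplicity of the category) and computes its root datum: the rank of a maximal torus is read off from the $\La_M = (\xcoch(T)_I)^\sigma$ lattice of irreducible objects, and the simple objects supported on minimal orbits produce the simple coroots, which one matches against the root datum of the Galois-fixed subgroup $(H^\vee)^I$ — here is where tameness is essential, since $(H^\vee)^I$ is then reductive with the explicitly known root system (folding of the Dynkin diagram of $H$), and where $I$ acts through its tame quotient $\widehat{\bbZ}^{(p')}(1)$ so that only the pinned outer action matters. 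I expect the main obstacle to be precisely this last identification: proving that the tensor category $\calP_v$ "sees" exactly the Galois-fixed root datum and not something larger or smaller requires a careful analysis of the lowest strata of $\Fl_v$ (the ``minuscule'' and ``quasi-minuscule'' $\IC$-sheaves) and their convolutions, and matching the resulting structure constants with those of $(H^\vee)^I$; controlling this for the non-reduced, possibly non-special-in-the-split-sense flag varieties attached to ramified groups — particularly in the exceptional cases $A_{2n}^{(2)}$ where the echelon of orbits is subtle — is the delicate heart of the argument.
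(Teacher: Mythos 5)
Your proposal deviates from the paper's proof in a way that hides what is actually the central difficulty of the theorem, and I think the step you describe as ``essentially formal once the global object is in place'' is where the argument would break down. You propose to endow $\calP_v$ with a symmetric monoidal structure by a Beilinson--Drinfeld fusion deformation of $\Fl_v\times\Fl_v$ to $\Fl_v$ over $\bbA^1$. Such a family does not exist. When one spreads out $\underline{G}_v$ to a group scheme $\calG$ over $\bbA^1$, the ramification is concentrated at a \emph{single} point (say $0$), and $\calG$ is a form of $H$ away from $0$. Consequently the two-point Beilinson--Drinfeld Grassmannian for $\calG$, with one point pinned at $0$, degenerates $\Gr_H\times\Fl_v$ (generic fibre: one marked point away from the ramification, one at $0$) to $\Fl_v$ (special fibre: both at $0$) --- \emph{not} $\Fl_v\times\Fl_v$ to $\Fl_v$. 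There is no commutativity constraint on $\calP_v$ obtained this way, and in particular no analogue for $\Fl_v$ of the MV fusion product.

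The paper instead uses precisely this asymmetric degeneration to build a \emph{central functor} $\calZ:\Sat_H\to\calP_v$ via nearby cycles (with trivial monodromy), following Gaitsgory, and then invokes Bezrukavnikov's formalism of central functors: since every simple object of $\calP_v$ is a direct summand of some $\calZ(\calF)$ (Lemma \ref{mult}), one gets for free that convolution on $\calP_v$ is exact, that $\calP_v$ is a monoidal category, and, by Proposition 1 of \cite{B}, that $\calP_v\simeq\Rep(\tilde G^\vee)$ for some closed subgroup $\tilde G^\vee\subset H^\vee$. The symmetric monoidal structure on $\calP_v$ is \emph{inherited} from $\Sat_H$ through $\calZ$, not built on $\Fl_v$ directly. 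This is a genuinely different mechanism, and it is the reason the argument works in the ramified case. Your proposal to prove perversity of convolution by direct semismallness of $LG\times^{K_v}\Fl_v\to\Fl_v$ is not obviously available (the paper in fact remarks that exactness of convolution probably \emph{implies} semismallness, not the other way round), and the fiber-functor structure on $\rH^*$ is set up in the paper via a general Tannakian lemma about restriction functors, not Künneth.

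A second, smaller gap: your identification of the Tannakian group with $(H^\vee)^I$ by matching root data glosses over the fact that $(H^\vee)^I$ need not be connected (e.g.\ $\on{O}_{2n+1}$ inside $\GL_{2n+1}$), so a root datum calculation does not pin it down. The paper first shows $\tilde G^\vee\subset (H^\vee)^I$ via the $I$-equivariance of $\calZ$, establishes that $\pi_0(T^{\vee,I})\to\pi_0(H^{\vee,I})$ is an isomorphism (Lemma \ref{connected components}), uses this to classify irreducibles of $(H^\vee)^I$ by dominant elements of $\xcoch(T)_I$, and then concludes equality by a $K$-group comparison of restrictions of the $[V_\mu]$, rather than by reconstructing the root datum from scratch. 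If you want to repair the commutativity-constraint step without abandoning your approach entirely, you would need to replace the (nonexistent) fusion on $\Fl_v$ with the central functor $\calZ$ and work with its centrality data; once that is done, the rest of your outline is broadly compatible with the paper's proof, though the $K$-group route to the final identification is both cleaner and handles the disconnectedness correctly.
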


This theorem can be regarded as a categorification of
\eqref{ramified} in the case when $k$ is algebraically closed and
the group splits over a tamely ramified extension of $k((t))$.

Let us point out the following remarkable facts when the group is
ramified. First, the group $(H^\vee)^I$ is not necessarily connected
as is shown in Remark \eqref{non-connected}. Second, it is
well-known that if $G$ is unramified over $F$, then all the
hyperspecial subgroups of $G$ are conjugate under $G_{\ad}(F)$
(\cite[\S 2.5]{T}), where $G_\ad$ is the adjoint group of $G$.
However, this is no longer true for special parahoric of $G$ if $G$
is ramified. An example is given by the odd ramified unitary
similitude group $\on{GU}_{2m+1}$. There are essentially two types
of special parahorics of $\on{GU}_{2m+1}$, as given in
\eqref{spodd}. One of them has reductive quotient $\on{GO}_{2m+1}$
(denoted by $\underline{G}_{v_0})$, and the other has reductive
quotient $\on{GSp}_{2m}$ (denoted $\underline{G}_{v_1}$).
Accordingly, the geometry of the corresponding flag varieties
$\Fl_{v_0}$ and $\Fl_{v_1}$ are very different, while
$\calP_{v_0}\simeq\calP_{v_1}$. Indeed, their Schubert varieties
(i.e. closures of $K_{v_i}$-orbits) are both parameterized by
irreducible representations of $\on{GO}_{2m+1}$. Let
$\Fl_{v_0\bar{\mu}_{2m,1}}$ (resp. $\Fl_{v_1\bar{\mu}_{2m,1}}$) be
the Schubert variety in $\Fl_{v_0}$ (resp. $\Fl_{v_1}$)
parameterized by the standard representation of $\on{GO}_{2m+1}$.
Then it is shown in \cite{Z} that $\Fl_{v_0\bar{\mu}_{2m,1}}$ is not
Gorenstein, while in \cite{Ri} that $\Fl_{v_1\bar{\mu}_{2m,1}}$ is
smooth. On the other hand, the intersection cohomology of both
varieties gives the standard representation of $\on{GO}_{2m+1}$. In
addition, the stalk cohomologies of both sheaves are the ``same".
See Theorem \ref{BK} below.

\begin{Rmk}\label{max vs parahoric}
Instead of considering a special parahoric $K_v$ of $LG$, one can
begin with the special maximal``compact" $K'_v$, (i.e.
$K'_v=L^+\underline{G}'_v$, where $\underline{G}'_v$ is the
stabilizer group scheme of $v$ as constructed by Bruhat-Tits), and
consider the category of $K'_v$-equivariant perverse sheaves on
$LG/K'_v$. However, from geometric point of view, this is less
natural since $K'_v$ is not necessarily connected and the category
of $K'_v$-equivariant perverse sheaves is complicated. In fact, we
do not how to relate this category to the Langlands dual group yet.
In addition, when we discuss the Langlands parameters in Sect.
\ref{parameter}, it is also more``correct" to
consider $K_v$ rather than $K'_v$. 
\end{Rmk}

\medskip

The idea of the proof of the theorem is as follows. Using
Gaitsgory's nearby cycle functor construction as in \cite{G1,Z}, we
construct a functor
\[\calZ:\Sat_H\to\calP_v,\]
which is a central functor in the sense of \cite{B}. By standard
arguments in the theory of Tannakian equivalence and the
Mirkovic-Vilonen theorem, this already implies that
$\calP_v\simeq\Rep(\tilde{G}^\vee)$ for certain closed subgroup
$\tilde{G}^\vee\subset H^\vee$. Then we identify $\tilde{G}^\vee$
with $(H^\vee)^I$ using the parametrization of the $K_v$-orbits on
${\Fl_v}$.

\begin{Rmk}(i) We believe that the same argument (maybe with small
modifications) should work for groups split over wild ramified
extensions. However, we have not checked this carefully.

(ii) Our approach is more inspired by \cite{G1} rather than
\cite{MV}. However, it would be interesting to know whether there is
the similar theory of MV-cycles in the ramified case. It seems that
the geometry of semi-infinite orbits on $\Fl_v$ is similar to the
unramified case, except when $\Fl_v$ corresponds to one type of
special parahorics for odd unitary groups (the one denoted by
$\underline{G}_{v_1}$ as above). We do not know what happens in this
last case.
\end{Rmk}

When the group $G$ is quasi-split over the non-archimedean local
field $F=\bbF_q\ppart$ and $v$ is a special vertex of $\calB(G,F)$,
the affine flag variety $\Fl_v$ is defined over $\bbF_q$. We assume
that $v$ is very special, i.e. it remains special when we base
change $G$ to $\overline{\bbF}_q\ppart$ (see \S \ref{parameter} for
more discussions of this notion). Then we can consider the category
of $K_v$-equivariant semi-simple perverse sheaves on $\Fl_v$, pure
of weight zero, and denote it by $\calP_v^0$. On the other hand, let
$I$ be the inertial group of $F$ and $\sigma$ be the Frobenius of
$\Gal(k/\bbF_q)$, where $k=\overline{\bbF}_q$. Then the action of
$\Gal(F^s/F)$ on $H^\vee$ (via the pinned automorphisms) induces a
canonical action of $\sigma$ on $(H^\vee)^I$, denoted by
$\on{act}^{alg}$. One can form the semidirect product
$(H^\vee)^I\rtimes_{\on{act}^{alg}}\Gal(k/\bbF_q)$, which can be
regarded as a proalgebraic group over $\overline{\bbQ}_\ell$, and
consider the category of algebraic representations of
$(H^\vee)^I\rtimes_{\on{act}^{alg}}\Gal(k/\bbF_q)$, denoted by
$\Rep((H^\vee)^I\rtimes_{\on{act}^{alg}}\Gal(k/\bbF_q))$.

\begin{thm}\label{main finite}
In this case, the functor $\calR\calS$ in Theorem \ref{main} can be
extended to an equivalence
\[\calR\calS: \Rep((H^\vee)^I\rtimes_{\on{act}^{alg}}\Gal(k/\bbF_q))\simeq \calP_v^0,\]
whose composition with $\rH^*$ is isomorphic to the forgetful
functor.
\end{thm}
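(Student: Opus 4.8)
The plan is to descend the equivalence $\calR\calS$ of Theorem \ref{main} from $k=\overline{\bbF}_q$ to $\bbF_q$ by keeping track of the Frobenius structure on both sides. Since $v$ is very special, $K_v$ and $\Fl_v$ are already defined over $\bbF_q$, and the group $\underline{G}_v$ is a parahoric group scheme over $\bbF_q[[t]]$; thus pullback to $k$ identifies $\calP_v^0$ with the category of pairs $(\calF,\varphi)$ where $\calF\in\calP_v$ is semisimple and $\varphi:\sigma^*\calF\xrightarrow{\sim}\calF$ is a Weil structure making $\calF$ pure of weight zero, morphisms being those commuting with $\varphi$. I would first record this descent description carefully (this is standard, e.g. along the lines of the mixed-sheaf formalism), noting that a semisimple perverse sheaf pure of weight zero admits such a $\varphi$, unique up to scalars on each simple constituent, so the forgetful functor $\calP_v^0\to\calP_v$ is essentially surjective onto the semisimple objects and the extra data is precisely an action of $\Gal(k/\bbF_q)=\widehat{\bbZ}\langle\sigma\rangle$.

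Next I would transport the $\sigma$-structure across $\calR\calS$. Because the nearby cycles functor $\calZ:\Sat_H\to\calP_v$ used to build $\calR\calS$ is manufactured from the degeneration of the Beilinson--Drinfeld Grassmannian, and this degeneration is defined over $\bbF_q$ once $G$ is, the functor $\calZ$ — hence the whole tensor equivalence — is compatible with the Frobenius structures: $\sigma^*\circ\calR\calS\cong\calR\calS\circ\sigma^*$ as tensor functors, where on the $\Rep((H^\vee)^I)$ side $\sigma^*$ is induced by the pinned automorphism $\on{act}^{alg}$ of $(H^\vee)^I$. Concretely, for $\calF=\calR\calS(V)$ a Weil structure $\varphi$ on $\calF$ corresponds under $\calR\calS$ to an isomorphism $(\on{act}^{alg})^*V\xrightarrow{\sim}V$, i.e. to an extension of the $(H^\vee)^I$-action on $V$ to an action of $(H^\vee)^I\rtimes_{\on{act}^{alg}}\langle\sigma\rangle$ (pro-$\langle\sigma\rangle$, completing to $\Gal(k/\bbF_q)$). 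Thus giving an object of $\calP_v^0$ lying over $\calR\calS(V)$ is the same as giving such an extension, and giving a morphism in $\calP_v^0$ is the same as giving a morphism of these extended representations. This is exactly the assertion that $\calR\calS$ upgrades to an equivalence $\Rep((H^\vee)^I\rtimes_{\on{act}^{alg}}\Gal(k/\bbF_q))\simeq\calP_v^0$, and one checks the tensor structure and the compatibility $\rH^*\circ\calR\calS\cong\text{forget}$ are inherited from Theorem \ref{main} by construction.

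The main obstacle I expect is the \emph{normalization of the Frobenius/Weil structures} so that everything is strictly weight zero and the cocycle relations match: one must check that the natural Weil structure coming from the nearby-cycles construction (with the customary half-integer Tate twists built into $\calZ$ to keep it weight-exact) induces precisely the algebraic $\sigma$-action $\on{act}^{alg}$ on $(H^\vee)^I$ and not some twist of it by a character of $\Gal(k/\bbF_q)$, and that the identification is as tensor functors (so compatible with the associativity and commutativity constraints, the latter being where sign/weight subtleties in Tannakian arguments usually hide). A secondary point is to verify that purity of weight zero on the geometric side corresponds to \emph{algebraic} (rather than merely continuous) representations of the semidirect product — i.e. that the $\widehat{\bbZ}$-action factors appropriately — which follows from the fact that each $K_v$-orbit closure is defined over a finite extension and the stalk cohomologies, being those computed in Theorem \ref{BK}, carry Frobenius eigenvalues that are roots of unity times powers of $q^{1/2}$; pinning this down rigorously is where the bulk of the routine-but-delicate work lies.
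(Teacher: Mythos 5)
There is a genuine gap in your proposal, and you partially sense it but mischaracterize it. The problem is your assertion that transporting the Frobenius structure across $\calR\calS$ identifies the natural $\sigma$-action on the perverse-sheaf side with the pinned action $\on{act}^{alg}$ on $(H^\vee)^I$. It does not. As the paper develops in the Appendix (Lemma \ref{action of Galois}, Proposition \ref{comparison}), the Tannakian formalism applied to the natural geometric Frobenius structure produces a \emph{different} action $\on{act}^{geom}$ of $\sigma$ on $(H^\vee)^I$, and the two are related by
\[
\on{act}^{geom}=\on{act}^{alg}\circ\Ad_\chi,\qquad \chi=\rho\circ\on{cycl}:\Gal(k/\bbF_q)\to(H^\vee_{\ad})^I,
\]
i.e.\ they differ by an \emph{inner} twist by the one-parameter subgroup $\rho$ evaluated at the cyclotomic character, not by a ``twist by a character of $\Gal(k/\bbF_q)$'' as you suggest in your caveat paragraph. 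Thus a Weil structure $\varphi$ on $\calR\calS(V)$ corresponds tautologically to an isomorphism $(\on{act}^{geom})^*V\simeq V$, i.e.\ to a representation of $(H^\vee)^I\rtimes_{\on{act}^{geom}}\Gal(k/\bbF_q)$, not of $(H^\vee)^I\rtimes_{\on{act}^{alg}}\Gal(k/\bbF_q)$.

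To repair this you need the explicit isomorphism \eqref{geom-to-alg} of the paper,
\[
(H^\vee)^I\rtimes_{\on{act}^{alg}}\Gal(k/\bbF_q)\xrightarrow{\ \sim\ }(H^\vee)^I\rtimes_{\on{act}^{geom}}\Gal(k/\bbF_q),\quad (g,\sigma)\mapsto(\Ad_{\chi(\sigma)^{-1}}g,\sigma),
\]
to transfer from one semidirect product to the other. This step is not a formality: it is precisely what forces the remark, stated right after Theorem \ref{main finite}, that the restriction of the resulting $(H^\vee)^I\rtimes_{\on{act}^{alg}}\Gal(k/\bbF_q)$-representation to $\Gal(k/\bbF_q)$ on $\rH^*(\calF)$ is \emph{not} the naive Galois action on $\rH^*(\calF)$. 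Your framing, in which the equivalence should be ``compatible with Frobenius structures'' with $\sigma^*$ directly giving $\on{act}^{alg}$, would predict the opposite, and is therefore inconsistent with the theorem's intended normalization. The ingredient you are missing (and which constitutes the heart of the Appendix) is Proposition \ref{comparison}: the geometric action fixes the cohomological grading and acts on the principal nilpotent $X^\vee$ via the cyclotomic character, and these two facts pin down $\on{act}^{geom}$ as $\on{act}^{alg}\circ\Ad_\chi$.
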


Let us mention that under this equivalence, the restriction to
$\Gal(k/\bbF_q)$ of the representation
$(H^\vee)^I\rtimes_{\on{act}^{alg}}\Gal(k/\bbF_q)$ on $\rH^*(\calF)$
for $\calF\in\calP^0_v$ is NOT the natural Galois action of
$\Gal(k/\bbF_q)$ on $\rH^*(\calF)$. However, their difference can be
described explicitly. See Sect. \ref{proof} and Appendix for more
details.

\medskip

Our next result is to use the ramified geometric Satake isomorphism
to obtain the stalk cohomology of sheaves on ${\Fl_v}$ (i.e. the
corresponding Lusztig-Kato polynomial in ramified case), following
an idea of Ginzburg (cf. \cite{Gi}). Let us state the result
precisely. The centralizer of $A$ in our case is a maximal torus of
$G$, denoted by $T$. Then the $K$-orbits on $\Fl_v$ are labeled by
$\xcoch(T)_I/W_0$, $W_0$-orbits of the coinvariants of the
cocharacter group of $T$. For $\bar{\mu}\in\xcoch(T)_I$, let
$\mathring{\Fl}_{v\bar{\mu}}$ be the corresponding orbit. For a
representation $V$ of $(H^\vee)^I$, let $V(\bar{\mu})$ be the weight
space of $V$ for $(T^\vee)^I$. Let $X^\vee\in\Lie (H^\vee)^I$ be a
certain principal nilpotent element (see Sect. \ref{stalk} for the
details), which induces a filtration $F_i V(\bar{\mu})=(\ker
X^\vee)^{i+1}\cap V(\bar{\mu})$ on $V(\bar{\mu})$, called the
Brylinski-Kostant filtration. Then we have

\begin{thm}\label{BK}
For $V\in \Rep((H^\vee)^I)$, let $\calR\calS(V)\in\calP_v$ be the
corresponding sheaf. Then
\[\dim \calH^{2i-(2\rho,\bar{\mu})}\calR\calS(V)|_{\Fl_{v\bar{\mu}}}=\dim \on{gr}^F_iV(\bar{\mu}).\]
\end{thm}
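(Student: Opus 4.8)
The plan is to follow Ginzburg's strategy (as adapted in the unramified setting) and transport it through the central functor $\calZ\colon\Sat_H\to\calP_v$. The starting observation is that for the \emph{split} group $H$ over $k$, the analogous statement is known: the stalk cohomology of $\IC$-sheaves on $\Gr_H$ at a $T$-fixed point, in terms of the Brylinski--Kostant filtration attached to a principal nilpotent, is exactly the content of Ginzburg's theorem (cf.\ \cite{Gi}), and can be reformulated as saying that the associated graded of the weight space $V(\mu)$ under the Brylinski--Kostant filtration computes the "$q$-analogue of weight multiplicity", which is the Lusztig--Kato polynomial, which in turn is the Poincar\'e polynomial of the stalk. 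So first I would recall this statement for $\Sat_H$ in the precise form needed, including the identification of the principal nilpotent $X\in\Lie H^\vee$ acting on $\rH^*(\IC_\mu)$ via the $\mathfrak{sl}_2$-Lefschetz action, so that $\ker X^{i+1}$ is literally the Lefschetz filtration on cohomology.

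Next I would use the compatibility of the nearby-cycle functor $\calZ$ with hypercohomology and with the weight/Lefschetz structure. The central functor $\calZ$ (constructed via Gaitsgory's degeneration as in \cite{G1,Z}) satisfies $\rH^*\circ\calZ\cong\rH^*$ as functors to vector spaces, and moreover it intertwines the monodromy/Lefschetz operators: the principal nilpotent $X^\vee\in\Lie(H^\vee)^I$ is precisely the restriction to $(H^\vee)^I$ of the principal nilpotent $X\in\Lie H^\vee$, so the Brylinski--Kostant filtration on $V(\bar\mu)$ for $(T^\vee)^I$ is compatible under $\calZ$ with the one on the split side. The key geometric input is that nearby cycles preserve stalks in a controlled way: restricting $\calZ(\calF)$ to the orbit $\mathring{\Fl}_{v\bar\mu}$ should, up to the cohomological shift built into the statement (the $(2\rho,\bar\mu)$ versus the split $\langle 2\rho,\mu\rangle$), compute the same graded pieces. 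Concretely I would first prove the identity for $V=\calR\calS(V)$ with $\calR\calS(V)=\calZ(\IC_V)$ for $V$ pulled back from a representation of $H^\vee$, then extend to general $V\in\Rep((H^\vee)^I)$ by using that $\calZ$ is essentially surjective onto $\calP_v$ modulo the semisimplification and that both sides of the claimed equality are additive in $V$ and compatible with the convolution/tensor structure, so it suffices to check it on a generating set.

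The alternative, more self-contained route — which I would fall back on if the transport argument has gaps — is to redo Ginzburg's argument directly on $\Fl_v$: use the $\mathbb{G}_m$-action (coming from loop rotation, or from a cocharacter of $T$) contracting $\Fl_v$ onto fixed points, apply the hyperbolic localization theorem of Braden so that the stalk at $\bar\mu$ is computed by the cohomology of the attracting cell, identify that cell-cohomology with a weight space via the (ramified) Mirkovi\'c--Vilonen-type decomposition, and then recognize the Lefschetz $\mathfrak{sl}_2$-action as giving the Brylinski--Kostant filtration. This requires knowing that the semi-infinite orbit geometry on $\Fl_v$ behaves as in the unramified case; the paper flags this as true except for one type of special parahoric of odd unitary groups, so some care — possibly a separate argument — is needed there.

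The main obstacle I expect is exactly this last point: controlling the stalk cohomology at $T$-fixed points in the one anomalous case ($\underline{G}_{v_1}$ for odd ramified unitary groups), where the semi-infinite orbit geometry is not known to match the unramified picture. Here the transport argument via $\calZ$ is more robust than the direct geometric argument, because it only uses formal properties of the central functor and the already-established equivalence $\calR\calS$, so I would lean on that; but one still has to verify that the cohomological degree bookkeeping — the appearance of $\bar\mu\in\xcoch(T)_I$ and the pairing $(2\rho,\bar\mu)$ — is the correct shadow of the split-side degrees after passing to $I$-coinvariants, which is where a short but genuine computation with the action of inertia on roots is unavoidable.
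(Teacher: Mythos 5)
Your high-level plan---Ginzburg's approach transported through the central functor $\calZ$---is the right one and is exactly the strategy the paper follows, including the reduction to objects of the form $\calZ(\calF)$ via additivity in the Grothendieck group. But there is a genuine gap in the crucial ``transport'' step, which you gloss as ``nearby cycles preserve stalks in a controlled way.'' This is precisely where the content lies, and as stated it is not a theorem. The stalk of $\calZ(\calF)$ at $s_{\bar\mu}\in\Fl_v$ is not a priori related to stalks of $\calF$ at the various lifts $s_\mu\in\Gr_H$; nearby cycles commute with proper push-forward and hence with $\rH^*$, but they do not commute with restriction to a point, and the section $s_\mu:\bbA^1\to\wGr_\calG$ is a closed rather than proper-over-the-base morphism. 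The paper's mechanism for closing this gap is $A$-equivariant cohomology plus the localization theorem: one works with $\rH_A^*$, for which one does have $\rH_A^*\circ\calZ\simeq\rH_A^*$, and then localization at the generic (or a suitable closed) point of $\fraka$ expresses $\rH_A^*(\calF)$ as a direct sum over $A$-fixed points of stalk/costalk contributions (Lemma 5.7 of the paper). This is supplemented by the identification, coming from the Yun--Zhu picture of $J^\vee=\Spec\rH^{T_H}_*(\Gr_H)$ as the centralizer of $e^{T_H}=X^\vee+h$, of those localization summands with weight spaces for the correct torus (Proposition \ref{wt decomposition}), and by the choice of the closed point $t$ with $h(t)=2\rho$ that makes the cohomological filtration visible as the principal grading. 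None of this appears in your proposal, and without it the transport argument does not go through.

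Your worry about the anomalous special parahoric for odd unitary groups is also misplaced for the approach the paper actually takes. The semi-infinite orbit / MV-cycle geometry on $\Fl_v$ is never used. What is used is the $A$-fixed-point set $\{s_{\bar\mu}\}$ together with the contraction of the Kac--Moody big open cell onto each fixed point (Lemma \ref{splitting inj}), which gives the splitting injectivity $\rH_A^*(s_{\bar\mu}^!\calF)\hookrightarrow\rH_A^*(\calF)$ needed to intersect the cohomological filtration with a single localization summand; the purity/weight argument there works uniformly across all twisted affine flag varieties, including the case you flagged. So your ``fall-back'' route is in fact closer to what the paper does than your primary route, but with $A$-equivariant localization in place of Braden-style hyperbolic localization and without any appeal to semi-infinite orbits; and your primary route, as written, has no mechanism connecting cohomology (which $\calZ$ controls) to stalk data (which is what the theorem asserts).

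One more point of bookkeeping: the final step matching the filtration index $2i+(2\rho,\bar\mu)$ with the Brylinski--Kostant index $i$ is not ``a computation with the action of inertia on roots'' but rather the purely $\fraks\frakl_2$-theoretic Proposition \ref{two fil} (comparing two filtrations attached to $2\rho$ and to $\Ad_{n^{-1}}2\rho$ where $n$ conjugates $X^\vee+2\rho$ to $2\rho$), together with the observation that $2\rho$ is fixed by the pinned $I$-action so the relevant point $t$ can be taken in $\fraka\subset\frakt_H$. That is where the $I$-invariance enters, and it is a single-line check, not a roots computation.
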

Here $\calH^*$ denotes the cohomology sheaves, and $2\rho$ is the
sum of positive roots of $H$, see Sect. \ref{reminder} for the
meaning of $(2\rho,\bar{\mu})$.

\medskip

One of our main motivations of this work is to apply these results
to the calculation of the nearby cycles of certain ramified unitary
Shimura varieties, via the Rapoport-Zink-Pappas local models. For
example, we obtain the following theorem (see Sect. \ref{Shimura}
for details).

\begin{thm}
Let $G=\on{GU}(r,s)$ be a unitary similitude group associated to an
imaginary quadratic extension $F/\bbQ$ and a hermitian space
$(W,\phi)$ over $F/\bbQ$. Let $p>2$ be a prime where $F/\bbQ$ is
ramified and the hermitian form is split. Let $K_p$ be a special
parahoric subgroup of $G=G(\bbQ_p)$. Let $K=K_pK^p\subset
G(\bbQ_p)G(\bbA_f^p)$ be a compact open subgroup with $K^p$ small
enough. Let $Sh_K$ be the associated Shimura variety over the reflex
field $E$ and $Sh_{K_p}$ be the integral model of $Sh_K$ over
$\calO_{E_p}$ (as defined in \cite{PR2}). Then for $\ell\neq p$, the
action of the inertial subgroup $I$ of $\Gal(\overline{\bbQ}_p/F_p)$
on the nearby cycle
$\Psi_{Sh_{K_p}\otimes_{\calO_{E_p}}\calO_{F_p}}(\bbQ_\ell)$ is
trivial.
\end{thm}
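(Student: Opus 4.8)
The plan is to reduce the assertion to a statement about the Rapoport--Zink--Pappas local model and then to feed it into the ramified geometric Satake equivalence of Theorem \ref{main}. First I would invoke the local model diagram of \cite{PR2}: there is a scheme $\widetilde{Sh}$ over $\calO_{E_p}$ together with smooth morphisms $\widetilde{Sh}\to Sh_{K_p}$ and $\widetilde{Sh}\to M^{\mathrm{loc}}$, where $M^{\mathrm{loc}}=M^{\mathrm{loc}}_{K_p,\mu}$ is the local model attached to $(G,\mu)$ and $K_p$. Since the formation of nearby cycles commutes with smooth morphisms (up to shift) and with the base change $\calO_{E_p}\to\calO_{F_p}$, it suffices to show that $I$ acts trivially on $\Psi_{M^{\mathrm{loc}}\otimes_{\calO_{E_p}}\calO_{F_p}}(\bbQ_\ell)$; replacing $\calO_{F_p}$ by $\calO_{\breve F_p}$ does not change the inertia group, so I may work over the latter, and extending coefficients to $\overline{\bbQ}_\ell$ (which detects triviality of the $I$-action) I may also assume $\overline{\bbQ}_\ell$-coefficients throughout. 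The point of taking nearby cycles along $\Spec\calO_{F_p}$ rather than along $\Spec\bbZ_p$ is that the ramification of $F_p/\bbQ_p$ is precisely the ramification over which $G$ becomes split, which is what makes the equal-characteristic picture below applicable.

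Next I would check that $\Psi:=\Psi_{M^{\mathrm{loc}}\otimes\calO_{\breve F_p}}(\overline{\bbQ}_\ell)$, suitably shifted, is an object of $\calP_v$. Here I use the structure theory of $M^{\mathrm{loc}}$ for ramified unitary similitude groups developed in Sect.\ \ref{Shimura} (building on \cite{PR2,Z}): in the case at hand ($p>2$, split hermitian form) the corrected Pappas--Rapoport model $M^{\mathrm{loc}}\otimes\calO_{\breve F_p}$ is flat over $\calO_{\breve F_p}$ with reduced special fibre, and this special fibre is realized --- compatibly with the natural $L^+\underline{G}_v$-action --- inside the twisted affine flag variety $\Fl_v$ of the equal-characteristic avatar of $G$ over $\overline{\bbF}_p\ppart$, a group which splits over the quadratic extension obtained by adjoining $\sqrt{t}$, tamely since $p>2$, so that Theorem \ref{main} applies to it with $v$ the special vertex corresponding to $K_p$. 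Because $M^{\mathrm{loc}}$ is of finite type over the trait and its generic fibre is smooth, $\Psi$ (after the normalising shift) lies in the perverse heart, by the $t$-exactness of nearby cycles; and $\Psi$ is $L^+\underline{G}_v$-equivariant because $L^+\underline{G}_v$ acts on all of $M^{\mathrm{loc}}\otimes\calO_{\breve F_p}$ over the trivial action on the base. Thus $\Psi\in\calP_v$, and the canonical action of $I=\Gal(\overline{\bbQ}_p/F_p)$ on nearby cycles, being compatible with equivariance and perversity, yields a homomorphism $I\to\Aut_{\calP_v}(\Psi)$.

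The key point is then to pass to hypercohomology. By Theorem \ref{main} the functor $\rH^*$ is a faithful exact $\overline{\bbQ}_\ell$-linear fibre functor on $\calP_v$, so $\Aut_{\calP_v}(\Psi)\hookrightarrow\GL(\rH^*\Psi)$ and it suffices to see that $I$ acts trivially on $\rH^*\Psi$. But the defining property of nearby cycles provides an isomorphism $\rH^*(\Psi)\cong\rH^*\bigl(M^{\mathrm{loc}}\otimes_{\calO_{E_p}}\overline{F}_p,\ \overline{\bbQ}_\ell\bigr)$ of $I$-modules, up to a shift that does not affect the $I$-action; and $M^{\mathrm{loc}}_{E_p}$ is a (partial) flag variety, hence a cellular variety over $E_p$, so its $\ell$-adic cohomology is a direct sum of Tate twists $\overline{\bbQ}_\ell(-j)$ on which $I$ acts trivially. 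Hence $I$ acts trivially on $\rH^*\Psi$, therefore trivially on $\Psi$ (a morphism in $\calP_v$ that becomes the identity under the faithful $\overline{\bbQ}_\ell$-linear $\rH^*$ is the identity), and --- unwinding the local model diagram --- trivially on $\Psi_{Sh_{K_p}\otimes_{\calO_{E_p}}\calO_{F_p}}(\bbQ_\ell)$.

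Granting Theorem \ref{main}, the only genuinely substantial ingredient is the one used in the second paragraph: the flatness and reducedness of the corrected local model for the ramified $\on{GU}$, together with the $L^+\underline{G}_v$-equivariant identification of its special fibre with a closed subset of $\Fl_v$ for the equal-characteristic avatar. This is exactly where $p>2$ (tameness of $F_p/\bbQ_p$, so that Theorem \ref{main} is available for the avatar) and the splitness of the hermitian form (to stay within the cases treated) enter. Once this is in place the rest is the purely formal observation that an automorphism of an object of $\calP_v$ which becomes the identity under a faithful fibre functor is itself the identity; all the arithmetic content of the vanishing of the monodromy has been absorbed into the existence of the fibre functor $\rH^*$ on $\calP_v$, i.e.\ into Theorem \ref{main}.
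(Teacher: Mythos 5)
Your argument is correct and reaches the same conclusion as the paper, but it takes a noticeably more economical route that is worth comparing. The paper deduces the statement from the finer Theorem \ref{Shi}: after the reduction to the local model and the observation (from Proposition \ref{geom}) that the shifted nearby cycle $\Psi^{\mathrm{geom}}$ lies in $\calP_v$, the paper first computes $\rH^*(\Psi^{\mathrm{geom}})\cong V_{r,s}$, then pins down $\Psi^{\mathrm{geom}}$ as $\calR\calS(V_{r,s})$ by a dimension/multiplicity count against the classification of irreducibles supplied by Theorem \ref{main}, and only then argues the triviality of inertia by applying Schur's lemma to each irreducible summand and comparing characters with the $I$-action on cohomology; the argument is run separately in the cases $n$ odd, $n$ even with $r\neq s$, and $r=s$. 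Your proof bypasses the explicit identification entirely: you only need that $\Psi\in\calP_v$ and that $\rH^*$ is a faithful functor on $\calP_v$ (Corollary \ref{fib}), so that $\Aut_{\calP_v}(\Psi)\hookrightarrow\GL(\rH^*\Psi)$, together with the compatibility of nearby cycles with proper pushforward to identify the $I$-action on $\rH^*\Psi$ with the Galois $I$-action on $\rH^*(\calP_{\mu_{r,s}}\otimes\overline{F}_p)$, which is Tate because $G_{F_p}$ is split and $\ell\neq p$. This handles all cases in one stroke, avoids the combinatorics of Theorem \ref{Shi}, and makes transparent that the structural input is precisely the faithfulness of the fibre functor (equivalently, the semisimplicity of $\calP_v$ together with the nonvanishing of $\rH^*$ on each $\IC_{\bar\mu}$). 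The trade-off is that the paper's longer route also delivers the finer information (the precise isomorphism type of $\Psi^{\mathrm{geom}}$ as a perverse sheaf, the full Galois action including Frobenius, and the $\bbZ/2$-character appearing when $r=s$), which is used elsewhere in the paper.

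One small imprecision: you write that ``$M^{\mathrm{loc}}_{E_p}$ is \dots\ a cellular variety over $E_p$.'' When $r=s$ one has $E_p=\bbQ_p$ and $G_{\bbQ_p}$ is not split, so $\calP_{\mu_{m,m}}$ is not cellular over $E_p$ (and the inertia of $\bbQ_p$ does act nontrivially on its cohomology --- this is exactly Theorem \ref{Shi}(3)). What your argument actually needs, and what your earlier base change to $\calO_{F_p}$ (or $\calO_{\breve F_p}$) already arranges, is cellularity of $\calP_{\mu_{r,s}}\otimes F_p$ over $F_p$, which holds because $G_{F_p}$ is split. It would be cleaner to say ``cellular over $F_p$'' so as not to suggest a claim that is false for $r=s$.
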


By applying Theorem \ref{BK}, it will not be hard to determine the
traces of Frobenius on these sheaves explicitly, which will be the
input of the Langlands-Kottwitz method to determine the local Zeta
factors of $Sh_K$. Instead, we characterize these traces of
Frobenius in terms of Langlands parameters, which verifies a
conjecture of Haines and Kottwitz in this case (see Proposition
\ref{Haines-Kottwitz}).

\begin{Rmk}(i)
While the definition of the integral model of a PEL-type Shimura
variety at an ``unramified" prime $p$ (i.e. the group is unramified
at $p$ and $K_p$ is hyperspecial) is well-known (cf. \cite{Ko1}),
the definition of such a model at the ramified prime $p$ (even for
$K_p$ special) is a subtle issue. In \cite{Pa,PR2}, the integral
models $Sh_{K_p}$ are defined as certain closed subschemes of
certain moduli problems of abelian varieties. Except a few cases
(e.g. $(r,s)=(n-1,1)$ and $n=r+s$ is small), there is no moduli
description of $Sh_{K_p}$ so far. In general, $Sh_{K_p}$ are not
smooth. Indeed, as shown in \cite{Pa,PR2}, when $n=r+s$ is odd and
$(r,s)=(n-1,1)$, for the special parahoric $K_p$ of $G(\bbQ_p)$ with
reductive quotient $\on{GO}_{n}$, $Sh_{K_p}$ is not even
semi-stable.

(ii) If $r\neq s$, then we know that $E=F$ and the above theorem
gives a complete description of the monodromy on the nearby cycles
of $Sh_{K_p}$. If $r=s$, then $E=\bbQ$, and the complete description
of the monodromy is more complicated. See Sect. \ref{Shimura} for
details. In any case, the action of inertia on the nearby cycle is
semi-simple.

(iii) We hope that there will be a ``good" compactification of such
Shimura varieties $Sh_{K_p}$. Then the above theorem, together with
the existence of such compactification, would imply that the
monodromy of $H_c^*(Sh_K\otimes_{E_p} F_p)$ is trivial.

(iv)The triviality of the monodromy as above would have the
following surprising consequence for the Albanese of Picard modular
surfaces. Namely, in the case when $(r,s)=(2,1)$, $F/\bbQ$ is
ramified at $p>2$ and $K_p=G(\bbQ_p)$ is a special parahoric, the
Albanese $\on{Alb}(Sh_{K_p})$ of $Sh_{K_p}$ is trivial. It will be
interesting to find the ``optimal" level structure at $p$ so that
$\on{Alb}(Sh_{K_p})$ can be possibly non-trivial. More detailed
discussion will appear elsewhere.
\end{Rmk}

\medskip

Let us quickly describe the organization of the paper. We will prove
Theorem \ref{main} and Theorem \ref{main finite} in \S
\ref{reminder}-\ref{proof}. Then we prove Theorem \ref{BK} in \S
\ref{stalk}.

In \S \ref{parameter}, we briefly discuss the Langlands parameters
associated to a smooth representation of a quasi-split $p$-adic
group, which has a vector fixed by a special parahoric. We call them
``spherical" representations, and we will see that their Langlands
parameters can be described easily. Again, the correct point of view
is to consider the special parahoric rather than the special maximal
compact. Then in \S \ref{Shimura}, we apply the previous results to
study the nearby cycles on certain unitary Shimura varieties.

The paper contains an appendix, joint with T. Richarz, where we
recover the full Langlands dual group via the Tannakian formalism.
In particular, we give the geometric Satake correspondence for
unramified groups. We hope that this formulation will be of
independent interest. In addition, we observe that for a reductive
group defined over $k$, the Tannakian formulism provides a natural
action $\on{act}^{geom}$ of $\Gal(k^s/k)$ on the dual group
$G^\vee$, which differs from the usual pinned action
$\on{act}^{alg}$ of $\Gal(k^s/k)$ on $G^\vee$ by the twist of ``the
half sum of the positive roots". This gives a geometric explanation
of the two natural normalizations of the Satake parameters.

\medskip

\noindent{\bf Notations.} Let $k$ be a field. We denote by $k^s$ a
separable closure of $k$.

For a (not necessarily connected) diagonalizable algebraic group $C$
defined over a field $F$, we denote by $\xch(C)$ its group of
characters and by $\xcoch(C)$ its group of cocharacters over the
separable closure $F^s$. The Galois group $I=\Gal(F^s/F)$ acts on
$\xch(C)$ (resp. $\xcoch(C)$) and the invariants (resp.
coinvariants) are denoted by $\xch(C)^I$ (resp. $\xch(C)_I,
\xcoch(C)^I, \xcoch(C)_I$). We will always use $\la,\mu,\ldots$ to
denote elements in $\xch(C)$ or $\xcoch(C)$ and
$\bar{\la},\bar{\mu}$ to denote elements in $\xch(C)_I$ or
$\xcoch(C)_I$. In general, let $I$ be a group acting on a set $S$.
We denote $S^I$ to be the subset of fixed points.

If $G$ is an algebraic group defined over a field $E$, we denote by
$\Rep(G)$ the category of finite dimensional representations of $G$
over $E$. If $G$ is connected reductive, we denote
$G_{\ad},G_{\der},G_{\s}$ to be its adjoint quotient, its derived
group, and the simply-connected cover of its derived group.

Let $k$ be a field and $\calO=k[[t]], F=k\ppart:=k[[t]][t^{-1}]$.
For an $\calO$-scheme $X$, we denote $L^+X$ to be the jet space over
$k$ so that for any $k$-algebra $R$, $L^+X(R)=X(R[[t]])$. For an
$F$-scheme $X$, we denote $LX$ to be its loop space so that
$LX(R)=X(R\ppart)$. If $X$ is defined over $k$, we write $L^+X$ for
$L^+(X\otimes\calO)$ and $LX$ for $L(X\otimes F)$ if no confusion
will arise.

For a variety $X$ over $k$, we denote $D(X)$ the usual (bounded)
derived category of $\ell$-adic sheaves on $X$
($\ell\nmid\on{char}k$). If $X=\lim X_i$ is an ind-scheme of
ind-finite type, $D(X)=\lim D(X_i)$ as usual. If there is an action
of an algebraic group $G$ on $X$, the $G$-equivariant derived
category is denoted by $D_G(X)$ (see \cite{BL} for the details). All
the functors like $f_*,f_!,f^*,f^!$ are understood in the derived
sense unless otherwise specified.

\medskip

\noindent{\bf Acknowledgement.} The author would like to thank D.
Gaitsgory, T. Haines, Y. Liu, I. Mirkovi\'{c}, G. Pappas, M.
Rapoport, T. Richarz, E. Urban, Z. Yun for useful discussions. The
author also thanks the hospitality of Tsinghua University, where
part of the work is done. The work of the author is supported by the
NSF grant under DMS-1001280.

\section{Reminders on the affine flag variety associated to a special
parahoric}\label{reminder} In this section, we collect basic facts
about the affine flag varieties associated to a special parahoric of
$G$. Another purpose of this section is to fix notations that are
used in the later sections.

Let $k$ be an algebraically closed field and let $G$ be a group over
the local field $F=k\ppart$, which is split over a tamely ramified
extension. Let us choose $A$ to be a maximal $F$-split torus of $G$
and $T$ be its centralizer. Then $T$ is a maximal torus of $G$ since
$G$ is quasi-split. Let us choose a rational Borel subgroup
$B\supset T$.

Let $H$ be a split Chevalley group over $\bbZ$ such that $H\otimes
F^s\simeq G\otimes F^s$. We need to choose this isomorphism
carefully. Let us fix a pinning $(H,B_H,T_H,X)$ of $H$ over $\bbZ$.
Let us recall that this means that $B_H$ is a Borel subgroup of $H$,
$T_H$ is a split maximal torus contained $B_H$, and
$X=\Sigma_{\tilde{a}\in\Delta_H} X_{\tilde{a}}\in\Lie B$, where
$\Delta_H$ is the set of simple roots of $H$,
$\tilde{U}_{\tilde{a}}$ is the root subgroup corresponding to
$\tilde{a}$ and $X_{\tilde{a}}$ is a generator in the rank one free
$\bbZ$-module $\Lie \tilde{U}_{\tilde{a}}$. Let $\Xi$ be the group
of pinned automorphisms of $(H,B_H,T_H,X)$, which is canonically
isomorphic to the group of the automorphisms of the root datum
$(\xch(T_H),\Delta_H,\xcoch(T_H),\Delta^\vee_H)$.

Let us choose an isomorphism $(G,B,T)\otimes_F\tilde{F}\simeq
(H,B_H,T_H)\otimes_{\bbZ}\tilde{F}$, where $\tilde{F}/F$ is a cyclic
extension such that $G\otimes\tilde{F}$ splits. This induces an
isomorphism of the root data
$(\xch(T_H),\Delta_H,\xcoch(T_H),\Delta^\vee_H)\simeq(\xch(T),\Delta,\xcoch(T),\Delta^\vee)$.
Now the action of $I=\Gal(\tilde{F}/F)$ on $G\otimes_F\tilde{F}$
induces a homomorphism $\psi:I\to\Xi$. Then we can always choose an
isomorphism
\begin{equation}\label{rho}
(G,B,T)\otimes_{F}\tilde{F}\simeq (H,B_H,T_H)\otimes_\bbZ \tilde{F}
\end{equation}
such that the action of $\ga\in I$ on the left hand side corresponds
to $\psi(\ga)\otimes\ga$. In the rest of the paper, we fix such an
isomorphism.

Recall that the Kottwitz homomorphism $\kappa:T(F)\to \xcoch(T)_I$
(cf. \cite{Ko,HRa}) induces an isomorphism
\[\xcoch(T)_I\simeq T(F)/\underline{T}^{\flat,0}(\calO),\]
where $\underline{T}^{\flat,0}$ is the unique parahoric group scheme
of $T$ over $\calO$ (the connected N\'{e}ron model). Our convention
of Kottwitz homomorphism is that the action of $t\in T(F)$ on
$A(G,A)$ (the apartment associated to $(G,A)$) is given by $v\mapsto
v-\kappa(t)$. Let $W_0=W(G,A)$ be the relative Weyl group of $G$. It
acts on $T$ and therefore on $\xcoch(T)_I$. In addition, its action
on the torsion subgroup $\xcoch(T)_{I,\on{tor}}\subset\xcoch(T)_I$
is trivial.

The Borel subgroup $B$ determines a set of positive roots
$\Phi^+=\Phi(G,A)^+$ for $G$. There is a natural map $\xcoch(T)_I\to
\xcoch(T)_I\otimes\bbR\simeq\xcoch(A)_\bbR$. We define the set of
dominant elements in $\xcoch(T)_I$ to be
\begin{equation}\label{plus}
\xcoch(T)_I^+=\{\bar{\mu}| (\bar{\mu},a)\geq 0 \mbox{ for }
a\in\Phi^+\}.
\end{equation}
Then the natural map
$\xcoch(T)_I^+\subset\xcoch(T)_I\to\xcoch(T)_I/W_0$ is bijective.
Let us define an order $\preceq$ on $\xcoch(T)_I$ as follows. Let
$Q_H$ be the coroot lattice for $H$. The action of $I$ on $Q_H$ will
send the positive coroots of $H$ (determined by the chosen Borel) to
positive coroots. Therefore, it makes sense to talk about positive
elements in $(Q_H)_I$. Namely, an element in $(Q_H)_I$ is positive
if its preimage in $Q_H$ is a sum of positive coroots (of $H$).
Since $(Q_H)_I\subset\xcoch(T)_I$, we can define for
$\la,\mu\in\xcoch(T)_I$,
\begin{equation}\label{order}
\bar{\la}\preceq\bar{\mu} \mbox{ if } \bar{\mu}-\bar{\la} \mbox{ is
positive in } (Q_H)_I.
\end{equation}

Let $\underline{G}_v$ be a special parahoric group scheme of $G$
over $\calO=k[[t]]$ in the sense of Bruhat-Tits (see \cite{T} for a
summary of the theory), such that the natural inclusion $A\subset G$
extends to $A_\calO\subset\underline{G}_v$ (i.e. the vertex $v$ in
the (reduced) building of $G$ corresponding to $\underline{G}_v$ is
contained in the apartment $A(G,A)$. For examples of such group
schemes, we refer to Sect. \ref{Shimura}. We write
$K_v=L^+\underline{G}_v$, and consider the (twisted) affine flag
variety
\[{\Fl_v}=LG/K_v.\]
This is an ind-projective scheme (cf. \cite{PR}). As is shown in
\emph{loc. cit.}, when $G$ is semi-simple and simply-connected,
$\Fl_v$ is just a partial flag variety of certain (twisted) affine
Kac-Moody group. The $K_v$-orbits on ${\Fl_v}$ are parameterized by
$\xcoch(T)_I^+$. For $\bar{\mu}\in\xcoch(T)_I^+$, let
$s_{\bar{\mu}}$ denote the point in ${\Fl_v}$ corresponding to
$\bar{\mu}$. More precisely, this point is the image of $\bar{\mu}$
under the map $\xcoch(T)_I\simeq
T(F)/\underline{T}^{\flat,0}(\calO)\to G(F)/K_v={\Fl_v}(k)$. Let
${\Fl_v}_{\bar{\mu}}$ be the corresponding Schubert variety, i.e.
the closure of $K$-orbit through $s_{\bar{\mu}}$. Then
\[\dim{\Fl_v}_{\bar{\mu}}=(2\rho,\bar{\mu}),\]
where $2\rho$ is the sum of all positive roots (for $H$), and by
definition $(2\rho,\bar{\mu})=(2\rho,\mu)$ for any lift $\mu$ of
$\bar{\mu}$ to $\xcoch(T)$. In addition,
${\Fl_v}_{\bar{\la}}\subset{\Fl_v}_{\bar{\mu}}$ if and only if
$\bar{\la}\preceq\bar{\mu}$. In this case,
$\dim{\Fl_v}_{\bar{\mu}}-\dim{\Fl_v}_{\bar{\la}}$ is an even
integer. For the proof of these facts, see \cite{Ri,Z}.

Let $\calP_v=\calP_{K_v}({\Fl_v})$ be the category of
$K_v$-equivariant perverse sheaves on ${\Fl_v}$, with coefficients
in $\overline{\bbQ}_\ell$. By the above facts, in each connected
component of $\Fl_v$, the dimensions of $K_v$-orbits have the
constant parity. Therefore, we have
\begin{lem}\label{semisimple}
$\calP_v$ is a semi-simple abelian category.
\end{lem}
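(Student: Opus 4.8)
The plan is to show that $\calP_v$ is semisimple by using the standard parity/stratification criterion for equivariant perverse sheaves, combined with the facts about Schubert strata recalled just above.

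First I would recall that $\Fl_v = LG/K_v$ is an ind-projective $k$-scheme stratified by the $K_v$-orbits $\mathring{\Fl}_{v\bar\mu}$, $\bar\mu\in\xcoch(T)_I^+$, each of which is isomorphic to an affine space bundle over a point (more precisely each orbit is a smooth irreducible variety, being a $K_v$-orbit, and its equivariant cohomology is that of a point since the stabilizers are connected). Since $\calP_v$ is the category of $K_v$-equivariant perverse sheaves and each orbit has connected stabilizer, every simple object of $\calP_v$ is of the form $\IC(\Fl_{v\bar\mu})$ (the intersection cohomology sheaf of the Schubert variety $\Fl_{v\bar\mu}$ with constant coefficients on the open orbit, suitably shifted). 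To conclude semisimplicity it suffices to show that $\on{Ext}^1_{\calP_v}(\IC(\Fl_{v\bar\la}),\IC(\Fl_{v\bar\mu}))=0$ for all $\bar\la,\bar\mu$.

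The key input is the parity statement recalled in the section: within each connected component of $\Fl_v$, all $K_v$-orbits have dimensions of constant parity, and moreover $\Fl_{v\bar\la}\subset\Fl_{v\bar\mu}$ implies $\dim\Fl_{v\bar\mu}-\dim\Fl_{v\bar\la}$ is even. The argument then runs as follows. An extension class in $\on{Ext}^1$ of two perverse sheaves lives in a group computed by the $t$-structure spectral sequence from $\on{Hom}$ in the derived category; concretely, $\on{Ext}^1_{\calP_v}(\IC_{\bar\la},\IC_{\bar\mu})$ injects into $\on{Hom}_{D_{K_v}(\Fl_v)}(\IC_{\bar\la},\IC_{\bar\mu}[1])$. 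Restricting to the strata and using that $\IC$ sheaves of Schubert varieties are, by the Kazhdan--Lusztig/parity philosophy in this setting (available here because the strata are affine spaces with constant-parity dimension, so the usual Deligne purity/parity-vanishing argument of \cite{MV} applies verbatim), concentrated in cohomological degrees of a fixed parity on each stratum: one checks that $\calH^*(i_{\bar\nu}^*\IC_{\bar\mu})$ and $\calH^*(i_{\bar\nu}^!\IC_{\bar\la})$ are nonzero only in degrees of opposite parity from what an odd-degree $\on{Hom}$ would require, forcing the relevant group to vanish. The cleanest packaging is: the standard and costandard objects $j_{!\bar\mu}$, $j_{*\bar\mu}$ satisfy $\on{Ext}^{\mathrm{odd}}(j_{!\bar\la}, j_{*\bar\mu})=0$ by the parity of orbit dimensions, hence every $\IC$ is both a quotient of a standard and a sub of a costandard with $\on{Ext}^1$ between consecutive pieces vanishing, which gives semisimplicity by dévissage.

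The main obstacle, and the step I would spend the most care on, is establishing the parity-vanishing for the stalks and costalks of the $\IC$ sheaves of the twisted Schubert varieties $\Fl_{v\bar\mu}$ — i.e. that $\IC(\Fl_{v\bar\mu})$ is even (pointwise pure of parity $\dim\Fl_{v\bar\mu}$). In the untwisted affine Grassmannian this is Lusztig's theorem / follows from the MV argument via the $\bbG_m$-action and semismallness; in the twisted ramified setting one must know that the same geometric input is available. I would invoke the results of \cite{PR,Ri,Z} on the geometry of $\Fl_v$ and its Schubert varieties (rational singularities, the $K_v$-orbit stratification by affine spaces, the even-codimension property) to run the purity argument; alternatively one can cite that these Schubert varieties, being Schubert varieties in (twisted) affine Kac--Moody flag varieties, have pointwise pure $\IC$ by general results on Kac--Moody Schubert varieties. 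Once parity is in hand, the vanishing of $\on{Ext}^1$ between simples is formal, and $\calP_v$ being abelian with no nontrivial extensions between its (finitely many, up to the obvious bookkeeping) simple objects is semisimple.
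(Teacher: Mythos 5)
Your proposal follows essentially the same strategy as the paper: reduce semisimplicity to the parity-vanishing property of $\IC$ stalks using the constant-parity stratification (the paper cites \cite[Proposition 1]{G1} for this formal reduction, which you unpack via standard/costandard objects and $\on{Ext}^1$ vanishing), and then establish parity vanishing from the geometry of the twisted Schubert varieties. The one place where the paper is more concrete than you are is the mechanism for the parity vanishing itself: it appeals specifically to the Demazure resolutions constructed in \cite[Sect.\ 8]{PR} together with the argument of \cite{H} showing their fibers are paved by affine spaces, rather than a generic appeal to Kac--Moody $\IC$ purity.
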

\begin{proof}By the argument as in \cite[Proposition 1]{G1}, it is enough to show that the
stalks of the intersection cohomology sheaves have the parity
vanishing property. But this follows from the existence of Demazure
resolutions of Schubert varieties in ${\Fl_v}$ whose fibers have
pavings by affine spaces (for example see \cite[A.7]{G1}). More
precisely, the existence of such resolutions were constructed in
\cite[Sect. 8]{PR} for twisted affine flag varieties, and the
arguments as in \cite{H} apply in this situation to show that the
fibers have pavings by affine spaces.
\end{proof}

In \cite{Z,PZ}, a natural $\bbG_m$-action on ${\Fl_v}$ is
constructed. In the Kac-Moody setting, it is just the action of the
``rotation torus" on ${\Fl_v}$. Each Schubert cell is invariant
under this action.

\begin{cor}\label{Gm equiv}
Any $K_v$-equivariant perverse sheaf on ${\Fl_v}$ is automatically
$\bbG_m$-equivariant.
\end{cor}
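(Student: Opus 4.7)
By Lemma \ref{semisimple}, the category $\calP_v$ is semi-simple, so it suffices to exhibit a $\bbG_m$-equivariant structure on each simple object. The simple objects of $\calP_v$ are the intersection cohomology sheaves $\IC_{\bar\mu}$ of the Schubert varieties $\Fl_{v\bar\mu}$, indexed by $\bar\mu\in\xcoch(T)_I^+$. Thus the whole claim reduces to showing that each $\IC_{\bar\mu}$ carries a $\bbG_m$-equivariant structure.

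The plan is to deduce this from the fact that the $\bbG_m$-action preserves the Schubert stratification. Concretely, since each Schubert cell $\mathring\Fl_{v\bar\mu}$ is $\bbG_m$-stable, so is its closure $\Fl_{v\bar\mu}$, and the whole stratification of $\Fl_{v\bar\mu}$ by smaller Schubert cells is $\bbG_m$-stable. On the open stratum, the shifted constant sheaf $\overline{\bbQ}_\ell[(2\rho,\bar\mu)]$ has an obvious $\bbG_m$-equivariant structure (constant sheaves are canonically equivariant for any group action). Since intermediate extension with respect to a stratification preserved by a connected group commutes with that group's action on the derived category, applying $j_{!*}$ along the open inclusion $j:\mathring\Fl_{v\bar\mu}\hookrightarrow\Fl_{v\bar\mu}$ and then pushing forward to $\Fl_v$ produces a $\bbG_m$-equivariant structure on $\IC_{\bar\mu}$.

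Equivalently (and more invariantly), one can check the equivariance obstruction directly: for a connected algebraic group $H$ acting on a variety $X$ and a simple perverse sheaf $\calF$ on $X$, the condition $a^*\calF\simeq p^*\calF$ (with $a,p:H\times X\to X$ the action and the projection) together with $\End(\calF)=\overline{\bbQ}_\ell$ already produces an $H$-equivariant structure, because both the space of isomorphisms $a^*\calF\cong p^*\calF$ and the cocycle equations that must hold at the unit section are controlled by $\overline{\bbQ}_\ell^{\times}$, which is divisible. In our setting $H=\bbG_m$ is connected, $\calF=\IC_{\bar\mu}$ is simple with one-dimensional endomorphism algebra, and $a^*\IC_{\bar\mu}\simeq p^*\IC_{\bar\mu}$ holds because the $\bbG_m$-action preserves both the underlying Schubert variety and its stratification, hence its IC sheaf up to isomorphism.

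The main technical point, and the only place where something could in principle go wrong, is the passage from the existence of an abstract isomorphism $a^*\IC_{\bar\mu}\simeq p^*\IC_{\bar\mu}$ to a genuine equivariant structure satisfying the cocycle condition. I expect this to be handled by the standard argument recalled above (or, equivalently, by the functoriality of $j_{!*}$ for group-equivariant stratifications), so no serious obstacle should arise.
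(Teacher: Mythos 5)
Your proof takes the same route as the paper: use the semisimplicity of $\calP_v$ (Lemma~\ref{semisimple}) to reduce to the simple objects, and then observe that each $\IC_{\bar\mu}$ is $\bbG_m$-equivariant because the $\bbG_m$-action preserves the Schubert stratification. The paper compresses the second step into the single sentence ``clearly, the intersection complex is $\bbG_m$-equivariant,'' whereas you spell out the standard justification (functoriality of $j_{!*}$ for $\bbG_m$-stable open embeddings, or equivalently the fact that for a connected group and a simple perverse sheaf an abstract isomorphism $a^*\calF\simeq p^*\calF$ upgrades to a genuine equivariant structure); both of your elaborations are correct and routine.
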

\begin{proof}Clearly, the intersection complex is
$\bbG_m$-equivariant. Then the assertion follows from the
semisimplicity of $\calP_v$.
\end{proof}

\begin{ex}
In the special case when $G=H\otimes_k F$ and
$\underline{G}_v=H\otimes_k\calO$ is hyperspecial, then $\Fl_v$ is
just the usual affine Grassmannian $\Gr_H$ of $H$, and $\calP_v$ is
the Satake category of $H$, i.e., the category of $L^+H$-equivariant
perverse sheaves on $\Gr_H$. All the above facts are well-known.
\end{ex}

\section{Construction of the functor $\calZ$}\label{central functor}

We continue the notations as in the previous sections and let
$\underline{G}_v$ be a special parahoric group scheme of $G$ over
$\calO$. In \cite{Z}, a group scheme $\calG$ over $\bbA^1_k$ is
constructed such that

\begin{enumerate}
\item $\calG_\eta$ is connected reductive,
splits over a (tamely) ramified extension, where $\eta$ is the
generic point of $\bbA^1_k$;
\item For some choice of isomorphism $F_0\simeq F$, $\calG_{F_0}\simeq G$, where for a point $x\in \bbA^1_k$, $\calO_x$ denotes the completed local ring at $x$ and $F_x$ denotes the fractional field of $\calO_x$;

\item For any $y\neq 0$, $\calG_{\calO_y}$ is hyperspecial, (non-canonically) isomorphic to $H\otimes\calO_y$;

\item $\calG_{\calO_0}=\underline{G}_{v}$ under the isomorphism $\calG_{F_0}\simeq G$.
\end{enumerate}

The construction is as follows. Regard $I$ as the Galois group of
the cyclic cover $[e]:\bbG_m\to\bbG_m$ of degree $e$. Then the group
$I$ acts $H\times\bbG_m$. Namely, it acts on $H$ via the pinned
automorphism $\psi:I\to\Xi$, and on $\bbG_m$ via transport of
structures. Then
$\calG|_{\bbG_m}=(\Res_{\bbG_m/\bbG_m}(H\times\bbG_m))^I$, and
$\calG$ is the extension of it to $\bbA^1$ so that
$\calG_{\calO_0}=\underline{G}_v$.

Let $\Gr_\calG$ be the global affine Grassmannian of $\calG$, which
is an ind-scheme over $\bbA^1$ (see, for example \cite[\S 5]{PZ} for
the ind-representability of $\Gr_\calG$). Recall that it classifies
triples $(y,\calE,\beta)$ where $y$ is a point on $\bbA^1_k$,
$\calE$ is a $\calG$-torsor on $\bbA^1$ and $\beta$ is a
trivialization of this $\calG$-torsor away from $y$. Let
$[e]:\bbA^1\to\bbA^1$ be the natural extension of the cyclic cover
of $[e]:\bbG_m\to\bbG_m$. Let
$\wGr_\calG:=\Gr_\calG\times_{\bbA^1,[e]}\bbA^1$ be the base change.
Then
\[(\wGr_\calG)_0\simeq{\Fl_v},\quad\quad\wGr_\calG|_{\bbG_m}\simeq\Gr_H\times\bbG_m.\]
Since $I$ acts on $H$ via pinned automorphisms, it acts on $\Gr_H$,
still denoted by $\psi$. The following lemma is clear from the above
construction.
\begin{lem}\label{psi}
Under the isomorphism
$\Gr_\calG\times_{\bbG_m}\bbG_m\simeq\Gr_H\times\bbG_m$, the action
of $\ga\in I$ on the left hand side (via the Galois action on the
second factor) corresponds to the action of $\psi(\ga)\times\ga$ on
the right hand side.
\end{lem}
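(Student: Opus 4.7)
The plan is to unwind the descent description of $\calG|_{\bbG_m}$ and to verify that the two appearances of $I$ --- as deck transformations of the cover $[e]$ and as pinned automorphisms of $H$ --- combine in the expected way after passing to the affine Grassmannian.

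First I would recall that by construction $\calG|_{\bbG_m}$ is the \'etale descent of the constant group scheme $H\times\bbG_m$ along the Galois cover $[e]\colon\bbG_m\to\bbG_m$, where the descent datum is the $I$-equivariant structure $\psi\times\text{deck}$ on $H\times\bbG_m$ ($\psi$ acting on $H$ and the deck transformations acting on the source of $[e]$). In particular, the pullback $[e]^*\calG|_{\bbG_m}$ is tautologically identified with $H\times\bbG_m$ as a group scheme over the upstairs $\bbG_m$, and this identification is $I$-equivariant precisely for the twisted action $\psi\times\text{deck}$ on the right hand side.

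Next I would appeal to the functoriality of the (global) affine Grassmannian under pullback of the underlying group scheme: since $[e]^*\calG|_{\bbG_m}\simeq H\times\bbG_m$ over the upstairs $\bbG_m$, the corresponding relative affine Grassmannian satisfies
\[
\wGr_\calG|_{\bbG_m}\;=\;\Gr_\calG\times_{\bbG_m,[e]}\bbG_m\;\simeq\;\Gr_H\times\bbG_m,
\]
which is the isomorphism of the lemma.

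Finally, to trace the $I$-action: on the left hand side $\gamma\in I$ acts through the second factor of the fibre product, i.e.\ by the deck transformation on the upstairs $\bbG_m$; but via functoriality of the affine Grassmannian this action is precisely the one induced by the $I$-equivariant structure on $H\times\bbG_m$, which by construction is $\psi\times\text{deck}$. Hence on $\Gr_H\times\bbG_m$ the action becomes $\psi(\gamma)\times\gamma$, as asserted. The only real point of care is to keep the deck-transformation action on the base and the structure-group twist $\psi(\gamma)$ from being confused; once both sides are set up via descent, the lemma is essentially a tautology.
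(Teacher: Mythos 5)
Your proposal is correct and takes the same route the paper intends (the paper's own ``proof'' is just ``clear from the above construction,'' so you are essentially filling in what the author left implicit): unwind the Weil-restriction/descent description $\calG|_{\bbG_m}=(\Res_{\bbG_m/\bbG_m}(H\times\bbG_m))^I$, identify $[e]^*\calG|_{\bbG_m}$ tautologically with $H\times\bbG_m$, and track the $I$-equivariant structure $\psi\times\text{deck}$.

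One point to phrase more carefully: your middle step invokes ``functoriality of the affine Grassmannian under pullback of the underlying group scheme'' to pass from $\Gr_\calG\times_{\bbG_m,[e]}\bbG_m$ to $\Gr_{[e]^*\calG|_{\bbG_m}}$. What is actually being used is the compatibility of the (Beauville--Laszlo) affine Grassmannian with \emph{\'etale base change of the base}, which holds precisely because $[e]$ is \'etale over $\bbG_m$. This is not a formal consequence of pulling back the group: the paper explicitly warns, in the remark immediately after the lemma, that $\wGr_\calG\neq\Gr_{\widetilde\calG}$ globally over $\bbA^1$ (where $[e]$ ramifies at $0$). So the restriction to $\bbG_m$ is doing real work in your argument and should be flagged. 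With that caveat spelled out --- i.e., that over $\bbG_m$ one has $\wGr_\calG|_{\bbG_m}\simeq\Gr_{[e]^*\calG}|_{\bbG_m}$ by \'etaleness, and this is exactly where the identification with $\Gr_H\times\bbG_m$ comes from --- the rest of your tracing of the $I$-action is accurate and matches the paper.
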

\begin{rmk}{\rm One should be warned that $\wGr_\calG\neq
\Gr_{\widetilde{\calG}}$, where $\widetilde{\calG}$ is the base
change of $\calG$ along $[e]:\bbA^1\to\bbA^1$.
 }\end{rmk}

Recall that we denote $\Sat_H$ to be the Satake category for $H$,
i.e., the category of $L^+H$-equivariant perverse sheaves on
$\Gr_H$, which is equivalent to $\Rep(H^\vee)$ via the geometric
Satake correspondence. Let denote
\begin{equation}\label{sat}
\calS: \Rep(H^\vee)\to \Sat_H
\end{equation}
be this equivalence. We define a functor
\[\calZ:\Sat_H\to \calP_v\] by taking the nearby cycles. More
precisely, let
\begin{equation}\label{central}
\calZ(\calF)=\Psi_{\wGr_\calG}(\calF\boxtimes\overline{\bbQ}_\ell[1]),
\end{equation}
where for an (ind)-scheme $\calX$ of (ind)-finite type over
$\bbA^1$, $\Psi_\calX$ denotes the usual nearby cycle functor (see
SGA 7, XIII for the definition of nearby cycles, and \cite[A.2]{G1}
for the explanation why the nearby cycles functors extend to
ind-schemes of ind-finite type). Recall that the theory of nearby
cycles provides an action of $\Gal(F^s/F)$ on the functor $\calZ$
via automorphisms, usually called the monodromy action.

\begin{lem}\label{monodromy}
The monodromy of $\calZ(\calF)$ is trivial.
\end{lem}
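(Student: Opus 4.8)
The plan is to reduce to an intersection cohomology sheaf and then to transport triviality of the monodromy from a family which is smooth over $\bbA^1$, through the decomposition theorem. Since $\Sat_H$ is semisimple with simple objects the $\IC_{\bGr_H^{\le\mu}}$, $\mu$ dominant, and $\calZ$ is additive, it suffices to treat $\calF=\IC_{\bGr_H^{\le\mu}}$. Let $\wGr_{\calG,\le\mu}\subset\wGr_\calG$ be the closed sub-ind-scheme over $\bbA^1$ with generic fibre $\bGr_H^{\le\mu}$ and special fibre the Schubert variety of $\Fl_v$ attached to $\bar\mu$; the sheaf $\calF\boxtimes\overline{\bbQ}_\ell[1]$ is supported there over $\bbG_m$.

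Next I would invoke a Demazure resolution in families. By the Bott--Samelson/Demazure resolutions of Schubert varieties in twisted affine flag varieties (\cite[Sect.~8]{PR}), together with their global versions over $\bbA^1$ as in \cite{PZ} (and \cite[A.7]{G1} for the split prototype), there is a proper morphism $\pi\colon D_\mu\to\wGr_{\calG,\le\mu}$ over $\bbA^1$ with $D_\mu$ \emph{smooth over $\bbA^1$} — it is built as an iterated $\bbP^1$-bundle — and restricting over $\bbG_m$ to $\pi_{\mathrm{gen}}\times\id_{\bbG_m}$ for a Demazure resolution $\pi_{\mathrm{gen}}\colon D_\mu^{\mathrm{gen}}\to\bGr_H^{\le\mu}$. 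Because $D_\mu\to\bbA^1$ is smooth, $\Psi_{D_\mu}$ of a (shifted) constant sheaf is just its restriction to the special fibre, and the monodromy acts trivially on it (the vanishing cycles of a smooth morphism vanish). Since nearby cycles commute with proper pushforward, compatibly with the monodromy actions, the monodromy on $\Psi_{\wGr_\calG}\bigl(\pi_*\overline{\bbQ}_\ell[\ast]\bigr)\cong(\bar\pi_0)_*\Psi_{D_\mu}\bigl(\overline{\bbQ}_\ell[\ast]\bigr)$ is trivial.

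Finally I would extract $\calZ(\calF)$ as a monodromy-equivariant direct summand. Over $\bbG_m$ we have $\wGr_{\calG,\le\mu}|_{\bbG_m}\cong\bGr_H^{\le\mu}\times\bbG_m$ and $\pi|_{\bbG_m}=\pi_{\mathrm{gen}}\times\id$, so by the decomposition theorem applied to $\pi_{\mathrm{gen}}$ the sheaf $\calF\boxtimes\overline{\bbQ}_\ell[1]$ is a direct summand of $\pi_*\overline{\bbQ}_\ell[\ast]$. Applying the functor $\Psi_{\wGr_\calG}$, which carries direct summands to direct summands and the monodromy operator to the restriction of the monodromy, exhibits $\calZ(\calF)=\Psi_{\wGr_\calG}\bigl(\calF\boxtimes\overline{\bbQ}_\ell[1]\bigr)$ as a monodromy-equivariant summand of $\Psi_{\wGr_\calG}\bigl(\pi_*\overline{\bbQ}_\ell[\ast]\bigr)$, on which the monodromy is trivial; hence the monodromy on $\calZ(\calF)$ is trivial.

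The main obstacle is the geometric input of the second paragraph: the construction of the global Demazure resolution $D_\mu$ over $\bbA^1$ and the verification that it is smooth over $\bbA^1$ (equivalently, that $\Psi_{D_\mu}$ of the constant sheaf has trivial monodromy). Everything else is formal: additivity of $\calZ$, semisimplicity of $\Sat_H$, and the compatibility of the nearby cycle functor and its monodromy with proper pushforward and direct summands. (Alternatively: once $\calZ(\calF)$ is seen to be a direct summand of a proper pushforward of a pure complex it is pure, whence $N=\log T\colon\calZ(\calF)\to\calZ(\calF)(-1)$ vanishes for weight reasons and $T=\id$; the direct-summand argument above avoids appealing to weights.)
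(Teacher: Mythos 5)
Your argument takes a genuinely different route from the paper's, and the difference matters. The paper's proof is short: there is a $\bbG_m$-action on $\wGr_\calG$ lying over the dilatation action on $\bbA^1$, its restriction to the special fiber is the rotation torus, $\calZ(\calF)$ is $\bbG_m$-equivariant for this torus (Corollary \ref{Gm equiv}, which ultimately rests on the semisimplicity of $\calP_v$), and for a $\bbG_m$-equivariant family the monodromy of the nearby cycle is the inverse of the $\bbG_m$-monodromic action --- hence trivial once equivariance is known. Your route instead produces a proper $\pi\colon D_\mu\to\wGr_{\mu}$ over $\bbA^1$ with $D_\mu$ smooth over $\bbA^1$, observes that nearby cycles of the constant sheaf on a smooth family have trivial monodromy, and transports this to $\calZ(\calF)$ via proper pushforward and direct summands.

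You correctly flag the smoothness of $D_\mu$ over $\bbA^1$ as the ``main obstacle,'' and I believe it is a genuine gap rather than a citation to be supplied. Neither \cite[Sect.~8]{PR} nor \cite{PZ} constructs a global Demazure resolution of the whole family $\wGr_{\mu}$ that is smooth over $\bbA^1$. The difficulty is structural: the Demazure tower over $\bbG_m$ is built from a reduced word for $\mu$ in the affine Weyl group of the split group $H$, while a Demazure resolution of $\Fl_{v\bar\mu}$ in the special fiber is built from a reduced word in the affine Weyl group of $(G,K_v)$, which for ramified $G$ is a \emph{different} Coxeter group with a different number of simple affine reflections (e.g.\ type $A_{2n}^{(1)}$ for $\SL_{2n+1}$ versus $A_{2n}^{(2)}$ for the ramified odd unitary group). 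There is no common reduced word, and hence no evident candidate for $D_\mu$ as an iterated $\bbP^1$-bundle over $\bbA^1$. The global resolutions that do appear in \cite{Z,PZ} --- global convolution Grassmannians built from fundamental-coweight Schubert varieties --- are proper but not smooth over $\bbA^1$, and for those the monodromy of nearby cycles is a priori only quasi-unipotent. For comparison, in Gaitsgory's Iwahori degeneration \cite{G1}, which you cite as the split prototype, the monodromy of $\calZ(\calF)$ is genuinely nontrivial (indeed this is the whole point of his construction), so nothing in the general Demazure-resolution framework can force triviality; the $\bbG_m$-action specific to the degeneration to a \emph{special} parahoric is exactly the missing input, and it is what the paper uses.

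A smaller remark on your alternative ending: purity of $\calZ(\calF)$ gives $N=0$ and hence triviality of the unipotent part of the monodromy, but it does not by itself control the semisimple (finite-order) part of the tame monodromy. You would still need a separate argument for unipotence, whereas the $\bbG_m$-equivariance argument kills the full monodromy in one step.
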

\begin{proof}
This follows from the fact that there is a $\bbG_m$-action on
$\wGr_\calG$ making the natural map $\wGr_\calG\to\bbA^1$ a
$\bbG_m$-equivariant morphism, where $\bbG_m$ acts on $\bbA^1$ via
natural dilatations (cf. \cite{Z,PZ}). In addition, the restriction
of this $\bbG_m$-action on $(\wGr_\calG)_0={\Fl_v}$ coincides with
the action of the ``rotation torus" on ${\Fl_v}$ as mentioned in
Lemma \ref{Gm equiv}.

Then for any $\calF\in\Sat_H$, the sheaf
$\calF\boxtimes\overline{\bbQ}_\ell[1]$ is $\bbG_m$-equivariant so
that the monodromy of the nearby cycle
$\calZ(\calF)=\Psi_{\wGr_\calG}(\calF\boxtimes\overline{\bbQ}_\ell)$
is opposite to the $\bbG_m$-monodromic action on $\calZ(\calF)$ (see
\cite[\S 7.2]{Z} for the definition of $\bbG_m$-monodromic sheaves
and $\bbG_m$-monodromic actions). By Corollary \ref{Gm equiv},
$\calZ(\calF)$ is $\bbG_m$-equivariant which exactly means that the
monodromy is trivial (\cite[\S 7.2]{Z}.
\end{proof}

\begin{rmk}{\rm A mixed characteristic analogue of this lemma also holds
(Theorem \ref{Shi}).
 }\end{rmk}

Let $i_0:(\wGr_\calG)_0\to\wGr_\calG$ be the closed embedding of the
special fiber and $j:\wGr_\calG\setminus(\wGr_\calG)_0\to
\wGr_\calG$ be the open complement.

\begin{cor}\label{mono-free1}
There is a canonical isomorphism $\calZ(\calF)\simeq
i_0^*j_{!*}(\calF\boxtimes\overline{\bbQ}_\ell)$.
\end{cor}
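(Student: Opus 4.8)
The plan is to derive this from Lemma \ref{monodromy} together with two general facts about nearby cycles: first, that the nearby cycles functor $\Psi$ is (up to shift and twist) the composition $i_0^* R j_*$ restricted to the subcategory where things are perverse, and second, that for the constant sheaf (or more precisely for $\calF \boxtimes \overline{\bbQ}_\ell[1]$ with $\calF \in \Sat_H$) the relevant complex on $\wGr_\calG$ is actually an intermediate extension. Concretely, recall the unipotent/total nearby cycles triangle: on a scheme over the henselian trait (or here over $\bbA^1$ localized at $0$), for a complex $\calK$ on the total space one has a distinguished triangle relating $i_0^* \calK$, $\Psi(\calK)$, and the cone, and the monodromy operator $T$ acts on $\Psi(\calK)$ with $T - \id$ controlling the difference. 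When the monodromy is trivial — which is exactly Lemma \ref{monodromy} — the variation map vanishes and one gets a clean identification.

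First I would set $\calK := \calF \boxtimes \overline{\bbQ}_\ell[1]$ on $\wGr_\calG|_{\bbG_m} \simeq \Gr_H \times \bbG_m$, a shifted external product, which is a perverse sheaf on the generic part. I would argue that $j_{!*}\calK$ (the intermediate extension across the special fiber $(\wGr_\calG)_0 \simeq \Fl_v$) is the natural candidate: by the theory of nearby cycles, $\Psi_{\wGr_\calG}(\calF\boxtimes\overline{\bbQ}_\ell[1])$ is perverse on $\Fl_v$ (this is part of the standard package, using the parity/semisimplicity established in Lemma \ref{semisimple}), and it is equipped with a monodromy action. Since that monodromy is trivial by Lemma \ref{monodromy}, the nearby cycles complex coincides with the unipotent nearby cycles, and the canonical triangle
\[
i_0^* j_{!*}\calK \to \Psi(\calK) \xrightarrow{\ T - \id\ } \Psi(\calK) \to
\]
(or the corresponding statement with $j_* $ and $ j_!$) degenerates, giving $i_0^* j_{!*}\calK \simeq \Psi(\calK) = \calZ(\calF)$. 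More carefully, one uses that $j_{!*}\calK$ sits between $j_!\calK$ and $j_*\calK$, that $i_0^* j_*\calK[-1]$ and $i_0^! j_!\calK[1]$ are computed by $\Psi$ up to the unipotent/non-unipotent decomposition, and that triviality of monodromy forces $i_0^* j_{!*}\calK$ to be the full $\Psi(\calK)$ rather than a proper sub or quotient.

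The step I expect to be the main obstacle is making the identification \emph{canonical} rather than merely an abstract isomorphism, and pinning down the shift conventions so that the statement reads exactly as $\calZ(\calF) \simeq i_0^* j_{!*}(\calF\boxtimes\overline{\bbQ}_\ell)$ with the bracket placed as written (note the $[1]$ shift is absorbed into the perverse $t$-structure normalization on a one-dimensional base, so $\calF\boxtimes\overline{\bbQ}_\ell$ versus $\calF\boxtimes\overline{\bbQ}_\ell[1]$ must be tracked). The clean way is to invoke Beilinson's gluing description of nearby cycles: $\Psi$ is the functor that, together with the data of the monodromy, reconstructs perverse sheaves on the total space from the generic and special data; when monodromy is trivial the gluing data is rigid and the unique perverse extension with that generic restriction and with lisse-along-$\bbA^1$ behavior near $0$ is precisely $j_{!*}$. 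I would cite the relevant formulation (e.g. as in \cite{G1} or Beilinson's construction) and note that triviality of monodromy plus semisimplicity of $\calP_v$ (Lemma \ref{semisimple}) makes the extension automatically both $j_{!*}$ and equal to $i_0^*$ applied to it. A secondary, more routine check is that $\calF\boxtimes\overline{\bbQ}_\ell[1]$ is genuinely perverse on $\wGr_\calG|_{\bbG_m}$ and that $j_{!*}$ is defined there — this follows since $\Gr_H\times\bbG_m \to \bbG_m$ is nice and $\calF$ is a perverse sheaf on $\Gr_H$, so the external product with $\overline{\bbQ}_\ell[1]$ is perverse for the middle perversity on the product.
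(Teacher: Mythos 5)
Your overall strategy — reduce everything to the triviality of the monodromy (Lemma \ref{monodromy}) and then invoke the standard nearby-cycles machinery relating $i_0^*j_*$, $\Psi$, and the monodromy operator — is exactly the paper's approach, and the high-level reduction is correct. But the displayed triangle
\[
i_0^*j_{!*}\calK \to \Psi(\calK) \xrightarrow{T-\id} \Psi(\calK) \to
\]
is not a real thing: the standard SGA 7 triangle has $i_0^*j_*\calK$ in the first slot (the derived $T$-invariants of $\Psi$), not $i_0^*j_{!*}\calK$. And even after you substitute $j_*$, setting $T-\id=0$ does not give $i_0^*j_*\calK\simeq\Psi(\calK)$; it gives $i_0^*j_*\calK\simeq\Psi(\calK)\oplus\Psi(\calK)[-1]$. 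So your phrase ``degenerates, giving $i_0^*j_{!*}\calK\simeq\Psi(\calK)$'' has skipped the step that carries all the content. What is missing is the identification of $i_0^*j_{!*}\calK$ with the correct perverse cohomology of $i_0^*j_*\calK$. This is precisely what the paper supplies: using the (standard, since $j$ is an affine open immersion and the boundary has codimension one) identity $i_0^*j_{!*}(\calF\boxtimes\overline{\bbQ}_\ell)={^p}\rH^0\,i_0^*j_*(\calF\boxtimes\overline{\bbQ}_\ell)$, one reads off from the split triangle that $i_0^*j_*(\calF\boxtimes\overline{\bbQ}_\ell)$ has perverse cohomology exactly in degrees $0,1$, both equal to $\calZ(\calF)$, and hence $i_0^*j_{!*}(\calF\boxtimes\overline{\bbQ}_\ell)\simeq\calZ(\calF)$.

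Your pointer to Beilinson's gluing is the right conceptual alternative — in the gluing description $i_0^*j_{!*}\calK$ is the two-term complex $[\Psi^{\mathrm{un}}(\calK)\twoheadrightarrow\mathrm{Im}\,N]$, which reduces to $\Psi^{\mathrm{un}}(\calK)[1]$ exactly when $N=0$ — but as written the remark is too vague to stand as a proof: you never actually extract $i_0^*j_{!*}\calK$ from the gluing data, and the assertion ``triviality of monodromy forces $i_0^*j_{!*}\calK$ to be the full $\Psi(\calK)$ rather than a proper sub or quotient'' is a restatement of the conclusion, not an argument. If you want to use the gluing approach, write out the data $(\Psi^{\mathrm{un}},\mathrm{Im}\,N,\mathrm{can},\mathrm{var})$ for $j_{!*}$ and then set $N=0$; if you prefer the paper's route, make the substitution $j_*$ for $j_{!*}$ in your triangle and add the identity $i_0^*j_{!*}={^p}\rH^0\,i_0^*j_*$. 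Either way you must also track the $[1]$ shift consistently (the paper's $j_{!*}(\calF\boxtimes\overline{\bbQ}_\ell)$ really means $j_{!*}$ applied to the perverse sheaf $\calF\boxtimes\overline{\bbQ}_\ell[1]$ and then shifted back), as you already flagged.
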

\begin{proof}This is standard. Since the monodromy is trivial, from the distinguished triangle
\[i_0^*j_*(\calF\boxtimes\overline{\bbQ}_\ell)\to \calZ(\calF)\stackrel{0}{\to} \calZ(\calF)\to\]
we obtain that $i_0^*j_*(\calF\boxtimes\overline{\bbQ}_\ell)$ lives
in perverse cohomological degree $0$ and $1$, and both cohomology
sheaves are isomorphic to $\calZ(\calF)$. But
$i^*_0j_{!*}(\calF\boxtimes\overline{\bbQ}_\ell)={^p\rH}^{0}i^*_0j_*(\calF\boxtimes\overline{\bbQ}_\ell)$,
where ${^p}\rH^*$ stands for the perverse cohomology.
\end{proof}

In what follows, for $\calF\in\Sat_H$, we denote
$\calF_{\bbG_m}=\calF\boxtimes\overline{\bbQ}_\ell[1]$ over
$\wGr_\calG|_{\bbG_m}$, and $\calF_{\bbA^1}=j_{!*}\calF_{\bbG_m}$.

Recall that the irreducible objects in $\Sat_H$ are the intersection
cohomology sheaves $\calS(V_\mu)$ on $\Gr_H$ where
$\mu\in\xcoch(T_H)^+=\xcoch(T)^+$. On the other hand, the
irreducible objects in $\calP_v$ are the intersection cohomology
sheaves $\IC_{\bar{\mu}}$ on ${\Fl_v}$, where
$\bar{\mu}\in\xcoch(T)_I^+$. For $\bar{\mu}\in\xcoch(T)_I^+$, let
$j_{\bar{\mu}}:\mathring{\Fl}_{v\bar{\mu}}\to{\Fl_v}$ be the
corresponding locally closed embedding of $K_v$-orbits.

\begin{lem}\label{mult}
For any $\mu\in\xcoch(T)^+$, let $\bar{\mu}$ be its image in
$\xcoch(T)_I^+$. Then
$j_{\bar{\mu}}^*\calZ(\calS(V_{\mu}))\simeq\overline{\bbQ}_\ell[(2\rho,\mu)]$.
\end{lem}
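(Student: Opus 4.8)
The plan is to compute the stalk of the nearby cycles $\calZ(\calS(V_\mu))$ along the open $K_v$-orbit $\mathring{\Fl}_{v\bar\mu}$ by relating it to the stalk of the intersection complex $\calS(V_\mu)$ on $\Gr_H$ near the point $s_\mu$, and then invoking the known fact that this stalk is concentrated in the top degree $(2\rho,\mu)$ because $s_\mu$ is a smooth point of $\bGr_{H,\mu}$. The first step is to use Corollary \ref{mono-free1}, which identifies $\calZ(\calF) \simeq i_0^* j_{!*}\calF_{\bbG_m}$, so that we only need to understand $j_{!*}(\calS(V_\mu)\boxtimes\overline\bbQ_\ell[1])$ near a point lying over $(0,s_{\bar\mu})$. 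The key geometric input is the global affine Grassmannian $\wGr_\calG$ over $\bbA^1$: over $\bbG_m$ it is $\Gr_H\times\bbG_m$, and the Schubert scheme $\overline{\wGr}_{\calG,\mu}$ (the family of closures of the relevant orbits) is flat over $\bbA^1$ with generic fibre $\bGr_{H,\mu}$ and special fibre containing $\Fl_{v\bar\mu}$ as a component, since the parametrization of orbits reduces $\mu$ to $\bar\mu$.

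The second step is to pin down smoothness in the family along the locus over the open orbit. On $\Gr_H\times\bbG_m$ the point $s_\mu$ lies in the open stratum $\mathring\Gr_{H,\mu}$ of $\bGr_{H,\mu}$, which is smooth. I expect that the point $s_{\bar\mu}$ in $\Fl_v$ deforms, along the section of $\wGr_\calG\to\bbA^1$ given by the cocharacter $\mu$, to the family of points $s_\mu$, and that the total space $\overline{\wGr}_{\calG,\mu}$ is smooth along the whole section in a neighbourhood of $(0,s_{\bar\mu})$. Granting this, $j_{!*}\calF_{\bbG_m}$ restricted to that smooth open locus is just the (shifted) constant sheaf $\overline\bbQ_\ell[\dim]$ with $\dim = \dim\bGr_{H,\mu}+1 = (2\rho,\mu)+1$; restricting to the special fibre via $i_0^*$ drops the shift by $1$, giving $\overline\bbQ_\ell[(2\rho,\mu)]$ on $\mathring{\Fl}_{v\bar\mu}$, which is exactly $j_{\bar\mu}^*\calZ(\calS(V_\mu))$.

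The main obstacle will be establishing the smoothness of the total space $\overline{\wGr}_{\calG,\mu}$ along the section over the open orbit — equivalently, showing that the deformation of the point $s_{\bar\mu}\in\Fl_v$ to $s_\mu\in\Gr_H$ happens through smooth points of the family, rather than through the boundary of some Schubert variety. This is where the explicit description of the orbits (in terms of $\xcoch(T)_I^+$ versus $\xcoch(T)^+$) and the results of \cite{Ri,Z,PZ} on the geometry of $\wGr_\calG$ must be used; one needs that the $K_v$-orbit through $s_{\bar\mu}$ has the "expected" dimension $(2\rho,\mu)$, which is compatible with the generic fibre dimension, and that the family is equidimensional and reduced there. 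An alternative, perhaps cleaner, route is to avoid global smoothness and instead use the $\bbG_m$-action from Lemma \ref{monodromy}: since $\mathring\Gr_{H,\mu}$ contains $s_\mu$ and the constant-sheaf normalization of $\calS(V_\mu)$ there is forced by it being an IC sheaf on a smooth locus, and since nearby cycles of a constant sheaf on a smooth family is again constant, one reduces to a purely local statement about the fibre $\Fl_v$ near $s_{\bar\mu}$, where the Demazure resolutions of \cite[\S 8]{PR} and the paving arguments from the proof of Lemma \ref{semisimple} show the orbit is affine space of dimension $(2\rho,\mu) = (2\rho,\bar\mu)$, giving the required stalk.
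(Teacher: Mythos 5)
Your first approach is exactly the paper's argument: pass to the closure $\wGr_\mu$ of $\bGr_\mu\times\bbG_m$ in $\wGr_\calG$, note that the section $s_\mu:\bbA^1\to\wGr_\calG$ satisfies $s_\mu(0)=s_{\bar\mu}$, and then use that $\wGr_\mu$ is smooth at $s_{\bar\mu}$ (with special fibre equal to $\Fl_{v\bar\mu}$) so that the intermediate extension near that point is the shifted constant sheaf. The ``main obstacle'' you flag — smoothness of $\wGr_\mu$ along the section, and the identification of $s_\mu(0)$ with $s_{\bar\mu}$ — is not something to be re-derived from scratch; both facts are proven in \cite{Z} (the extension of the section and $s_\mu(0)=s_{\bar\mu}$ are \cite[Prop.\ 3.5]{Z}, and the smoothness at $s_{\bar\mu}$ together with the identification of the special fibre of $\wGr_\mu$ with $\Fl_{v\bar\mu}$ is also established there), which is precisely what the paper cites. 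One small inaccuracy in your bookkeeping: $i_0^*$ does not ``drop the shift by $1$''; the shift comes from the normalization of $\Psi$ (equivalently, the way Corollary \ref{mono-free1} is phrased without a $[1]$), so you should track this through whichever convention you fix.

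Your proposed ``alternative route'' does not work as stated. Knowing that $\mathring\Fl_{v\bar\mu}$ is an affine space of dimension $(2\rho,\bar\mu)$ (from Demazure/paving arguments) only constrains the rank and support of $j_{\bar\mu}^*\calZ(\calS(V_\mu))$; it does not by itself tell you that the nearby cycle is a single shifted constant sheaf rather than, say, a direct sum of several. The stalk of a nearby cycle at a point of the special fibre depends on the local geometry of the total space around that point, not just on the fibre; that is exactly why the smoothness of $\wGr_\mu$ at $s_{\bar\mu}$ (equivalently, smoothness of $\wGr_\mu\to\bbA^1$ there, since the fibres have the expected dimension) is the input one cannot avoid. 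So the first route — which is the paper's — is the right one, and the ``cleaner alternative'' should be dropped.
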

\begin{proof}
Consider $s_\mu\times \bbG_m\subset \Gr_H\times\bbG_m\subset
\wGr_\calG|_{\bbG_m}$. Since $\wGr_\calG$ is ind-proper over
$\bbA^1$, it extends to a section $\bbA^1\to \wGr_{\calG}$, still
denoted by $s_\mu$. By \cite[Proposition 3.5]{Z},
$s_\mu(0)\in{\Fl_v}$ is just the point $s_{\bar{\mu}}$, where
$\bar{\mu}$ is the image of $\mu$ under
$\xcoch(T_H)\to\xcoch(T_H)_I$.

Recall that $\calS(V_{\mu})$ is supported on $\bGr_\mu$, the
Schubert variety in $\Gr_H$ corresponding to $s_\mu$. Let
$\wGr_{\mu}\subset\wGr_\calG$ be the closure of
$\bGr_\mu\times\bbG_m\subset\wGr_\calG|_{\bbG_m}$. Then by the above
fact, ${\Fl_v}_{\bar{\mu}}$ is contained in the special fiber of
$\wGr_{\mu}$. In fact, it is proved in \cite{Z} that the special
fiber of $\wGr_{\mu}$ is ${\Fl_v}_{\bar{\mu}}$. In addition, it is
shown in \emph{loc. cit.} that the point $s_{\bar{\mu}}$ is smooth
in $\wGr_{\mu}$. The lemma then is clear.
\end{proof}

The following key result is established in \cite{G1} the split case
and in \cite[Theorem 7.3]{Z} in general. Let $D_{K_v}({\Fl_v})$ be
the $K_v$-equivariant derived category on $\Fl_v$, and
$\star:\calP_v\times\calP_v\to D_{K_v}({\Fl_v})$ be the convolution
product functor. For a precise definition of the convolution
product, see for example \cite{MV,Z}.
\begin{prop}\label{comm3}
For any $\calF_1\in\Sat_H$ and $\calF_2\in\calP_v$, there is a
canonical isomorphism
$\calZ(\calF_1)\star\calF_2\simeq\calF_2\star\calZ(\calF_1)$ and
both are objects in $\calP_v$.
\end{prop}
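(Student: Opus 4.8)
The plan is to adapt the key central-functor argument from Gaitsgory's work to the ramified setting. The statement we want is that the nearby cycle functor $\calZ$ produces objects that are \emph{central} with respect to the convolution product, i.e. $\calZ(\calF_1)\star\calF_2\simeq\calF_2\star\calZ(\calF_1)$ for $\calF_1\in\Sat_H$, $\calF_2\in\calP_v$, and that both sides are perverse (lie in $\calP_v$). First I would recall the global/deformation picture: the ``convolution global affine Grassmannian'' over $\bbA^1$, whose special fiber gives the convolution diagram $\widetilde{\Fl_v}\times^{K_v}\widetilde{\Fl_v}$ for the flag variety and whose generic fiber gives $\Gr_H\times^{L^+H}\Fl_v$ (or rather a product of a $\Gr_H$-factor with the flag-variety factor, twisted by the cyclic cover). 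The point is that nearby cycles commute with the proper convolution morphism, so $\calZ(\calF_1\star_{\text{gen}}\calF_2)$ computes $\calZ(\calF_1)\star\calF_2$; doing this on the other side of the convolution diagram gives $\calF_2\star\calZ(\calF_1)$. The commutativity constraint of $\Sat_H$ (the symmetry in the geometric Satake category coming from the fusion/Beilinson--Drinfeld product) then furnishes the desired isomorphism between the two. This is exactly the strategy of \cite{G1} and \cite[Theorem 7.3]{Z}; the role of this proposition in the present paper is just to record that the twisted/ramified version goes through verbatim.

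Concretely, the steps I would carry out are: (1) Set up the two-parameter deformation: a group scheme over $\bbA^2$ (or the relevant ind-scheme of convolution type) whose restriction over the diagonal, resp. over the axes, recovers the needed special and generic fibers; this is where one uses that the construction of $\calG$ over $\bbA^1$ and its associated $\wGr_\calG$ from \S\ref{central functor} globalizes. (2) Invoke Lemma \ref{monodromy} and Corollary \ref{mono-free1}: since the monodromy on $\calZ$ is unipotent — indeed trivial — one has $\calZ(\calF)\simeq i_0^*j_{!*}(\calF\boxtimes\overline{\bbQ}_\ell)$, and more importantly nearby cycles of an IC-extension on the total space commute with the (proper, ind-proper) convolution map; here one uses that $\wGr_\calG\to\bbA^1$ is ind-proper and the convolution morphism is proper, so base change and the compatibility of $\Psi$ with proper pushforward apply. (3) Use that on the generic fiber $\Fl_v$ degenerates to $\Gr_H$, so $\calZ(\calF_1)\star\calF_2$ arises as nearby cycles of $\calF_1\boxtimes$ (the extension of $\calF_2$) along a family whose generic fiber is $\Gr_H\times^{L^+H}(\text{something})$; the key input is that $\calF_1\in\Sat_H$ has a canonical extension (the one coming from the Beilinson--Drinfeld Grassmannian), making the whole sheaf on the total space $\bbG_m$-monodromic/equivariant so that $\Psi$ is well-behaved. (4) Finally, transport across the commutativity/fusion isomorphism of $\Sat_H$ to get $\calF_2\star\calZ(\calF_1)$, and check perversity: since $\calZ(\calF_1)$ is perverse and central, the convolution with a perverse sheaf on a (stratified, even-dimensional-parity) flag variety is again perverse, using Lemma \ref{semisimple} and the parity-vanishing of stalks of $\IC$-sheaves on $\Fl_v$.

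The main obstacle — and the reason this is cited rather than reproved — is step (2)--(3): one must carefully construct the correct global convolution space over $\bbA^1$ (or $\bbA^2$) in the \emph{twisted} setting, i.e. incorporating the cyclic cover $[e]:\bbA^1\to\bbA^1$ and the $I$-action via pinned automorphisms, and then verify that its generic fiber really is $\Gr_H\times$ (twisted convolution with $\Fl_v$) rather than something more degenerate. In particular, one needs that the $L^+H$-torsor structure used to form the convolution is compatible with the $I$-action by pinned automorphisms on $\Gr_H$ (Lemma \ref{psi}), so that the twisting is the ``same'' on both factors of the convolution. Once the geometry of this deformation is correctly in place, the sheaf-theoretic arguments (commutation of $\Psi$ with proper maps, $\bbG_m$-equivariance killing the monodromy, application of the Satake commutativity constraint) are formal and identical to the split case treated in \cite{G1,Z}. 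Perversity of the output is then immediate from the semisimplicity of $\calP_v$ (Lemma \ref{semisimple}) together with the fact that, being in the image of the central functor $\calZ$ followed by convolution, the result has the same support conditions as an iterated IC-sheaf.
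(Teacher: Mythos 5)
Your proposal captures the general shape of the argument---nearby cycles of a sheaf on a suitable Beilinson--Drinfeld-type family over a base, identified via commutation of $\Psi$ with proper maps and using trivial monodromy---but there are three concrete points where it diverges from what actually carries the proof.

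First, the relevant family is over $\bbA^1$, not $\bbA^2$. The paper uses the one-parameter Beilinson--Drinfeld Grassmannian $\Gr^{BD}_\calG$ classifying $(y,\calE,\beta)$ with $y\in\bbA^1$, $\calE$ a $\calG$-torsor, and $\beta$ a trivialization away from both the fixed ramification point $0$ \emph{and} the moving point $y$ (i.e.\ over $(\bbG_m)_R-\Gamma_y$). Its restriction to $\bbG_m$ is a genuine \emph{direct} product $\Gr_\calG|_{\bbG_m}\times(\Gr_\calG)_0\simeq(\Gr_H\times\bbG_m)\times\Fl_v$ (not a twisted product $\Gr_H\tilde{\times}\Fl_v$), which is what lets you place $(\calF_1)_{\bbG_m}\boxtimes\calF_2$ there as an external box product; and its fiber at $y=0$ collapses to $\Fl_v$, not to a convolution diagram. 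Your description of the special and generic fibers is therefore off.

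Second, the commutativity constraint of $\Sat_H$ plays no role in producing the isomorphism $\calZ(\calF_1)\star\calF_2\simeq\calF_2\star\calZ(\calF_1)$; indeed $\calF_2\notin\Sat_H$, so there is no $\sigma$-constraint to transport through. Instead, both $\calZ(\calF_1)\star\calF_2$ and $\calF_2\star\calZ(\calF_1)$ are \emph{independently} identified with $\Psi_{\wGr^{BD}_\calG}((\calF_1)_{\bbG_m}\boxtimes\calF_2)$ by realizing $\wGr^{BD}_\calG$ as the target of two different (ind-proper) convolution degenerations from the two twisted-product families, and commuting $\Psi$ with proper pushforward. The commutativity constraint of $\Sat_H$ only enters in the \emph{next} step of the paper---verifying axiom (2) of a central functor in Bezrukavnikov's sense---not here.

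Third, your perversity argument is circular: you cannot invoke semisimplicity of $\calP_v$ to conclude perversity of the convolution, since semisimplicity is a statement about $\calP_v$ and you have not yet placed the convolution in $\calP_v$. Perversity comes for free because $\Psi$ sends perverse sheaves to perverse sheaves (up to the shift built into the normalization), and both sides of the claimed isomorphism are exhibited as $\Psi$ of the perverse sheaf $(\calF_1)_{\bbG_m}\boxtimes\calF_2$.
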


Let us briefly review the proof. Let us define the
Beilinson-Drinfeld Grassmannian
\begin{equation}\label{BD Grass}
\Gr^{BD}_\calG(R)=\left\{(y,\calE,\beta)\ \left|\ \begin{split}&y\in
\bbA^1(R), \calE \mbox{ is a } \calG\mbox{-torsor on
}\bbA^1_R,\mbox{ and}\\& \beta:\calE|_{(\bbG_m)_R-\Ga_y}\simeq
\calE^0|_{(\bbG_m)_R-\Ga_y} \mbox{ is a trivialization}
\end{split}\right.\right\},
\end{equation}
where $\calE^0$ denotes the trivial torsor. We have
\[\Gr_\calG^{BD}|_{\bbG_m}\simeq\Gr_{\calG}|_{\bbG_m}\times (\Gr_\calG)_0,\quad (\Gr_{\calG}^{BD})_0\simeq(\Gr_\calG)_0\simeq{\Fl_v}.\]
As before, we denote $\wGr_{\calG}^{BD}$ to be the base change along
$[e]:\bbA^1\to\bbA^1$. Over $\wGr_{\calG}^{BD}|_{\bbG_m}\simeq
\wGr_{\calG}|_{\bbG_m}\times (\Gr_{\calG})_0$, we can form the
external product $(\calF_1)_{\bbG_m}\boxtimes\calF_2$. Then the
isomorphism in the proposition is induced from the canonical
isomorphisms (cf. \cite[Theorem 7.3]{Z})
\[\calZ(\calF_1)\star\calF_2\simeq\Psi_{\wGr^{BD}_\calG}((\calF_1)_{\bbG_m}\boxtimes\calF_2)\simeq \calF_2\star\calZ(\calF_1).\]

Again, since the monodromy of
$\Psi_{\wGr^{BD}_\calG}((\calF_1)_{\bbG_m}\boxtimes\calF_2)$ is
trivial, we have
\begin{equation}\label{mono-free2}\calZ(\calF_1)\star\calF_2\simeq
i_0^*j_{!*}((\calF_1)_{\bbG_m}\boxtimes\calF_2)\simeq
\calF_2\star\calZ(\calF_1),
\end{equation}
where $i_0,j$ are corresponding closed and open embedding.

\begin{cor}
The convolution of $\calP_v$ is bi-exact. Therefore, $\calP_v$ is a
monoidal category.
\end{cor}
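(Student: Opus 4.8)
The plan is to derive the whole statement from three things already in hand: the semisimplicity of $\calP_v$ (Lemma~\ref{semisimple}), the compatibility of the central functor $\calZ$ with convolution (Proposition~\ref{comm3}), and the computation of $\calZ$ on the open cell (Lemma~\ref{mult}). Since $\calP_v$ is semisimple, every short exact sequence splits, so a biadditive bifunctor taking values in $\calP_v$ is automatically bi-exact; and a biadditive bifunctor $\calP_v\times\calP_v\to D_{K_v}({\Fl_v})$ is bi-exact as soon as it is known to take values in $\calP_v$. As $\star$ is biadditive by construction, the entire content of the corollary is the assertion that $\calF_1\star\calF_2\in\calP_v$ for all $\calF_1,\calF_2\in\calP_v$. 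Granting this, the ``therefore'' is the standard one: the associativity constraint comes from the usual isomorphism of iterated convolution diagrams (as in \cite{MV,Z}), and the unit object is the skyscraper sheaf $\IC_{\bar 0}$ at the base point $eK_v\in{\Fl_v}$, which is a $K_v$-fixed point, so that $\IC_{\bar 0}\star\calF\simeq\calF\simeq\calF\star\IC_{\bar 0}$; the coherence axioms are formal.

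The first step I would carry out is to show that every object of $\calP_v$ is a direct summand of $\calZ(\calF)$ for some $\calF\in\Sat_H$. Because $\calP_v$ is semisimple and $\calZ$ is additive, it suffices to exhibit each irreducible $\IC_{\bar\mu}$, $\bar\mu\in\xcoch(T)_I^+$, as a summand of some $\calZ(\calF)$. I would take $\calF=\calS(V_\mu)$ for a lift $\mu\in\xcoch(T)^+$ of $\bar\mu$. By the geometry recalled in the proof of Lemma~\ref{mult}, $\calS(V_\mu)$ is supported on $\bGr_\mu$, whose family closure $\wGr_\mu\subset\wGr_\calG$ has special fibre ${\Fl_v}_{\bar\mu}$; hence $\calZ(\calS(V_\mu))\in\calP_v$ is supported on ${\Fl_v}_{\bar\mu}$, so by semisimplicity it is a direct sum of sheaves $\IC_{\bar\nu}$ with $\bar\nu\preceq\bar\mu$. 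Lemma~\ref{mult} identifies $j_{\bar\mu}^*\calZ(\calS(V_\mu))$ with $\overline{\bbQ}_\ell[(2\rho,\mu)]$, which is exactly the restriction of $\IC_{\bar\mu}$ to $\mathring{\Fl}_{v\bar\mu}$ (recall $\dim{\Fl_v}_{\bar\mu}=(2\rho,\bar\mu)=(2\rho,\mu)$), while the restriction of $\IC_{\bar\nu}$ to $\mathring{\Fl}_{v\bar\mu}$ vanishes for $\bar\nu\prec\bar\mu$. Comparing ranks over $\mathring{\Fl}_{v\bar\mu}$ forces $\IC_{\bar\mu}$ to occur in $\calZ(\calS(V_\mu))$ with multiplicity one, hence as a direct summand.

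The second step applies Proposition~\ref{comm3}. Given $\calF_1,\calF_2\in\calP_v$, choose $\calG_1,\calG_2\in\Sat_H$ with $\calF_i$ a direct summand of $\calZ(\calG_i)$ as found above. Taking the first argument $\calG_1\in\Sat_H$ and the second argument $\calZ(\calG_2)\in\calP_v$, Proposition~\ref{comm3} gives $\calZ(\calG_1)\star\calZ(\calG_2)\in\calP_v$; in particular it is perverse. Since $\star$ is biadditive and $\calZ(\calG_i)=\calF_i\oplus\calF_i'$ in $\calP_v$, the object $\calF_1\star\calF_2$ is a direct summand of $\calZ(\calG_1)\star\calZ(\calG_2)$ in $D_{K_v}({\Fl_v})$. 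As the heart of the perverse $t$-structure is closed under direct summands, $\calF_1\star\calF_2$ is perverse; it is also $K_v$-equivariant (being built by the convolution construction in $D_{K_v}({\Fl_v})$), so $\calF_1\star\calF_2\in\calP_v$. Together with the first paragraph this proves bi-exactness and the monoidal structure.

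I do not anticipate a serious obstacle, since the heavy geometric input — Proposition~\ref{comm3}, and the identification in \cite{Z} of the special fibres of the $\wGr_\mu$ used in Lemma~\ref{mult} — is already available. The only point requiring care is the multiplicity bookkeeping in the first step: one must be sure that $\calZ(\calS(V_\mu))$ picks up no summand supported beyond ${\Fl_v}_{\bar\mu}$ and that $\IC_{\bar\mu}$ genuinely appears, and this is precisely what the support statement from \cite{Z} and the stalk computation of Lemma~\ref{mult} deliver.
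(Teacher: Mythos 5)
Your proposal is correct and follows essentially the same route as the paper: use semisimplicity of $\calP_v$ (Lemma~\ref{semisimple}) plus the fact from Lemma~\ref{mult} that every $\IC_{\bar\mu}$ occurs as a summand of $\calZ(\calS(V_\mu))$, and then deduce perversity of general convolutions from Proposition~\ref{comm3} by passing to direct summands. The only cosmetic difference is that you replace both factors by objects $\calZ(\calG_i)$ simultaneously and conclude $\calF_1\star\calF_2$ is a summand of the perverse object $\calZ(\calG_1)\star\calZ(\calG_2)$, whereas the paper phrases it as exactness of convolution with $\calZ(\calS(V_\mu))$ and its summands; these are the same argument.
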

\begin{proof}
Observe that $\calP_v$ is semi-simple (Lemma \ref{semisimple}) and
every irreducible object in $\calP_v$ is a direct summand of some
$\calZ(\calF)$. Indeed, for $\bar{\mu}\in\xcoch(T)_I^+$, let $\mu$
be a lift of it in $\xcoch(T)^+$. Then by Lemma \ref{mult},
$\IC_\mu$ appears as a direct summand of $\calZ(\calS(V_\mu))$ with
multiplicity one. As the convolution (for left and right) with
$\calZ(\calS(V_\mu))$ is exact, the convolution with its direct
summand is also exact. The first claim follows.

It is well-known that the convolution functor $D_{K_v}(\Fl_v)\times
D_{K_v}(\Fl_v)\to D_{K_v}(\Fl_v)$ is monoidal. Its restriction to
$\calP_v\times\calP_v$ takes value in $\calP_v$. As $\calP_v$ is a
full subcategory of $D_{K_v}(\Fl_v)$, the associativity constraints
are morphisms in $\calP_v$. Therefore, $\calP_v$ is a monoidal
subcategory of $D_{K_v}(\Fl_v)$.
\end{proof}

\begin{rmk}{\rm
(i) According to \cite[Remark 4.5]{MV}, the exactness of the
convolution product probably would imply that $LG\times^K{\Fl_v}\to
{\Fl_v}$ is (stratified) semi-small.

(ii) By the same argument, the convolution bi-functor $\star:
\calP({\Fl_v})\times\calP_v\to\calP({\Fl_v})$ is also exact, where
$\calP({\Fl_v})$ is the category of perverse sheaves on $\Fl_v$.
 }\end{rmk}

\section{$\calZ$ is a central functor}\label{central property}
In this section, we show that $\calZ$ is a central functor in the
sense of \cite{B}. By Lemma \ref{mult}, together with some general
nonsense, this already implies that $\calP_v$ is equivalent to
$\Rep(\tilde{G}^\vee)$ for some closed subgroup
$\tilde{G}^\vee\subset H^\vee$. In the next section, we will
identify $\tilde{G}^\vee$ explicitly. We will also determine a fiber
functor of $\calP_v$.

\begin{thm-def}The functor $\calZ:\Sat_H\to\calP_v$ is naturally a monoidal functor.
\end{thm-def}
\begin{proof}The proof is literally the same as the proof in
\cite[Theorem 1(c)]{G1}. We repeat the argument here in order to
make the definition of this monoidal structure explicit.

Let $\Gr_\calG\underset{\bbA^1}{\tilde{\times}}\Gr_\calG$ be the
ind-scheme over $\bbA^1$ classifying
\begin{equation}
\Gr_\calG\underset{\bbA^1}{\tilde{\times}}\Gr_\calG(R)=\left\{(y,\calE,\calE',\beta,\beta')\
\left|\ \begin{split} &y\in \bbA^1(R), \calE,\calE' \mbox{ are two }
\calG\mbox{-torsors}\\ &\mbox{ on } \bbA^1_R,
\beta:\calE|_{\bbA^1_R-\Ga_y}\simeq \calE^0|_{\bbA^1_R-\Ga_y} \mbox{
is a }\\& \mbox{trivialization},
\beta':\calE'|_{\bbA^1_R-\Ga_y}\simeq\calE|_{\bbA^1_R-\Ga_y}\end{split}\right.\right\}.
\end{equation}
The notation suggests that
$\Gr_\calG\underset{\bbA^1}{\tilde{\times}}\Gr_\calG$ is a kind of
twisted product. Indeed, let $\calL^+\calG$ be the global jet group
of $\calG$, which classifies a point on $\bbA^1$ and a
trivialization of the trivial $\calG$-torsor over the formal
neighborhood of this point (cf. \cite[\S 3.1]{Z}). Then
$\calL^+\calG$ naturally acts on $\Gr_\calG$. In addition, there is
a $\calL^+\calG$-torsor over $\Gr_\calG$ classifying quadruples
$(y,\calE,\beta,\ga)$, where the triple $(y,\calE,\beta)$ is as in
the definition of $\Gr_\calG$ and $\ga$ is a trivialization of
$\calE$ over the formal neighborhood of $y$ (this is indeed the
global loop group $\calL\calG$ of $\calG$ introduced in \cite[\S
3.1]{Z}). Then
$\Gr_\calG\underset{\bbA^1}{\tilde{\times}}\Gr_\calG\simeq\calL\calG\overset{\calL^+\calG}{\times}\Gr_\calG$.
Let us denote the base change of the this isomorphism along
$[e]:\bbA^1\to\bbA^1$ by
$\wGr_\calG\underset{\bbA^1}{\tilde{\times}}\wGr_\calG\simeq\widetilde{\calL\calG}\overset{\widetilde{\calL^+\calG}}{\times}\wGr_\calG$.

Let $\calF_1$ and $\calF_2$ be two objects in $\Sat_H$. We can form
the twisted product
$(\calF_1)_{\bbG_m}\tilde{\times}(\calF_2)_{\bbG_m}$ over
$\wGr_\calG|_{\bbG_m}\underset{\bbG_m}{\tilde{\times}}\wGr_\calG|_{\bbG_m}$.
We claim that there is a canonical isomorphism
\[\Psi_{\wGr_\calG\underset{\bbA^1}{\tilde{\times}}\wGr_\calG}((\calF_1)_{\bbG_m}\tilde{\times}(\calF_2)_{\bbG_m})\simeq\calZ(\calF_1)\tilde{\times}\calZ(\calF_2).\]
Indeed, let $V_i\subset\wGr_\calG$ be the closure of the support of
$\calF_i\boxtimes\overline{\bbQ}_\ell[1]$ in $\wGr_\calG|_{\bbG_m}$.
Let $\calL^+_n\calG$ be the $n$th jet group such that the action of
$\calL^+\calG$ on $V_i$ factors through $\calL^+_n\calG$. The
corresponding $\calL^+_n\calG$ torsor over $\Gr_\calG$ is denoted by
$\calL_n\calG$. Let us denote $\widetilde{\calL^+_n\calG}$ and
$\widetilde{\calL_n\calG}$ be their base changes along $[e]$. Then
one can check the isomorphism  after pullback along
$\widetilde{\calL_n\calG}\times_{\bbA^1}V_2\to\widetilde{\calL_n\calG}\overset{\widetilde{\calL^+_n\calG}}{\times}V_2$,
and the isomorphism follows from \cite[Theorem 5.2.1]{G1}.

Now
$\Gr_\calG\underset{\bbA^1}{\tilde{\times}}\Gr_\calG\to\Gr_\calG,
(y,\calE,\calE',\beta,\beta)\mapsto (y,\calE',\beta\beta')$ is
ind-proper and taking nearby cycles commutes with proper
push-forward. Therefore we obtain the canonical isomorphism
\[\calZ(\calF_1\star\calF_2)\simeq\calZ(\calF_1)\star\calZ(\calF_2)\]
In addition, working over
$\Gr_\calG\underset{\bbA^1}{\tilde{\times}}\Gr_\calG\underset{\bbA^1}{\tilde{\times}}\Gr_\calG$,
one can see that this isomorphism makes $\calZ$ a monoidal functor.
\end{proof}

Let us recall the definition of central functors as in \cite{B}.
Namely, if $F:\calC\to\calD$ is a monoidal functor between two
monoidal categories and assume that $\calC$ is a symmetric monoidal
category, then $F$ (together with the following data) is called
central if
\begin{enumerate}
\item there is an isomorphism $c$ of the bi-functors
$\calC\times\calD\to\calD, (X,Y)\mapsto F(X)\otimes Y$ and
$(X,Y)\mapsto Y\otimes F(X)$, i.e. an isomorphism
$c_{X,Y}:F(X)\otimes Y\simeq Y\otimes F(X)$ functorial in $X,Y$;

\item for $X,X'\in\calC$, the following diagram is commutative
\[\begin{CD}
F(X)\otimes F(X')@>c_{X,F(X')}>> F(X')\otimes F(X)\\
@VVV@VVV\\
F(X\otimes X')@>F(\sigma_{X,X'})>>F(X'\otimes X),
\end{CD}\]
where $\sigma$ is the commutativity constraint of $\calC$;

\item for $X\in\calC$ and $Y,Y'\in\calD$, the following diagram is
commutative
\[\begin{CD}
F(X)\otimes Y\otimes Y'@>c_{X,Y}\otimes\id>> Y\otimes F(X)\otimes
Y'\\
@V c_{X,Y\otimes Y'}VV @VV\id\otimes c_{X,Y'}V\\
 Y\otimes Y'\otimes F(X)@=Y\otimes Y'\otimes F(X);
\end{CD}\]

\item for $X,X'\in\calC, Y\in\calD$, the following diagram is
commutative
\[\begin{CD}
F(X)\otimes F(X')\otimes Y@>\id\otimes c_{X',Y}>>F(X)\otimes
Y\otimes F(X')@>c_{X,Y}\otimes\id>>Y\otimes F(X)\otimes F(X')\\
@VVV@.@VVV\\
F(X\otimes X')\otimes Y@>c_{X\otimes X',Y}>>Y\otimes F(X\otimes
X')@=Y\otimes F(X\otimes X').
\end{CD}\]
\end{enumerate}

\begin{prop}
The functor $\calZ$, together with the canonical isomorphisms
provided in Proposition \ref{comm3}, is a central functor. There is
an algebraic group $\tilde{G}^\vee\subset H^\vee$ together with an
equivalence $S:\calP_v\simeq\Rep(\tilde{G}^\vee)$ such that
$S\circ\calZ\simeq\Res_{H^\vee}^{\tilde{G}^\vee}$ as tensor
functors, where $\Res_{H^\vee}^{\tilde{G}^\vee}$ is the restriction
functor from $\Rep(H^\vee)$ to $\Rep(\tilde{G}^\vee)$.
\end{prop}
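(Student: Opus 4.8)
The plan is to verify the four axioms of a central functor directly from the geometric constructions already set up, and then invoke the Tannakian reconstruction machinery. For the first part, the key observation is that all the commutativity isomorphisms $c_{X,Y}$ of Proposition~\ref{comm3} come from a single geometric source: the identification of $\calZ(\calF_1)\star\calF_2$ and $\calF_2\star\calZ(\calF_1)$ with the nearby cycles $\Psi_{\wGr^{BD}_\calG}((\calF_1)_{\bbG_m}\boxtimes\calF_2)$ on the Beilinson--Drinfeld Grassmannian, together with the fusion description of the monoidal structure on $\calZ$ coming from $\wGr_\calG\underset{\bbA^1}{\tilde\times}\wGr_\calG$. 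First I would recall that on the open locus $\bbG_m$, all of $\wGr_\calG|_{\bbG_m}$, $\wGr^{BD}_\calG|_{\bbG_m}$ and the twisted products are built from $\Gr_H$ and the usual Satake category, where the corresponding statements are exactly Gaitsgory's results: the constraint $c$ restricts, over $\bbG_m$, to the commutativity constraint of $\Sat_H$ (via the factorization/fusion picture), and axioms (1)--(4) hold there by \cite[Prop.~6]{G1} or the argument of \cite[\S 2]{B}. Each of the four diagrams is a diagram of morphisms between nearby-cycle sheaves on an ind-scheme over $\bbA^1$ (built from $\wGr_\calG$, its $BD$-variants, and triple BD-variants $\wGr^{BD}_\calG\underset{\bbA^1}{\tilde\times}\wGr^{BD}_\calG$, etc.); since $\calP_v$ is semisimple (Lemma~\ref{semisimple}) and the generic fibre of each such ind-scheme restricts the diagram to the already-known commutative diagram in $\Sat_H$, commutativity in $\calP_v$ follows because a morphism of perverse sheaves that vanishes after passing to a dense open (here: the generic fibre, via the fact that $\calZ(\calF)\simeq i_0^*j_{!*}(\calF_{\bbA^1})$ as in Corollary~\ref{mono-free1} and its BD-analogue~\eqref{mono-free2}) must vanish.

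More concretely, the step I would carry out in detail is the ``nearby-cycles commute with everything'' bookkeeping: for axiom (1), the hexagon-type compatibility of $c$ with associativity, one works over the triple BD-Grassmannian $\wGr_\calG\underset{\bbA^1}{\tilde\times}\wGr^{BD}_\calG$ and observes that both ways around the diagram are obtained by applying $\Psi$ to the same sheaf $(\calF_1)_{\bbG_m}\tilde\times(\calF_2)_{\bbG_m}\tilde\times\calF_3$ and then pushing forward along different ind-proper maps (forgetting $\calE$ versus forgetting $\calE'$, etc.), using that $\Psi$ commutes with proper pushforward; the equality of the two maps of ind-schemes over $\bbA^1$ is elementary. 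Axioms (2), (3), (4) are handled identically, replacing $\wGr_\calG$ by the appropriate product of BD-Grassmannians so that all four ``factorization coordinates'' are available, and each diagram becomes the $\Psi$-image of a manifestly commutative diagram of ind-proper maps. The only genuinely new input over \cite{G1} is that $\calG$ is a ramified group scheme and $[e]:\bbA^1\to\bbA^1$ has to be base-changed everywhere, but since $I$ acts compatibly on all these objects (Lemma~\ref{psi}) and $\Psi$ commutes with the finite base change $[e]$, nothing changes.

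Granting that $\calZ$ is central, the Tannakian conclusion is standard, following \cite{B} and the Mirkovi\'c--Vilonen formalism. The category $\calP_v$ is semisimple (Lemma~\ref{semisimple}), monoidal with bi-exact convolution (the Corollary after Proposition~\ref{comm3}), and rigid; one checks that $\rH^*$ (or rather its graded pieces via the weight filtration, exactly as in \cite{MV}) is a faithful exact tensor functor to $\overline{\bbQ}_\ell$-vector spaces, so by Tannakian reconstruction $\calP_v\simeq\Rep(\tilde G^\vee)$ for an affine group scheme $\tilde G^\vee$ over $\overline{\bbQ}_\ell$, which is reductive because $\calP_v$ is semisimple and pro-algebraic of finite type on each component by the orbit parametrization. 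The centrality of $\calZ$ means precisely that $\calZ$ factors through the Drinfeld center, hence through $\Rep(H^\vee)$ with its symmetric structure, giving a tensor functor $\Rep(H^\vee)\to\Rep(\tilde G^\vee)$ compatible with fibre functors; by Tannakian duality this corresponds to a homomorphism $\tilde G^\vee\to H^\vee$. By Lemma~\ref{mult}, the composite $\rH^*\circ\calZ\circ\calS$ agrees with the forgetful functor on $\Rep(H^\vee)$ up to the cohomological shift $[(2\rho,\mu)]$, so the map $\tilde G^\vee\to H^\vee$ induces an isomorphism on $\rH^*$-spaces of every irreducible, whence (again by the argument of \cite[\S 2.6]{B} or \cite[Prop.~2.21]{MV}) it is a closed immersion. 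The main obstacle is the careful verification of axioms (1)--(4) in the ramified BD-setting: tracking which $\calG$-torsor is being forgotten in each ind-proper map, checking that the resulting diagrams of $\bbA^1$-ind-schemes commute on the nose, and confirming that generic-fibre commutativity propagates to the special fibre via the $j_{!*}$-description of $\calZ$ --- this is bookkeeping rather than mathematics, but it is where all the work lies, and one must be sure that the finite cover $[e]$ does not spoil the $\Psi$-commutation-with-pushforward used repeatedly.
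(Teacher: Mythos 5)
Your proposal follows essentially the same path as the paper: reduce the four central-functor axioms to commutativity statements over the Beilinson--Drinfeld Grassmannians, exploit the trivial monodromy (Lemma~\ref{monodromy}) so that $\calZ$ and the constraints $c_{X,Y}$ can be described via $i_0^*j_{!*}$ as in Corollary~\ref{mono-free1} and \eqref{mono-free2}, and then invoke Bezrukavnikov's Proposition~1 (using that every irreducible of $\calP_v$ is a summand of some $\calZ(\calF)$) to get $\tilde G^\vee\subset H^\vee$. Two points worth flagging. First, the paper is more surgical than you are: it defers axioms (3) and (4) to \cite{G2} verbatim (they do not involve the commutativity constraint of $\Sat_H$, and the \cite{G2} argument applies without change), and only for axiom (2) does it invoke the trivial-monodromy/$j_{!*}$ simplification, replacing the homotopy (co)limits of \cite{G2}; your attempt to run a uniform ``check on the generic fibre, then propagate via full faithfulness of $j_{!*}$'' argument for all four axioms is morally the same move but should be formulated carefully --- the arrows in the diagrams are morphisms on the special fibre $\Fl_v$, so one must first exhibit each as $i_0^*$ of a morphism of intermediate extensions over $\bbA^1$ before a generic-fibre vanishing criterion applies. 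Second, you slightly misattribute the role of Lemma~\ref{mult}: it computes the stalk of $\calZ(\calS(V_\mu))$ along $\mathring{\Fl}_{v\bar\mu}$, which is what shows every irreducible of $\calP_v$ occurs as a summand (with multiplicity one) in the image of $\calZ$; the identification $\rH^*\circ\calZ\simeq\rH^*$ that feeds the fibre-functor argument comes instead from ind-properness of $\wGr_\calG\to\bbA^1$ (nearby cycles commute with proper push-forward), as in Corollary~\ref{fib}.
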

\begin{proof}
Since every object in $\calP_v$ appears as a direct summand of some
object in the essential image of $\calZ$, the second statement of
the proposition is a direct consequence of the first statement and
Proposition 1 of \cite{B}.

The first statement can be checked literally the same as in the
\cite{G2}. In fact, in this case, the proof is even simpler. Namely,
conditions (3) and (4) are checked as the same way as in \emph{loc.
cit}. To check condition (2), observe that the monodromy of all the
nearby cycles involve is trivial. They the nearby cycles can be
expressed via intermediate extensions as in \ref{mono-free1} and
\ref{mono-free2}, rather than via the homotopy (co)limits of certain
ind-pro system of sheaves as in \emph{loc. cit}.
\end{proof}

Now we would like to endow $\calP_v$ with a fiber functor. We begin
with the following general lemma.
\begin{lem}Let $G_1\subset G_2$ be a closed embedding of affine algebraic
groups over a field $E$ (of characteristic zero). Let
$F:\Rep(G_1)\to\on{Vect}_{E}$ be an $E$-linear exact and faithful
functor. Assume that: (i) $F(X\otimes Y)$ and $F(X)\otimes F(Y)$ are
(non-canonically) isomorphic; (ii) $F\circ\Res_{G_2}^{G_1}$ is a
fiber functor of $\Rep(G_2)$. Then $F$ has a unique fiber functor
structure which induces the fiber functor structure of
$F\circ\Res_{G_2}^{G_1}$ as in (ii).
\end{lem}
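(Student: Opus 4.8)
The plan is to reconstruct the tensor structure on $F$ from the one it is supposed to induce on $F \circ \Res_{G_2}^{G_1}$, using that $\Res_{G_2}^{G_1}$ is essentially surjective onto a ``large enough'' subcategory. First I would set up the Tannakian picture: write $\omega_2 = F \circ \Res_{G_2}^{G_1}$ for the given fiber functor on $\Rep(G_2)$, so that by Tannaka duality $G_2 = \underline{\Aut}^{\otimes}(\omega_2)$. The tensor constraint we seek on $F$ is a collection of isomorphisms $\tau_{X,Y} : F(X\otimes Y) \simeq F(X)\otimes F(Y)$, natural in $X,Y \in \Rep(G_1)$, satisfying the associativity pentagon and unit axioms, and compatible (in the obvious sense) with $\tau^{(2)}$, the tensor constraint of $\omega_2$.

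The key steps, in order: (1) Since $E$ has characteristic zero and $G_1 \subset G_2$ is a closed subgroup, every object of $\Rep(G_1)$ is a subquotient — in fact, by semisimplicity arguments or by the standard fact about algebraic groups in characteristic zero, one reduces to the case where every irreducible $G_1$-representation embeds into $\Res_{G_2}^{G_1} V$ for some $V \in \Rep(G_2)$. More precisely, I would use that the restriction functor $\Res_{G_2}^{G_1}$ is \emph{dominant} on objects: every $X \in \Rep(G_1)$ is a direct summand, hence a subquotient, of $\Res_{G_2}^{G_1}V$ for suitable $V$ (take $V = \Ind_{G_1}^{G_2}X$, which is finite-dimensional since $G_1$ is closed and of finite index type — or simply note $X \hookrightarrow \Res_{G_2}^{G_1}\Ind_{G_1}^{G_2}X$ by Frobenius reciprocity, then take a finite-dimensional $G_2$-subrepresentation containing a copy of $X$). (2) On objects of the form $\Res_{G_2}^{G_1}V$, the isomorphism $\tau$ is \emph{forced}: we must have $\tau_{\Res V, \Res W}$ equal to $\tau^{(2)}_{V,W}$ under the identification $\Res(V\otimes W) = \Res V \otimes \Res W$. (3) Extend $\tau$ to arbitrary objects by naturality: given $X, Y \in \Rep(G_1)$, choose $V,W$ with $X$ a summand of $\Res V$, $Y$ a summand of $\Res W$, and define $\tau_{X,Y}$ as the restriction of $\tau^{(2)}_{V,W}$ along the idempotents; exactness and faithfulness of $F$ guarantee this is an isomorphism and that it is independent of the choices (any two choices are compared through a common refinement, using functoriality of $\tau^{(2)}$). (4) The associativity and unit axioms for $\tau$ then follow from those for $\tau^{(2)}$, again by choosing objects lifted from $\Rep(G_2)$ and invoking faithful exactness of $F$ to detect commutativity of the relevant diagrams after applying $F$. (5) Uniqueness: any two tensor structures on $F$ inducing $\tau^{(2)}$ must agree on the objects $\Res V$ and hence, by the extension-by-naturality argument of step (3), everywhere.

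The main obstacle I expect is step (3): showing the extension of $\tau$ to all objects is \emph{well-defined}, i.e.\ genuinely independent of the auxiliary choices of $G_2$-lifts and of the splitting idempotents. The clean way to handle this is to phrase it as a statement about the idempotent completion / the fact that $\Rep(G_1)$ is the pseudo-abelian envelope of the (non-full) image of $\Res_{G_2}^{G_1}$ together with all subquotients, and that a monoidal natural isomorphism defined on a generating set of objects and compatible with the monoidal structure extends uniquely to the envelope; hypothesis (i), that $F(X\otimes Y)$ and $F(X)\otimes F(Y)$ are abstractly isomorphic, is exactly what is needed to know the target of $\tau_{X,Y}$ has the right dimension so that the map constructed from $\tau^{(2)}_{V,W}$ is an isomorphism and not merely a morphism. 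Everything else is a diagram chase that, because $F$ is faithful and exact, can be verified after restriction to $G_2$-liftable objects where it reduces to the known coherence of $\omega_2$.
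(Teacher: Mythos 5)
Your overall strategy---build the tensor constraint $\tau_{X,Y}\colon F(X\otimes Y)\to F(X)\otimes F(Y)$ by hand, starting from objects of the form $R(V)$ with $R=\Res_{G_2}^{G_1}$ and extending to arbitrary $X,Y$---runs into a genuine gap at step (3), and it is precisely the point that the paper's argument is organized to avoid. When you realize $X\hookrightarrow R(V)$ and $Y\hookrightarrow R(W)$ and try to obtain $\tau_{X,Y}$ by ``restricting'' $\tau^{(2)}_{V,W}$, you need the isomorphism $\tau^{(2)}_{V,W}\colon F(R(V\otimes W))\to F(R(V))\otimes F(R(W))$ to carry the subspace $F(X\otimes Y)$ isomorphically onto the subspace $F(X)\otimes F(Y)$, equivalently to intertwine the projectors/inclusions that cut out these subspaces. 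But those projectors and inclusions are only morphisms in $\Rep(G_1)$, not in the image of $R$, and the naturality of $\tau^{(2)}$ is naturality with respect to $G_2$-morphisms only. Proving that the image lands in $F(X)\otimes F(Y)$ therefore requires that $F$ already commute with $\otimes$ on these $G_1$-morphisms---which is exactly what you are trying to establish, so the argument is circular. The dimension count from hypothesis (i) does not rescue this: it tells you the source and target have the same dimension, not that your candidate map is injective or surjective. A second, smaller issue: in characteristic zero $\Rep(G_1)$ is not semisimple in general (take $G_1=\bbG_a$), so you only get that $X$ is a \emph{subobject} of some $R(V)$, not a direct summand; the idempotent formalism you lean on is not available.

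The paper's proof avoids constructing $\tau$ directly and instead works at the level of coalgebras via Tannakian reconstruction. Using \cite[Lemma 2.13]{DM} it packages $F$ into a coalgebra $B$ with $\Rep(G_1)\simeq B$-Comod and $F R$ into the coalgebra $A=\calO_{G_2}$, and observes that $R$ induces a \emph{surjection} of coalgebras $A\to B$ (dual to an injection of endomorphism algebras). Hypothesis (i) together with \cite[Proposition 2.16]{DM} equips $B$ with a multiplication $B\otimes B\to B$ compatible with the one on $A$; since $FR$ is a fiber functor, the multiplication on $A$ is the usual one, hence associative and commutative, and surjectivity of $A\to B$ then forces the multiplication on $B$ to be associative and commutative as well. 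Applying \cite[Proposition 2.16]{DM} in the reverse direction turns this into the desired tensor structure on $F$. The surjection $A\to B$ is the structural replacement for the ``restrict $\tau^{(2)}$ along idempotents'' move that your step (3) cannot justify.
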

\begin{rmk}{\rm We are not sure whether the first assumption is necessary.
 }\end{rmk}
\begin{proof}The uniqueness is clear.
We write $R=\Res_{G_2}^{G_1}$ for simplicity. For any
$X\in\Rep(G_2)$, let $\langle X\rangle$ denote the full subcategory
of $\Rep(G_2)$ consisting the objects that are isomorphic to
subquotients of $X^n, n\in\bbN$, and $\langle R(X)\rangle$ denote
the full subcategory of $\Rep(G_1)$ consisting of the objects that
are isomorphic to subquotients of $R(X)^n, n\in\bbN$. Let us denote
$\End(FR|_{\langle X\rangle})$ (resp. $\End(F|_{\langle
R(X)\rangle})$) the endomorphism algebra of the restriction of the
functor $FR$ (resp. $F$) to $\langle X\rangle$ (resp. $\langle
R(X)\rangle$). They are finite dimensional $E$-algebras and clearly,
the $E$-algebra homomorphism $\End(F|_{\langle R(X)\rangle})\to
\End(FR|_{\langle X\rangle})$ is injective. According to \cite[Lemma
2.13]{DM}, there are canonical equivalences and the following
commutative diagram
\[\begin{CD}
\langle X \rangle @>{a_X}>\simeq> \End(FR|_{\langle X
\rangle})\Mod @>\omega>>\on{Vect}_E\\
@VR VV@VVV@|\\
\langle R(X)\rangle @>b_{R(X)}>\simeq> \End(F|_{\langle
R(X)\rangle})\Mod @>\omega>>\on{Vect}_E
\end{CD}.\]
In addition, $\omega a_X\simeq FR$ and $\omega b_{R(X)}\simeq F$.
Observe that if $\langle X\rangle $ is a subcategory of $\langle Y
\rangle$. Then we have a natural algebra homomorphism
$\End(FR|_{\langle Y \rangle})\to \End (FR|_{\langle X \rangle})$.
Then $A=\underrightarrow{\lim}_{X\in\Rep G_2}\End(FR|_{\langle X
\rangle})^\vee$ is a coalgebra. Similarly, we can define
$B=\underrightarrow{\lim}_{X\in\Rep G_2}\End(F|_{\langle
R(X)\rangle})^\vee$. We have the surjective map of coalgebras $A\to
B$, and
\[\begin{CD}
\Rep G_2 @>{a}>\simeq> A\mbox{-Comod} @>\omega>>\on{Vect}_E\\
@V R VV@VVV@|\\
\Rep G_1 @>b>\simeq> B\mbox{-Comod} @>\omega>>\on{Vect}_E
\end{CD}.\]
By the assumption (i) and \cite[Proposition 2.16]{DM}, the tensor
structures on $\Rep G_1$ and $\Rep G_2$ induces $B\otimes B\to B$
and $A\otimes A\to A$ respectively. Since the restriction functor
$R$ is a tensor functor, we have the commutative diagram
\[\begin{CD}
A\otimes A@>>>A\\
@VVV@VVV\\
B\otimes B@>>>B
\end{CD}.\]
By assumption (ii), $\omega a\simeq FR$ is a fiber functor of $\Rep
G_2$, and therefore we know that $A=\calO_{G_2}$ and the map
$A\otimes A\to A$ is the usual multiplication. Since the map $A\to
B$ is surjective, this implies that the map $B\otimes B\to B$ is
also associative and commutative. By \cite[Proposition 2.16]{DM}
again, this implies that the functor $F$ respect to the
associativity and the commutativity constraints. The lemma follows.
\end{proof}
\begin{cor}\label{fib}
The functor given by taking the cohomology
$\rH^*:\calP_v\to\on{Vect}_{\overline{\bbQ}_\ell}$ has a natural
structure as a fiber functor.
\end{cor}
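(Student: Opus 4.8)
The plan is to obtain the fiber functor structure on $\rH^*$ by applying the preceding lemma to the closed embedding $\tilde{G}^\vee\subset H^\vee$ coming from the equivalence $S:\calP_v\simeq\Rep(\tilde{G}^\vee)$ established above, for which $S\circ\calZ\simeq\Res_{H^\vee}^{\tilde{G}^\vee}$. Transporting $\rH^*$ along $S$ to a functor $F:\Rep(\tilde{G}^\vee)\to\on{Vect}_{\overline{\bbQ}_\ell}$, the composite $F\circ\Res_{H^\vee}^{\tilde{G}^\vee}$ corresponds to $\rH^*\circ\calZ$ on $\Sat_H\simeq\Rep(H^\vee)$. So it will suffice to check that $F=\rH^*$ is $\overline{\bbQ}_\ell$-linear, exact and faithful, and that it satisfies hypotheses (i) and (ii) of that lemma with $(G_1,G_2)=(\tilde{G}^\vee,H^\vee)$.

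First I would record exactness and faithfulness. Exactness is immediate from the semi-simplicity of $\calP_v$ (Lemma \ref{semisimple}), since every short exact sequence then splits and $\rH^*$ is additive; faithfulness follows because the simple object $\IC_{\bar{\mu}}$ has nonzero hypercohomology for each $\bar{\mu}\in\xcoch(T)_I^+$ — for instance $\rH^{-(2\rho,\bar{\mu})}(\IC_{\bar{\mu}})\neq 0$ since $\Fl_{v\bar{\mu}}$ is irreducible of dimension $(2\rho,\bar{\mu})$. Hypothesis (i) is in fact available canonically: $\rH^*(\calF_1\star\calF_2)\cong\rH^*(\calF_1)\otimes\rH^*(\calF_2)$, the convolution being the (ind-)proper push-forward along $LG\times^{K_v}\Fl_v\to\Fl_v$ of the twisted external product, whose hypercohomology agrees with that of $\calF_1\boxtimes\calF_2$ on $\Fl_v\times\Fl_v$ by the local triviality of the twisted product and the K\"unneth formula.

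The remaining and essential point is hypothesis (ii): that $\rH^*\circ\calZ$ is a fiber functor of $\Rep(H^\vee)$. Here I would use that $\wGr_\calG\to\bbA^1$ is ind-proper, so that the formation of nearby cycles commutes with push-forward to $\bbA^1$, hence with hypercohomology; thus $\rH^*(\Fl_v,\calZ(\calF))\cong\rH^*\big((\wGr_\calG)_{\bar{\eta}},(\calF\boxtimes\overline{\bbQ}_\ell[1])|_{\bar{\eta}}\big)\cong\rH^*(\Gr_H,\calF)$ up to a fixed cohomological shift, which is harmless as $\rH^*$ forgets the grading. By the geometric Satake equivalence for $H$ this is the standard fiber functor on $\Rep(H^\vee)$. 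Granting exactness, faithfulness, (i) and (ii), the preceding lemma then furnishes a unique fiber functor (i.e.\ tensor) structure on $\rH^*$ inducing the given one on $\rH^*\circ\calZ$, which is exactly the assertion of the corollary. The one genuinely non-formal input is the identification in (ii): one must ensure that $\rH^*\circ\calZ\simeq\rH^*|_{\Sat_H}$ is the isomorphism produced by ind-properness of $\wGr_\calG/\bbA^1$ — the same mechanism already used in Lemma \ref{mult} — so that no spurious twist is introduced; the rest is bookkeeping with the semi-simplicity of $\calP_v$ and the previous proposition.
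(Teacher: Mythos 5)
Your proof follows the same route as the paper: verify the two hypotheses of the preceding lemma for $F=\rH^*$ with $(G_1,G_2)=(\tilde{G}^\vee,H^\vee)$, using ind-properness of $\wGr_\calG\to\bbA^1$ to identify $\rH^*\circ\calZ$ with $\rH^*|_{\Sat_H}$ (the standard fiber functor) for hypothesis (ii). One small caution: for hypothesis (i), the paper deliberately records the isomorphism $\rH^*(\calF_1\star\calF_2)\simeq\rH^*(\calF_1)\otimes\rH^*(\calF_2)$ as \emph{non-canonical} (via the decomposition theorem), and your appeal to ``local triviality of the twisted product plus K\"unneth'' does not by itself produce a canonical isomorphism — the twisted product is only \emph{locally} a product, so extracting a global cohomological K\"unneth statement still needs a degeneration/decomposition-theorem argument and a trivialization of the resulting local system; but since the lemma only requires the non-canonical version, this overclaim is harmless and the argument goes through.
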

\begin{proof}It is well-known (e.g. from the decomposition theorem) that there exists an isomorphism
$\rH^*({\Fl_v},\calF_1\star\calF_2)\simeq
\rH^*({\Fl_v},\calF_1)\otimes \rH^*({\Fl_v},\calF_2)$
(non-canonically). Since taking nearby cycles commutes with proper
push forward, we have a canonical isomorphism $\rH^*\circ\calZ\simeq
\rH^*$. Since $\rH^*:\Sat_H\to\on{Vect}_{\overline{\bbQ}_\ell}$ is a
fiber functor, the assertion follows from the above lemma.
\end{proof}

\section{Identification of the group $\tilde{G}^\vee$ with
$(H^\vee)^I$}\label{proof} We need to describe the group
$\tilde{G}^\vee$ from the last section. To begin with, let us review
how the geometric Satake correspondence (together with a choice of
an ample line bundle on $\Gr_H$) gives rise to a pinned group
$(H^\vee,B_H^\vee,T_H^\vee,X^\vee)$. First, once we choose
$T_H\subset B_H\subset H$, the construction of \cite{MV} provides us
$T_H^\vee\subset B_H^\vee\subset H^\vee$. Namely, let $U_H\subset
B_H$ be its unipotent radical. For $\mu\in\xcoch(T_H)$, let $S_\mu$
be the semi-infinite orbit on $\Gr_H$ passing through $s_\mu$ as
introduced in \cite{MV} (i.e., the $LU_H$-orbit passing through
$s_\mu$). Let $S_{\leq\mu}=\cup_{\la\preceq\mu}S_\la$ and
$S_{<\mu}=\cup_{\la\prec\mu}S_\la$. Then the fiber functor
$\rH^*:\Sat_H\to\on{Vect}_{\bbQ_\ell}$ has a canonical filtration
(called the \emph{MV filtration}) given by $\ker(\rH^*(\Gr_H,-)\to
\rH^*(S_{<\mu},-))$. This defines a Borel $B_H^\vee\subset H^\vee$.
In addition, it is proved that the filtration admits a canonical
splitting, i.e. a canonical isomorphism $\rH^*(\Gr_H,-)\simeq
\bigoplus_{\mu} \rH^*_c(S_\mu,-)$. This provides a maximal torus
$T_H^\vee\subset B_H^\vee$. Let $\calL$ be an ample line bundle on
$\Gr_H$, and let $c(\calL)\in \rH^2(\Gr_H,\overline{\bbQ}_\ell)$ be
its Chern class. Then it is shown in \cite{Gi,YZ} that the cup
product with this class realizes $c(\calL)$ as a principal nilpotent
element in $X^\vee=\frakh^\vee=\Lie H^\vee$. In addition, by
\cite[Proposition 5.6]{YZ}, the quadruple
$(H^\vee,B_H^\vee,T_H^\vee,X^\vee)$ is indeed a pinned reductive
group.

\begin{rmk}{\rm One remark is in order. In \cite{YZ}, all the assertions are proved for the affine Grassmannian
defined over $\bbC$. The only place where the complex topology is
used, besides the issue of dealing with $\bbZ$-coefficients as in
\cite{MV}, is to define the coproduct on $\rH^*(\Gr_H,\bbZ)$ by
realizing $\Gr_H$ as being homotopic to the based loop space of a
maximal compact subgroup of $H_\bbC$. However, one can provide a
commutative and cocommutative Hopf algebra structure on
$\rH^*(\Gr_H,\bbZ)$ using the Beilinson-Drinfeld Grassmannian. More
precisely, one can use the isomorphism (2.11) in \emph{loc. cit.} to
define the comultiplication map by the formula (2.12) in \emph{loc.
cit.}. This map on the other hand can be realized as follows. There
is the Beilinson-Drinfeld Grassmannian $\pi:\Gr_2\to\bbA^2$ whose
fiber over a point in the diagonal $\Delta\subset\bbA^2$ is $\Gr_H$
and whose fiber over a point off the diagonal is $\Gr_H\times\Gr_H$
(cf. \cite[Sect. 5]{MV}). Then $R^i\pi_*\bbQ_\ell$ is a
constructible sheaf on $\bbA^2$, constant along the stratification
$\bbA^2=\Delta\cup(\bbA^2-\Delta)$. Now the usual cospecialization
map of constructible sheaves gives rise to the comultiplication.
From this latter definition, the usual arguments for the
commutativity constraints as in \cite{MV} show that this defined
comultiplication is indeed cocommutative. The proof of \cite[Lemma
5.1]{YZ} that $c^{T_H}(\calL)$ is primitive under this Hopf algebra
structure can be replaced by the following argument: as is
well-known (e.g see \cite[1.1.9]{Z1}), if $\calL$ is ample on
$\Gr_H$, then there is an ample line bundle on $\Gr_2$, which away
from the diagonal is $\calL\boxtimes\calL$ and on the diagonal is
$\calL$. Now the above arguments and all the remaining arguments of
\cite{YZ} apply to the situation when $\Gr_H$ is defined over
arbitrary field $k$ and sheaves have
$\overline{\bbQ}_\ell$-coefficients.
 }\end{rmk}

\begin{rmk}\label{independence}{\rm
As explained in \cite[Theorem 3.6]{MV}, the above pinning
$(H^\vee,B_H^\vee,T_H^\vee,X^\vee)$ is in fact independent of the
choice of $T\subset B$. Another way to deduce this fact is as
follows. The natural grading on the cohomological functor $\rH^*$
defines a one-parameter subgroup $\bbG_m\to H^\vee$ and $T_H^\vee$
is just the centralizer of this subgroup, which is independent of
the choice of $T\subset B$. On the other hand, $B^\vee_H$ is
completely determined by $X^\vee$, which is also independent of the
choice of $T\subset B$. In other words, there is a canonical
morphism from the Lefschetz $\SL_2$ to $H^\vee$, which gives a
principal $\SL_2$ in $H^\vee$, and the pinning is determined by this
principal $\SL_2$.
 }\end{rmk}

Recall that we denote $\psi$ to be the action of $I$ on $\Gr_H$. The
action of $\ga\in I$ will map $\bGr_\la$ isomorphically to
$\bGr_{\ga(\la)}$. Therefore $\ga_*: D(\Gr_H)\to D(\Gr_H)$ naturally
gives rise to $\ga_*:\Sat_H\to\Sat_H$ for $\ga\in\Ga$. In this way,
$I$ acts on $\Sat_H$ via tensor automorphisms. Under the geometric
Satake correspondence, $I$ acts on $H^\vee$ clearly as pinned
automorphisms with respect to the pinning we mentioned above.

\begin{thm}
$\tilde{G}^\vee\simeq (H^\vee)^I$.
\end{thm}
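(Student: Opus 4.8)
The plan is to identify $\tilde G^\vee$ and $(H^\vee)^I$ as \emph{equal} closed subgroups of $H^\vee$, using throughout the embedding $\tilde G^\vee\hookrightarrow H^\vee$ furnished by the monoidal functor $\calZ$ (i.e.\ by $S\circ\calZ\simeq\Res^{\tilde G^\vee}_{H^\vee}$), and to prove the two inclusions separately. Note first that $\calP_v$ is semisimple (Lemma~\ref{semisimple}), so $\tilde G^\vee$ is a (possibly disconnected) reductive group.

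For the inclusion $\tilde G^\vee\subseteq(H^\vee)^I$ I would exploit the action of $I=\Gal$ of the cover $[e]\colon\bbA^1\to\bbA^1$ on $\wGr_\calG=\Gr_\calG\times_{\bbA^1,[e]}\bbA^1$. This action only touches the $\bbA^1$--factor, hence is trivial on the special fibre $(\wGr_\calG)_0=\Fl_v$, whereas on $\wGr_\calG|_{\bbG_m}\simeq\Gr_H\times\bbG_m$ it is $\psi(\ga)\times\ga$ by Lemma~\ref{psi}. Since $\calF\boxtimes\overline{\bbQ}_\ell[1]$ is equivariant for the $\bbG_m$--factor, pullback along $\ga$ carries $\calF_{\bbA^1}=j_{!*}(\calF\boxtimes\overline{\bbQ}_\ell[1])$ to $(\psi(\ga)^*\calF)_{\bbA^1}$; applying $i_0^*$, together with Corollary~\ref{mono-free1} (which rests on the triviality of monodromy, Lemma~\ref{monodromy}) and the fact that $\ga$ acts trivially on $\Fl_v$, one gets a canonical monoidal isomorphism $\calZ\circ\psi(\ga)^*\simeq\calZ$, which one checks is compatible with the fibre functor $\rH^*$. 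Under $\calS$ the functor $\psi(\ga)^*$ is intertwined with the pinned automorphism $\ga$ of $H^\vee$, so the two closed embeddings $\tilde G^\vee\hookrightarrow H^\vee$ --- the tautological one and $\ga|_{\tilde G^\vee}$ --- induce isomorphic fibre-functor-compatible tensor functors on $\Rep(H^\vee)$ and hence coincide. As $\ga\in I$ was arbitrary, $\tilde G^\vee\subseteq(H^\vee)^I$.

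For the reverse inclusion, since $\tilde G^\vee\subseteq(H^\vee)^I$ the functor $\calZ$ factors as $\bar{\calZ}\circ\Res^{(H^\vee)^I}_{H^\vee}$ with $\bar{\calZ}\colon\Rep((H^\vee)^I)\to\calP_v$ monoidal; as $(H^\vee)^I$ is a reductive subgroup, $\Res^{(H^\vee)^I}_{H^\vee}$ is essentially surjective up to direct summands, whence $\rH^*\circ\bar{\calZ}$ is the forgetful functor, and $\bar{\calZ}$ corresponds to the embedding $\tilde G^\vee\hookrightarrow(H^\vee)^I$; it thus suffices to show $\bar{\calZ}$ is an equivalence, i.e.\ bijective on isomorphism classes of simple objects. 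On the geometric side the simples are the $\IC_{\bar\mu}$, $\bar\mu\in\xcoch(T)_I^+$ (\S\ref{reminder}). On the dual side I would record the structure of $(H^\vee)^I$: its maximal torus $(T_H^\vee)^I$ has character group $\xcoch(T_H)_I=\xcoch(T)_I$, and --- using that $I$ acts through pinned automorphisms and the orbit parametrization of \S\ref{reminder} --- one shows the simple objects $W_{\bar\mu}$ of $\Rep((H^\vee)^I)$, for the Borel $(B_H^\vee)^I$, are also indexed by $\xcoch(T)_I^+$, the torsion of $\xcoch(T)_I$ accounting for $\pi_0((H^\vee)^I)$ (cf.\ Remark~\ref{non-connected}), with the relevant dominance order agreeing with $\preceq$ by the definition \eqref{order}. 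I would then match the two indexings: the proof of Lemma~\ref{mult} shows $\calZ(\calS(V_\mu))$ is supported on $\Fl_{v\bar\mu}$ with open-orbit restriction $\overline{\bbQ}_\ell[(2\rho,\mu)]$, so $\calZ(\calS(V_\mu))=\IC_{\bar\mu}\oplus\bigoplus_{\bar\la\prec\bar\mu}\IC_{\bar\la}^{\oplus n_{\bar\mu\bar\la}}$, a decomposition unitriangular for $\preceq$ with leading coefficient $1$; likewise $\Res^{(H^\vee)^I}_{H^\vee}(V_\mu)=W_{\bar\mu}\oplus\bigoplus_{\bar\la\prec\bar\mu}W_{\bar\la}^{\oplus m_{\bar\mu\bar\la}}$ is unitriangular with leading coefficient $1$ (highest weights of constituents are weights of $V_\mu$). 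Since $\bar{\calZ}(\Res(V_\mu))=\calZ(\calS(V_\mu))$ and $\rH^*\circ\bar{\calZ}$ is forgetful, an induction on $\preceq$ forces $n_{\bar\mu\bar\la}=m_{\bar\mu\bar\la}$, hence $\dim W_{\bar\mu}=\dim\rH^*(\IC_{\bar\mu})$ and $\bar{\calZ}(W_{\bar\mu})\simeq\IC_{\bar\mu}$ for all $\bar\mu$; so $\bar{\calZ}$ is bijective on simples, hence an equivalence, and $\tilde G^\vee=(H^\vee)^I$.

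The main obstacle is the representation theory of the disconnected group $(H^\vee)^I$ in the reverse inclusion: pinning down its maximal torus and component group and checking that its dominant highest weights relative to $(B_H^\vee)^I$ are exactly \eqref{plus}, so that its simple objects are correctly indexed by $\xcoch(T)_I^+$; and then running the $\preceq$--induction so that the two compatible unitriangular systems yield $\bar{\calZ}(W_{\bar\mu})=\IC_{\bar\mu}$ --- equivalently, ruling out that some $\IC_{\bar\la}$ with $\bar\la\prec\bar\mu$ absorbs part of $\bar{\calZ}(W_{\bar\mu})$.
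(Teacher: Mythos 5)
Your proposal follows essentially the same two-step strategy as the paper: first $\tilde G^\vee\subseteq(H^\vee)^I$ via a tensor isomorphism $\calZ\circ\ga_*\simeq\calZ$ coming from the Galois equivariance of the nearby cycle construction, then the reverse inclusion by a $\preceq$-unitriangularity argument comparing decompositions of $\calZ(\calS(V_\mu))$ and $\Res^{(H^\vee)^I}_{H^\vee}V_\mu$ into simples (the paper phrases this at the level of $K$-groups, you at the level of simples; these are equivalent here because $\calP_v$ is semisimple). So the architecture is right.

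The genuine gap is the one you yourself flag at the end, and it is not a minor obstacle: you need to know that the isomorphism classes of irreducible $(H^\vee)^I$-modules are indexed by $\xcoch(T)_I^+$ with highest weights of multiplicity one, and since $(H^\vee)^I$ is in general disconnected (Remark \ref{non-connected}), this is not formal. The paper's Lemma \ref{connected components} supplies the decisive ingredient: the inclusion of maximal tori induces an isomorphism $\pi_0((T^\vee)^I)\xrightarrow{\sim}\pi_0((H^\vee)^I)$. Its proof is a real argument: one reduces, via the exact sequences $1\to Z\to H_\s\to H_\der\to1$ and $1\to H_\der\to H\to D\to1$ (and the parallel tori sequences), to showing that the preimage of $1$ under $\rH^1(I,T)\to\rH^1(I,H)$ is trivial, which in turn uses that $\rH^1(I,H)$ is the quotient of $\rH^1(I,T)$ by a $W^I$-action (proved in \cite{PZ}), together with the surjectivity of $N^I\to W^I$ (a Steinberg-type fact, using that $I$ acts by pinned automorphisms) and Steinberg's connectedness theorem for $H_\s^I$. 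Once this is available, the Frobenius-reciprocity argument (the paper's Lemma \ref{irr}) correctly yields the classification of simples by $\xcoch(T)_I^+$, and your $\preceq$-induction closes. Without Lemma \ref{connected components} the indexing claim you assert in passing (''the torsion of $\xcoch(T)_I$ accounting for $\pi_0((H^\vee)^I)$'') is exactly the thing that needs to be proven, and is the crux of the theorem.

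One smaller remark on the first inclusion: the step from ``$\calZ\circ\psi(\ga)^*\simeq\calZ$'' to ``$\tilde G^\vee\subseteq(H^\vee)^I$'' requires that the tensor isomorphism be compatible with the fibre functor $\rH^*$ --- a mere tensor isomorphism only gives that $\ga|_{\tilde G^\vee}$ and the inclusion are $H^\vee$-conjugate, not equal. You do note that one must ``check'' this compatibility; in the paper this is implicit in applying the lemma derived from \cite[Cor.\ 2.9]{DM}, and the compatibility does hold because the isomorphism $\calZ\circ\ga_*\simeq\calZ$ is produced by proper base change for nearby cycles, which respects $\rH^*$. So this is not a gap, but it deserves the explicit mention you give it.
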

\begin{rmk}\label{non-connected}{\rm
Observe that $(H^\vee)^I$ is not necessarily a connected reductive
group. For example: let $H^\vee=\GL_{2n+1}$, let $J$ be the matrix
with $1s$ on the anti-diagonal and $0s$ elsewhere. Let
$\Ga=\{1,\ga\}$ and $\ga$ acts on $H^\vee$ via $g\mapsto
J(g^t)^{-1}J$. Then $(H^\vee)^I=\on{O}_{2n+1}$.
 }\end{rmk}
\begin{proof}
Since $\calP_v$ is semi-simple, $\tilde{G}^\vee$ is a reductive
subgroup of $H^\vee$. We first see that $\tilde{G}^\vee\subset
(H^\vee)^I$. The following lemma is a direct consequence of
\cite[Corollary 2.9]{DM}.

\begin{lem}Let $f:H_2\to H_1$ be a
homomorphism of algebraic groups and let $\omega^f$ denote the
induced tensor functor $\Rep(H_1)\to\Rep(H_2)$ (if $f$ is a closed
embedding then $\omega^f$ is the restriction functor
$\Res_{H_1}^{H_2}$). Let $I\subset\Aut(H_1)$ so that it acts on
$\Rep(H_1)$ via tensor automorphisms. If for any $\ga\in I$,
$\omega^f\circ\omega^\ga\simeq \omega^f$, then $f$ factors through
$f:H_2\to H_1^I\subset H_1$.
\end{lem}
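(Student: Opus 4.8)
The plan is to prove the final statement—that $\tilde{G}^\vee \subset (H^\vee)^I$—by combining the abstract Tannakian lemma stated just above with the concrete fact that the central functor $\calZ$ is $I$-equivariant in the appropriate sense. Recall the setup: we have a closed embedding $\tilde{G}^\vee \subset H^\vee$ together with an equivalence $S: \calP_v \simeq \Rep(\tilde{G}^\vee)$ such that $S \circ \calZ \simeq \Res_{H^\vee}^{\tilde{G}^\vee}$; we have the action $\psi$ of $I$ on $\Gr_H$, which induces tensor automorphisms $\ga_*: \Sat_H \to \Sat_H$ realizing $I$ as pinned automorphisms of $H^\vee$ via geometric Satake; and the abstract lemma tells us that if $\omega^f \circ \omega^\ga \simeq \omega^f$ for all $\ga \in I$ (where here $f: \tilde{G}^\vee \hookrightarrow H^\vee$, so $\omega^f = \Res_{H^\vee}^{\tilde{G}^\vee}$), then $f$ factors through $(H^\vee)^I$. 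So the entire task reduces to: \emph{for each $\ga \in I$, produce a tensor isomorphism $\Res_{H^\vee}^{\tilde{G}^\vee} \circ \ga_* \simeq \Res_{H^\vee}^{\tilde{G}^\vee}$ as functors $\Rep(H^\vee) \to \Rep(\tilde{G}^\vee)$}, equivalently (transporting via $S$) a tensor isomorphism $\calZ \circ \ga_* \simeq \calZ$ of functors $\Sat_H \to \calP_v$.

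The key geometric input is Lemma \ref{psi}: under $\Gr_\calG \times_{\bbG_m} \bbG_m \simeq \Gr_H \times \bbG_m$, the action of $\ga \in I$ on the left (via the Galois action on the cyclic cover $[e]:\bbA^1 \to \bbA^1$, i.e.\ the second factor) corresponds to $\psi(\ga) \times \ga$ on the right. The functor $\calZ$ is defined by $\calZ(\calF) = \Psi_{\wGr_\calG}(\calF \boxtimes \overline{\bbQ}_\ell[1])$, where $\wGr_\calG = \Gr_\calG \times_{\bbA^1, [e]} \bbA^1$ and the special fiber is $(\wGr_\calG)_0 \simeq \Fl_v$. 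First I would observe that $\ga$ acts on $\wGr_\calG$ as a morphism over $\bbA^1$ (since it acts via the deck transformation of $[e]$, it fixes the image point $0 \in \bbA^1$ and restricts to an automorphism of $(\wGr_\calG)_0 = \Fl_v$; concretely this restriction is the automorphism of $\Fl_v$ induced by $\psi(\ga)$ on $\Gr_H$ under the degeneration), and that on the generic fiber $\wGr_\calG|_{\bbG_m} \simeq \Gr_H \times \bbG_m$ it acts by $\psi(\ga) \times \ga$. Since $\ga$ is a finite-order automorphism commuting with the projection to $\bbA^1$, it commutes with the formation of nearby cycles up to the canonical base-change isomorphism; applied to $\calF \boxtimes \overline{\bbQ}_\ell[1]$, whose pullback under $\psi(\ga) \times \ga$ is $(\psi(\ga)_* \calF) \boxtimes \overline{\bbQ}_\ell[1] = (\ga_* \calF)_{\bbG_m}$ (the $\bbG_m$-factor being constant, the Galois twist on it is trivial), this yields a canonical isomorphism $\calZ(\ga_* \calF) \simeq \overline{\psi(\ga)}_* \calZ(\calF)$ in $D(\Fl_v)$, where $\overline{\psi(\ga)}$ is the induced automorphism of $\Fl_v$.

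Next I would argue that $\overline{\psi(\ga)}_*: D_{K_v}(\Fl_v) \to D_{K_v}(\Fl_v)$ acts trivially on $\calP_v$ up to canonical isomorphism, at least as a tensor functor into $\Rep(\tilde{G}^\vee)$. The point is that $\psi(\ga)$ preserves the parahoric structure—$\underline{G}_v$ is built $I$-equivariantly, so $\overline{\psi(\ga)}$ normalizes $K_v$ and permutes the $K_v$-orbits $\mathring{\Fl}_{v\bar\mu}$ according to the action of $\ga$ on $\xcoch(T)_I^+$. But $\ga$ acts trivially on the set of dominant coinvariants (its action on $\xcoch(T)_I/W_0$ is already visible in the $I$-action being absorbed into the coinvariants, and $W_0$ is stable; more precisely, one checks that on $\xcoch(T)_I$ the induced action of $I$ is trivial since $I$ already acts trivially on coinvariants by construction, cf.\ the discussion in \S\ref{reminder} that $\La_M = (\xcoch(T)_I)^\sigma$ collapses the inertial action). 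Hence $\overline{\psi(\ga)}$ fixes every Schubert variety, sending each $\IC_{\bar\mu}$ to $\IC_{\bar\mu}$; being an automorphism of the geometric situation it carries the monoidal and commutativity structure along, so it induces a tensor \emph{auto}equivalence of $\calP_v$ isomorphic to the identity. Composing, we get $\calZ \circ \ga_* \simeq \calZ$ as tensor functors, hence $\Res_{H^\vee}^{\tilde{G}^\vee} \circ \ga_* \simeq \Res_{H^\vee}^{\tilde{G}^\vee}$, and the abstract lemma delivers $\tilde{G}^\vee \subset (H^\vee)^I$.

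The main obstacle I anticipate is the verification in the previous paragraph that $\overline{\psi(\ga)}_*$ induces the \emph{identity} tensor automorphism of $\calP_v$—not merely an automorphism fixing isomorphism classes of simple objects, but one compatible with the monoidal and symmetry constraints, and compatible with the fiber functor $\rH^*$ from Corollary \ref{fib}. One must check that $\overline{\psi(\ga)}$ commutes (up to canonical isomorphism) with the convolution structure on $\Fl_v$ and with the hypercohomology functor; the latter is essentially automatic since $\rH^*(\Fl_v, \overline{\psi(\ga)}_* \calF) \simeq \rH^*(\Fl_v, \calF)$ canonically for an automorphism, but matching this with the central functor's compatibilities requires care. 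An alternative, cleaner route that sidesteps part of this is to work directly with the isomorphism $\calZ(\ga_*\calF) \simeq \overline{\psi(\ga)}_*\calZ(\calF)$ at the level of objects, post-compose with $\rH^*$ to get an isomorphism of fiber functors $\rH^* \circ \calZ \circ \ga_* \simeq \rH^* \circ \calZ \simeq \rH^*$ (using $\rH^* \circ \calZ \simeq \rH^*$ from Corollary \ref{fib}), check this is monoidal using that nearby cycles commute with the external/twisted products as in the proof of the Theorem-Definition in \S\ref{central property}, and then invoke the abstract lemma with $\omega^f = \rH^* \circ \Res_{H^\vee}^{\tilde{G}^\vee}$; this reduces everything to statements about fiber functors where the Tannakian machinery is most robust.
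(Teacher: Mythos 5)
You have proved the wrong statement. The lemma in question is the purely Tannakian assertion itself: for a homomorphism $f:H_2\to H_1$ of algebraic groups and a group $I\subset\Aut(H_1)$ acting on $\Rep(H_1)$ by tensor automorphisms, the existence of tensor isomorphisms $\omega^f\circ\omega^\ga\simeq\omega^f$ forces $f$ to factor through $H_1^I$. Your proposal takes this lemma as a black box (``combining the abstract Tannakian lemma stated just above with \dots'') and instead proves its geometric application, the inclusion $\tilde{G}^\vee\subset(H^\vee)^I$; in the paper that application is the surrounding argument, not the lemma. Nothing in your write-up addresses why the hypothesis $\omega^f\circ\omega^\ga\simeq\omega^f$ implies the factorization, and that is the entire content of what you were asked to prove.

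The paper's proof is one line: the lemma is a direct consequence of \cite[Corollary 2.9]{DM}, which identifies homomorphisms $H_2\to H_1$ with tensor functors $\Rep(H_1)\to\Rep(H_2)$ compatible with the forgetful fiber functors. Since $\omega^f\circ\omega^\ga=\omega^{\ga\circ f}$, a tensor isomorphism $\omega^{\ga\circ f}\simeq\omega^f$ respecting the fiber functors forces $\ga\circ f=f$ for every $\ga\in I$ by the uniqueness in that corollary, i.e.\ $f$ lands in $\bigcap_{\ga\in I}H_1^{\ga}=H_1^I$. (If the isomorphisms were not required to be compatible with the fiber functors one would only conclude that $\ga\circ f$ is conjugate to $f$; in the application the isomorphisms do respect $\rH^*$, which is why the lemma applies there.) As for the geometric argument you actually give: it essentially reproduces the paper's deduction that $\calZ\circ\ga_*\simeq\calZ$, but your detour through an induced automorphism $\overline{\psi(\ga)}$ of $\Fl_v$ and the verification that it is the identity tensor automorphism of $\calP_v$ is unnecessary --- by Lemma \ref{psi} the automorphism $\psi(\ga)\times\ga$ of the generic fibre is the restriction of $\id\times\ga$ on $\wGr_\calG$, which acts trivially on the special fibre, so the paper obtains $\calZ(\ga_*\calF)\simeq\calZ(\calF)$ directly from the Galois-equivariance of nearby cycles.
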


Now, $I$ acts on $\wGr_\calG=\Gr_\calG\times_{\bbA^1}\bbA^1$ via the
action on the second factor $\bbA^1$ by deck transformations. By
Lemma \ref{psi}, we have
\[\calZ(\ga_*\calF)=\Psi_{\wGr_\calG}(\ga_*\calF\boxtimes\overline{\bbQ}_\ell[1])\simeq\Psi_{\wGr_\calG}((\psi(\ga)\times\ga)_*(\calF\boxtimes\overline{\bbQ}_\ell[1]))\simeq\Psi_{\wGr_\calG}(\calF\boxtimes\overline{\bbQ}_\ell[1])=\calZ(\calF).\]
In other words, we have the tensor isomorphism between
$\calZ\circ\ga_*$ and $\calZ$ for all $\ga\in I$. From the above
lemma, $\tilde{G}^\vee\subset(H^\vee)^I$.

Therefore, we have successive restriction functors
\[\Rep(H^\vee)\to\Rep((H^\vee)^I)\to\Rep(\tilde{G}^\vee).\] To prove
that $\tilde{G}^\vee=(H^\vee)^I$, it is enough to show that the
above restriction induces an isomorphism of $K$-groups
$K(\Rep(H^\vee)^I)\simeq K(\Rep(\tilde{G}^\vee))$.

As the group $(H^\vee)^I$ may not be connected, we need to be
careful to describe its representation ring.

Let $(H^\vee)^{I,0}$ denote the neutral connected component of
$(H^\vee)^I$. This is a connected reductive group with maximal torus
$(T^\vee)^{I,0}$, the neutral connected component of $(T^\vee)^I$.
The key fact is the following lemma.
\begin{lem}\label{connected components}
The natural map
\[(T^\vee)^{I}/(T^\vee)^{I,0}\to (H^\vee)^{I}/(H^\vee)^{I,0}\]
is an isomorphism.
\end{lem}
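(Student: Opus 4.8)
The plan is to prove Lemma \ref{connected components} by a careful analysis of the action of $I$ on $H^\vee$ as a group of \emph{pinned} automorphisms, using that $I$ is finite of order prime to $\on{char} k$ (so the action is ``tame'' in a sense that lets us compare component groups for tori and for the ambient reductive group). The surjectivity of $(T^\vee)^I/(T^\vee)^{I,0}\to (H^\vee)^I/(H^\vee)^{I,0}$ is essentially formal: given a connected component $C$ of $(H^\vee)^I$, one needs a fixed maximal torus of $(H^\vee)^{I,0}$ normalized by $C$, and then that $C$ meets the normalizer, together with the fact that $(T^\vee)^I$ surjects onto the relevant piece of the Weyl group; but actually the cleanest route to surjectivity is to first establish injectivity plus equality of orders. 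So the real content is \textbf{injectivity}: one must show $(T^\vee)^I\cap (H^\vee)^{I,0}=(T^\vee)^{I,0}$, i.e. that a point of $(T^\vee)^I$ lying in the identity component of $(H^\vee)^I$ already lies in the identity component of $(T^\vee)^I$.

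First I would reduce to a root-datum computation. Since $I\to \Xi=\Aut(H^\vee, B_H^\vee, T_H^\vee, X^\vee)$ factors through the automorphism group of the (based) root datum, and since $k$ has characteristic prime to $|I|$ (as $G$ splits over a tamely ramified extension), the standard structure theory of fixed points of diagram automorphisms applies: $(H^\vee)^{I,0}$ is the connected reductive group whose root datum is obtained by the folding procedure — roots are $I$-orbit sums (or averages) of roots of $H^\vee$, and $(T^\vee)^{I,0}$ is its maximal torus. Concretely I would use: $\xcoch((T^\vee)^{I,0}) = \xcoch(T^\vee)^I = \xch(T_H)^I$ (a free abelian group, a direct summand issue aside), while $\pi_0((T^\vee)^I) = \xch(T^\vee)^I / (\text{image of } \xch(T^\vee)\text{-norm})$, which by tameness is the torsion of $(\xch(T_H))_I$ — this is exactly the ``two dual descriptions of coinvariants versus invariants'' phenomenon (invariants and coinvariants of a tame action differ only by torsion, and $\pi_0(\text{torus fixed points})$ records precisely that torsion). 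Then I would match this against $\pi_0((H^\vee)^I)$, for which there is an analogous formula in terms of the fundamental group / the action on $\xch(T^\vee)$ modulo the coroot lattice of the folded datum.

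The key step, and the main obstacle, is to show these two component groups are literally the \emph{same} abelian group and not merely abstractly isomorphic — i.e. that the natural inclusion-induced map is an isomorphism. I would argue as follows: $\pi_0((H^\vee)^I) = (H^\vee)^I/(H^\vee)^{I,0}$; since $(H^\vee)^{I,0}$ is reductive with maximal torus $(T^\vee)^{I,0}$ and $N_{(H^\vee)^{I,0}}((T^\vee)^{I,0})$ surjects onto the folded Weyl group $W^I$, and since $(H^\vee)^I$ normalizes $(H^\vee)^{I,0}$ hence (after conjugating) normalizes $(T^\vee)^{I,0}$, every component of $(H^\vee)^I$ meets $N_{(H^\vee)^I}((T^\vee)^{I,0})$; the action on $\xcoch((T^\vee)^{I,0})$ identifies this normalizer's component group with an extension of (a subgroup of) $\Aut(\text{folded datum})$ by $\pi_0((T^\vee)^I)$, and one checks — this is where the pinning is crucial — that the ``outer'' part is trivial because $I$ acts through pinned automorphisms, so the automorphisms of the folded datum realized inside $(H^\vee)^I$ are inner. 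Hence $\pi_0((H^\vee)^I)$ is covered by $\pi_0(N_{(T^\vee)^I}((T^\vee)^{I,0})) = \pi_0((T^\vee)^I)$, giving surjectivity; injectivity then follows because $(T^\vee)^{I,0} = (T^\vee)^I \cap (H^\vee)^{I,0}$, which one gets from the fact that $(T^\vee)^I \cap (H^\vee)^{I,0}$ is a diagonalizable subgroup of the torus $(T^\vee)^{I,0}$ containing it, hence equal to it (it is connected, being a finite-index closed subgroup of a connected group that contains that group). I expect the delicate point to be verifying that no genuinely outer (non-inner) automorphism of the folded root datum is realized by elements of $(H^\vee)^I$ — this is exactly the input that rules out, e.g., spurious extra components — and that this is precisely where one invokes that $\psi\colon I\to\Xi$ lands in \emph{pinned} automorphisms and that $T^\vee \subset B^\vee$ are $I$-stable, so the whole analysis can be done compatibly with a fixed pinning of the folded group.
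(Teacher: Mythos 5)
Your route is genuinely different from the paper's. The paper argues cohomologically: it first observes that both $(H^\vee)^I$ and $(T^\vee)^I$ are connected when $H^\vee$ is simply-connected (Steinberg, plus the fact that $I$ permutes a basis of $\xch(T^\vee)$), then propagates the statement to $H^\vee_{\der}$ and finally to $H^\vee$ by comparing the long exact $\pi_0$--$\rH^1$ sequences for $1\to Z\to H^\vee_\s\to H^\vee_{\der}\to 1$ and $1\to H^\vee_{\der}\to H^\vee\to D\to 1$ with the corresponding sequences for the maximal tori. The key input there is that the fiber over the identity of $\rH^1(I,T^\vee)\to\rH^1(I,H^\vee)$ is trivial; this follows from a result of \cite{PZ} identifying $\rH^1(I,H^\vee)$ as the quotient of $\rH^1(I,T^\vee)$ by a $W^I$-action, which in turn requires the surjectivity of $N_{H^\vee}(T^\vee)^I\to W^I$ (Steinberg's argument; this is exactly where the pinned hypothesis enters). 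Your approach via the reductive structure of $(H^\vee)^{I,0}$ and the normalizer of $(T^\vee)^{I,0}$ is more ``geometric'', but at bottom it needs the same surjectivity $N_{H^\vee}(T^\vee)^I\twoheadrightarrow W^I$, so the two proofs share the essential lemma while packaging it very differently.

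There is a genuine gap in your injectivity step, and the surjectivity sketch needs tightening. You write that $(T^\vee)^I\cap(H^\vee)^{I,0}$ ``is a diagonalizable subgroup of the torus $(T^\vee)^{I,0}$ containing it, hence equal to it'' — but the containment $(T^\vee)^I\cap(H^\vee)^{I,0}\subset(T^\vee)^{I,0}$ is precisely what injectivity asserts, so this is circular; the parenthetical about finite-index subgroups does not repair it (a finite-index closed subgroup of a connected algebraic group is the whole group). The correct short argument, possibly what you intended, is: $(T^\vee)^I\cap(H^\vee)^{I,0}$ is a diagonalizable, hence abelian, subgroup of the connected reductive group $(H^\vee)^{I,0}$ containing its maximal torus $(T^\vee)^{I,0}$; it therefore lies in $Z_{(H^\vee)^{I,0}}((T^\vee)^{I,0})=(T^\vee)^{I,0}$. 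For surjectivity, the appeal to ``outer automorphisms of the folded datum being inner'' is tangential to what is required. The clean route: no root of $H^\vee$ vanishes on $\Lie(T^\vee)^I$ (if $\alpha$ did, so would the nonzero $I$-orbit sum $\sum_{\ga\in I}\ga\alpha$, contradicting nondegeneracy of the pairing on invariants), so $(T^\vee)^{I,0}$ is regular and $Z_{H^\vee}((T^\vee)^{I,0})=T^\vee$; hence any $I$-fixed element normalizing $(T^\vee)^{I,0}$ normalizes $T^\vee$ and lies in $N_{H^\vee}(T^\vee)^I$. Every component of $(H^\vee)^I$ meets $N_{H^\vee}(T^\vee)^I$ by conjugacy of maximal tori of $(H^\vee)^{I,0}$, and the surjectivity $N_{H^\vee}(T^\vee)^I\to W^I$ then gives $N_{H^\vee}(T^\vee)^I=(T^\vee)^I\cdot\bigl(N_{H^\vee}(T^\vee)^I\cap(H^\vee)^{I,0}\bigr)$, which yields surjectivity. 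With these repairs your proof goes through, but it is longer than the paper's bootstrap and needs additional structural facts (notably that $(T^\vee)^{I,0}$ is a maximal torus of $(H^\vee)^{I,0}$) that the cohomological argument avoids.
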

\begin{proof} We will not distinguish a group from its
$E$-points. As we purely work with dual groups, we switch the
notation $H^\vee$ to $H$ etc. in the proof. Let us choose $\gamma$
to be a generator of $I$. We need to show that
$\pi_0(T^I)=\pi_0(H^I)$. Let $N$ be the normalizer of $T$ in $H$ and
let $W=N/T$ be the Weyl group. Then $I$ acts on $W$ naturally. Let
us define a right action of $W^I$ on $\rH^1(I,T)$ as follows.
Suppose $w$ is in $W^I$ and $c$ be a cohomology class; lift $w$ to
$n\in N$ and lift $c$ to a cocycle $\varphi:I\to T$. Then we set
$(c\cdot w) (\gamma)=[n^{-1}\varphi(\gamma)\gamma(n)]$. It is clear
that this is independent of all choices. We will deduce Lemma
\ref{connected components} from the following fact.

\begin{lem}Under the above definition, every element $w\in W^I$ acts on $\rH^1(I,T)$
via a group automorphism. In addition, $\rH^1(I,H)$ is the quotient
of $\rH^1(I,T)$ via the above action.
\end{lem}
\begin{proof}Observe that the map $N^I\to
W^I$ is surjective. Indeed, let $H_\der$ be the derived group of $H$
and $H_\s$ be the simply-connected cover of $H_\der$. We have
corresponding groups $N_\der,N_\s, T_\der, T_\s$. We now apply the
argument of \cite{St} p. 55 (5) to $H_{\s}$ and $\gamma$. Our
assumption that $\gamma$ is pinned allows us to take $t=1$ in
\emph{loc. cit.}. It follows that the natural map $N_{\s}^I\to W^I$
is surjective; therefore the same is true for $N^I\to W^I$ (another
argument of this surjectivity can be found in \cite[Lemma 6.2]{Bo}).
By taking the lift $w$ to $n\in N^I$, it is clear that $w$ acts on
$\rH^1(I,T)$ via group automorphisms. The second statement was
proved in \cite{PZ}.
\end{proof}
\begin{rmk}{\rm The above lemma in particular shows that if $I$ acts on $H$ via pinned automorphisms, then
$\rH^1(I,H)$ has a canonical abelian group structure. This does not
necessarily hold for arbitrary action of $I$.
 }\end{rmk}
\begin{cor}\label{cor}
The preimage of $1\in \rH^1(I,H)$ under $\rH^1(I,T)\to\rH^1(I,H)$ is
$1$.
\end{cor}

We continue to prove Lemma \ref{connected components}. First, if $H$
is simply-connected, then $H^I$ is connected as is shown in
\cite[Theorem 8.2]{St}.  On the other hand, $I$ acts on $T$ via
permuting a basis of $\xch(T)$. Therefore, $T^I$ is also connected.
The lemma holds in this case. For general $H$, let $H_\der$ be the
derived group of $H$ and $H_\s$ be the simply-connected cover of
$H_\der$. Let $T_\der$ and $T_\s$ be the corresponding preimages of
$T$. Write
$$
1\to Z\to H_{\s}\to H_{\der}\to  1
$$
which then gives
\begin{equation}\label{ext1}
1   \to \pi_0(H^I_{\der})\to \rH^1(I, Z)\to \rH^1(I, H_{\s}) .
\end{equation}
Similarly, the sequence of maximal tori
$$
1\to Z\to T_{\s}\to T_{\der}\to 1
$$
gives
\begin{equation}\label{ext2}
1\to \pi_0(T^I_{\der})\to \rH^1(I, Z) \to \rH^1(I, T_{\s}).
\end{equation}
Comparing (\ref{ext1}) and (\ref{ext2}) and applying Corollary
\ref{cor}, we obtain that the natural map
\begin{equation}\label{ext3}
\pi_0(T^I_{\der})\xrightarrow{\sim} \pi_0(H^I_{\der})
\end{equation}
is an isomorphism. Now consider
$$
1\to H_{\der}\to H\to D\to 1
$$
which gives
\begin{equation}\label{ext4}
1\to \pi_0(H^I_{\der})\to \pi_0(H^I)\to\pi_0(D^I)\to \rH^1(I,
H_{\der}) .
\end{equation}
Similarly, the sequence of maximal tori
$$
1\to T_{\der}\to T\to D\to 1
$$
give
\begin{equation}\label{ext5}
1\to \pi_0(T^I_{\der})\to \pi_0(T^I)\to\pi_0(D^I)\to \rH^1(I,
T_{\der}) .
\end{equation}
Comparing (\ref{ext4}) and (\ref{ext5}) and using Corollary
\ref{cor} again, we obtain that the natural map
\begin{equation}\label{ext6}
\pi_0(T^I)\xrightarrow{\sim} \pi_0(H^I)
\end{equation}
is an isomorphism.
\end{proof}

\quash{ Let $N_{H^I}(T^I)$ (resp. $N_{H^{I,0}}(T^{I,0})$) the the
normalizer of $T^I$ (resp. $T^{I,0}$) in $H^I$ (resp. $H^{I,0}$). As
$Z_H(T^I)=T$, we have $N_{H^I}(T^I)=N^I$ and
$N_{H^{I,0}}(T^{I,0})=N_{H^I}(T^I)\cap H^{I,0}$. Combining with (the
proof of) Lemma \ref{connected components}, we have
\[\frac{N_{H^{I,0}}(T^{I,0})}{T^{I,0}}\simeq \frac{N_{H^I}(T^I)}{T^I}\simeq W^I.\]
}

Recall that there is a natural partial order ``$\preceq$" on
$\xcoch(T)_I$ given by \eqref{order}. We claim that

\begin{lem}\label{irr}
(i) For $\bar{\mu}\in\xcoch(T)_I^+$, there is a unique (up to
isomorphism) irreducible representation $W_{\bar{\mu}}$ of
$(H^\vee)^I$ of highest weight $\bar{\mu}$. In addition, any
irreducible representation of $(H^\vee)^I$ is of this form.

(ii) The multiplicity of the $\bar{\mu}$-weight in $W_{\bar{\mu}}$
is one.
\end{lem}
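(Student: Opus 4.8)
The plan is to prove this purely by highest--weight theory for the (possibly disconnected) reductive group $(H^\vee)^I$, using Lemma \ref{connected components} to control its component group and the Borel--Weil construction on the flag variety of the identity component to produce the modules $W_{\bar\mu}$ directly. Write $\Gamma_0:=(H^\vee)^{I,0}$, a connected reductive group with maximal torus $S^0:=(T^\vee)^{I,0}$ and Borel subgroup $B_0:=(B^\vee)^{I,0}$ (that $B_0$ is a Borel of $\Gamma_0$ is due to Steinberg), and set $S:=(T^\vee)^I$, so that $\xch(S)=\xcoch(T)_I$ and $\xch(S^0)=\xcoch(T)_I/(\text{torsion})$. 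By Lemma \ref{connected components} one has $(H^\vee)^I=\Gamma_0\cdot S$ with $\Gamma_0\cap S=S^0$, so $\Pi:=(H^\vee)^I/\Gamma_0\cong S/S^0$ is finite abelian. Since $S\subset T^\vee$ normalizes every root subgroup of $H^\vee$ and is fixed pointwise by $I$ (so conjugation by $S$ commutes with the $I$--action), $S$ normalizes $U':=(U^\vee)^I$, the unipotent radical of $B_0$, hence normalizes $B_0=U'S^0$; thus $B':=U'S=U'\rtimes S$ is a subgroup of $(H^\vee)^I$ with $\xch(B')=\xch(S)=\xcoch(T)_I$, and $\Gamma_0\hookrightarrow(H^\vee)^I$ induces an isomorphism $\Gamma_0/B_0\xrightarrow{\sim}(H^\vee)^I/B'$ onto the (proper) flag variety of $\Gamma_0$. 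Finally, I will use the standard fact that, under $\xch(S)=\xcoch(T)_I$ and $\xch(S^0)=\xcoch(T)_I/(\text{torsion})$, the condition $\bar\mu\in\xcoch(T)_I^+$ of \eqref{plus} is equivalent to dominance relative to $B_0$ of the image $\mu$ of $\bar\mu$ in $\xch(S^0)$ --- the usual description of the based root datum of the identity component of a pinned fixed--point group (cf. \cite{PZ}).

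For existence I would use Borel--Weil. Given $\bar\mu\in\xcoch(T)_I^+$, regard it as a character of $B'$ (trivial on $U'$), let $\mathcal L_{\bar\mu}$ be the associated $(H^\vee)^I$--equivariant line bundle on $(H^\vee)^I/B'$, and put $W_{\bar\mu}:=\rH^0((H^\vee)^I/B',\mathcal L_{\bar\mu})$. Under $(H^\vee)^I/B'\cong\Gamma_0/B_0$ the bundle $\mathcal L_{\bar\mu}$ restricts to $\mathcal L_\mu$, so as a $\Gamma_0$--module $W_{\bar\mu}=\rH^0(\Gamma_0/B_0,\mathcal L_\mu)$, which in characteristic $0$ is the irreducible $\Gamma_0$--module of highest weight $\mu$ (classical Borel--Weil; nonzero since $\mu$ is dominant). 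Hence $W_{\bar\mu}$ is irreducible over $\Gamma_0$, a fortiori over $(H^\vee)^I$; its $U'$--invariants form a line on which $S^0$ acts by $\mu$ and $S$ by $\bar\mu$; and any vector of $S$--weight $\bar\mu$ has $S^0$--weight $\mu$, which is maximal among the $S^0$--weights, hence is killed by $U'$ and so lies in that line. This gives the existence assertion of (i) together with the multiplicity--one statement (ii).

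It then remains to show that $\bar\mu\mapsto W_{\bar\mu}$ exhausts, up to isomorphism, the irreducible representations of $(H^\vee)^I$ and is injective; here I would invoke Clifford theory together with the extendability just established. For an irreducible $W$ of $(H^\vee)^I$, conjugation by $S$ on $\Gamma_0$ fixes $S^0$ pointwise and preserves $B_0$, hence is an inner automorphism of $\Gamma_0$; therefore $\Pi$ acts trivially on the set of isomorphism classes of irreducible $\Gamma_0$--modules, so $W|_{\Gamma_0}$ is isotypic, say $V^{\oplus m}$, with $V$ of some highest weight $\mu$. By the previous paragraph $V$ extends to $(H^\vee)^I$; by Schur's lemma any two extensions of $V$ differ by a twist by a character of $\Pi=S/S^0$, i.e.\ by an element of $\xch(S/S^0)=\ker(\xcoch(T)_I\to\xcoch(T)_I/(\text{torsion}))$, and these twists act simply transitively both on the set of extensions of $V$ and on the set of lifts $\bar\mu\in\xcoch(T)_I$ of $\mu$. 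In particular $m=1$, every irreducible is some $W_{\bar\mu}$, and $\bar\mu$ (the $S$--weight of $W^{U'}$, which is dominant) determines $W$, completing (i).

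The step that genuinely needs the Borel--Weil input --- and which I expect to be the main obstacle --- is exactly the equality $m=1$, i.e.\ that every irreducible of $(H^\vee)^{I,0}$ extends to $(H^\vee)^I$: a priori the Clifford obstruction to extending $V$ lies in $\rH^2(\Pi,\overline{\bbQ}_\ell^\times)$ and need not vanish for a formal reason, so one cannot argue by pure group theory; it is the construction of honest (not merely projective) $(H^\vee)^I$--representations as sections of the line bundles $\mathcal L_{\bar\mu}$ on the flag variety of $(H^\vee)^{I,0}$ that rules it out. Everything else --- the Steinberg facts about $B_0$ and $S^0$, the identification of the dominance conditions, and the bookkeeping with $\Pi$ and torsion in $\xcoch(T)_I$ --- is routine once Lemma \ref{connected components} is in hand.
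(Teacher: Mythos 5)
Your proof is correct and fills in a detail the paper leaves implicit. At the top level the two arguments coincide: both reduce to the connected reductive group $(H^\vee)^{I,0}$, invoke Lemma \ref{connected components} to identify the component group $\Pi := (H^\vee)^I/(H^\vee)^{I,0}$ with $(T^\vee)^I/(T^\vee)^{I,0}$, and then classify irreducibles of $(H^\vee)^I$ by Clifford theory as character twists of a fixed extension of the highest-weight module $W_{\bar{\bar\mu}}$ of $(H^\vee)^{I,0}$. The paper asserts, citing only ``Lemma \ref{connected components} and Frobenius reciprocity,'' that $\ind_{(H^\vee)^{I,0}}^{(H^\vee)^I}W_{\bar{\bar\mu}}\simeq\bigoplus_\chi W\otimes\chi$ with $W$ a single irreducible of $(H^\vee)^I$ restricting to $W_{\bar{\bar\mu}}$; the unstated ingredient here is precisely that $W_{\bar{\bar\mu}}$ does extend to an honest (not merely projective) $(H^\vee)^I$-module. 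You make this explicit via the Borel--Weil construction on $(H^\vee)^I/B'$ with $B' = U'\rtimes(T^\vee)^I$, which simultaneously delivers the extension and the multiplicity-one statement (ii) --- a genuine clarification of the paper's argument. One small caveat on your closing remark: the claim that the Mackey obstruction in $\rH^2(\Pi,\overline{\bbQ}_\ell^\times)$ ``cannot be ruled out by pure group theory'' overstates things. Each root $\alpha$ of $(H^\vee)^{I,0}$ relative to $(T^\vee)^{I,0}$ lifts canonically to a character $\tilde\alpha\in\xch((T^\vee)^I)$ (the action of $(T^\vee)^I$ on the corresponding root space), the assignment $\alpha\mapsto\tilde\alpha$ is additive on the root lattice, and fixing the lift $\bar\mu$ of $\bar{\bar\mu}$ then propagates to a coherent lift of the entire $(T^\vee)^{I,0}$-weight grading of $W_{\bar{\bar\mu}}$ to a $(T^\vee)^I$-grading compatible with the $(H^\vee)^{I,0}$-action, killing the obstruction directly. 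Either route is valid; yours has the pleasant feature of realizing $W_{\bar\mu}$ concretely as a space of sections.
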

\begin{proof}
Indeed, let $\bar{\bar{\mu}}$ be the image of $\bar{\mu}$ in
$\xcoch(T)_I/\xcoch(T)_{I,\on{tor}}$, and $W_{\bar{\bar{\mu}}}$ be
the unique irreducible representation of $(H^\vee)^{I,0}$ of highest
weight $\bar{\bar{\mu}}$. Then by Lemma \ref{connected components}
and the Frobenius reciprocity
\[\ind_{(H^\vee)^{I,0}}^{(H^\vee)^I}W_{\bar{\bar{\mu}}}\simeq\bigoplus_{\chi\in \Rep((T^\vee)^I/(T^\vee)^{I,0})}W\otimes\chi,\]
where $W$ is an irreducible representation of $(H^\vee)^I$, whose
restriction to $(H^\vee)^{I,0}$ is isomorphic to
$W_{\bar{\bar{\mu}}}$. It is then clear than exact one of
($W\otimes\chi$) is an irreducible representation of $(H^\vee)^I$ of
highest weight $\bar{\mu}$. This proves the existence. Then
uniqueness is also clear because by the Frobenius reciprocity, every
irreducible representation of $(H^\vee)^I$ appears as a direct
summand in $\ind_{(H^\vee)^{I,0}}^{(H^\vee)^I}W_{\bar{\bar{\mu}}}$
for some $\bar{\bar{\mu}}\in\xcoch(T)_I/\xcoch(T)_{I,\on{tor}}$.
\end{proof}

Now we finish the proof of the theorem. Let $\mu\in\xcoch(T)^+$ be a
lift of $\bar{\mu}$. Then,
$\Res_{H^\vee}^{(H^\vee)^I}[V_{\mu}]=[W_{\bar{\mu}}]+\sum_{\bar{\la}\prec\bar{\mu}}c_{\bar{\la}\bar{\mu}}[W_{\bar{\la}}]$,
where $[X]$ stands for the element in the $K$-group corresponding to
$X$. Therefore,
\[\Res_{H^\vee}^{\tilde{G}^\vee}[V_\mu]=\Res_{(H^\vee)^I}^{\tilde{G}^\vee}[W_{\bar{\mu}}]+\sum_{\bar{\la}\prec\bar{\mu}}c_{\bar{\la}\bar{\mu}}\Res_{(H^\vee)^I}^{\tilde{G}^\vee}[W_{\bar{\la}}].\]
On the other hand, for $\bar{\mu}\in\xcoch(T)_I^+$, the intersection
cohomology sheaf $\IC_{\bar{\mu}}\in\calP_v$ gives rise to an
irreducible object $U_{\bar{\mu}}$ in $\Rep(\tilde{G}^\vee)$. By
Lemma \ref{mult}, we have
\[\Res_{H^\vee}^{\tilde{G}^\vee}[V_{\mu}]=[U_{\bar{\mu}}]+\sum_{\bar{\la}\prec\bar{\mu}}d_{\bar{\la}\bar{\mu}}[U_{\bar{\la}}].\]
By induction on $\bar{\mu}$, one immediately obtains that
\[\Res_{(H^\vee)^I}^{\tilde{G}^\vee}[W_{\bar{\mu}}]=U_{\bar{\mu}}+\sum_{\bar{\la}\prec\bar{\mu}}e_{\bar{\la}\bar{\mu}}[U_{\bar{\la}}].\]
Since $[W_{\bar{\mu}}]$ (resp. $[U_{\bar{\mu}}]$) form a
$\bbZ$-basis of $K(\Rep((H^\vee)^I))$ (resp.
$K(\Rep(\tilde{G}^\vee))$), this implies that
$\Res_{(H^\vee)^I}^{\tilde{G}^\vee}$ is an isomorphism and therefore
$\tilde{G}^\vee=(H^\vee)^I$.
\end{proof}
\quash{
\begin{rmk}\label{connected}{\rm
Although $(H^\vee)^I$ is not necessarily connected, in many aspect,
it behaves like the connected reductive group due to Lemma
\ref{connected components}. One example is provided by Lemma
\ref{irr}. Let us list a few more. We can talk about the root datum
of $(H^\vee)^I$. Namely, the set of roots for $(H^{I,0},T^{I,0})$,
denoted by
\[\Phi(H^{I,0},T^{I,0})\subset
\xch(T^{I,0})=\xch(T)_I/\xch(T)_{I,\on{tor}}\] can be naturally
lifted to a subset $\Phi(H^{I,0},T^{I,0})\subset\xch(T)_I$ (by
considering the action of $T^I$ on $\Lie (H^{I,0})$). The exact
sequence
\[1\to \on{Int}((H^\vee)^I)\to \Aut((H^\vee)^I)\to \on{Out}((H^\vee)^I)\to 1,\]
is split with a splitting given by
$\Aut((H^\vee)^I,(B^\vee)^I,(T^\vee)^I,X)\simeq
\on{Out}((H^\vee)^I)$ In addition, we can identify
$\on{Out}((H^\vee)^I)$ with the group of automorphisms of the root
datum. Another example is provided in Proposition \ref{parallel}.
 }\end{rmk}
}
\medskip

Now, we switch to Theorem \ref{main finite}. Therefore, we will
assume that $G$ is a quasi-split reductive group defined over the
non-archimedean local field $F=\bbF_q\ppart$ (we can in fact replace
$\bbF_q$ by any other perfect field). Let $k=\overline{\bbF}_q$, and
$\sigma$ be the Frobenius element in $\Gal(k/\bbF_q)$. Let
$v\in\calB(G,F)$ be a special vertex in the building which remains
to be special when base change to $k\ppart$ (such a vertex is called
very special ,see \S \ref{parameter} for more discussions). Let
$\underline{G}_v$ be the special parahoric group scheme over
$\bbF_q[[t]]$ corresponding to $v$ and $K_v=L^+\underline{G}_v$.
Then the affine flag variety $\Fl_v=LG/K_v$ is defined over $\bbF_q$
and when base change to $k$, $\Fl_v\otimes k$ is the affine flag
variety considered in the previous sections, and we have the
Tannakian category $\calP_v=\calP_{K_v\otimes k}(\Fl_v\otimes k)$
with a fiber functor $\rH^*$.

As in Lemma \ref{action of Galois}, there is an action of $\sigma$
on $\calP_v$, and therefore an action of $\sigma$ on $(H^\vee)^I$.
Following the notation as in the appendix, we denote this action by
$\on{act}^{geom}$. On the other hand, since there is a canonical
pinning $(H^\vee,B_H^\vee,T_H^\vee,X^\vee)$, there is a canonical
action of $\Gal(F^s/F)$ on $H^\vee$ by pinned automorphisms and
therefore an action of $\sigma$ on $(H^\vee)^I$ by pinned
automorphisms. We denote this action by $\on{act}^{alg}$. As in the
Appendix, we denote $\on{cycl}$ to be the cyclotomic character of
$\Gal(k/\bbF_q)$, so that $\on{cycl}(\sigma)=q$. Let
\[\chi=\rho\circ\on{cycl}:\Gal(k/\bbF_q)\to (H^\vee_\ad)^I.\]  As in Proposition \ref{comparison},
since the action of $\on{act}^{geom}$ on $(H^\vee)^I$ fixes the
cohomological grading and acts on $X^\vee$ via the cyclotomic
character, we know that
\[\on{act}^{geom}=\on{act}^{alg}\circ\Ad_\chi,\]
and there is an isomorphism
\begin{equation}\label{geom-to-alg}(H^\vee)^I\rtimes_{\on{act}^{alg}}\Gal(k/\bbF_q)\to
(H^\vee)^I\rtimes_{\on{act}^{geom}}\Gal(k/\bbF_q), \quad
(g,\sigma)\mapsto (\Ad_{\chi(\sigma)^{-1}}g,\sigma).\end{equation}
Now regarding $(H^\vee)^I\rtimes_{\on{act}^{alg}}\Gal(k/\bbF_q)$ as
a pro-algebraic group over $\overline{\bbQ}_\ell$, as in the
Appendix, we have the category
$\Rep((H^\vee)^I\rtimes_{\on{act}^{alg}}\Gal(k/\bbF_q))$ of
algebraic representations of
$(H^\vee)^I\rtimes_{\on{act}^{alg}}\Gal(k/\bbF_q)$. Now Theorem
\ref{main finite} follows from the same line as in the Appendix.

\section{$\IC$-stalks, $q$-analogy of the weight multiplicity, and the Lusztig-Kato
polynomial}\label{stalk} Let $\bar{\mu}\in\xcoch(T)_I$ and
$\calF\in\calP_v$. We determine the stalk cohomology $\calF$ at the
point $s_{\bar{\mu}}$. By abuse of notation, the inclusion map
$s_{\bar{\mu}}\in {\Fl_v}$ is still denoted by $s_{\bar{\mu}}$. It
will be convenient to define
\[\on{Stalk}_{\bar{\mu}}(\calF)=s_{\bar{\mu}}^*\calF[-(2\rho,\bar{\mu})],\quad \on{Costalk}_{\bar{\mu}}(\calF)=s_{\bar{\mu}}^!\calF[(2\rho,\bar{\mu})].\]
Let $X^\vee$ be the regular nilpotent element of $\Lie(H^\vee)^I$
given by the pinning. It defines an increasing filtration (the
Brylinski-Kostant filtration) on any representation $V$ of $H^\vee$
or $(H^\vee)^I$,
\begin{equation}\label{BK fil}
F_iV=(\ker X^\vee)^{i+1}.
\end{equation}
For $\bar{\mu}\in\xcoch(T)_I$, denote by $V(\bar{\mu})$ the
$\bar{\mu}$-weight subspace of $V$, under the action of
$(T_H^\vee)^I$. Then filtration \eqref{BK fil} induces
\begin{equation}\label{BKfilmu}
F_iV(\bar{\mu})=V(\bar{\mu})\cap F_iV.
\end{equation}
Let
\[P_{\bar{\mu}}(V,q)=\sum \on{gr}^F_iV(\bar{\mu})q^i\]
be the $q$-analogue weight multiplicity polynomial.

\begin{thm}\label{stalk cohomology}
Let $\calF\in\calP_v$ and let $V=\rH^*({\Fl_v},\calF)$ be the
corresponding representation of $(H^\vee)^I$. Then
\[P_{\bar{\mu}}(V,q)=\sum \dim \rH^{-2i}(\on{Stalk}_{\bar{\mu}}(\calF))q^{i}=\sum \dim \rH^{2i}(\on{Costalk}_{\bar{\mu}}(\calF))q^{i}.\]
\end{thm}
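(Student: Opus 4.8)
The plan is to transplant Ginzburg's argument (\cite{Gi}; equivalently the equivariant-cohomology computation of \cite{YZ}) from $\Gr_H$ to $\Fl_v$, feeding in the structure of $\calP_v$ already established. I begin with reductions. Both sides of the claimed identity are additive over direct sums in $\calF$ and over direct sums of modules in $V$, and $\calP_v$ is semisimple (Lemma \ref{semisimple}); each $\IC_{\bar\nu}$ is Verdier self-dual, and Verdier duality interchanges $\on{Stalk}_{\bar\mu}$ with $\on{Costalk}_{\bar\mu}$, so it suffices to prove the Stalk identity for $\calF=\IC_{\bar\nu}$ (the Costalk statement being its dual, using $\bbD\IC_{\bar\nu}\simeq\IC_{\bar\nu}$). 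Throughout, the deformation $\wGr_\calG\to\bbA^1$, the central functor $\calZ$, and the isomorphism $\rH^*\circ\calZ\simeq\rH^*$ of Corollary \ref{fib} will be the bridge carrying split-group facts to $\Fl_v$: in particular $\IC_{\bar\nu}$ is a direct summand of $\calZ(\calS(V_\mu))$ for any $\mu\in\xcoch(T)^+$ over $\bar\nu$ (Lemma \ref{mult}), so $V=\rH^*(\Fl_v,\IC_{\bar\nu})$ is a summand of $\Res_{H^\vee}^{(H^\vee)^I}V_\mu$, and its $(T_H^\vee)^I$-weight spaces and $X^\vee$-filtration are the coarsenings, along $\xcoch(T)\to\xcoch(T)_I$, of those of $V_\mu$.

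The argument then rests on three geometric inputs. $(A)$ \emph{The weight functor.} For $\bar\mu\in\xcoch(T)_I$ put $S_{\bar\mu}=LU\cdot s_{\bar\mu}$, with $U$ the unipotent radical of the rational Borel $B$, and let $S_{\bar\mu}^-=LU^-\cdot s_{\bar\mu}$; one needs that $\rH^*_c(S_{\bar\mu},\calF)$ is concentrated in degree $(2\rho,\bar\mu)$ and that $\calF\mapsto\bigoplus_{\bar\mu}\rH^{(2\rho,\bar\mu)}_c(S_{\bar\mu},\calF)$ realizes the $(T_H^\vee)^I$-weight decomposition of the fiber functor $\rH^*$, so that $V(\bar\mu)=\rH^{(2\rho,\bar\mu)}_c(S_{\bar\mu},\calF)$. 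This is the ramified Mirkovi\'{c}--Vilonen theory; one obtains it either by re-running \cite{MV} on $\Fl_v$ (the $K_v$-orbit parametrization and dimension formulas recalled in \S\ref{reminder} being exactly the needed input), or by transporting the statement from $\Gr_H$ along $\calZ$ and $\wGr_\calG$, matching the $I$-action via Lemma \ref{psi}, the compatibility with compactly supported cohomology being forced by the single-degree concentration. $(B)$ \emph{The contraction principle.} Choosing a regular dominant cocharacter $\chi\colon\bbG_m\to T$ (which takes values in $\underline{G}_v\subset K_v$) and letting $\bbG_m$ act on $\Fl_v$ by left translation along $\chi$, each $s_{\bar\mu}$ is fixed, $S_{\bar\mu}$ (resp.\ $S_{\bar\mu}^-$) is its attracting (resp.\ repelling) cell, and every $\calF\in\calP_v$ is equivariant (being $K_v$-equivariant); the contraction principle (Braden) then gives $\rH^*(S_{\bar\mu},\calF)\simeq s_{\bar\mu}^*\calF$ and $\rH^*_c(S_{\bar\mu}^-,\calF)\simeq s_{\bar\mu}^!\calF$. $(C)$ \emph{The Lefschetz operator.} Fix a $\bbG_m$-equivariant ample line bundle $\calL$ on $\Fl_v$, compatible through a relatively ample bundle on $\wGr_\calG$ with the ample bundle on $\Gr_H$ used to pin $H^\vee$, and set $e=c_1(\calL)\cup(-)$ on $V$. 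Since cup products commute with nearby cycles, $\rH^*\circ\calZ\simeq\rH^*$ carries $e$ to cup with $c_1$ on $\Gr_H$, i.e.\ to the principal nilpotent of $H^\vee$ (\cite{Gi,YZ}); this element is $I$-fixed, and for tame folding the $I$-fixed pinned principal nilpotent of $H^\vee$ is a pinned principal nilpotent of $(H^\vee)^I$, so $e=X^\vee$. Finally, purity of $\IC_{\bar\nu}$ and ampleness of $\calL$ yield hard Lefschetz, $e^n\colon\rH^{-n}(\Fl_v,\IC_{\bar\nu})\xrightarrow{\sim}\rH^{n}(\Fl_v,\IC_{\bar\nu})$, and the ensuing Lefschetz decomposition of $V$ with $e$ raising and cohomological degree as weight.

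With $(A)$--$(C)$ in hand, the Stalk identity is extracted as in \cite{Gi}: one stratifies a Schubert variety $\Fl_{v\bar\nu}$ containing the support of $\calF$ by the $\chi$-attracting cells $S_{\bar\lambda}\cap\Fl_{v\bar\nu}$, $\bar\lambda\preceq\bar\mu$, and analyses the resulting exact couple computing the stalk $s_{\bar\mu}^*\calF\simeq\rH^*(S_{\bar\mu},\calF)$ out of the groups $\rH^*_c(S_{\bar\lambda},\calF)$; by the parity vanishing of $\IC$-stalks (Lemma \ref{semisimple}) and purity the spectral sequence degenerates after the page whose differential is cup product with $c_1(\calL)=e$, and matching the cohomological and weight gradings against the one-degree group $\rH^*_c(S_{\bar\mu},\calF)=V(\bar\mu)$ identifies $\rH^{-2i}(\on{Stalk}_{\bar\mu}(\calF))$ with $(\ker e^{i+1}\cap V(\bar\mu))/(\ker e^{i}\cap V(\bar\mu))=\on{gr}^F_iV(\bar\mu)$; the Costalk statement follows by Verdier duality (equivalently, by the same analysis applied to $S_{\bar\mu}^-$). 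I expect the principal difficulty to be exactly this last step -- making rigorous the interplay between the $\chi$-direction (which produces the cells and the forgetful maps $\rH^*_c(S_{\bar\mu},\calF)\to\rH^*(S_{\bar\mu},\calF)$) and the Lefschetz operator $e$ (which produces the Brylinski--Kostant filtration) -- together with the ramified verifications behind $(A)$ and the ample bundle, for which the deformation $\wGr_\calG$ and the central functor $\calZ$ are the tools at hand.
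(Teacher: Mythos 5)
Your route is genuinely different from the paper's, and it contains a gap that the paper's choice of tools is specifically designed to avoid.

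The paper does not touch semi-infinite orbits or a ramified Mirkovi\'c--Vilonen theory at all. Instead it works with $A$-equivariant cohomology, $A\subset K_v$ being the maximal split torus, whose fixed points on $\Fl_v$ are exactly the $s_{\bar\mu}$. It then combines three ingredients: Atiyah--Bott localization at a generic (and then a carefully chosen closed) point of $\fraka$; the identification, from the equivariant geometric Satake picture of \cite{YZ}, of the equivariant Chern class $e^{T_H}$ evaluated at the point $t$ with $h(t)=2\rho$ with $X^\vee+2\rho$, which is conjugated to $2\rho$ by a unique unipotent $n$; and a purely representation-theoretic $\fraks\frakl_2$-computation (Proposition \ref{two fil}) matching the equivariant-cohomology filtration $\rH_t^{\leq\bullet}$ with the Brylinski--Kostant filtration, followed by Lemma \ref{splitting inj}, which uses the Kac--Moody big-cell contraction to show that the localization map $\rH_A^*(s_{\bar\mu}^!\calF)\to\rH_A^*(\calF)$ is split injective. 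Hard Lefschetz, hyperbolic localization, and semi-infinite orbits play no role.

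The gap in your argument is concentrated in step $(A)$. You need the semi-infinite orbits $S_{\bar\mu}\subset\Fl_v$ (equivalently the attracting cells of a regular dominant $\chi$) to carry a weight functor with the usual single-degree concentration of $\rH_c^*(S_{\bar\mu},-)$. The paper explicitly declines to claim this: the remark after Theorem \ref{main} in the introduction states that the geometry of semi-infinite orbits on $\Fl_v$ is unclear for the $\underline{G}_{v_1}$-type special parahorics of odd unitary groups, and that ``we do not know what happens in this last case.'' Re-running \cite{MV} on $\Fl_v$ is precisely what is not available. Your alternative, transporting the weight-functor statement from $\Gr_H$ through $\calZ$ and $\wGr_\calG$, does not work as stated either: nearby cycles commute with proper pushforward (which gives the paper its $\rH^*_A\circ\calZ\simeq\rH^*_A$), but they do not in any automatic way commute with $\rH^*_c$ along a family of locally closed non-proper strata, and the ``single-degree concentration forces compatibility'' step is circular because that concentration is exactly what needs to be proved on $\Fl_v$. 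Finally, the closing ``exact couple / spectral sequence degenerates after the page whose differential is cup with $c_1(\calL)$'' step is asserted rather than argued; even granting $(A)$--$(C)$, making the interaction between the $\chi$-contraction grading and the Lefschetz operator precise is nontrivial, and this is the place where the paper substitutes the explicit $\fraks\frakl_2$ computation of Proposition \ref{two fil} together with Lemma \ref{splitting inj} for any spectral-sequence argument.
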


Observe that by the parity vanishing property of $\calF$,
$\on{Stalk}_{\bar{\mu}}\calF$ and $\on{Costalk}_{\bar{\mu}}(\calF)$
only concentrate on even degrees.

In the split case, this is proved in \cite{Lu,Bry}. A more geometric
proof is given by Ginzburg \cite{Gi}, which relies on the geometric
Satake isomorphism and certain techniques of equivariant cohomology.
We will follow Ginzburg's idea.

\medskip

Let us give a quick review of equivariant cohomology (see \cite{BL}
and \cite[\S 8]{Gi} for more details). Let $M$ be a variety with an
action of a torus $A$. Let $\bbB A$ be the classifying space (stack)
of $A$. Let $R_A=\rH^*(\bbB A)$ and recall that $\Spec R_A\simeq
\fraka=\Lie A$. Let $t\in\fraka$ be an element. We denote
$\kappa(t)$ be the residue field of $t$ and let
$\rH_t:=\rH^*_A\otimes_{R_A}\kappa(t)$. If $t=0$, this functor
$\rH_0$ inherits a canonical grading. For general $t$, this functor
equips with a canonical filtration by
\[\rH_t^{\leq i}:=\on{Im}(\sum_{j\leq i}\rH_A^j\to \rH_t),\]
and there is a canonical isomorphism $\on{gr}^*\rH_t\simeq \rH^*_0$.
For every $\calF\in D_A(M)$, there is a spectral sequence
$E_2^{p,q}=\rH^p(\bbB A,H^q(M,\calF))\Rightarrow
\rH^{p+q}_A(M,\calF)$. If this spectral sequence degenerates at the
$E_2$-term (which is always the case in the following discussion),
then $\rH_0^*(M,\calF)\simeq \rH^*(M,\calF)$ and therefore we have a
canonical isomorphism $\on{gr}\rH_t(M,\calF)\simeq \rH^*(M,\calF)$.

Now assume that the action of $A$ on $M$ has only isolated fixed
points $M^A$, and let $\eta\in\fraka$ be the generic point. Then the
localization theorem claims that there is an isomorphism
\begin{equation}\label{localization} \bigoplus_{x\in
M^A}\rH_\eta(i_x^!\calF)\simeq\rH_\eta(M,\calF)\simeq
\bigoplus_{x\in M^A}\rH_\eta(i_x^*\calF),\end{equation} where $i_x$
is the inclusion of the point $x$.

\medskip

Now consider $\rH^*_{T_H}:\Sat_H\to R_{T_H}\Mod$. It is proved in
\cite[Lemma 2.2]{YZ} that there is a canonical grading preserving
isomorphism
\begin{equation}\label{triv}
\rH_{T_H}^*\simeq \rH^*\otimes R_{T_H}:\Sat_H\to R_{T_H}\Mod,
\end{equation}
which endows $\rH_{T_H}^*$ with a structure of tensor functors and
defines a canonically trivialized $H^\vee$-torsor $\calE\simeq
H^\vee\times\frakt_H$ on $\Spec R_{T_H}=\frakt_H=:\Lie T_H$. In
other words, the group scheme $\Aut^{\otimes} \rH^*_{T_H}$ over
$\frakt_H$ of the tensor automorphism of this fiber functor, which a
priori is an inner form of $H^\vee$, is canonically isomorphic to
$H^\vee\times\frakt_H$. In addition, the MV filtration and its
canonical splitting extend in the equivariant setting \cite[Lemma
2.2]{YZ} and provide $T_H^\vee\times\frakt_H\subset
B_H^\vee\times\frakt_H\subset H^\vee\times\frakt_H$. Now, let
$c^{T_H}(\calL)\in \rH^2_{T_H}(\Gr_H)$ denote the equivariant Chern
class of $\calL$\footnote{by replacing $\calL$ be a power of it, we
can assume that $\calL$ is $T_H$-equivariant.}. Then the action of
$c^{T_H}(\calL)$ on $\rH_{T_H}^*(\Gr_H,\calF)$ for $\calF\in\Sat_H$
can be identified with the action of an element
\[e^{T_H}\in \Gamma(\frakt_H,\Lie(\ad\calE)).\]
Since $\calE$ is canonically trivialized, $e^{T_H}$ can be regarded
as a map $\frakt_H\to\frakh^\vee$. Observe that $e^{T_H}$ is NOT the
constant map $X^\vee$. In fact,
\[e^{T_H}=X^\vee+h,\]
where $h:\frakt_{H}\to\frakt^\vee_H\simeq (\frakt_H)^*$ is given by
a nondegenerate invariant bilinear form (cf. \cite[Proposition
5.7]{YZ}). In particular, $(H^\vee,B^\vee_{H}, T^\vee_{H}, e^{T_H})$
is not a pinning over $\frakt_H$.

The equivariant homology $\rH^{T_H}_*(\Gr_H)$ is a commutative and
cocommutative Hopf algebra and $J^\vee=\Spec \rH^{T_H}_*(\Gr_H)$ is
a flat group scheme over $\frakt_H$, acting on every
$\rH_{T_H}^*(\Gr_H,\calF), \calF\in\Sat_H$. By Tannakian formulism,
this induces a map $\iota: J^\vee\to H^\vee\times \frakt_H$. In
\cite{YZ}, it is shown that this is a closed embedding, which
identifies $J^\vee$ with the $(H^\vee\times\frakt_H)^{e^{T_H}}$, the
centralizer of $e^{T_H}$ in $H^\vee\times\frakt_H$.

Let $\eta$ be the generic point of $\frakt_H$. Then $J^\vee_\eta$ is
indeed a torus in $H^\vee_\eta$ since $e^{T_H}(\eta)\in
\frakh^\vee_\eta$ is regular semisimple. Then localization theorem
gives rise to an isomorphism of $J^\vee_\eta$-modules
\begin{equation}\label{loc}
\bigoplus_{\mu\in\xcoch(T_H)}\rH_\eta(s_\mu^!\calF)\simeq\rH_\eta(\Gr_H,\calF).
\end{equation}
Following the idea of Ginzburg, we claim that this decomposition
corresponds to the weight decomposition under $J^\vee_\eta\subset
H^\vee_\eta$. First, let $B_H^\vee\to T_H^\vee$ be the natural
projection. As is shown in \cite{YZ}, $J^\vee\subset
B^\vee_H\times\frakt_H$ and the composition $J^\vee\to
B^\vee_H\times\frakt_H\to T^\vee_H\times\frakt_H$ is identified with
the map (cf. Remark 3.4 of \emph{loc. cit.})
\[R_{T_H}[\xcoch(T_H)]\simeq\bigoplus_{\mu\in\xcoch(T_H)} \rH^{T_H}_*(s_\mu)\to \rH^{T_H}_*(\Gr_H).\]
Over $\eta$, this is an isomorphism and therefore we obtain a
canonical isomorphism $J^\vee_\eta\simeq (T^\vee_H)_\eta$. In
addition, the action of $J^\vee_\eta$ on $\rH_{\eta,
c}(S_\mu,\calF)$ via $J^\vee_\eta\to (T_H^\vee)_\eta$ is identified
with the natural action of $J^\vee_\eta$ on
$\rH_\eta(s_\mu^!\calF)\simeq \rH_{\eta,c}(S_\mu,\calF)$. Therefore,
we obtain the following proposition, originally proved by Ginzburg
by another method.

\begin{prop}\label{wt decomposition}
Let $V\in \Rep(H^\vee)$ and $\calS(V)\in\Sat_H$ be the corresponding
sheaf (see \eqref{sat}). Under the identification of the weight
lattice of $J^\vee_\eta$ with $\xcoch(T_H^\vee)$ via the canonical
isomorphism $J^\vee_\eta\to (T^\vee_H)_\eta$, the direct summand
$\rH_\eta(s_\mu^!\calS(V))\subset \rH_\eta(\Gr_H,\calS(V))$
corresponds to the weight subspace $V(\mu)\subset V$ for
$J^\vee_\eta$.
\end{prop}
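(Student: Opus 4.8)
The plan is to combine the equivariant-localization decomposition \eqref{loc} with the equivariant refinement of the Mirkovi\'c--Vilonen picture recalled above, the one genuinely new point being the identification of the two resulting decompositions of $\rH_\eta(\Gr_H,\calF)$ together with the $J^\vee_\eta$-actions on the summands.

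First I would record that the $T_H$-fixed points of $\Gr_H$ are exactly the points $s_\mu$ with $\mu\in\xcoch(T_H)$, and that for a regular dominant cocharacter $\nu$, viewed via $\xcoch(T_H)\subset\frakt_H$ as a rational point specializing the generic point, the attracting locus of $s_\mu$ under the associated $\bbG_m$-action is precisely the semi-infinite orbit $S_\mu$ (the $LU_H$-orbit through $s_\mu$). Braden's hyperbolic localization theorem then yields a canonical isomorphism $\rH_\eta(s_\mu^!\calF)\simeq\rH_{\eta,c}(S_\mu,\calF)$, compatibly with the $R_{T_H}$-module structure after localizing at $\eta$; this is the equivariant lift of the canonical splitting of the MV filtration of \cite[Lemma 2.2]{YZ}, and summing over $\mu$ recovers \eqref{loc}, now read as the equivariant MV decomposition. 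The degeneration of the relevant spectral sequence, already invoked above, ensures $\rH_\eta(\Gr_H,\calF)$ is free over $\kappa(\eta)$ and that this is an honest decomposition of $J^\vee_\eta$-modules.

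Next I would pin down the $J^\vee_\eta$-action on each summand. By the results of \cite{YZ} recalled above one has $J^\vee\subset B_H^\vee\times\frakt_H$, and the composite $J^\vee\to B_H^\vee\times\frakt_H\to T_H^\vee\times\frakt_H$ is induced by the map $\bigoplus_\mu\rH^{T_H}_*(s_\mu)\to\rH^{T_H}_*(\Gr_H)$, which is an isomorphism over $\eta$; this is exactly the canonical identification $J^\vee_\eta\simeq(T_H^\vee)_\eta$ appearing in the statement. Since the unipotent radical of $B_H^\vee$ acts trivially on the associated graded of the MV filtration, the $J^\vee_\eta$-action on the summand $\rH_{\eta,c}(S_\mu,\calF)$ factors through $(T_H^\vee)_\eta$, and there it is the character $\mu$ — this is precisely how the torus $T_H^\vee\subset H^\vee$ and its weight spaces are manufactured in geometric Satake, transported to the generic fibre over $\frakt_H$. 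Taking $\calF=\calS(V)$ and using $\rH^*(\calS(V))\cong V$ as $H^\vee$-representations, we conclude that $\rH_\eta(s_\mu^!\calS(V))$ is the weight-$\mu$ subspace $V(\mu)$ for $J^\vee_\eta\simeq(T_H^\vee)_\eta$.

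The main obstacle is the first step: carrying out the hyperbolic-localization identification $\rH_\eta(s_\mu^!\calF)\simeq\rH_{\eta,c}(S_\mu,\calF)$ while keeping track of the full $R_{T_H}$-linear and $J^\vee$-equivariant structure, and verifying that under it the $J^\vee_\eta$-action is carried to the action through $T_H^\vee\subset B_H^\vee$. Once this compatibility between fixed-point localization and the equivariant MV splitting is in place, the rest is bookkeeping with the structures already set up in \cite{YZ}.
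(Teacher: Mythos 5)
Your proposal takes essentially the same route as the paper. Both arguments rest on the same two pillars from \cite{YZ}: the localization isomorphism \eqref{loc} over the generic point $\eta$, and the identification of $J^\vee\to B_H^\vee\times\frakt_H\to T_H^\vee\times\frakt_H$ with the natural map $\bigoplus_\mu\rH^{T_H}_*(s_\mu)\to\rH^{T_H}_*(\Gr_H)$, which is an isomorphism over $\eta$. The one stylistic difference is in how you extract the weight statement: you argue that since $J^\vee_\eta$ sits inside $(B_H^\vee)_\eta$ and the unipotent radical of $B_H^\vee$ kills the associated graded of the MV filtration, the $J^\vee_\eta$-action on each graded piece factors through $(T_H^\vee)_\eta$ and is the character $\mu$; the paper instead phrases the same conclusion directly as the compatibility between the natural $J^\vee_\eta$-action on $\rH_\eta(s_\mu^!\calF)\simeq\rH_{\eta,c}(S_\mu,\calF)$ and the action transported through $J^\vee_\eta\to(T_H^\vee)_\eta$. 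Your explicit invocation of Braden's hyperbolic localization to identify $\rH_\eta(s_\mu^!\calF)$ with $\rH_{\eta,c}(S_\mu,\calF)$ is making visible exactly the compatibility the paper asserts implicitly, and you correctly flag that checking this identification is $J^\vee$-equivariant is the crux; the paper discharges that check by appealing to the module-theoretic description of $J^\vee$ over $\rH^{T_H}_*(\Gr_H)$ from \cite{YZ}. So this is the paper's argument with an added elaboration of the localization step, not a genuinely different proof.
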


\begin{rmk}{\rm Let us observe that the localization isomorphism \eqref{loc}
holds over $\frakt_H$ after we remove all root hyperplanes. This is
because for every $T_H$-invariant finite dimensional closed
subvariety $Z\subset \Gr_H$, there are only finitely many
1-dimensional $T_H$-orbits in $Z$, and $T_H$ acts on these orbits
via rotations determined by roots. Therefore, in all the discussions
above, we can replace the generic point $\eta$ by any (closed) point
in $\frakt_H\setminus\{\tilde{a}=0, \tilde{a}\in\Phi_H\}$, where
$\Phi_H$ is the set of roots of $H$.
 }\end{rmk}

Now let $t$ be a closed point on $\frakt_H$ such that
\begin{equation}\label{pt t}
h(t)=2\rho
\end{equation}
so that $e^{T_H}(t)=X^\vee+h(t)=X^\vee+2\rho$. According to
\cite[Proposition 5.7]{YZ}, such point exists (unique up to adding
an element in the center $\frakz(\frakh)$ of $\frakh$) and does not
belong to any root hyperplanes. Therefore, the localization
isomorphism \eqref{loc} holds for $\rH_t$ by the above remark. From
now on, we will always choose the point $t$ satisfying \eqref{pt t}.

Recall that under the geometric Satake isomorphism
$\rH^*:\Sat_H\simeq \Rep(H^\vee)$, the natural grading on the
cohomology functor corresponds to the principal grading on
representations of $H^\vee$. More precisely, consider the
cocharacter $2\rho:\bbG_m\to T_H^\vee\subset H^\vee$. Then the
grading on the cohomology functor corresponds to the grading given
by $2\rho$ on the representations. This follows from the fact that
$\rH_c^*(S_\mu,\calF)$ is nonzero only in degree $(2\rho,\mu)$. Now
it is clear from \eqref{triv} that for the closed point
$t\in\frakt_H$, the filtration $\rH_t^{\leq i}$ corresponds to the
increasing filtration on the representations associated to the
gradings given by $2\rho$. For $i\in\bbZ$, let
\[\xcoch(T_H)_i=\{\mu\in\xcoch(T_H)\mid (2\rho,\mu)=i\}.\]
Let $V$ be a representation of $H^\vee$. Denote
\[V(i)=\sum_{\mu\in\xcoch(T_H)_i}V(\mu),\]
where $V(\mu)$ is the $\mu$-weight space of $J^\vee_t$. Let us
identify $\rH_t(\Gr_H,\calS(V))$ with $V$ canonically, so that
$\rH_t(s_\mu^*\calS(V))$ is identified with $V(\mu)$ by Proposition
\ref{wt decomposition}). Then we have the following proposition.

\begin{prop}\label{two fil}
Let $t$ be as in \eqref{pt t}. Write $\rH_t=\rH_t(\Gr_H,\calS(V))$
for simplicity. Then for any $m\in\bbZ$,
\[\rH_t^{\leq 2i+m}\cap \bigoplus_{\mu\in\xcoch(T)_m}\rH_t(s_\mu^!\calS(V))=\rH_t^{\leq 2i+1+m}\cap \bigoplus_{\mu\in\xcoch(T)_m}\rH_t(s_\mu^!\calS(V))= F_iV\cap V(m),\]
where $F_iV$ is defined as in \eqref{BK fil}.
\end{prop}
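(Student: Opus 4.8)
The plan is to transport both filtrations in the statement through the trivialization \eqref{triv} and through the localization description of the summands $\rH_t(s_\mu^!\calS(V))$; after that the assertion becomes an elementary statement about a finite-dimensional module for the principal $\SL_2\subset H^\vee$, which I would settle by a direct computation.

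First I fix the identifications. Via \eqref{triv} I identify $\rH_t=\rH_t(\Gr_H,\calS(V))$ with $V$ as graded vector spaces, where $V=\bigoplus_a V[a]$ carries the principal grading (so $V[a]=\rH^a(\Gr_H,\calS(V))$, the $a$-eigenspace of $2\rho$); under this identification $\rH_t^{\leq j}=\bigoplus_{a\leq j}V[a]$. By the description $e^{T_H}=X^\vee+h$ together with the normalization \eqref{pt t}, cup product with $c^{T_H}(\calL)$ acts on $\rH_t\simeq V$ as the operator $e:=e^{T_H}(t)=X^\vee+2\rho\in\frakh^\vee$, the sum of the regular nilpotent $X^\vee$ and the principal grading element $2\rho\in\frakt^\vee_H$, so that $[2\rho,X^\vee]=2X^\vee$ and $X^\vee$ raises the principal degree by $2$. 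Since $t$ avoids all root hyperplanes (see \eqref{pt t}), the localization isomorphism and Proposition \ref{wt decomposition} hold at $t$: $\rH_t=\bigoplus_\mu\rH_t(s_\mu^!\calS(V))$, each summand being the $\mu$-weight space for $J^\vee_t$ under the canonical isomorphism $J^\vee_t\simeq T^\vee_H$. As $e=e^{T_H}(t)\in\Lie J^\vee_t$ and its image under the composition $J^\vee\to B^\vee_H\times\frakt_H\to T^\vee_H\times\frakt_H$ evaluated at $t$ is $h(t)=2\rho\in\frakt^\vee_H$ (the nilpotent part $X^\vee$ dies under $B^\vee_H\to T^\vee_H$), the operator $e$ acts on $\rH_t(s_\mu^!\calS(V))$ by the scalar $(2\rho,\mu)$. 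Summing over $\mu\in\xcoch(T_H)_m$ and using that localization exhausts $\rH_t$, I conclude that $V(m)=\bigoplus_{\mu\in\xcoch(T_H)_m}\rH_t(s_\mu^!\calS(V))$ is exactly the $m$-eigenspace of $e$ acting on $V$.

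At this point the statement is purely about the $\SL_2$-module $V$: I must compare the principal filtration $\rH_t^{\leq\bullet}$, the eigenspace decomposition $V=\bigoplus_m V(m)$ of $e=X^\vee+2\rho$, and $F_iV=\ker(X^\vee)^{i+1}$. The key point is that $e$ is conjugate to the grading element: from $[2\rho,X^\vee]=2X^\vee$ one computes $\Ad(\exp(-\tfrac12X^\vee))(2\rho)=2\rho+X^\vee=e$, so as operators on $V$ one has $V(m)=\exp(-\tfrac12X^\vee)\bigl(V[m]\bigr)$. Writing $v=\exp(-\tfrac12X^\vee)w$ with $w\in V[m]$, so that the $V[m+2k]$-component of $v$ equals $\tfrac{(-1)^k}{2^kk!}(X^\vee)^kw$ and $v$ is supported only in principal degrees $m,m+2,m+4,\dots$, I then read off directly: (a) since every nonzero $v\in V(m)$ is supported in degrees $\equiv m\pmod 2$, we have $\rH_t^{\leq2i+m}\cap V(m)=\rH_t^{\leq2i+1+m}\cap V(m)$; (b) $v\in\rH_t^{\leq2i+m}$ iff $(X^\vee)^kw=0$ for all $k\geq i+1$, i.e. iff $(X^\vee)^{i+1}w=0$; (c) since $X^\vee$ commutes with the invertible operator $\exp(-\tfrac12X^\vee)$, one has $(X^\vee)^{i+1}v=0$ iff $(X^\vee)^{i+1}w=0$. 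Hence all three subspaces in the statement equal $\exp(-\tfrac12X^\vee)\bigl(\ker(X^\vee)^{i+1}\cap V[m]\bigr)=F_iV\cap V(m)$, as desired.

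The computation in the last paragraph is routine; the part that needs real care — and where I expect the main obstacle — is the bookkeeping in the second paragraph: checking that \eqref{triv} carries $\rH_t^{\leq\bullet}$ to the principal filtration and $c^{T_H}(\calL)|_t$ to $X^\vee+2\rho$ with $2\rho$ the actual principal grading element (so that \eqref{pt t} is indeed the correct normalization of the point $t$), and checking that Proposition \ref{wt decomposition} and the localization isomorphism are legitimately available at the special point $t$ rather than only at the generic point $\eta$ — which is exactly what is guaranteed by $t$ lying off the root hyperplanes, as recorded just before \eqref{pt t}.
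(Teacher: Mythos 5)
Your proof is correct and follows essentially the paper's own route: the paper conjugates the $e^{T_H}(t)$-grading to the principal $2\rho$-grading via the unipotent element $n\in U^\vee_H$ with $\Ad_n(X^\vee+2\rho)=2\rho$, reduces to a lemma about the principal $\fraks\frakl_2$, and leaves the final step as a "direct calculation." You identify $n=\exp(\tfrac12 X^\vee)$ explicitly and carry out that calculation, which is exactly what the paper's cited lemma amounts to.
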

\begin{proof}
Let $n$ be the unique element in $U^\vee_H$ such that
$\Ad_n(X^\vee+2\rho)=2\rho$. Then the canonical isomorphism
$J^\vee_t\simeq T_H^\vee$ is given by $\Ad_n: J^\vee_t\to T_H^\vee$.
The proposition clearly follows from the following purely
representation theoretical lemma.
\end{proof}

\begin{lem}Let $V$ be a representation of $H^\vee$. Let
\[V=\sum V^1(i), \quad V=\sum V^2(i)\]
be two gradings on $V$, given by the cocharacters $2\rho:\bbG_m\to
H^\vee$ and $\Ad_{n^{-1}}2\rho:\bbG_m\to H^\vee$ respectively. Let
$F^1_\bullet V$ and $F^2_\bullet V$ be two filtrations on $V$ given
by
\[F^1_iV= \sum_{j\leq i} V^1(j),\quad F^2_iV=(\ker X^\vee)^{i+1}.\]
The for any $m\in\bbZ$, $V^2(m)\cap F^1_{2i+m}V=V^2(m)\cap
F^1_{2i+1+m}V= V^2(m)\cap F^2_iV$.
\end{lem}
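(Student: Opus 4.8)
The plan is to reduce the statement to a computation inside a single irreducible module for a principal $\mathfrak{sl}_2$-triple, and then to solve a triangular recursion on that module. First I would observe that $(X^\vee,2\rho,Y^\vee)$ completes to a principal $\mathfrak{sl}_2$-triple in $\frakh^\vee=\Lie H^\vee$: by the pinning $X^\vee$ lies in the sum of the simple root spaces of $H^\vee$, and the cocharacter $2\rho\in\xcoch(T_H^\vee)=\xch(T_H)$ pairs to $2$ with every simple root of $H^\vee$ (a simple coroot of $H$), so $[2\rho,X^\vee]=2X^\vee$; hence by the Jacobson--Morozov theorem and the conjugacy theorem for $\mathfrak{sl}_2$-triples there is a unique $Y^\vee\in\frakh^\vee$ making $(X^\vee,2\rho,Y^\vee)$ an $\mathfrak{sl}_2$-triple, with $2\rho$ its semisimple element. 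In particular $e:=X^\vee+2\rho$ is regular semisimple, the element $n\in U_H^\vee$ (as in the proof of Proposition~\ref{two fil}, i.e. $\Ad_n(X^\vee+2\rho)=2\rho$) satisfies $\Ad_{n^{-1}}(2\rho)=e$ in $\frakh^\vee$, and the grading $V=\sum_iV^2(i)$ is precisely the eigenspace decomposition of $V$ under the action of $e=X^\vee+2\rho$.

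Next I would note that the three structures occurring in the statement --- the grading $V^1$ by $2\rho$-eigenvalues, the grading $V^2$ by $e$-eigenvalues, and the Brylinski--Kostant filtration $F^2_\bullet V=(\ker X^\vee)^{\bullet+1}$ --- are all compatible with the decomposition of $V$ into isotypic components for the principal $\mathfrak{sl}_2$, simply because $X^\vee$, $2\rho$ and $e$ all lie in this fixed $\mathfrak{sl}_2$. Since we work over $\overline{\bbQ}_\ell$ (characteristic zero), $V$ is a direct sum of irreducible $\mathfrak{sl}_2$-modules, so it suffices to prove the two equalities when $V=V_n$ is the irreducible module of highest weight $n$.

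On $V_n$ I would fix a basis $v_0,\dots,v_n$ of $2\rho$-weight vectors with $v_k$ of weight $n-2k$ and $X^\vee v_k=c_kv_{k-1}$, $c_k\in\overline{\bbQ}_\ell^\times$ for $k\ge 1$, $X^\vee v_0=0$. Then $F^2_iV_n=\mathrm{span}(v_0,\dots,v_{\min(i,n)})$ and $F^1_dV_n=\sum_{j\le d}V^1(j)=\mathrm{span}(v_k\mid n-2k\le d)$. Writing an $e$-eigenvector as $w=\sum_j a_j v_j$ and comparing coefficients gives the triangular recursion $c_{k+1}a_{k+1}=(m-n+2k)a_k$, from which one reads off that $V^2(m)\neq 0$ exactly for $m\in\{-n,-n+2,\dots,n\}$, that it is then one-dimensional, and that for $m=n-2k$ its generator has the form $w=\sum_{j=0}^{k}a_jv_j$ with all $a_j\neq 0$. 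Consequently $w\in F^2_iV_n\iff i\ge k$, whereas $w\in F^1_dV_n\iff d\ge n$ (since $w$ has a nonzero $v_0$-component and $v_0$ spans the top weight line). Hence, for $m=n-2k$ with $V^2(m)\neq 0$, both $V^2(m)\cap F^1_{2i+m}V_n$ and $V^2(m)\cap F^2_iV_n$ are nonzero --- hence equal to all of $V^2(m)$ --- exactly when $2i+m\ge n$, i.e. when $i\ge k$; moreover $n-m=2k$ is even, so $2i+m$ is never equal to $n-1$, which forces $F^1_{2i+m}V_n$ and $F^1_{2i+1+m}V_n$ to cut out the same subspace of $V^2(m)$. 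Summing over the irreducible summands of $V$ then yields the claimed equalities.

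The main difficulty I anticipate is bookkeeping rather than anything conceptual: one must keep straight that the cocharacter $\Ad_{n^{-1}}2\rho$ produces exactly the $(X^\vee+2\rho)$-eigenspace grading, and one must carry the parity congruence $m\equiv n\pmod 2$ --- automatic since $V^2(m)\subset V_n$ is nonzero --- through the indexing, as this congruence is precisely what makes the $2i+m$ and $2i+1+m$ versions of the first filtration agree on $V^2(m)$, and thus is the reason the lemma is phrased with consecutive indices. A minor point to dispatch cleanly is that the reduction to an irreducible $V_n$ uses only the canonicity of the principal $\mathfrak{sl}_2$-isotypic decomposition, which is immediate.
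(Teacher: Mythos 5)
Your proof is correct and follows essentially the same route as the paper's: reduce to the principal $\fraks\frakl_2$-triple $(X^\vee,2\rho,Y^\vee)$, decompose $V$ into irreducible $\fraks\frakl_2$-modules, and verify the three filtrations agree there by direct computation. The paper's proof is only a one-line sketch ("the lemma is purely a statement about this $\fraks\frakl_2$ and can be checked easily by direct calculation"), so your write-up just fills in exactly the calculation the paper had in mind, including the key parity observation ($n-m$ even on $V_n$) that makes the $F^1_{2i+m}$ and $F^1_{2i+1+m}$ cut the same line.
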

\begin{proof}Let $Y^\vee\in\frakh^\vee$ so that
$\{X^\vee,2\rho,Y^\vee\}$ form an $\fraks\frakl_2$-triple. Then the
lemma is purely a statement about this $\fraks\frakl_2$ and can be
checked easily by direct calculation.
\end{proof}

This finishes the discussion for split groups. Now let $G$ be as
before and $A$ be its maximal split torus. By the isomorphism
\eqref{rho}, we can regard $A$ as a subtorus of $T_H$. Note that we
can restrict everything discussed above to $A\subset T_H$. In
particular, we can choose the point $t\in\fraka$ such that
$h(t)=2\rho$. This is because that $h:\frakt_H\to\frakt^\vee_H$ is
equivariant under the automorphisms of the based root datum and
$2\rho$ is a fixed point under these automorphisms.

We begin to prove the theorem. Observe that it is enough to prove
the theorem for objects in $\calP_v$ of the form $\calZ(\calF)$,
where $\calF\in\Sat_H$. Indeed, both maps $\calP_v\to \bbZ[q]$ given
by $\calF\mapsto P_{\bar{\mu}}(\rH^*(\calF),q)$ and  $\calF\mapsto
\sum \dim \rH^{2i}(\on{Costalk}_{\bar{\mu}}(\calF))q^{i}$ factor
through the Grothendieck group, and as observed in the proof of
Theorem \ref{main} in Sect. \ref{proof}, the objects of the form
$\calZ(\calF), \calF\in\Sat_H$ generate the Grothendieck group of
$\calP_v$.

Recall that the maximal torus $A\subset G$ extends naturally to a
split torus over $\calO$ and $A_\calO\subset\underline{G}_v$.
Therefore, we can regard $A$ as a subtorus of $K_v$, as $A$ is a
natural subgroup of $L^+A_\calO$ consisting of ``constant" elements.
The set of fixed points of the action of $A$ on ${\Fl_v}$ are
exactly $\{s_{\bar{\mu}}|\bar{\mu}\in\xcoch(T)_I\}$. This will be
clear if we regard $LG$ as a Kac-Moody group and $A$ as its maximal
torus. We consider the $A$-equivariant cohomology
$\rH_A^*:\calP_v\to R_A\Mod$. Since the nearby cycles commute with
proper base change, we have a canonical isomorphism
\[\rH_A^*\circ\calZ\simeq \rH_A^*: \Sat_H\to R_A\Mod.\]
Indeed, when fixing a cohomological degree, we can we replace $\bbB
A$ by $(\bbP^n)^{\on{rk}A}$ for $n$ large enough, and consider the
nearby cycle functors for the family
$\wGr_\calG\times^A(\bbP^n)^{\on{rk}A}$. The claim then is clear.

\begin{rmk}{\rm
One should be able to argue as defining the tensor structure of
$\rH^*$, that there is a canonical isomorphism
\[\rH^*_A\simeq \rH^*\otimes R_A:\calP_v\to R_A\Mod,\]
which endows $\rH^*_A$ a fiber functor structure, and the
corresponding $(H^\vee)^I$ torsor on $\Spec R_A=\fraka=:\Lie A$ is
canonically trivialized. However, we did not investigate this.
 }\end{rmk}

Let $p:\xcoch(T)\to \xcoch(T)_I$ the projection. Let $\eta$ be the
generic point of $\fraka$.
\begin{lem} Under the canonical
isomorphism $\rH_\eta({\Fl_v},\calZ(\calF))\simeq
\rH_\eta(\Gr_H,\calF)$, the direct summand
$\rH_\eta^*(s^!_{\bar{\mu}}\calZ(\calF))$ corresponds to
$\bigoplus_{\mu\in p^{-1}(\bar{\mu})}\rH_\eta^*(s_\mu^!\calF)$.
Therefore, by Proposition \ref{wt decomposition}, if
$\calF=\calS(V)$, this direct summand can be further identified with
$\bigoplus_{\mu\in p^{-1}(\bar{\mu})}V(\mu)$, the weight subspaces
under $J^\vee_\eta$.
\end{lem}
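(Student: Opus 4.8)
The plan is to run the equivariant localization argument (as in the discussion preceding Proposition \ref{wt decomposition}, following \cite{Gi,YZ}) in the family $\wGr_\calG\to\bbA^1$, exploiting that $\calZ$ is formed by nearby cycles and that nearby cycles commute with proper push-forward. The geometric input is the following. The constant loops inside $\calL^+\calG$ give a fibrewise action of $A$ on $\wGr_\calG$ over $\bbA^1$; over $\bbG_m$ it is the product of the $A\subset T_H$-action on $\Gr_H$ (via \eqref{rho}) with the trivial action on $\bbG_m$, and over $0$ it is the $A$-action on ${\Fl_v}$ considered above, with fixed points the $s_{\bar\mu}$. Since $Z_G(A)=T$ is a torus, no root of $H$ is trivial on $A$; hence $Z_H(A)=T_H$, a generic cocharacter of $A$ is regular in $T_H$, and $\Gr_H^A=\Gr_H^{T_H}=\{s_\mu\mid\mu\in\xcoch(T)\}$. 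Combined with $({\Fl_v})^A=\{s_{\bar\mu}\}$ and the finiteness of the set of one-dimensional orbits in each finite-dimensional piece, this makes the localization isomorphism \eqref{localization} available, over the generic point $\eta$ of $\fraka=\Lie A$, for both $(\Gr_H,\calF)$ and $({\Fl_v},\calZ(\calF))$. Finally, as in the proof of Lemma \ref{mult}, the section $s_\mu\times\bbG_m\subset\Gr_H\times\bbG_m$ extends by ind-properness of $\wGr_\calG/\bbA^1$ to a section $\fraks_\mu\colon\bbA^1\to\wGr_\calG$ lying in the relative $A$-fixed locus, with $\fraks_\mu(0)=s_{p(\mu)}$ by \cite[Proposition 3.5]{Z}. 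Hence the relative $A$-fixed locus of a finite-dimensional $A$-stable piece $\frakZ\subset\wGr_\calG$ (e.g.\ the closure of $(\on{supp}\calF)\times\bbG_m$) is finite flat over $\bbA^1$, with generic fibre the relevant $s_\mu$ and with exactly $\#p^{-1}(\bar\mu)$ of the branches $\fraks_\mu$ colliding over $t=0$ at the point $s_{\bar\mu}$.

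Now the argument. Choosing $\frakZ$ proper over $\bbA^1$ (so $\frakZ$ and $\frakZ^A$ are both proper over $\bbA^1$), apply equivariant localization \eqref{localization} fibrewise over $\bbA^1$, inverting uniformly over $\bbA^1$ the finitely many hyperplanes in $\fraka$ involved; this produces a decomposition of the relative equivariant cohomology indexed by the connected components of $\frakZ^A/\bbA^1$, i.e.\ by $\bar\mu$, whose $\bar\mu$-component has generic fibre $\bigoplus_{p(\mu)=\bar\mu}\rH_\eta^*(s_\mu^!\calF)$ (the contributions of the branches $\fraks_\mu$) and special fibre $\rH_\eta^*(s_{\bar\mu}^!\calZ(\calF))$. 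Since $\calF_{\bbG_m}=\calF\boxtimes\overline{\bbQ}_\ell[1]$ is pulled back from $\Gr_H$, its restriction to each branch $\fraks_\mu$ is $s_\mu^!\calF$ tensored with a constant sheaf, so these generic-fibre summands are constant along $\bbG_m$ with trivial monodromy (Lemma \ref{monodromy}); passing to nearby cycles at $t=0$ therefore computes the special fibre as the direct sum of the limits along the colliding branches, namely $\bigoplus_{p(\mu)=\bar\mu}\rH_\eta^*(s_\mu^!\calF)$. On the other hand nearby cycles commute with the proper maps $\frakZ\to\bbA^1$ and $\frakZ^A\to\bbA^1$, which on cohomology is the isomorphism $\rH_A^*\circ\calZ\simeq\rH_A^*$ tensored with $\kappa(\eta)$; this identifies the $t=0$ fibre of the relative localization with $\rH_\eta^*({\Fl_v},\calZ(\calF))\simeq\bigoplus_{\bar\mu}\rH_\eta^*(s_{\bar\mu}^!\calZ(\calF))$. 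Comparing, the canonical isomorphism $\rH_\eta^*({\Fl_v},\calZ(\calF))\simeq\rH_\eta^*(\Gr_H,\calF)$ carries $\rH_\eta^*(s_{\bar\mu}^!\calZ(\calF))$ to $\bigoplus_{\mu\in p^{-1}(\bar\mu)}\rH_\eta^*(s_\mu^!\calF)$; specializing $\calF=\calS(V)$ and invoking Proposition \ref{wt decomposition} rewrites this as $\bigoplus_{\mu\in p^{-1}(\bar\mu)}V(\mu)$, the sum of the indicated weight subspaces under $J^\vee_\eta$.

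The step I expect to be the main obstacle is the compatibility of the equivariant localization decomposition with nearby cycles — the assertion that $\Psi$ transports the fixed-point decomposition of $\rH_\eta^*(\Gr_H,\calF)$ to that of $\rH_\eta^*({\Fl_v},\calZ(\calF))$, re-indexed by $\mu\mapsto p(\mu)$. The subtlety is that $\Psi$ commutes with proper push-forward but not with $*$-restriction to a closed subscheme, so one cannot argue summand by summand; one should instead phrase localization in Atiyah--Bott form — the cone on $\iota_*\iota^!\rH_A(-)\to\rH_A(-)$, for $\iota\colon\frakZ^A\hookrightarrow\frakZ$, becomes acyclic after $\otimes_{R_A}\kappa(\eta)$ — and then push forward along the proper maps to $\bbA^1$ before applying $\Psi$. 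Triviality of the monodromy (Lemma \ref{monodromy}) is what makes the specialization at $t=0$ of the fixed-point contributions computable simply by counting the colliding branches $\fraks_\mu$; without it one would be forced into the geometry of semi-infinite orbits on ${\Fl_v}$, which is exactly what the family-of-fixed-points approach is designed to avoid.
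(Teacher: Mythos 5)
Your approach is a genuine alternative to the paper's, based on the same localization setup but organized differently. The paper restricts along each individual section $s_\mu:\bbA^1\to\wGr_\calG$ and invokes the SGA~7 compatibility of $\Psi$ with $f^*$ and $f_*$ (the commuting square involving the unit $\calG\to s_{\mu*}s_\mu^*\calG$), obtaining diagram \eqref{comm1}: the projection $\rH_\eta^*(\Fl_v,\calZ(\calF))\simeq\rH_\eta^*(\Gr_H,\calF)\to\rH_\eta^*(s_\mu^*\calF)$ factors through $\rH_\eta^*(s_{\bar\mu}^*\calZ(\calF))$. Combined with the vanishing of $\rH_\eta(i_x^!)\to\rH_\eta(i_y^*)$ for $x\neq y$, this forces the block matching. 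You instead work with the whole relative fixed locus $\frakZ^A$, the counit $\iota_*\iota^!\to\id$, and push forward to $\bbA^1$ before taking $\Psi$. This is the $!$-dual of what the paper does, and the picture of branches $\fraks_\mu$ colliding at $s_{\bar\mu}$ is geometrically vivid (it makes the $p^{-1}(\bar\mu)$-indexing manifest), at the cost of needing to understand $\frakZ^A$ rather than just the individual sections. (Your assertion that $\frakZ^A$ is finite \emph{flat} over $\bbA^1$ is both unverified and unneeded; only the finitely many sections $\fraks_\mu$ and the fact that $(\frakZ^A)_0$ is a finite set matter.)

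The step that is not actually closed by your proposed fix is the one you flag as the main obstacle. Pushing forward to $\bbA^1$ and applying $\Psi$ to the cone of $(\pi\iota)_*\iota^!\calF_{\bbA^1}\to\pi_*\calF_{\bbA^1}$ does show that the resulting map over $\eta$ is an isomorphism, but that only re-derives the \emph{total} identification $\rH_\eta^*(\Fl_v,\calZ(\calF))\simeq\rH_\eta^*(\Gr_H,\calF)$; it does not by itself match the $\bar\mu$-block of $\Psi_{\frakZ^A}(\iota^!\calF_{\bbA^1})$ with the $\bar\mu$-block $\rH_\eta(s_{\bar\mu}^!\calZ(\calF))$ of the localization decomposition \emph{intrinsic to ${\Fl_v}$}. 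What supplies that matching is the natural transformation $\Psi_{\frakZ^A}\iota^!\to\iota_0^!\Psi_{\frakZ}$ (obtained by adjunction from $\Psi_{\frakZ}\iota_*\simeq\iota_{0*}\Psi_{\frakZ^A}$) together with the fact, automatic from adjunction, that it is compatible with the two counits. Then, since both sides are supported on the finite set $\{s_{\bar\mu}\}$, the transformation preserves the $\bar\mu$-blocks, and since both composites to $\rH_\eta(\Fl_v,\calZ(\calF))$ are isomorphisms over $\eta$ it is forced to be an isomorphism block by block. This is the exact counterpart of the SGA~7 square the paper uses; you need it too, and you should state it. A secondary remark: the phrase ``the cone on $\iota_*\iota^!\rH_A(-)\to\rH_A(-)$'' conflates sheaf-level and cohomology-level objects; the torsion assertion should be made for $\pi_*$ of the cone of the sheaf map $\iota_*\iota^!\calF_{\bbA^1}\to\calF_{\bbA^1}$, as an $R_A$-module complex on $\bbA^1$.
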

\begin{proof}
Recall that if $f:\calX\to\calY$ is a morphism of varieties over
$\bbA^1$, then there are always the natural maps $f^*\Psi_\calY\to
\Psi_\calX f^*$ and $\Psi_\calY f_*\to f_*\Psi_\calX$ (see SGA 7
Expos\'{e} XIII, (2.1.7.1) (2.1.7.2)). In addition, these two maps
fit into the following commutative diagram
\[\begin{CD}
\Psi_\calY(\calF)@>>>f_*f^*\Psi_\calY(\calF)\\
@VVV@VVV\\
\Psi_\calY(f_*f^*\calF)@>>>f_*\Psi_\calX(f^*\calF).
\end{CD}\]

Now let $\mu\in\xcoch(T_H)$ and apply this remark to
$s_\mu:\bbA^1\to\wGr_\calG$ as defined in the proof of Lemma
\ref{mult}. By taking the cohomology $\rH_A^*$, we obtain, for any
$\calF\in\Sat_H$, the following commutative diagram
\begin{equation}\label{comm1}\begin{CD}
\rH^*_A(\Gr_H,\calF)@>\simeq>>\rH_A^*({\Fl_v},\calZ(\calF))@>>>\rH_A^*(s_{\bar{\mu}}^*\calZ(\calF))\\
@VVV@VVV@VVV \\
\rH_A^*(s^*_\mu\calF)@>\simeq>>\rH_A^*({\Fl_v},\Psi_{\wGr_\calG}(s_{\mu*}s_\mu^*\calF))@>\simeq>>
\rH_A^*(\Psi_{\bbA^1}(s^*_\mu\calF))
\end{CD}\end{equation}
In other words, the composition $\rH_A^*({\Fl_v},\calZ(\calF))\to
\rH_A^*(\Gr_H,\calF)\to \rH_A^*(s_\mu^*\calF)$ factors as
$\rH_A^*({\Fl_v},\calZ(\calF))\to
\rH_A^*(s_{\bar{\mu}}^*\calZ(\calF))\to \rH_A^*(s_\mu^*\calF)$. On
the other hand, by the localization theorem, over the generic $\eta$
of $\fraka$, we have
\begin{equation}\label{comm2}\begin{CD}
\rH_\eta^*({\Fl_v},\calZ(\calF)))@>\simeq>>\bigoplus_{\bar{\mu}\in\xcoch(T)_I}\rH_\eta^*(s_{\bar{\mu}}^*\calZ(\calF))\\
@V\simeq VV@VV\simeq V\\
\rH_\eta^*(\Gr_H,\calF)@>\simeq>>\bigoplus_{\mu\in\xcoch(T)}\rH_\eta^*(s_{\mu}^*\calF)\end{CD}\end{equation}
Observe that in the localization theorem \eqref{localization}, for
$x,y\in M^A$ and $x\neq y$, the composition
$\rH_{\eta}(i_x^!\calF)\to \rH_{\eta}(i_y^*\calF)$ is zero.
Therefore, the lemma follows from \eqref{comm1} and \eqref{comm2}.
\end{proof}

Now by Proposition \ref{two fil},
\[\rH_t^{\leq 2i+(2\rho,\bar{\mu}))}\cap \rH_t(s_{\bar{\mu}}^!\calZ(\calS(V)))=\rH_t^{\leq 2i+1+(2\rho,\bar{\mu}))}\cap \rH_t(s_{\bar{\mu}}^!\calZ(\calS(V)))=F_iV\cap \bigoplus_{\mu\in p^{-1}(\bar{\mu})}V(\mu),\]
where we write $\rH_t=\rH_t({\Fl_v},\calZ(\calS(V)))$ for brevity.
Therefore, to finish the prove of the theorem, it remains to show
that

\begin{lem}\label{splitting inj}
Let $\calF\in\calP_v$. Then the canonical map
\begin{equation}\label{inj}\rH^*_A(s_{\bar{\mu}}^!\calF)\to\rH_A^*(\calF)\end{equation} is a splitting
injective map of free $R_A$-modules. Therefore,
\[\rH_t^{\leq i}({\Fl_v},\calZ(\calS(V)))\cap \rH_t(s_{\bar{\mu}}^!\calZ(\calS(V)))=\rH_t^{\leq i}(s_{\bar{\mu}}^!\calZ(\calS(V))).\]
\end{lem}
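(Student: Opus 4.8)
The plan is as follows. Since $\calP_v$ is semisimple (Lemma~\ref{semisimple}) and both source and target of \eqref{inj} are additive in $\calF$, while being "free over $R_A$'' and "split injective'' are preserved by finite direct sums, I would first reduce to $\calF=\IC_{\bar\nu}$ for a single $\bar\nu\in\xcoch(T)_I^+$; one may moreover assume $\bar\mu\preceq\bar\nu$, the statement being trivial otherwise. Note that $\IC_{\bar\nu}$ is canonically $A$-equivariant because $A\subset L^+A_\calO\subset K_v$. Fix a generic one-parameter subgroup $\chi\colon\bbG_m\hookrightarrow A$, so that $(\Fl_v)^{\bbG_m}=(\Fl_v)^A=\{s_{\bar\la}\}$, of which only finitely many lie in the Schubert variety $\Fl_{v\bar\nu}=\operatorname{supp}\IC_{\bar\nu}$.

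Next I would check that the $R_A$-modules occurring are free. For a point this is automatic, so $\rH_A^*(s_{\bar\mu}^!\IC_{\bar\nu})\cong\rH^*(s_{\bar\mu}^!\IC_{\bar\nu})\otimes_{\overline{\bbQ}_\ell}R_A$ is free, and it is concentrated in a single cohomological parity by the parity vanishing of $\on{Costalk}_{\bar\mu}$. For $\rH_A^*(\IC_{\bar\nu})=\rH_A^*(\Fl_v,\IC_{\bar\nu})$ one runs the usual equivariant formality argument: $R_A=\rH^*(\bbB A)$ is a polynomial ring on even-degree generators, and by the parity vanishing proved in Lemma~\ref{semisimple} (Demazure resolutions with affine pavings, together with the decomposition theorem) $\rH^*(\Fl_v,\IC_{\bar\nu})$ lies in a single parity; hence every differential in the spectral sequence $\rH^p(\bbB A,\rH^q(\Fl_v,\IC_{\bar\nu}))\Rightarrow\rH_A^{p+q}(\Fl_v,\IC_{\bar\nu})$ vanishes, the sequence degenerates, and $\rH_A^*(\Fl_v,\IC_{\bar\nu})$ is a free $R_A$-module with $\rH_A^*(\Fl_v,\IC_{\bar\nu})\otimes_{R_A}\overline{\bbQ}_\ell\cong\rH^*(\Fl_v,\IC_{\bar\nu})$.

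Injectivity of \eqref{inj} is then formal: localizing at the generic point $\eta$ of $\fraka=\Lie A$, the localization theorem \eqref{localization} identifies \eqref{inj}$\otimes_{R_A}\operatorname{Frac}(R_A)$ with the inclusion of the $s_{\bar\mu}$-summand of $\rH_\eta(\Fl_v,\IC_{\bar\nu})\cong\bigoplus_{\bar\la}\rH_\eta(s_{\bar\la}^!\IC_{\bar\nu})$, and a torsion-free module which injects after localization injects. The essential point—and this is the main obstacle—is that \eqref{inj} \emph{splits} over $R_A$, equivalently that its cokernel is again free. To prove this I would argue along the Bialynicki--Birula decomposition of $\Fl_{v\bar\nu}$ for $\chi$: there is a chain of closed $A$-stable subsets $\Fl_{v\bar\nu}=Z_0\supset Z_1\supset\cdots\supset Z_m=\emptyset$ with $Z_{k-1}\setminus Z_k$ the attracting cell of $s_{\bar\la_k}$, which the $\bbG_m$-action contracts onto $s_{\bar\la_k}$. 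Combining the recollement triangles for these cells with the contraction lemma and Braden's hyperbolic localization—which, because $\IC_{\bar\nu}$ is a parity complex, identify the relevant cohomology of each cell with a costalk $\rH_A^*(s_{\bar\la_k}^!\IC_{\bar\nu})$ concentrated in a single parity (one passes freely between the $*$- and $!$-pictures using the Verdier self-duality of $\IC_{\bar\nu}$), and using the localization theorem again to get injectivity at each stage—should produce, by induction on $k$, short exact sequences of \emph{free} graded $R_A$-modules relating $\rH_A^*(\Fl_v\setminus Z_k,\IC_{\bar\nu})$, $\rH_A^*(\Fl_v\setminus Z_{k-1},\IC_{\bar\nu})$ and $\rH_A^*(s_{\bar\la_k}^!\IC_{\bar\nu})$, each of which splits because its quotient is free. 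Since the costalk-to-global map for $s_{\bar\mu}$ factors through restriction to one of these opens, \eqref{inj} itself is a split injection of free graded $R_A$-modules. The delicate part is this bookkeeping—controlling the orderings and, because the cells are singular and non-proper, making the passage between ordinary and compactly supported cohomology precise; an alternative route, closer in spirit to the rest of the paper, is to transport the assertion to $\Gr_H$ via $\calZ$ (using $\rH_A^*\circ\calZ\simeq\rH_A^*$ and the preceding lemma identifying the equivariant costalks, upgraded from the generic point to all of $R_A$) and to invoke the canonical equivariant splitting of the MV filtration of \cite[Lemma~2.2]{YZ}.

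Finally, once \eqref{inj} is a split injection of graded free $R_A$-modules, the second assertion follows at once. Split injections survive the base change $R_A\to\kappa(t)$, which gives the injectivity of $\rH_t(s_{\bar\mu}^!\calZ(\calS(V)))\hookrightarrow\rH_t(\Fl_v,\calZ(\calS(V)))$; and choosing homogeneous $R_A$-bases of $\rH_A^*(s_{\bar\mu}^!\calZ(\calS(V)))$ and of an $R_A$-module complement of its image in $\rH_A^*(\Fl_v,\calZ(\calS(V)))$ shows that the degree filtration $\rH_t^{\le i}$ on the ambient module induces on the submodule precisely its own degree filtration, i.e. $\rH_t^{\le i}(\Fl_v,\calZ(\calS(V)))\cap\rH_t(s_{\bar\mu}^!\calZ(\calS(V)))=\rH_t^{\le i}(s_{\bar\mu}^!\calZ(\calS(V)))$.
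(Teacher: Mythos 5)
Your proposal is correct in spirit but takes a genuinely different route from the paper, and it is considerably heavier in its key step. The paper avoids proving injectivity and splitting directly; it instead observes that all the $R_A$-modules in play are free (by parity vanishing, exactly as you argue), so that $\Hom(-,R_A)$ is an exact involution on them, and that under this duality the map \eqref{inj} is interchanged with the restriction map $\rH_A^*(\calF)\to\rH_A^*(s_{\bar\mu}^*\calF)$. It therefore suffices to prove the \emph{surjectivity} of the latter. This surjectivity is then obtained not from a full Bialynicki--Birula decomposition of the Schubert variety, but from a single geometric input: $\Fl_v$, being the flag variety of a Kac--Moody group, has for each $s_{\bar\mu}$ an open neighborhood $U_{\bar\mu}$ (the translated big open cell, an ind-affine space) that is contracted to $s_{\bar\mu}$ by a $\bbG_m$-action commuting with $A$. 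Factoring as $\rH_A^*(\calF)\to\rH_A^*(U_{\bar\mu},j^*\calF)\to\rH_A^*(s_{\bar\mu}^*\calF)$, the second arrow is an isomorphism by the contraction lemma, and surjectivity of the first follows from the recollement triangle for $Z=\Fl_v\setminus U_{\bar\mu}$ together with a Frobenius-weight argument after descent to $\bbF_q$: $\rH_A^*(U_{\bar\mu},j^*\calF)$ is pure of weight zero while $\rH_A^*(i^!\calF)$ has nonnegative weights, so the connecting map vanishes. In effect, the paper trades your induction through the BB chain (with its bookkeeping of attracting cells, hyperbolic localization, and passage between $*$- and $!$-pictures) for one application of recollement plus purity. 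Your first reduction, the freeness argument, the localization-at-$\eta$ injectivity remark, and the deduction of the filtration identity from a split injection of graded free modules all match the paper or are straightforward; the delicate inductive splitting is precisely where the paper's duality trick lets you bypass the work. Your alternative suggestion—transporting to $\Gr_H$ via $\calZ$ and invoking the equivariant MV splitting—is also viable and is close in flavor to what the paper does, since the contraction along $U_{\bar\mu}$ is the Kac--Moody analogue of the semi-infinite cell geometry underlying the MV filtration.
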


These are general facts about flag varieties for Kac-Moody groups.
The basic geometric fact behind this proposition is that the ``big
open cell" of the flag variety contracts to a point under certain
$\bbG_m$-action. Then the statement follows using an argument with
weights (cf. \cite{Gi}). We here reproduce the proof for
completeness.

\begin{proof}Without loss of generality, we can assume that $\calF$
is an intersection cohomology complex. First, we claim that it is
enough to prove a dual statement: the map
\begin{equation}\label{surj}\rH_A^*(\calF)\to \rH_A^*(s_{\bar{\mu}}^*\calF)\end{equation} is
surjective. To see this, recall that since each of $\rH^*(\calF),
s_{\bar{\mu}}^*\calF,s_{\bar{\mu}}^!\calF$ concentrates in
cohomological degrees of the same parity, the spectral sequence
calculating the $A$-equivariant cohomology degenerates at the
$E_2$-term, which implies that all $\rH_A^*(\calF)$,
$\rH_A^*(s_{\bar{\mu}}^*\calF)$, $\rH_A^*(s_{\bar{\mu}}^!\calF)$ are
finite free $R_A$-modules. Then taking $\Hom(-,R_A)$ interchanges
\eqref{inj} and \eqref{surj}.

Since ${\Fl_v}$ is the flag variety of certain Kac-Moody group (cf.
\cite[Sect. 9.h]{PR}), for every $s_{\bar{\mu}}$, there is a
$\bbG_m$-action on ${\Fl_v}$, contracting an open neighborhood of
$s_{\bar{\mu}}$ in ${\Fl_v}$ to $s_{\bar{\mu}}$. In addition, this
$\bbG_m$-action stabilizes every Schubert cell
$\mathring{\Fl}^s_{\bar{\la}}$, and commutes with the action of $A$
on ${\Fl_v}$. Denote this open neighborhood by
$j:U_{\bar{\mu}}\hookrightarrow{\Fl_v}$. Then $U_{\bar{\mu}}$ is an
inductive limit of affine spaces. Indeed, $U_{\bar{0}}$ is just the
big open cell in the flag variety, and $U_{\bar{\mu}}$ is the
translate $U_{\bar{0}}$ via $s_{\bar{\mu}}$ (lifted to an element in
$T(F)$).

Now we can assume that our group is defined over $\bbF_q\ppart$ and
splits over a totally ramified extension. All the discussion above
remains unchanged in this setting. Recall that we denote $\calP^0_v$
to be the semisimple $K_v$-equivariant perverse sheaves on
${\Fl_v}$, pure of weight zero. It is well-known that for every
object $\calF$ in $\calP_v$, $\rH^*_A(s_{\bar{\mu}}^*\calF)$ is pure
of weight zero (i.e. $\rH^i_A(s_{\bar{\mu}}^!\calF)$ is pure of
weight $i$), essentially due to the existence of Demazure
resolutions. To show \eqref{surj} is surjective, We decompose this
map into
\[\rH_A^*(\calF)\to\rH^*_{A}(U_{\bar{\mu}},j^*\calF)\to\rH^*_A(s_{\bar{\mu}}^*\calF).\]
It is well-known that the second map is an isomorphism since
$j^*\calF$ is equivariant under this $\bbG_m$-action, which
contracts $(U_{\bar{\mu}}\cap \on{Supp}\calF)$ to $s_{\bar{\mu}}$.
In particular, $\rH^*_{A}(U_{\bar{\mu}},j^*\calF)$ is pure of weight
zero. Therefore, it is enough to show that the first map is
surjective. Denote $i:Z={\Fl_v}\setminus
U_{\bar{\mu}}\hookrightarrow {\Fl_v}$ to be the complement. Then we
have the distinguished triangle
\[i_*i^!\calF\to\calF\to j_*j^*\calF\to\]
and therefore
\[0\to\rH_{A}^*(i^!\calF)\to\rH^*_A(\calF)\to \rH^*_{A}(U_{\bar{\mu}},j^*\calF)\to 0.\]
The last map is surjective because the weights of
$\rH^*_{A}(i^!\calF)$ are $\geq 0$.
\end{proof}

\section{The Langlands parameter}\label{parameter}
In this section, we briefly discuss the Langlands parameters for
smooth ``spherical" representations of a quasi-split $p$-adic group.
The parameters themselves can be described easily, and they will be
used when we discuss the Frobenius trace of nearby cycles for
certain unitary Shimura varieties.

We will assume that $F$ is a non-archimedean local field with finite
residue field and that $G$ is a connected reductive group over $F$.
First, we generalize the hyperspecial vertex of an uniramfied group
as follows. Recall by \cite{T}, the building of $G(F)$ can be
embedded into the building of $G(L)$, where $L$ is the completion of
a maximal unramified extension of $F$.

\begin{dfn}\label{very special}
A special vertex $v$ of $G$ is called geometrically special (or very
special) if it remains special in $G_L$. The parahoric subgroup of
$G$ corresponding to a geometrically special vertex is called a
geometrically special (or very special) parahoric subgroup of $G$.
\end{dfn}

Clearly, if $G$ is an unramified group, then very special vertices
of $G$ are the same as hyperspecial vertices of $G$.

\begin{lem}A very special vertex of $G$ exists if and only if $G$ is quasi-split over $F$.
\end{lem}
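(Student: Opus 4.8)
\emph{Proof plan.} Write $L$ for the completion of a maximal unramified extension of $F$ and $\Gamma=\Gal(L/F)$, so that $\calB(G,F)=\calB(G,L)^\Gamma$. I will use two standard inputs from Bruhat--Tits theory (see \cite{T}): (a) the parahoric group scheme attached to a facet is compatible with the unramified base change $\calO\to\calO_L$, so that for a vertex $v\in\calB(G,F)$ with parahoric $\underline G_v$ over $\calO$ the base change $\underline G_v\otimes_{\calO}\calO_L$ is the parahoric of $G_L$ at $v$, with reductive quotient $\bar G_v\otimes_{k}\bar k$ over $\bar k$; and (b) for any vertex of $\calB(G,F)$ one can find a $\Gamma$-stable maximal $L$-split torus $\tilde S$ of $G_L$, defined over $F$, whose apartment in $\calB(G,L)$ contains that vertex, and whose maximal $F$-split subtorus $S$ is then a maximal $F$-split torus of $G$.

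For the implication ``$G$ quasi-split $\Rightarrow$ a very special vertex exists'', I would fix over $F$ a maximal split torus $S$, the maximal torus $T=Z_G(S)$ (a torus, since $G$ is quasi-split), and a Borel $B\supseteq T$; then $B_L$ is an $L$-Borel, so $G_L$ is quasi-split. I take for $v$ the ``origin'' of the apartment $A(G,S)$ attached to the pinned $\calO$-structure used to build $\underline G_v$ by Galois descent from a Chevalley group over the integers of a splitting extension. This $v$ is $\Gamma$-fixed by construction, and by (a) the same descent produces over $L$ the corresponding ``origin'' for the maximally split datum $(\tilde S,T_L,B_L)$ of the quasi-split group $G_L$ (here $T_L$ is again maximally split over $L$ because $G$ was quasi-split over $F$). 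Hence $v$ is special in both $\calB(G,F)$ and $\calB(G,L)$, i.e.\ very special.

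For the converse the plan is as follows. Suppose $v$ is very special. Since $L$ has algebraically closed residue field, $G_L$ is quasi-split (Steinberg, resp.\ Bruhat--Tits). Using (b), pick $\tilde S\supseteq S$ with $v$ in the apartment $A(G_L,\tilde S)$, and set $\tilde T=Z_{G_L}(\tilde S)$, a maximal torus of $G_L$ (as $G_L$ is quasi-split) automatically defined over $F$. It suffices to show $Z_G(S)=\tilde T$, i.e.\ that $Z_G(S)$ is a torus; the inclusion $\tilde T\subseteq Z_G(S)$ is clear after base change to $L$. By (a), $\bar G_v\otimes\bar k$ is the reductive quotient of the parahoric of $G_L$ at the \emph{special} vertex $v$; by the structure theory of special parahorics of a quasi-split group this is the split group over $\bar k$ whose maximal torus is the reductive quotient of the connected N\'eron model of $\tilde T$ and whose root system is the reduced relative root system of $(G_L,\tilde S)$, with root-group multiplicities those of the quasi-split form. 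Running the analogous description over $k$ (legitimate because $v$ is also special in $\calB(G,F)$) and comparing the two after $\otimes\,\bar k$ should force the reduced relative root system of $(G,S)$ to already exhaust the semisimple part of $\bar G_v$; equivalently $Z_G(S)$ contributes no relative root, so it is $F$-anisotropic modulo its centre and contains the maximal torus $\tilde T$, whence $Z_G(S)=\tilde T$ and $G$ is quasi-split over $F$.

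I expect the main obstacle to be exactly that last comparison in the converse: controlling how the \'echelonnage (relative) root datum at a special vertex behaves under the unramified base change $F\to L$. The cleanest way to make this precise is probably in terms of Iwahori--Weyl groups -- a vertex is special precisely when its stabilizer in the affine Weyl group surjects onto the finite relative Weyl group $W_0$ -- and to show that a $\Gamma$-fixed vertex which is special over $L$ can be special over $F$ only when the natural map $W_0(G,F)\to W_0(G_L,\tilde S)^\Gamma$ is an isomorphism, which in turn forces $Z_G(S)$ to be a torus. The remaining ingredients (the base-change compatibility (a), the existence of $\Gamma$-stable maximally split tori (b), and quasi-splitness of $G_L$) are standard, and the rest is bookkeeping with relative root data.
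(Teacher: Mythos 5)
Your forward direction is essentially the paper's: the paper just cites Tits \cite[1.10.2]{T} and takes the ``origin'' coming from descent of a quasi-split integral model, which is what you are describing; this part is fine.

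For the converse your plan is genuinely different from the paper's, and it has a real gap. You set up the same data (a maximal $F$-split torus $A$, a $\Gamma$-stable maximal $L$-split torus $\tilde S\supseteq A$ with $v$ in its apartment, quasi-splitness of $G_L$), but the core of your argument is a comparison of the \'echelonnage/relative root data at $v$ over $k$ and over $\bar k$, from which you want to deduce that $Z_G(A)$ is a torus. The crucial step is only asserted (``comparing the two after $\otimes\,\bar k$ should force\dots''), and the alternative sketch via Iwahori--Weyl groups replaces it with another unproved assertion (that $v$ being special over both $F$ and $L$ forces $W_0(G,F)\to W_0(G_L,\tilde S)^\Gamma$ to be an isomorphism, and that this forces $Z_G(A)$ to be a torus). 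Neither claim is established, and the second is not obviously the right characterization; this is exactly the point at which the argument must do real work, and it is left open.

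The paper's converse avoids all of this. Since $v$ is special in $\calB(G,L)$, there is a $\Gamma$-equivariant bijection between alcoves in $\calB(G,L)$ having $v$ as a vertex and finite Weyl chambers for $(G_L,\tilde S)$; hence finding a $\Gamma$-stable $L$-rational Borel containing $\tilde S$ (which is what quasi-splitness of $G$ requires) amounts to finding a $\Gamma$-stable alcove at $v$. By residual quasi-splitness (Bruhat--Tits, \cite[\S 1.10]{T}, valid for every reductive group over such an $F$), there is an alcove at $v$ meeting the $F$-apartment $A(G,A,F)=A(G_L,\tilde S,L)^\Gamma$ in full dimension, and such an alcove is automatically $\Gamma$-stable. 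This is a short combinatorial argument and never needs to compare reductive quotients of parahorics or touch \'echelonnage data. If you want to rescue your approach you would need to actually carry out the root-datum comparison you deferred, but the apartment/alcove argument is both shorter and more robust; I would recommend switching to it.
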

\begin{proof}Assume that $G$ is quasi-split. Then the existence of such points follows exactly by the same
argument as in \cite[1.10.2]{T}. We prove the converse. Let $v$ be a
very special point. Choose a maximal $F$-split torus $A$ of $G$ such
that the corresponding apartment $A(G,A,F)$ containing $v$. Let $S$
be a maximal $L$-split torus defined over $F$ and containing $A$. We
identify the apartment $A(G_L,S_L,L)$ with $\xcoch(S)\otimes\bbR$ by
$v$. As $v$ is special, there is a bijection between the finite Weyl
chambers for $(G_L,S_L)$ and the affine Weyl chambers (or called
alcove) with $v$ as a vertex, and this bijection is compatible with
the action of $\Gal(L/F)$. To show that $G$ is quasi-split, it is
enough to find a $L$-rational Borel containing $S$ stable under
$\Gal(L/F)$, which is equivalent to finding a finite Weyl chamber in
$\xcoch(S)\otimes\bbR$, stable under $\Gal(L/F)$. Therefore, it
enough to show that among all alcoves with $v$ as a vertex, there is
one stable under $\Gal(L/F)$. But as it is known, one of such
alcoves intersects with $A(G,A,F)$ (since every reductive group over
$F$ is residually quasi-split, see \cite[\S 1.10]{T}), which is
stable under $\Gal(L/F)$.
\end{proof}

In fact, by checking the classification of central isogeny classes
of quasi-simple, absolutely simple reductive group over $F$ as in
\cite[\S 4]{T}, we find that if $G$ is quasi-split, then every
special vertex of $G$ is very special except the following case: up
to central isogeny, $G$ is an unramified odd unitary group. Then
there are two special vertices in its relative local Dynkin diagram,
only one of which is hyperspecial. To prove this assertion, one uses
the following observation: Using the notation as in \emph{loc.
cit.}, a vertex $v$ in $\Delta$ (the relative local Dynkin diagram
of $G$) is very special if and only if the corresponding
$\Gal(L/F)$-orbit $O(v)\subset \Delta_1$ (the absolute local Dynkin
diagram of $G$) consist of one point, which is special in
$\Delta_1$.

Next we turn to representations.
\begin{dfn}
We call an irreducible smooth representation $V$ of $G$ spherical,
if there is some $v\in V, v\neq 0$, which is fixed by some very
special parahoric subgroup of $G$.
\end{dfn}

\begin{rmk}{\rm Again, one could try to define spherical
representations of $G$ as those with a vector fixed by some special
maximal compact subgroup of $G$. However, from the point of view of
Langlands parameters discussed below, this is not correct.
 }\end{rmk}

Clearly, if $G$ is unramified, spherical representations are those
usually called unramified representations. For the unramified
representations, the description the associated Langlands parameters
is well-known (for example, see \cite[Chapter II]{Bo}). Let us
explain the Langlands parameters of spherical representations for
quasi-split ramified groups.

Following the notation in the previous sections, we denote by
$H^\vee$ its dual group in the sense of Langlands defined over
$\bbC$, i.e. the root datum is dual to the root datum of $G$. Let us
equip $H^\vee$ with a pinning
$(H^\vee,B^\vee,T^\vee,X^\vee)$\footnote{In fact, by the
construction of the Appendix, there is a canonical pinned of
$H^\vee$ provided by the geometric Satake correspondence.}. Then
$\Gal(F^s/F)$ acts on $H^\vee$ via pinned automorphisms, which we
denote by $\on{act}^{alg}$, and we can form the Langlands dual group
of $G$ as
\[{^L}G^{alg}=H^\vee\rtimes_{\on{act}^{alg}}\Gal(F^s/F).\]

Let $W_F$ be the Weil group of $F$. A Langlands parameter is a
continuous homomorphism (up to conjugation by $H^\vee$) $\rho:W_F\to
{^L}G^{alg}$, such that its composition with the canonical
projection ${^L}G^{alg}\to\Gal(F^s/F)$ is the natural inclusion
$W_F\to\Gal(F^s/F)$ and $\rho(W_F)$ consists of semisimple elements
of ${^L}G^{alg}$. (see \cite[8.2]{Bo} for the unexplained
terminology).

We write $\rho(\gamma)=(\rho_1(\ga),\ga)$ for $\ga\in W_F$, where
$\rho_1$ is a map from $W_F$ to $H^\vee$.

\begin{dfn}A ``spherical" parameter (or Langlands-Satake parameter) is a Langlands parameter $\rho$ which
can be conjugated to the form $\rho(\gamma)=(1,\ga)$ for $\ga$ in
the inertial group $I$.
\end{dfn}

Let $(H^\vee)^I$ be the $I$-fixed point subgroup of $H^\vee$ (which
could be non-connected according to Remark \ref{non-connected}).
Then $\Gal(F^{s}/F)/I$ acts on $(H^\vee)^I$ through a finite cyclic
group $\langle\sigma\rangle$, where $\sigma\in\Gal(F^{s}/F)/I$ is
the Frobenius element.

\begin{lem}
``Spherical" Langlands parameters $\rho:W_F\to {^L}G^{alg}$ are in
one-to-one correspondence to semi-simple elements in
$(H^\vee)^{I}\times
\sigma\subset(H^\vee)^I\rtimes_{\on{act}^{alg}}\langle\sigma\rangle$
up to conjugacy by $(H^\vee)^I$.
\end{lem}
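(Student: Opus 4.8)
The statement is a concrete unwinding of the definition of a ``spherical'' parameter, so the strategy is to directly analyze what constraints the conditions impose. A continuous homomorphism $\rho: W_F \to {}^LG^{alg}$ with the required properties, being ``spherical'', can be conjugated so that $\rho|_I$ is the trivial section $\ga \mapsto (1,\ga)$. The plan is to show that the remaining data is then precisely a lift of Frobenius, and that the lift is pinned down up to $(H^\vee)^I$-conjugacy by a single semisimple element of $(H^\vee)^I\times\sigma$.

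First I would fix a (topological) generator $\Phi \in W_F$ of $W_F/I$ mapping to $\sigma$, so that $W_F$ is topologically generated by $I$ together with $\Phi$, subject to the relations $\Phi\, \gamma\, \Phi^{-1} = {}^{\sigma}\gamma$ for $\gamma \in I$ (using that $I$ is normal in $W_F$ with procyclic quotient). Write $\rho(\Phi) = (s, \sigma)$ with $s \in H^\vee$. Since $\rho|_I$ is trivial, the relation $\rho(\Phi)\rho(\gamma)\rho(\Phi)^{-1} = \rho({}^{\sigma}\gamma)$ forces, for every $\gamma \in I$, that $(s,\sigma)(1,\gamma)(s,\sigma)^{-1} = (1, {}^\sigma\gamma)$ in ${}^LG^{alg}$; expanding the semidirect-product multiplication using $\on{act}^{alg}$, the left side is $(s \cdot \on{act}^{alg}(\sigma)(1) \cdot \on{act}^{alg}({}^\sigma\gamma)(\cdot)\ldots, {}^\sigma\gamma)$ — the upshot after simplification is that $s$ must commute with the image of $I$ under $\on{act}^{alg}$, i.e. $s \in (H^\vee)^I$. (Here one uses that the $I$-action in the semidirect product is exactly the pinned action, so ``commuting with $I$'' literally means lying in the fixed subgroup $(H^\vee)^I$.) Conversely any choice of $s \in (H^\vee)^I$ together with the trivial section on $I$ visibly satisfies all the relations and defines a homomorphism $\rho$ — continuity is automatic since $I$ is closed and the quotient is procyclic — so the set of spherical parameters with the given normalization on $I$ is in bijection with $\{(s,\sigma) : s \in (H^\vee)^I\} = (H^\vee)^I \times \sigma$.

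Next I would account for the conjugacy. Two spherical parameters are identified if they are conjugate under $H^\vee$; I would argue that if $g \in H^\vee$ conjugates one normalized parameter to another normalized parameter (both trivial on $I$), then $g (1,\gamma) g^{-1} = (1,\gamma)$ for all $\gamma \in I$ forces $g \in (H^\vee)^I$, so the conjugation is already by an element of $(H^\vee)^I$; and conjugating $(s,\sigma)$ by $g \in (H^\vee)^I$ sends it to $(g \cdot s \cdot \on{act}^{alg}(\sigma)(g)^{-1}, \sigma)$, which is exactly the twisted-conjugation (= ordinary conjugation inside the group $(H^\vee)^I \rtimes_{\on{act}^{alg}}\langle\sigma\rangle$) of $(s,\sigma)$ by $g$. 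Hence the bijection descends to $(H^\vee)^I$-conjugacy classes in $(H^\vee)^I \times \sigma$. It remains to match the semisimplicity conditions: $\rho(W_F)$ consisting of semisimple elements of ${}^LG^{alg}$ is equivalent to $\rho(\Phi) = (s,\sigma)$ being semisimple in ${}^LG^{alg}$ (the rest of $\rho(W_F)$ lies in the finite group generated by the $I$-section together with $(s,\sigma)$, and semisimplicity is detected by a faithful algebraic representation, where it reduces to the single element $(s,\sigma)$ of finite order modulo the identity component contributions), and an element of $(H^\vee)^I \times \sigma$ is semisimple in ${}^LG^{alg}$ if and only if it is semisimple as an element of the possibly-disconnected group $(H^\vee)^I \rtimes_{\on{act}^{alg}}\langle\sigma\rangle$, by e.g. embedding both in $H^\vee \rtimes \langle\sigma\rangle$ and using that semisimplicity is an intrinsic notion.

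The main obstacle I anticipate is purely bookkeeping in the semidirect-product algebra: one must be careful that the $I$-action appearing in ${}^LG^{alg}$ is the \emph{pinned} action $\on{act}^{alg}$ and that ``the parameter restricted to $I$ is the obvious section'' uses the \emph{same} pinned action — there is no discrepancy here because ${}^LG^{alg}$ is \emph{defined} with $\on{act}^{alg}$, but one should state it explicitly so that the identity ``$s$ commutes with $I \iff s \in (H^\vee)^I$'' is transparent. A secondary subtlety is the semisimplicity bookkeeping when $(H^\vee)^I$ is disconnected (Remark \ref{non-connected}); the cleanest route is to note that $(H^\vee)^I \rtimes_{\on{act}^{alg}}\langle\sigma\rangle$ is a closed subgroup of $H^\vee \rtimes_{\on{act}^{alg}}\langle\sigma\rangle$ (itself a closed subgroup of $H^\vee \rtimes \Aut$), so ``semisimple in ${}^LG^{alg}$'' and ``semisimple in $(H^\vee)^I \rtimes \langle\sigma\rangle$'' coincide, both being detected by any faithful representation of the ambient group.
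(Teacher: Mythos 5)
Your proof follows essentially the same line of argument as the paper's: fix a Frobenius lift $\Phi$, observe that $\rho$ is determined by $\rho_1(\Phi)$, use the relation $\Phi\gamma\Phi^{-1}$ for $\gamma\in I$ to force $\rho_1(\Phi)\in(H^\vee)^I$, and note that conjugation preserving the trivial $I$-section must itself come from $(H^\vee)^I$. The only difference is that you spell out the conjugacy and semisimplicity bookkeeping more explicitly than the paper does, which is a matter of exposition rather than a different route.
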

We denote the set of semi-simple elements in $(H^\vee)^{I}\times
\sigma$ by $((H^\vee)^{I}\times \sigma)_{ss}$.
\begin{proof}
First, observe that elements $h\in H^\vee$ satisfies that
$(h,1)(1,\ga)(h^{-1},1)=(1,\ga)$ for all $\ga\in I$ if and only if
$h\in (H^\vee)^I$.

Let $\Phi$ be a lift of the Frobenius element to $W_F$. Then $\rho$
is uniquely determined by $\rho_1(\Phi)$, which is a semi-simple
element in $H^\vee$. Let $\ga\in I$. Then
$(1,\Phi\ga\Phi^{-1})=\rho(\Phi\ga\Phi^{-1})=(\rho_1(\Phi),\Phi)(1,\ga)(\rho_1(\Phi^{-1}),\Phi^{-1})$
implies that $\rho_1(\Phi)$ is invariant under $I$. Conversely, if
$g\in H^\vee\rtimes_{\on{act}^{alg}} I$, then the formulas
$\rho_1(I)=1, \rho_1(\Phi)=g$ define $\rho$.
\end{proof}

Let $A$ be a maximal $F$-split torus and let $T$ be the centralizer
of $A$, which is a maximal torus of $G$. Let $W_0=W(G,A)$ be the
Weyl group. As explained in \cite[Remark 9]{HRa}, we can identify
$W_0$ with the $\sigma$ invariants of the Weyl group
$W((H^\vee)^I,(T^\vee)^I)$ (observe that the latter group was
denoted by $W_0$ in Sect. \ref{proof}), and let $N_0$ be the inverse
image of $W_0$ inside the normalizer of $(T^\vee)^I$ in
$(H^\vee)^I$. Let
$\Rep((H^\vee)^I\rtimes_{\on{act}^{alg}}\langle\sigma\rangle)$ be
category of algebraic representations of
$(H^\vee)^I\rtimes_{\on{act}^{alg}}\langle\sigma\rangle$.

For every
$W\in\Rep((H^\vee)^I\rtimes_{\on{act}^{alg}}\langle\sigma\rangle)$,
by restriction of its character to $(H^\vee)^I\times\sigma$, we
obtain a function $\on{ch}_W$ on $(H^\vee)^I\times\sigma$. We denote
by $\on{R}$ the algebra of functions on $(H^\vee)^I\times\sigma$,
generated by all $\on{ch}_W$. We can adapt the proofs in
\cite[6.4-6.7]{Bo} to the group
$(H^\vee)^I\rtimes\langle\sigma\rangle$, and obtain
\begin{prop}\label{parallel}
(i) The natural map $\xcoch(T)_I^\sigma\subset\xcoch(T)_I$ induces
an isomorphism
\[\al:(T^\vee)^I\rtimes\sigma/\on{Int}N_0\simeq\Spec\bbC[\xcoch(T)_I^\sigma]^{W_0}.\]

(ii) The natural map $(T^\vee)^I\rtimes\sigma\to
(H^\vee)^I\rtimes\sigma$ induces an isomorphism
\[\beta:(T^\vee)^I\rtimes\sigma/\on{Int}N_0\simeq
((H^\vee)^I\rtimes\sigma)_{ss}/\on{Int}((H^\vee)^I).\]

(iii) The composition
$\beta\al^{-1}:\Spec\bbC[\xcoch(T)_I^\sigma]^{W_0}\to((H^\vee)^I\times\sigma)_{ss}/\on{Int}((H^\vee)^I)$
induces an isomorphism
\[\bbC[\xcoch(T)_I^\sigma]^{W_0}\simeq \on{R}\]
as functions on $((H^\vee)^I\times\sigma)_{ss}/\on{Int}(H^\vee)^I$.
\end{prop}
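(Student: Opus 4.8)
The plan is to adapt the arguments of \cite[6.4--6.7]{Bo} verbatim, with the (possibly disconnected) group $(H^\vee)^I$ and its non-identity component $(H^\vee)^I\rtimes\sigma$ playing the role that the connected dual group and its Frobenius-twisted coset play in the unramified case; the only new feature is the disconnectedness of $(H^\vee)^I$, and this is controlled entirely by Lemma \ref{connected components}. For part (i) I would first identify the coordinate ring of the coset $(T^\vee)^I\rtimes\sigma$ (trivialized by the base point $(1,\sigma)$) with the group algebra $\bbC[\xch((T^\vee)^I)]=\bbC[\xcoch(T)_I]$. The conjugation action of $s\in(T^\vee)^I$ on this coset is $t\mapsto s\,t\,\sigma(s)^{-1}$, so the orbit of $t$ is the coset $t\cdot B$ where $B$ is the image of the endomorphism $s\mapsto s\,\sigma(s)^{-1}$; since $B$ is a closed subgroup, every orbit is closed, the quotient is geometric, and the conjugation-invariant functions are exactly the characters killing $B$, i.e.\ those factoring through $\xch((T^\vee)^I)^\sigma=\xcoch(T)_I^\sigma$. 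Passing to the further quotient by the finite group $N_0/(T^\vee)^I=W_0$ (which acts through its action on $\xcoch(T)_I^\sigma$), and using that $N_0$ is reductive (diagonalizable-by-finite), so that $\Spec$ of the invariants is the categorical quotient, yields the isomorphism $\al$ induced by $\xcoch(T)_I^\sigma\subset\xcoch(T)_I$.

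For part (ii) I would show that every semisimple $\on{Int}(H^\vee)^I$-class in $(H^\vee)^I\rtimes\sigma$ meets $(T^\vee)^I\rtimes\sigma$, and that two elements of the torus coset are $\on{Int}(H^\vee)^I$-conjugate if and only if they are $\on{Int}N_0$-conjugate; equivalently, that the categorical quotient of $(H^\vee)^I\rtimes\sigma$ by $\on{Int}(H^\vee)^I$ is $((H^\vee)^I\rtimes\sigma)_{ss}/\on{Int}(H^\vee)^I$ and coincides with $\al$ under $\beta$. For connected reductive groups this is Steinberg's theory of quasi-semisimple classes, exactly as invoked by Borel. To import it I would decompose $(H^\vee)^I$ over representatives of $\pi_0$; by Lemma \ref{connected components} these representatives may be chosen in $(T^\vee)^I$, so every connected component of the coset already meets $(T^\vee)^I\rtimes\sigma$, and inside a fixed component the required conjugacy statement becomes the connected statement for $(H^\vee)^{I,0}$ equipped with the semisimple automorphism $\on{Int}(z,\sigma)$ (whose fixed-point group is reductive with a maximal torus containing $(T^\vee)^{I,0}$). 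The Weyl-group/normalizer bookkeeping matches because, again by Lemma \ref{connected components}, $N_{(H^\vee)^I}((T^\vee)^I)/(T^\vee)^I$ surjects compatibly onto $W((H^\vee)^{I,0},(T^\vee)^{I,0})\rtimes\pi_0$, with $W_0$ its $\sigma$-fixed part. The hard part is precisely this step: making the classical ``every semisimple class meets the maximal torus, the rest is the Weyl group'' dictionary work for a disconnected group. Lemma \ref{connected components}, together with the structural consequences it yields (that $(H^\vee)^I$ carries a genuine based root datum and a split outer-automorphism sequence, cf.\ the discussion surrounding that lemma), is exactly what dissolves this obstacle and lets the connected arguments run unchanged.

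For part (iii), combining (i) and (ii) shows that $\bbC[\xcoch(T)_I^\sigma]^{W_0}$ is the full ring of regular functions on $((H^\vee)^I\times\sigma)_{ss}/\on{Int}(H^\vee)^I$, so it remains only to identify this ring with $\on{R}$. The inclusion $\on{R}\subseteq\bbC[\xcoch(T)_I^\sigma]^{W_0}$ is immediate since the $\on{ch}_W$ are conjugation-invariant. For the reverse inclusion I would restrict to the torus coset and argue by triangularity with respect to $\preceq$: for each dominant $\bar{\mu}\in\xcoch(T)_I^\sigma$ one builds a representation $W$ of $(H^\vee)^I\rtimes\langle\sigma\rangle$ whose character restricted to $(T^\vee)^I\times\sigma$ equals $\sum_{w\in W_0}e^{w\bar{\mu}}$ plus a $\bbZ_{\geq0}$-combination of $e^{w\bar{\la}}$ with $\bar{\la}\prec\bar{\mu}$, and then an induction on $\preceq$ shows that these $\on{ch}_W$ span $\bbC[\xcoch(T)_I^\sigma]^{W_0}$. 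Such a $W$ is produced from the irreducible $W_{\bar{\mu}}$ of $(H^\vee)^I$ furnished by Lemma \ref{irr} (whose isomorphism class is $\sigma$-stable because $\bar{\mu}$ is $\sigma$-fixed), extended to the semidirect product, exactly as in \cite[6.7]{Bo}. Transporting the resulting equality $\bbC[\xcoch(T)_I^\sigma]^{W_0}=\on{R}$ through $\beta\al^{-1}$ gives the asserted identification of functions on $((H^\vee)^I\times\sigma)_{ss}/\on{Int}(H^\vee)^I$.
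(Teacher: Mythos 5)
Your proposal is correct and follows exactly the route the paper takes: the paper's entire proof of this proposition is the one-line remark that one adapts \cite[6.4--6.7]{Bo} to $(H^\vee)^I\rtimes\langle\sigma\rangle$, and your proposal is a faithful unpacking of what that adaptation must consist of. You also correctly pinpoint the one genuinely new ingredient that makes Borel's connected-group arguments carry over, namely Lemma \ref{connected components} (every component of $(H^\vee)^I$ is represented in $(T^\vee)^I$), together with Lemma \ref{irr} for the triangularity step in part (iii).
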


Therefore, the set of spherical Langlands parameters can be
identified with the set of all characters of $R$. Namely if $\rho$
is a spherical parameter, then the corresponding character
$\chi_\rho: R\to \bbC$ is given by
\begin{equation}\label{cha1}\chi_\rho(\on{ch}_W)=\tr(\rho(\Phi), W), \end{equation}
where we assume (after conjugation) that $\rho(\Phi)\in
((H^\vee)^I\times \sigma)_{ss}$, and $ \in
(H^\vee)^I\rtimes\langle\sigma\rangle$.

Now let us explain how to attach to a spherical representation its
spherical parameter. Let $\pi$ be a spherical representation of $G$,
such that $\pi^{K_v}\neq 0$ for a very special parahoric subgroup
$K_v\subset G(F)$. Therefore, $\pi$ determines a character
$\chi_\pi$ of $C_c(K_v\!\setminus\! G(F)/K_v)$ by
\begin{equation}\label{cha2}
\chi_\pi(f)=\tr(\pi(f)),
\end{equation}
where we fix a measure on $G(F)$ so that the volume of $K_v$ is one.

\begin{dfn}\label{LP}
We define the spherical parameter associated to $\pi$ to be the
unique Langlands parameter
\[\Sat(\pi):W_F\to{^L}G\]
such that $\chi_{\Sat(\pi)}=\chi_\pi$ under the Satake isomorphism
\begin{equation}\label{dict}C_c(K_v\!\setminus\! G(F)/K_v)\simeq
\on{R}.\end{equation}
\end{dfn}

As explained in Lemma \ref{classicalsatake},  in the case
$F=\bbF_q\ppart$, the isomorphism \eqref{dict} can be deduced from
\ref{main finite} under the sheaf-function dictionary. We come back
to the notation as in \S \ref{proof}, in particular
$k=\overline{\bbF}_q$. Let $\bar{\mu}\in\xcoch(T)_I$. By abuse of
notation, we denote the corresponding Schubert variety in
$\Fl_v\otimes k$ by $\Fl_{v\bar{\mu}}\otimes k$. If $\bar{\mu}$ is
defined $\bbF_q$, i.e. $\bar{\mu}\in(\xcoch(T)_I)^{\sigma}$, then
$\Fl_{v\bar{\mu}}\otimes k$ is also defined over $\bbF_q$ and we
denote the corresponding Schubert variety in $\Fl_v$ by
$\Fl_{v\bar{\mu}}$. In this case the intersection cohomology sheaf
$\IC_{\bar{\mu}}$ is naturally in $\calP_v^0$, and
$\rH^*(\IC_{\bar{\mu}})$ is a representation of
$(H^\vee)^I\rtimes_{\on{act}^{alg}}\Gal(k/\bbF_q)$. When restricted
to $(H^\vee)^I$, it is the highest representation $W_{\bar{\mu}}$.
By abuse of notation, this algebraic representation of
$(H^\vee)^I\rtimes_{\on{act}^{alg}}\Gal(k/\bbF_q)$ is still denoted
by $W_{\bar{\mu}}$. Let $A_{\bar{\mu}}\in C_c(K_v\!\setminus\!
G(F)/K_v)$ be the associated function under Grothendieck's
sheaf-function dictionary. Combining \eqref{cha1} and \eqref{cha2},
we have
\begin{equation}\label{trace}
\tr(\pi(A_\mu))=\tr(\Sat(\pi)(\Phi), W_{\bar{\mu}}).
\end{equation}

\section{Applications to the nearby cycles on certain Shimura
varieties}\label{Shimura} One of the main motivations of this work
is to calculate the nearby cycles for certain unitary Shimura
varieties. This is achieved by the so-called Rapoport-Zink-Pappas
local models.

Let $F/\bbQ$ be a quadratic imaginary field we fix an embedding
$F\subset\bbC$. Let $(W,\phi)$ be a hermitian space over $F/\bbQ$,
of dimension $n=\dim W\geq 3$. Let $G=\on{GU}(W,\phi)$ be the group
of unitary similitudes defined by
\[G(R)=\{g\in \GL_{F}(W\otimes_\bbQ R)\mid \phi(gv,gw)=c(g)\phi(v,w), c(g)\in R^\times\}.\]
Assume that $(W_\bbR,\phi_\bbR)\simeq (\bbC^n,H)$, where $H$ is the
standard Hermitian matrix on $\bbC^n$ of signature $(r,s)$, i.e.
$H=\on{diag}\{(-1)^{(s)},1^{(r)}\}$ is the diagonal matrix with $-1$
repeated at the first $s$ places and $1$ repeated at the remaining
$r$ places\footnote{The corresponding hermitian form is
$H(z,w)=\bar{\underline{z}}^tH\underline{w}$ for
$\underline{z},\underline{w}\in\bbC^n$.}. Without loss of
generality, we can assume that $s\leq r$. Let
$h:\Res_{\bbC/\bbR}\bbG_m\to G_{\bbR}$ be the homomorphism given by
$h(z)=\on{diag}\{z^{(s)},\bar{z}^{(r)}\}$. Let $K\subset G(\bbA_f)$
be an open compact subgroup, small enough (i.e. $K$ is contained in
some principal congruence subgroup for some $N\geq 3$). Then
associated to the data $(G,\{h\},K)$, one can define a Shimura
variety $Sh(G,K)$ over a number field $E$, where $E=\bbQ$ if $r=s$,
and $E=F$ if $r\neq s$. Let us recall that $h$ also determines a
conjugacy class of one parameter subgroups of $G_\bbC$ (the Shimura
cocharacter), defined over $E$. In our case,
$G_\bbC\simeq\GL_n\times\bbG_m$ and the one parameter subgroups are
conjugate to $\mu_{r,s}(z)=(\on{diag}\{z^{(s)},1^{(r)}\},z)$.

Let us fix a prime $p>2$ and assume that $F/\bbQ$ is ramified at
$p$. We denote $F_p$ (resp. $E_p$) the completion of $F$ (resp. $E$)
at the unique place over $p$. In addition, we assume that $(W,\phi)$
is a split hermitian form at $p$. In other words,
$(W,\phi)_{\bbQ_p}\simeq (F_p^n,J)$, where $J$ is the split
hermitian matrix on $F_p^n$ with all its anti-diagonal entries $1$,
and $0$ elsewhere. Observe that this assumption automatically holds
if $n$ is odd. Then $G_{\bbQ_p}$ is quasi-split. We will assume that
$K=K_pK^p\subset G(\bbQ_p)G(\bbA_f^p)$ and $K_p$ is a special
parahoric of $G_{\bbQ_p}$, which is automatically very special in
the sense of \eqref{very special}. Let us make this more concretely.

Let $\pi$ be a uniformizer of $F_p$ so that $\pi^2=ap$ with $a$ a
T\"{e}chimuller lifting of $\bbF_p^\times$. Let $\{e_1,\ldots,e_n\}$
be a basis of $F_p^n$ so that $\phi(e_i,e_j)$ is given by $J$. Let
\[\Lambda_i=\on{Span}_{\calO_{F_p}}\{\pi^{-1}e_1,\cdots,\pi^{-1}e_i,e_{i+1},\cdots,e_n\}.\]
If $n=2m+1$, we consider two integral models for $G_{\bbQ_p}$,
\begin{equation}\label{spodd}\underline{G}_{v_0}=\{g\in G, g\Lambda_0=\Lambda_0\}, \quad
\underline{G}_{v_1}=\{g\in G, g\Lambda_m=\Lambda_m\}.\end{equation}
If $n=2m$, we consider the integral model
\begin{equation}\label{speven}\underline{G}_v=\{g\in G, g\Lambda_m=\Lambda_m\}.\end{equation}
As explained in \cite[Sect. 1.2]{PR2}, these $\underline{G}_v$ are
special parahoric group schemes and essentially all special
parahoric group schemes of $G$ are conjugate to this ones.

Let $K_p=\underline{G}_{v}(\bbZ_p)$. In this case, the Shimura
variety $Sh(G,K)$ has a well-defined model over $\calO_{E_p}$, as in
\cite{PR2}. Let us denote the integral model by $Sh_{K_p}$. In
addition, there is the so-called local model diagram
\[\xymatrix{
&\widetilde{Sh}_{K_p}\ar_{\pi}[dl]\ar^{\varphi}[dr]&\\
Sh_{K_p}&&\on{M}^{\on{loc}}_{K_p} },\] where the scheme
$\on{M}^{\on{loc}}_{K_p}$, which is called the local model of
$Sh_{K_p}$, is projective over $\calO_{E_p}$ with an action of
$\underline{G}_v\otimes_{\bbZ_p}\calO_{E_p}$, and is \'{e}tale
locally isomorphic to $Sh_{K_p}$. In addition,
$\pi:\widetilde{Sh}_{K_p}\to Sh_{K_p}$ is a
$\underline{G}_{v}\otimes_{\bbZ_p}\calO_{E_p}$-torsor, and
$\varphi:\widetilde{Sh}_{K_p}\to \on{M}^{\on{loc}}_{K_p}$ is
$\underline{G}_v\otimes_{\bbZ_p}\calO_{E_p}$-equivariant and is
formally smooth. (cf. \cite{PR2} for details).

We are interested in the nearby cycle $\Psi_{Sh_{K_p}}(\bbQ_\ell)$,
which is an $\ell$-adic complex on
$Sh_{K_p}\otimes_{\calO_{E_p}}\overline{\bbF}_p$, on which
$\Ga=\Gal(\overline{\bbQ}_p/E)$ acts continuously, compatibly with
the action of $\Ga$ on
$\on{M}^{\on{loc}}_{K_p}\otimes\overline{\bbF}_p$ through
$\Ga\to\Gal(\overline{\bbF}_p/\bbF_p)$. From the local model
diagram, we have
\[\pi^*\Psi_{Sh_{K_p}}(\bbQ_\ell)\simeq \varphi^*\Psi_{\on{M}^{\on{loc}}_{K_p}}(\bbQ_\ell).\]
Therefore, it is essentially enough to determine
$\Psi_{\on{M}^{\on{loc}}_{K_p}}(\bbQ_\ell)$. For this purpose, we
need to recall the geometry of $\on{M}^{\on{loc}}_{K_p}$.

First, let $F'=\bbF_p\pparu$ be a ramified quadratic extension of
$\bbF_p\ppart$ with $u^2=at$, where $a\in\bbF_p^\times$ as before.
Let $W'=F'e_1+\cdots+F'e_n$ and $\phi'$ be a split hermitian form on
$W'$ given by $\phi'(e_i,e_{n+1-j})=\delta_{ij}$. Let $G'$ be the
corresponding unitary similitude group over $\bbF_p\ppart$. The
parahoric group scheme $\underline{G}_v$ of $G$ over $\bbZ_p$ has an
obvious counterpart $\underline{G}'_v$ over $\bbF_p[[t]]$. Namely,
consider
\[\Lambda'_i=\on{Span}_{\calO_{F'}}\{\pi^{-1}e_1,\cdots,\pi^{-1}e_i,e_{i+1},\cdots,e_n\}.\]
If $\underline{G}_v=G_{\bbQ_p}\cap\Aut(\Lambda_i)$, then
$\underline{G}'_{v}=G'\cap\Aut(\Lambda'_i)$. Observe that there is
an isomorphism
$\underline{G}_v\otimes\bbF_p\simeq\underline{G}'_v\otimes\bbF_p$
(given by the obvious identification of $(W,\phi)_{\bbF_p}\simeq
(W',\phi')_{\bbF_p}$). In addition, the Shimura cocharacter
$\mu_{r,s}$ makes sense as a cocharacter of
$G'\otimes(\bbF_p\ppart)^{s}$. Let ${\Fl_v}=LG'/L^+\underline{G}'_v$
be the associated affine flag variety considered before.

Now, we give the description of $\on{M}^{\on{loc}}_{K_p}$.  The
following statements can be extracted from \cite{PR2,Ri,PZ}.
\begin{prop}\label{geom}
(i) The generic fiber of $\on{M}^{\on{loc}}_{K_p}$ is isomorphic to
$\calP_{\mu_{r,s}}$, where $\calP_{\mu_{r,s}}$ is the variety of
maximal parabolic subgroups of $G_{E_p}$ of the type given by
$\mu_{r,s}$.

(ii) The special fiber is isomorphic to ${\Fl_v}_{\bar{\mu}_{r,s}}$
in an equivariant way. More precisely, the
$L^+\underline{G}'_v$-action on ${\Fl_v}_{\bar{\mu}_{r,s}}$ factors
through an action of $\underline{G}'_v\otimes\bbF_p$, and there is
an isomorphism
\[\on{M}^{\on{loc}}_{K_p}\simeq {\Fl_v}_{\bar{\mu}_{r,s}},\] intertwining
the $\underline{G}_v\otimes\bbF_p$ action on the left and this
$\underline{G}'_v\otimes\bbF_p$-action on the right.

(iii) The generic point of the special fiber
$\on{M}^{\on{loc}}_{K_p}\otimes\bbF_p$ is smooth in
$\on{M}^{\on{loc}}_{K_p}$.
\end{prop}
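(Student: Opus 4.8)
The plan is to deduce the three assertions from \cite{PR2}, \cite{Ri} and \cite{PZ}, where each is in substance already proved; the point of recording them here is that (ii) and (iii) are exactly the geometric input needed for the nearby-cycle computation that follows.

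First I would recall the Pappas--Rapoport construction: by definition (see \cite{PR2}) $\on{M}^{\on{loc}}_{K_p}$ is the schematic closure, inside a suitable twisted lattice Grassmannian over $\calO_{E_p}$, of the locus of lattices that are self-dual up to scalar and satisfy the Kottwitz condition attached to $\mu_{r,s}$ together with the wedge (resp.\ spin) conditions. For (i) one inverts $p$: over $E_p$ the extra lattice conditions become automatic and the Kottwitz condition cuts out exactly the variety $\calP_{\mu_{r,s}}$ of maximal parabolic subgroups of $G_{E_p}$ of the type prescribed by $\mu_{r,s}$, compatibly with the $\underline{G}_v\otimes_{\bbZ_p}E_p$-action. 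This is \cite[\S 1]{PR2} and requires no new argument.

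The substantive step is (ii). Here I would invoke the ``local model $=$ Schubert variety'' theorem: since $p>2$ the relevant splitting field is tamely ramified, so the Pappas--Zhu construction \cite{PZ} applies and identifies $\on{M}^{\on{loc}}_{K_p}$ with the group-theoretic local model attached to $(\underline{G}'_v,\mu_{r,s})$, whose special fiber is, $\underline{G}'_v\otimes\bbF_p$-equivariantly, the Schubert variety ${\Fl_v}_{\bar{\mu}_{r,s}}$ in ${\Fl_v}=LG'/L^+\underline{G}'_v$, where $\bar{\mu}_{r,s}$ is the image of $\mu_{r,s}$ in $\xcoch(T)_I$; for the unitary similitude groups at hand many cases of this are already in \cite{PR2}. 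To obtain the equivariance in the form stated I would transport the $\underline{G}'_v\otimes\bbF_p$-action along the isomorphism $\underline{G}_v\otimes\bbF_p\simeq\underline{G}'_v\otimes\bbF_p$ recalled above. (The cases relevant to the introduction are in \cite{Z} for $\underline{G}_{v_0}$ with $n$ odd and $(r,s)=(n-1,1)$, and in \cite{Ri} for $\underline{G}_{v_1}$ in the same range, where ${\Fl_v}_{\bar{\mu}_{r,s}}$ turns out to be smooth.) I expect this to be the main obstacle, in the sense that it rests on the full Pappas--Zhu machinery, hence ultimately on Zhu's proof of the Pappas--Rapoport coherence conjecture (cf.\ \cite{Z1}): the delicate point is matching the naive moduli description of $\on{M}^{\on{loc}}_{K_p}$ with the group-theoretic local model, but this is already in the literature, so here it is only a matter of citing it correctly.

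Finally (iii). By the parametrization of $K_v$-orbits recalled in \S\ref{reminder}, the dense open orbit $\mathring{\Fl}_{v\bar{\mu}_{r,s}}\subset{\Fl_v}_{\bar{\mu}_{r,s}}$ is smooth, being homogeneous; hence by (ii) the special fiber $\on{M}^{\on{loc}}_{K_p}\otimes\bbF_p$ is an integral scheme whose generic point $\xi$ is a smooth, in particular reduced, point. Since $\on{M}^{\on{loc}}_{K_p}$ is flat over the discrete valuation ring $\calO_{E_p}$ by construction, a uniformizer of $\calO_{E_p}$ is a non-zero-divisor in $\calO_{\on{M}^{\on{loc}}_{K_p},\xi}$ and generates its maximal ideal (the residue ring being the function field of ${\Fl_v}_{\bar{\mu}_{r,s}}$); therefore $\calO_{\on{M}^{\on{loc}}_{K_p},\xi}$ is a one-dimensional regular local ring, i.e.\ $\on{M}^{\on{loc}}_{K_p}$ is smooth over $\Spec\calO_{E_p}$ at $\xi$. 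Together with (i) this gives the assertion.
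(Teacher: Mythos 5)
Your proposal follows essentially the same route as the paper: the paper offers no argument beyond the remark that the three assertions ``can be extracted from \cite{PR2,Ri,PZ},'' and you spell out which assertion comes from which source, which is exactly the intended reading. Your expansion of (iii) — open Schubert cell smooth in the (reduced) special fiber, hence the fiber is smooth at its generic point, together with $\calO_{E_p}$-flatness of the scheme-theoretic closure — is the standard deduction and is correct; it indeed depends on (ii) providing the reducedness of the special fiber, which is the genuine content of the coherence conjecture. One slip to fix: the coherence conjecture is proved in \cite{Z}, not \cite{Z1}; in this paper's bibliography \cite{Z1} is the earlier ``Affine Demazure modules'' paper and \cite{Z} is ``On the coherence conjecture of Pappas and Rapoport.'' Apart from that citation mix-up, the proposal is accurate.
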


Having described the geometry of $\on{M}^{\on{loc}}_{K_p}$, let us
state the main theorem of this section. First, let $\calP_v$ be
either
\begin{enumerate}
\item the category of $L^+\underline{G}'_v$-equivariant Weil perverse
sheaves on ${\Fl_v}$, constant along each
$L^+\underline{G}'_v$-orbit; or
\item  the category of
$L^+\underline{G}'_v\otimes\overline{\bbF}_p$-equivariant perverse
sheaves on ${\Fl_v}\otimes\overline{\bbF}_p$.
\end{enumerate}
Likewise, we understand $\IC_{\bar{\mu}_{r,s}}$ either as a pure
perverse sheaf of weight zero, or just a geometric perverse sheaf.
By Theorem \ref{main}, we have:
\begin{enumerate}\item if $n=2m+1$
is odd,
\[\calR\calS: \Rep(\on{GO}_{2m+1})\simeq \calP_v;\]
\item if $n=2m$ is even,
\[\calR\calS: \Rep(\on{GSp}_{2m})\simeq \calP_v.\]
\end{enumerate}
Next, let $V$ be the standard representation of $\GL_n$ and
$V_{r,s}=\wedge^s V$ to be its $s$th wedge power. We extend
$V_{r,s}$ to a representation of $\GL_n\times\bbG_m$, on which
$\bbG_m$ acts via the homotheties.
\begin{thm}\label{Shi} Regard $V_{r,s}$ as a representation of $\on{GO}_{n}\subset\GL_n\times\bbG_m$ if
$n$ is odd, or of $\on{GSp}_n\subset\GL_n\times\bbG_m$ if $n$ is
even by restriction. Then:
\begin{enumerate}
\item Denote by $\Psi^{\on{geom}}$ the underlying complex of sheaves of $\Psi_{\on{M}^{\on{loc}}_{K_p}}[rs](\frac{rs}{2})$ on
$\on{M}^{\on{loc}}_{K_p}\otimes\overline{\bbF}_p$. Then
\[\Psi^{\on{geom}}\simeq \calR\calS(V_{r,s})\simeq\left\{\begin{array}{ll} \IC_{\bar{\mu}_{r,s}} & n \mbox{ odd } \\  \sum_{s'\geq 0, s-s'\in 2\bbZ_{\geq 0}}\IC_{\bar{\mu}_{n-s',s'}} & n \mbox{ even. }\end{array}\right.\]

\item Let $r\neq s$. Then the action of the inertial subgroup $I\subset\Ga$ on
$\Psi_{\on{M}^{\on{loc}}_{K_p}}$ is trivial so that
$\Psi_{\on{M}^{\on{loc}}_{K_p}}$ admits a structure as a Weil sheaf
on ${\Fl_v}$. In addition, as Weil sheaves,
\[\Psi_{\on{M}^{\on{loc}}_{K_p}}[rs](\frac{rs}{2})\simeq \calR\calS(V_{r,s}).\]

\item Let $r=s=m$ where $n=2m$. By (1) \[\Psi^{\on{geom}}\simeq \sum_{m'\geq 0,m-m'\in 2\bbZ_{\geq 0}}\IC_{\bar{\mu}_{n-m',m'}}.\]
The action of the inertial subgroup $I$ on
$\Psi_{\on{M}^{\on{loc}}_{K_p}}$ factors through
$I\to\Gal(F_p/\bbQ_p)\simeq \bbZ/2$. In addition, the action of
$\bbZ/2$ on $\IC_{\bar{\mu}_{n-m',m'}}$ is trivial if $4\mid m-m'$
and is through the non-trivial character if $4\nmid m-m'$. As Weil
sheaves,
\[(\Psi_{\on{M}^{\on{loc}}_{K_p}})^I[rs](\frac{rs}{2})\simeq \sum_{m'\geq 0,m-m'\in 4\bbZ_{\geq 0}}\IC_{\bar{\mu}_{n-m',m'}},\]
where $(\Psi_{\on{M}^{\on{loc}}_{K_p}})^I$ denotes the inertial
invariants of $\Psi_{\on{M}^{\on{loc}}_{K_p}}$.
\end{enumerate}
\end{thm}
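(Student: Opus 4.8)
The plan is to reduce the computation of $\Psi_{\on{M}^{\on{loc}}_{K_p}}$ to the central functor $\calZ$ of \S\ref{central functor}, and then to feed in the ramified geometric Satake equivalences (Theorems~\ref{main}, \ref{main finite}) together with the representation theory of $(H^\vee)^I$. By Proposition~\ref{geom} the generic fibre of $\on{M}^{\on{loc}}_{K_p}$ is the smooth variety $\calP_{\mu_{r,s}}$ and its special fibre is $\Fl_{v\bar{\mu}_{r,s}}$; by the Pappas--Zhu local model theory (\cite{PZ,PR2,Z}) one moreover has a canonical identification of $\Psi_{\on{M}^{\on{loc}}_{K_p}}$ \emph{together with its $\Gal(\overline{\bbQ}_p/E_p)$-action} with $\calZ\bigl(\calS(V_{r,s})\bigr)$, the central functor applied to the Satake sheaf of the minuscule coweight $\mu_{r,s}$, the identification carrying the inertia action over to the deck/monodromy action built into $\calZ$ (this uses that, $\mu_{r,s}$ being minuscule, $\bGr_{\mu_{r,s}}=\Gr_{\mu_{r,s}}$ is smooth of dimension $(2\rho,\mu_{r,s})=rs$, so on the generic fibre $\overline{\bbQ}_\ell[rs]$ agrees with the shifted Satake $\IC$, and nearby cycles depend only on the generic fibre). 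Since $\calZ(\calS(W))\simeq\calR\calS(\Res W)$ for $W\in\Rep(H^\vee)$, with $\Res$ the restriction to $\Rep((H^\vee)^I)$ (by the proof of Theorem~\ref{main}), this yields $\Psi^{\on{geom}}\simeq\calR\calS(V_{r,s})$ with $V_{r,s}$ now viewed as a $(H^\vee)^I$-representation, exactly as in the statement.

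Part~(1) is then representation theory. For $n=2m+1$ odd, $(H^\vee)^I=\on{GO}_{2m+1}$ and $V_{r,s}=\wedge^s V$ is (for $s\le m$) irreducible of highest weight $\bar{\mu}_{r,s}$, so $\calR\calS(V_{r,s})=\IC_{\bar{\mu}_{r,s}}$. For $n=2m$ even, $(H^\vee)^I=\on{GSp}_{2m}$ and the classical decomposition of exterior powers of the standard symplectic representation gives $V_{r,s}=\wedge^s V\simeq\bigoplus_{0\le s'\le s,\ s'\equiv s\,(2)}V_{s'}$, where $V_{s'}$ is the irreducible of highest weight $\bar{\mu}_{n-s',s'}$ and occurs inside $\wedge^s V$ as $\omega^{(s-s')/2}\wedge(\text{primitive part})$ for the invariant symplectic form $\omega\in\wedge^2V$; applying $\calR\calS$ gives the asserted $\Psi^{\on{geom}}\simeq\bigoplus_{s'\ge 0,\ s-s'\in2\bbZ_{\ge0}}\IC_{\bar{\mu}_{n-s',s'}}$. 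For part~(2), when $r\neq s$ one has $E_p=F_p$ and $I=I_{F_p}$ is tame with trivial wild part; the mixed-characteristic analogue of Lemma~\ref{monodromy} — obtained by transporting through the identification above the $\bbG_m$-action dilating the base used in \cite{Z,PZ} — shows the monodromy of $\Psi_{\on{M}^{\on{loc}}_{K_p}}$ is trivial, so $I$ acts trivially and $\Psi_{\on{M}^{\on{loc}}_{K_p}}$ carries a Weil structure; the isomorphism above then refines, via the Weil version of the equivalence (Theorem~\ref{main finite}), to an isomorphism of Weil sheaves $\Psi_{\on{M}^{\on{loc}}_{K_p}}[rs](\frac{rs}{2})\simeq\calR\calS(V_{r,s})$.

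For part~(3), $r=s=m$ forces $E_p=\bbQ_p$ while $G$ splits only over the ramified quadratic $F_p$. Restricting the $I_{\bbQ_p}$-action on $\Psi_{\on{M}^{\on{loc}}_{K_p}}$ to $I_{F_p}$ reduces to the situation of part~(2), hence is trivial, and since $F_p/\bbQ_p$ is quadratically — so, as $p>2$, tamely — ramified, the $I_{\bbQ_p}$-action factors through $\Gal(F_p/\bbQ_p)\simeq\bbZ/2=\langle\sigma_0\rangle$. To identify the $\sigma_0$-action I would use that, under the identification of the first paragraph, $\sigma_0$ corresponds to the $\mu_2$-deck transformation of the degeneration: it sends the uniformizer $\pi$ of $F_p$, equivalently the coordinate $u$ (with $u^2=at$) on the $[e]$-cover of the $\bbA^1$ carrying $\calG$, to $-\pi$, i.e. $u\mapsto -u$. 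By Lemma~\ref{psi} this deck transformation acts on the $\bbG_m$-part as $\psi(\sigma_0)\times\sigma_0$, hence — the honest monodromy being trivial — on $\calZ(\calS(V_{m,m}))\simeq\Psi^{\on{geom}}$ it acts through the pinned involution $\psi(\sigma_0)$ twisted by the sign picked up by the datum entering the decomposition of the previous paragraph, namely the invariant symplectic form $\omega$: in the local-model picture $\omega$ is the alternating form $\on{Tr}_{F'/F}\!\bigl(u^{-1}\phi'(-,-)\bigr)$, so $u\mapsto -u$ multiplies it by $-1$. Hence $\sigma_0$ acts on the summand $\omega^{(m-m')/2}\wedge(\text{primitive})=\IC_{\bar{\mu}_{n-m',m'}}$ by $(-1)^{(m-m')/2}$, i.e. trivially iff $4\mid m-m'$ and otherwise through the nontrivial quadratic character, and taking $I$-invariants gives the claimed formula for $(\Psi_{\on{M}^{\on{loc}}_{K_p}})^I$.

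The routine ingredients are the representation-theoretic decompositions and the reductions to the equal-characteristic picture. The main obstacle is the last step of part~(3): one has to make the Pappas--Zhu comparison of nearby cycles precise \emph{as an equivalence of inertia-modules}, not merely of underlying complexes, and then to pin down exactly the sign with which $\sigma_0$ acts on the invariant symplectic form inside the one-parameter degeneration — equivalently, to verify that the discrepancy between the geometric and algebraic Galois normalizations (in the sense of the Appendix) contributes precisely this quadratic twist, distributed over the $\IC$-summands according to the number of factors of $\omega$ each carries.
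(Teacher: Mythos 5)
Your overall strategy differs from the paper's in a crucial way that introduces a real gap. You build everything on a ``canonical identification of $\Psi_{\on{M}^{\on{loc}}_{K_p}}$ together with its $\Gal(\overline{\bbQ}_p/E_p)$-action with $\calZ(\calS(V_{r,s}))$,'' citing Pappas--Zhu. But the paper explicitly declines to use this: after stating that \eqref{isom} being an isomorphism of $\on{GO}_n$- or $\on{GSp}_n$-modules ``can be shown using the constructions in \cite{PZ},'' it adds ``as a further application of the main results of this paper, we \emph{will} prove the corresponding Part (i) for all ramified groups in \emph{loc. cit.}'' and instead gives ``a more direct (and easier) argument, without showing that \eqref{isom} is an isomorphism of $\on{GO}_n$ or $\on{GSp}_n$-modules.'' In other words, the identification of the mixed-characteristic nearby cycle with the equal-characteristic $\calZ$, together with the Galois/monodromy structure, is a result deferred to \cite{PZ} and not available within this paper. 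You acknowledge the obstacle yourself (``make the Pappas--Zhu comparison of nearby cycles precise as an equivalence of inertia-modules''), but the argument you propose does not close it. The same circularity affects Part (2): the ``mixed-characteristic analogue of Lemma \ref{monodromy}'' is precisely what Theorem \ref{Shi}(2) is \emph{proving}; the paper's remark after Lemma \ref{monodromy} points forward to Theorem \ref{Shi}, not to an independent fact you may import.

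The paper's actual route is more elementary and is calibrated so that only facts already in hand are used. From Proposition \ref{geom} one knows $\Psi^{\on{geom}}\in\calP_v$, one has the graded $\Ga$-isomorphism $\rH^*(\Psi^{\on{geom}})\simeq\rH^*(\calP_{\mu_{r,s}})=V_{r,s}$ from proper base change (no identification with $\calZ$, and no identification of $\on{GSp}_n$-module structure is claimed --- only the $2\rho(\bbG_m)$-grading via the cohomological degree), and one knows $\IC_{\bar{\mu}_{r,s}}$ occurs as a summand by flatness and generic smoothness; one then runs a dimension/degree count to pin down all multiplicities in Part (1). For Part (2), irreducibility of each $\IC$-summand implies $I$ acts by a scalar character, which is then computed to be trivial by comparing with $\rH^0$. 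For Part (3), the characters $\chi_{m'}$ are determined by the purely representation-theoretic Lemma \ref{Gal} (an explicit computation with the torus character of $V_{m,m}$ and specialization at $T=\sqrt{-1}$), and the sheaf-side characters $\chi'_{m'}$ are forced to agree by matching $(2\rho(\bbG_m)\times I)$-modules degree by degree. Your replacement for Lemma \ref{Gal} --- tracking the sign of the symplectic form $\omega=\on{Tr}_{F'/F}(u^{-1}\phi'(-,-))$ under $u\mapsto -u$ --- is a nice heuristic for \emph{why} the answer has the shape it does, and it should ultimately be correct once the \cite{PZ} comparison is in place, but as written it is not a proof: you have not shown that the deck action on the degenerating family actually transports to multiplication by $(-1)^{(m-m')/2}$ on the summand $\IC_{\bar{\mu}_{n-m',m'}}$ of the nearby cycle, and the sign bookkeeping would need to account for the geometric vs.\ algebraic normalization discrepancy in the Appendix, which your sketch gestures at but does not resolve. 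So: genuinely different route, with a genuine gap (the PZ comparison as inertia modules, plus the unproved sign computation in Part (3)), whereas the paper's own argument is self-contained and avoids both.
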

\begin{proof}Observe that $\Psi^{\on{geom}}$ is an
object in $\calP_v$. This follows from Proposition \ref{geom} (ii).
Observe that $\IC_{\bar{\mu}_{r,s}}$ is a direct summand of
$\Psi^{\on{geom}}$. This follows from the fact that
$\on{M}^{\on{loc}}_{K_p}$ is flat over $\calO_{E_p}$ with special
fiber isomorphic to ${\Fl_v}_{\bar{\mu}_{r,s}}$.

The partially flag variety $\calP_{\mu_{r,s}}$ is a Schubert variety
in the affine Grassmanian of $G_{E_p}$ over $E_p$, (see
\eqref{affGrass}). Therefore, $\rH^*(\calP_{\mu_{r,s}})$ is a
natural representation of $\GL_n\times\bbG_m$ (the dual group of
$G_{E_p}$), which is indeed just $V_{r,s}$. Since nearby cycles
commute with proper push-forward, we have
\begin{equation}\label{isom}V_{r,s}\simeq \rH^*(\calP_{\mu_{r,s}})\simeq
\rH^*(\Psi^{\on{geom}}).\end{equation} Part (i) of the theorem would
follow if we can show that this isomorphism is an isomorphism of
$\on{GO}_n$ or $\on{GSp}_n$-modules. This is indeed the case, and
can be shown using the constructions in \cite{PZ}. In fact, as a
further application of the main results of this paper, we will prove
the corresponding Part (i) for all ramified groups in \emph{loc.
cit.}. Here for the ramified unitary groups, we give a more direct
(and easier) argument, without showing that \eqref{isom} is an
isomorphism of $\on{GO}_n$ or $\on{GSp}_n$-modules. However, we do
need the existence of this natural isomorphism.

We will first assume that $n$ is odd.  Recall that $V_{r,s}$ remains
irreducible as a representation of $\on{GO}_n$. Therefore,
$\calR\calS(V_{r,s})\simeq \IC_{\bar{\mu}_{r,s}}$ and we have
\[\dim V_{r,s}=\dim\rH^*(\IC_{\bar{\mu}_{r,s}})\leq \dim\rH^*(\Psi^{\on{geom}})=\dim \rH^*(\calP_{\mu_{r,s}})=\dim V_{r,s}.\]
Therefore, $\Psi^{\on{geom}}=\IC_{\bar{\mu}_{r,s}}$. As
$\IC_{\bar{\mu}_{r,s}}$ is irreducible, the inertial group $I$ acts
on $\Psi_{\on{M}^{\on{loc}}_{K_p}}$ via some character. Observe that
the action of $I$ on
$\rH^0(\Psi_{\on{M}^{\on{loc}}_{K_p}})\simeq\rH^0(\calP_{\mu_{r,s}})\simeq\bbQ_\ell$
is via the same character, again due to the fact that nearby cycles
commute with proper push-forward. Therefore the action of $I$ on
$\Psi_{\on{M}^{\on{loc}}_{K_p}}$ is trivial. Therefore,
$\Psi_{\on{M}^{\on{loc}}_{K_p}}[rs](\frac{rs}{2})\simeq
\IC_{\bar{\mu}_{r,s}}\otimes\calL$ for some rank one local system
$\calL$ on $\Spec \bbF_p$. By comparing the action of Frobenius on
$\rH^*(\calP_{\mu_{r,s}})$ and on $\rH^*(\IC_{\bar{\mu}_{r,s}})$, we
obtain the result in this case.

If $n$ is even, we need a modified argument since $V_{r,s}$ is not
irreducible as a $\on{GSp}_n$-module. In fact, we know that the
symplectic form induces a surjective map $V_{r,s}\to V_{r+2,s-2}$
and the kernel is the irreducible representation of $\on{GSp}_n$ of
highest weight $\bar{\mu}_{r,s}$, denoted by $W_{\bar{\mu}_{r,s}}$.
Recall that under the (ramified) geometric Satake isomorphism, the
cohomological grading corresponding to the grading by
$2\rho:\bbG_m\to\on{GSp}_n\subset\GL_n\times\bbG_m$. Therefore,
\eqref{isom} is an isomorphism of the representations of
$2\rho(\bbG_m)\subset\on{GSp}_n$. We claim that this already implies
Part (1) of the theorem. Indeed, for a representation $V$ of
$\on{GSp}_n$, we denote $V(i)$ to be the eigenspace of $2\rho$ of
eigenvalue $i$. Let us write
\[\Psi^{\on{geom}}=\sum_{m'\geq 0,m-m'\in2\bbZ_{\leq 0}} c_{m'}\IC_{\bar{\mu}_{n-m',m'}},\]
we need to show that $c_{m'}=1$. First as in Lemma \ref{mult},
$c_s=1$ by Proposition \ref{geom} (iii). Next, we show that
$c_{s-2}=1$. Observe that the gradings on
$\rH^*(\IC_{\bar{\mu}_{r,s}})$ range from $rs$ to $-rs$. From
\[\dim\rH^{(s-2)(r+2)}(\Psi^{\on{geom}})=c_{s-2}+\dim\rH^{(s-2)(r+2)}(\IC_{\bar{\mu}_{r,s}}),\]
\[\dim V_{r,s}((s-2)(r+2))=1+\dim (W_{\bar{\mu}_{r,s}}((s-2)(r+2))),\]
we conclude that $c_{s-2}=1$. Now by induction, $c_{m'}=1$ for all
$m'\geq 0, m-m'\in 2\bbZ_{\geq 0}$. This shows that
\[\Psi^{\on{geom}}\simeq \calR\calS(V_{r,s}).\]
Next, we determine the action of the inertial group $I$ on
$\Psi_{\on{M}^{\on{loc}}_{K_p}}$. First, assume that $r\neq s$. To
show that the action of $I$ on $\Psi_{\on{M}^{\on{loc}}_{K_p}}$ is
trivial, we again observe that the action of $I$ on each irreducible
direct summand of $\Psi_{\on{M}^{\on{loc}}_{K_p}}$ via certain
characters. On the other hand, the group $G_{E_p}$ is split.
Therefore, the action of $I$ on $\rH^*(\calP_{\mu_{r,s}})$ is
trivial, and therefore is trivial on
$\Psi_{\on{M}^{\on{loc}}_{K_p}}$. Again, comparing the action of the
Frobenius, we conclude the result in this case.

Finally, let us assume that $r=s=m$, where $n=2m$. Then $E_p=\bbQ_p$
and $G_{\bbQ_p}$ is not split. In addition the action of $I$ on
$\rH^*(\calP_{\mu_{m,m}})$ is not trivial. Indeed, as
$\calP_{\mu_{m,m}}$ is defined over $\bbQ_p$, according to the
Appendix, $V_{\mu_{m,m}}=\rH^*(\calP_{\mu_{m,m}})$ is a natural
representation of
${^L}G^{geom}=(\GL_{n}\times\bbG_m)\rtimes_{\on{act}^{geom}}\Gal(\overline{\bbQ}_p/\bbQ_p)$,
so that the natural action of $\Gal(\overline{\bbQ}_p/\bbQ_p)$ on
$\rH^*(\calP_{\mu_{m,m}})$ is given by the restriction of this
representation to $\Gal(\overline{\bbQ}_p/\bbQ_p)$. This semidirect
product
$(\GL_{n}\times\bbG_m)\rtimes_{\on{act}^{geom}}\Gal(\overline{\bbQ}_p/\bbQ_p)$
is not the Langlands dual group ${^L}G_{\bbQ_p}^{alg}$ of
$G_{\bbQ_p}$. But if we form both semi-product using
$I\subset\Gal(\overline{\bbQ}_p/\bbQ_p)$, they become the same
because the cyclotomic character is trivial on $I$. On the other
hand, since $G$ is split over $F_p$, the action of $I$ factors
through $I\to \Gal(F_p/\bbQ_p)\simeq \bbZ/2$.

\begin{lem}\label{Gal} The representation $V_{m,m}$ of
$(\GL_{n}\times\bbG_m)\rtimes I$, when restricted to
$\on{GSp}_n\rtimes I=\on{GSp}_n\times I$, decomposes as
\begin{equation}\label{decom}V_{m,m}=\sum_{m'\geq 0,m-m'\in 2\bbZ_{\geq
0}}W_{\bar{\mu}_{n-m',m'}}\otimes \chi_{m'},\end{equation} where
$\chi_{m'}$ is the trivial character of $\Gal(F_p/\bbQ_p)$ if $4\mid
m-m'$, and is the non-trivial character if $4\nmid m-m'$.
\end{lem}
\begin{proof}Clearly, there are some characters $\chi_{m'}$ of $\Gal(F_p/\bbQ_p)$ such
that the decomposition \eqref{decom} holds. We need to identify
these characters.

First, it is clear that $\chi_m=1$. This is because the lowest
weight space of $V_{\bar{\mu}_{m,m}}$ is the same as the lowest
weight space of $V_{m,m}$, which in turn is the same as
$\rH^0(\calP_{\mu_{m,m}})$ as $I$-modules. But the action of $I$ on
$\rH^0(\calP_{\mu_{m,m}})$ is trivial. This shows that $\chi_m=1$.

Now pick up $g\in I$ whose projection to $\Gal(F_p/\bbQ_p)\simeq
\bbZ/2$ is non-trivial. To identify other $\chi_{m'}$, let us write
the weight lattice of $\GL_n\times\bbG_m$ in a standard way to be
$\xch=\bigoplus\bbZ\varepsilon_i\bigoplus\bbZ\varepsilon$ and the
set of simple roots to be $\{\varepsilon_i-\varepsilon_{i+1}, 1\leq
i\leq n-1\}$. Then the action of $g$ on $\xch$ will send
$\varepsilon_i$ to $-\varepsilon_{n+1-i}$ and $\varepsilon$ to
$\varepsilon+\varepsilon_1+\cdots+\varepsilon_n$. The weight lattice
of $\on{GSp}_n$ is $\xch/\{\varepsilon_i+\varepsilon_{n+1-i}=0,
i=1,\ldots,m\}$.

Let $\{v_1,\ldots,v_n\}$ be a standard basis of $V$ so that $v_i$ is
a weight vector of $\GL_n$ of weight $\varepsilon_i$ as usual. Then
a basis of $V_{m,m}$ is given by $\{v_{i_1}\wedge\cdots\wedge
v_{i_m}\mid 1\leq i_1<\cdots<i_m\leq n\}$. We divide this set of
basis into two subsets $A$ and $B$. A base vector
$v_{i_1}\wedge\cdots\wedge v_{i_m}$ belongs to the subset $A$ if
$\{i,n+1-i\}\nsubseteq\{i_1,\ldots,i_m\}$ for any $1\leq i\leq m$.
All remaining base vectors belong to $B$. It is clear that
$\on{Span}\{v\mid v\in A\}\subset V_{\bar{\mu}_{m,m}}$ and
therefore, the action of $g$ fixes each $v\in A$ since $\chi_m=1$.
On the other hand, it is easy to see from the description of the
action of $g$ on $\xch$, that for $v\in B$, $gv$ will be a multiple
of some $w\in B, w\neq v$. From this, we deduce that for any $t$ in
the maximal torus of $\on{GSp}_n$,
\begin{equation}\label{1}
\tr(gt,V_{m,m})=\varepsilon(t)\sum\varepsilon_{1}(t)^{\pm
1}\cdots\varepsilon_{m}(t)^{\pm 1}.
\end{equation}
On the other hand, according to \eqref{decom}, we have
\[\tr(gt,V_{m,m})=\sum_{m'\geq 0,m-m'\in 2\bbZ_{\geq 0}}
\chi_{m'}(g)\on{ch}(W_{\bar{\mu}_{n-m',m'}})(t).
\]
where $\on{ch}(W_{\bar{\mu}_{r,s}})$ denotes the character of
$W_{\bar{\mu}_{r,s}}$ as a $\GSp_n$-module, and $\chi_{m'}(g)=\pm 1$
according to whether $\chi_{m'}$ is trivial or not. Now it is easy
to see that the above two identities force $\chi_{m'}=1$ if $4\mid
m-m'$ and $\chi_{m'}\neq 1$ if $4\nmid m-m'$. Indeed, let $T$ be an
indeterminant and write
\[\varepsilon(1+\varepsilon_1T)\cdots(1+\varepsilon_mT)(1+\varepsilon_1^{-1}T)\cdots(1+\varepsilon_m^{-1}T)=\sum a_kT^k,\]
then $a_{m+i}=a_{m-i}$ and
$\on{ch}(W_{\bar{\mu}_{r,s}})=a_s-a_{s-2}=a_r-a_{r+2}$. Put
$T=\sqrt{-1}$, the left hand side becomes
$(\sqrt{-1})^m\varepsilon(\varepsilon_1+\varepsilon_1^{-1})\cdots(\varepsilon_m+\varepsilon_m^{-1})$,
which is exactly \eqref{1}, and the right hand side is
$(\sqrt{-1})^m(a_m-2a_{m-2}+2a_{m-4}-\cdots)$. The lemma is proved.
\end{proof}

Finally, let us finish the prove the the theorem. We know that
\[\Psi^{\on{geom}}=\sum_{m'\geq 0,m-m'\in 2\bbZ_{\geq 0}}\IC_{\bar{\mu}_{n-m',m'}}.\]
As argued in the case $r\neq s$, the action of $I$ on
$\Psi_{\on{M}^{\on{loc}}_{K_p}}$ also factors through
$I\to\Gal(F_p/\bbQ_p)$. Assume that the action of $I$ on
$\IC_{\bar{\mu}_{n-m',m'}}$ is through the character $\chi'_{m'}$.
We need to show that $\chi'_{m'}=\chi_{m'}$. Since
$\rH^*(\Psi_{\on{M}^{\on{loc}}_{K_p}})\simeq
\rH^*(\calP_{\mu_{m,m}})\simeq V_{m,m}$ as $(2\rho(\bbG_m)\times
I)$-modules, by taking the $I$-invariants, we obtain that
\[\sum_{\chi'_{m'}=1}\rH^*(\IC_{\bar{\mu}_{n-m',m'}})=\sum_{\chi_{m'}=1} W_{\bar{\mu}_{n-m',m'}}.\]
Again, as argued before by considering the gradings, it is easy to
see that this forces $\chi'_{m'}=\chi_{m'}$. Finally, by comparing
the action of Frobenius on $\rH^*(\Psi_{\on{M}^{\on{loc}}_{K_p}}^I)$
and on $\rH^*(\calP_{\mu_{m,m}})^I$, we conclude the theorem.
\end{proof}

Combining Theorem \ref{Shi} and Theorem \ref{stalk cohomology}, it
is not hard to obtain the explicit formula of the trace of Frobenius
of $\Psi_{\on{M}^{\on{loc}}_{K_p}}$, which will be the input of the
Langlands-Kottwitz method of calculating the local Zeta function of
the Shimura varieties. Instead of write down the explicit formula,
let us characterize this function in terms of its trace on
``unramified" representations of $G(F)$ (which clearly determines
this function uniquely). The characterization verifies a conjecture
of Haines and Kottwitz in this case.

\begin{prop}\label{Haines-Kottwitz}
Let $z_{r,s}$ be the function on $G'(F)$ associated to
$\Psi_{\on{M}^{\on{loc}}_{K_p}}^I$ under the Grothendieck
sheaf-function dictionary, and let $V_{r,s}$ be the representation
of ${^L}G_{E_p}^{alg}$ attached to $\mu_{r,s}\in\xcoch(T)$ as above
(or in Corollary \ref{extension}). For $\pi$ an ``unramified"
representation of $G'(F)$, with the Langlands parameter $\Sat(\pi)$
as defined in \eqref{LP}, we have
\[\tr(\pi(z_{r,s}))=\tr(\Sat(\pi)(\Phi), V_{r,s}^I).\]
\end{prop}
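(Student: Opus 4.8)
The plan is to combine the sheaf-theoretic description of $\Psi_{\on{M}^{\on{loc}}_{K_p}}$ in Theorem~\ref{Shi} with the Frobenius trace identity \eqref{trace}. First I would write $z_{r,s}$ as a sum of standard functions. By Proposition~\ref{geom}(ii) the nearby cycle $\Psi_{\on{M}^{\on{loc}}_{K_p}}$ is an $L^+\underline{G}'_v$-equivariant complex supported on $\Fl_{v\bar\mu_{r,s}}$, so after the normalizing shift $[rs]$ and half-twist $(\tfrac{rs}{2})$ its inertia-invariants define an object of $\calP^0_v$; and Theorem~\ref{Shi}(1)--(3) gives, as Weil sheaves,
\[(\Psi_{\on{M}^{\on{loc}}_{K_p}})^{I}[rs](\tfrac{rs}{2})\;\simeq\;\bigoplus_{\bar\mu\in S_{r,s}}\IC_{\bar\mu},\]
where $S_{r,s}\subset\xcoch(T)_I^+$ is $\{\bar\mu_{r,s}\}$ when $n$ is odd, is $\{\bar\mu_{n-s',s'}\colon s'\geq 0,\ s-s'\in 2\bbZ_{\geq 0}\}$ when $n$ is even and $r\neq s$, and is $\{\bar\mu_{n-m',m'}\colon m'\geq 0,\ m-m'\in 4\bbZ_{\geq 0}\}$ when $n=2m$ and $r=s=m$. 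Applying Grothendieck's sheaf--function dictionary, with $z_{r,s}$ taken in the normalization dictated by this, one obtains
\[z_{r,s}\;=\;\sum_{\bar\mu\in S_{r,s}}A_{\bar\mu}\qquad\text{in }C_c(K_v\!\setminus\! G'(F)/K_v).\]

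Next I would apply \eqref{trace} term by term. Each $\bar\mu\in S_{r,s}$ lies in $(\xcoch(T)_I)^{\sigma}$ because the $\IC_{\bar\mu}$ occur in $\calP^0_v$, so \eqref{trace} gives $\tr(\pi(A_{\bar\mu}))=\tr(\Sat(\pi)(\Phi),W_{\bar\mu})$, where $W_{\bar\mu}=\rH^*(\IC_{\bar\mu})$ is the representation of $(H^\vee)^{I}\rtimes_{\on{act}^{alg}}\langle\sigma\rangle$ attached to $\IC_{\bar\mu}$ by Theorem~\ref{main finite}. Summing over $S_{r,s}$,
\[\tr(\pi(z_{r,s}))\;=\;\tr\!\Big(\Sat(\pi)(\Phi),\,\bigoplus_{\bar\mu\in S_{r,s}}W_{\bar\mu}\Big).\]
Thus it remains to identify $\bigoplus_{\bar\mu\in S_{r,s}}W_{\bar\mu}$ with $V_{r,s}^{I}$ as representations of $(H^\vee)^{I}\rtimes_{\on{act}^{alg}}\langle\sigma\rangle$ (equivalently, to match their $\sigma$-twisted characters on $((H^\vee)^{I}\times\sigma)_{ss}$), and this has essentially been carried out inside the proof of Theorem~\ref{Shi}. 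When $n$ is odd, $I$ acts trivially on $V_{r,s}$ and $V_{r,s}|_{(H^\vee)^{I}}=V_{r,s}|_{\on{GO}_n}$ is the irreducible $W_{\bar\mu_{r,s}}$, so $V_{r,s}^{I}=W_{\bar\mu_{r,s}}$. When $n$ is even and $r\neq s$, $I$ again acts trivially and the branching rule for exterior powers from $\GL_n$ to $\on{Sp}_n$ (equivalently, part (1) of Theorem~\ref{Shi}) gives $V_{r,s}^{I}=V_{r,s}|_{\on{GSp}_n}=\bigoplus_{0\leq s'\leq s,\ s'\equiv s\,(2)}W_{\bar\mu_{n-s',s'}}$. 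When $n=2m$ and $r=s=m$, Lemma~\ref{Gal} gives $V_{m,m}|_{\on{GSp}_n\rtimes I}=\bigoplus_{m-m'\in 2\bbZ_{\geq 0}}W_{\bar\mu_{n-m',m'}}\otimes\chi_{m'}$ with $\chi_{m'}$ trivial exactly when $4\mid m-m'$, whence $V_{m,m}^{I}=\bigoplus_{4\mid m-m'}W_{\bar\mu_{n-m',m'}}$. In each case the set of $\bar\mu$ that occur is exactly $S_{r,s}$, completing the proof.

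The main obstacle is the last identification: not the $(H^\vee)^{I}$-module structures, which are governed by the classical branching rules and already appear in the proof of Theorem~\ref{Shi}, but the full $(H^\vee)^{I}\rtimes_{\on{act}^{alg}}\langle\sigma\rangle$-structure, i.e.\ verifying that geometric Frobenius acting on $\bigoplus_{\bar\mu\in S_{r,s}}\IC_{\bar\mu}$ through $\Psi^{I}_{\on{M}^{\on{loc}}_{K_p}}$ matches the one coming from $V_{r,s}^{I}$ \emph{on the nose}, not merely up to an unramified twist. As in the proofs of Theorem~\ref{Shi}(1)--(3) this is pinned down by tracking geometric Frobenius on $\rH^{0}(\calP_{\mu_{r,s}})\cong\overline{\bbQ}_\ell$ (using that nearby cycles commute with proper pushforward) and passing through \eqref{geom-to-alg} to reconcile $\on{act}^{geom}$ with the $\on{act}^{alg}$ normalization in which $\Sat(\pi)$ and the $W_{\bar\mu}$ are defined; alternatively, one reduces via Proposition~\ref{parallel} to an identity of characters in $\bbC[\xcoch(T)_I^{\sigma}]^{W_0}$. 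The one remaining routine point is to fix the normalization of $z_{r,s}$ so that the first step genuinely produces $\sum_{\bar\mu\in S_{r,s}}A_{\bar\mu}$ and not a scalar multiple of it.
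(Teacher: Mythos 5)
Your proposal follows exactly the route the paper has in mind: the paper's own proof consists of the single sentence ``The proof is a direct consequence of Theorem \ref{Shi} and \eqref{trace},'' and you have simply unwound that sentence, first expressing $z_{r,s}$ as $\sum_{\bar\mu\in S_{r,s}} A_{\bar\mu}$ via Theorem \ref{Shi}, then applying \eqref{trace} termwise, and finally matching $\bigoplus_{\bar\mu\in S_{r,s}} W_{\bar\mu}$ with $V_{r,s}^I$ using the branching/twist data already computed inside the proof of Theorem \ref{Shi} (including Lemma \ref{Gal} for the $r=s$ case). The only point you flag but do not nail down, the normalization by the shift/twist $[rs](\tfrac{rs}{2})$ relating $\Psi^I$ to $\bigoplus \IC_{\bar\mu}$, is likewise left implicit in the paper, so this is not a gap relative to the source.
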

The proof is a direct consequence of Theorem \ref{Shi} and
\eqref{trace}.

\begin{rmk}{\rm In the Langlands-Kottwitz methods of calculating the Zeta
factors of Shimura varieties, one needs some mysterious test
functions $z_\mu$ to be put into the trace formula. Assuming the
Local Langlands, Haines and Kottwitz give a conjectural
characterization of this test function $z_\mu$ in the general
setting (i.e. arbitrary group and arbitrary level structure). In the
special case when the group is quasi-split and the level structure
is special parahoric, in which case the Langlands parameters is
clear (as in Sect. \ref{parameter}), their characterization is
reduced to the above proposition. Therefore, this proposition is the
first example of their conjecture in the case when the group is
ramified at $p$. In \cite{PZ}, we will show that the same
characterization holds for arbitrary (tamely ramified) quasi-split
groups with special parahoric level structure.
 }\end{rmk}

Finally, let us make Theorem \ref{Shi} more explicit for some
special cases.

\begin{cor}Let $(r,s)=(n-1,1)$. Then the inertial group acts on
$\Psi_{Sh_{K_p}}$ trivially. In addition, As Weil sheaves,
$\Psi_{Sh_{K_p}}\simeq\bbQ_\ell$. In particular, for every $x\in
Sh_{K_p}(\bbF_{p^n})$, $\tr(\on{Frob}_x,\Psi_{K_p})=1$.
\end{cor}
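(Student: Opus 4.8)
The plan is to reduce the statement, via the local model diagram, to the computation of $\Psi_{\on{M}^{\on{loc}}_{K_p}}$ and then to invoke Theorem~\ref{Shi}. Since $n=r+s\ge 3$ and $(r,s)=(n-1,1)$ we have $r=n-1>1=s$, so $r\ne s$ and part~(2) of Theorem~\ref{Shi} applies: the inertia group $I$ acts trivially on $\Psi_{\on{M}^{\on{loc}}_{K_p}}$, and, as Weil sheaves,
\[
\Psi_{\on{M}^{\on{loc}}_{K_p}}[rs](\tfrac{rs}{2})\simeq\calR\calS(V_{n-1,1}),\qquad rs=n-1.
\]
Now $V_{n-1,1}=\wedge^{1}V=V$ is the $n$-dimensional standard representation of $\GL_n\times\bbG_m$, restricted to $(H^\vee)^I$, that is to $\on{GO}_n$ when $n$ is odd and to $\on{GSp}_n$ when $n$ is even; in both cases this restriction is irreducible of highest weight $\bar{\mu}_{n-1,1}$ (for $n$ even, the surjection $V_{r,s}\to V_{r+2,s-2}$ used in the proof of Theorem~\ref{Shi} has zero target when $s=1$, so no further summand appears). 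Hence by the ramified geometric Satake equivalence $\calR\calS(V_{n-1,1})\simeq\IC_{\bar{\mu}_{n-1,1}}$.

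Thus the corollary becomes the assertion that $\IC_{\bar{\mu}_{n-1,1}}$ is a shift and Tate twist of the constant sheaf, equivalently that the Schubert variety $\Fl_{v\bar{\mu}_{n-1,1}}$ --- of dimension $(2\rho,\bar{\mu}_{n-1,1})=rs=n-1$, the dimension of the generic fiber $\calP_{\mu_{n-1,1}}\cong\bbP^{n-1}$ of $\on{M}^{\on{loc}}_{K_p}$ --- is rationally smooth. This is the only non-formal step, and where I expect the main difficulty. For the special parahoric whose associated Schubert variety is smooth the claim is immediate, and such smoothness is proved in \cite{Ri}; for the $\on{GO}$-type vertex of an odd unitary group, however, $\Fl_{v\bar{\mu}_{n-1,1}}$ is only known to be non-Gorenstein (\cite{Z}), so one must still verify rational smoothness --- i.e. that the $\IC$-stalks are concentrated in the top degree --- from the explicit description of this Schubert variety in \cite{Z,PR2}, noting that rational smoothness does not imply the Gorenstein property. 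Granting rational smoothness, $\IC_{\bar{\mu}_{n-1,1}}$, being pure of weight $0$, equals $\overline{\bbQ}_\ell[n-1](\tfrac{n-1}{2})$; substituting back, the Tate twists cancel and $\Psi_{\on{M}^{\on{loc}}_{K_p}}\simeq\overline{\bbQ}_\ell$ as Weil sheaves --- this cancellation is exactly the Frobenius comparison on $\rH^{0}$ carried out at the end of the proof of Theorem~\ref{Shi}.

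Finally I would transport this through the local model diagram. Since $\on{M}^{\on{loc}}_{K_p}$ is \'{e}tale-locally isomorphic to $Sh_{K_p}$ compatibly with nearby cycles, $\Psi_{Sh_{K_p}}$ is \'{e}tale-locally $\simeq\overline{\bbQ}_\ell$, hence a rank-one local system; its class as a Weil sheaf is determined by pulling back along $\pi$, where $\pi^{*}\Psi_{Sh_{K_p}}\simeq\varphi^{*}\Psi_{\on{M}^{\on{loc}}_{K_p}}\simeq\overline{\bbQ}_\ell$, and since $\pi$ is a torsor under the parahoric group scheme $\underline{G}_v\otimes_{\bbZ_p}\calO_{E_p}$ --- hence smooth surjective with geometrically connected fibers, so that $\pi^{*}$ is fully faithful on Weil local systems --- we get $\Psi_{Sh_{K_p}}\simeq\overline{\bbQ}_\ell$ as Weil sheaves; in particular $I$ acts trivially on it. Consequently, for every $x\in Sh_{K_p}(\bbF_{p^n})$ the stalk of $\Psi_{Sh_{K_p}}$ at $x$ is $\overline{\bbQ}_\ell$ with geometric Frobenius acting as the identity, so $\tr(\on{Frob}_x,\Psi_{Sh_{K_p}})=1$. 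Everything after the rational-smoothness input is routine bookkeeping with the local model diagram and Theorem~\ref{Shi}.
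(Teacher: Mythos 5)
Your overall structure matches the paper: reduce via the local model diagram, apply Theorem~\ref{Shi} to identify $\Psi^{\on{geom}}$ with $\calR\calS(V_{n-1,1})=\IC_{\bar{\mu}_{n-1,1}}$, then argue that this $\IC$ is a shifted, Tate-twisted constant sheaf, and finally descend along the torsor $\pi$ using connectedness of fibers. However, there is a genuine gap at exactly the step you flag as ``the only non-formal step'': you reduce the corollary to rational smoothness of $\Fl_{v\bar{\mu}_{n-1,1}}$ in all cases (including the $\on{GO}$-type vertex for odd $n$, where the Schubert variety is singular and even non-Gorenstein by \cite{Z}), but then only suggest that one ``must still verify rational smoothness $\ldots$ from the explicit description of this Schubert variety in \cite{Z,PR2}'' without doing so. No such verification is supplied, and in fact this is precisely the content the corollary is meant to illustrate.

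The paper closes this gap by an entirely representation-theoretic route that you do not invoke: Theorem~\ref{stalk cohomology} (equivalently Theorem~\ref{BK}), which computes the stalk cohomology of $\calR\calS(V)$ along each $K_v$-orbit in terms of the Brylinski--Kostant filtration on the weight spaces of $V$. Since $V_{n-1,1}$ is the standard representation, every weight space is one-dimensional and the principal nilpotent $X^\vee$ places each weight vector in the unique filtration step that makes the corresponding stalk concentrated in the single degree $1-n$; hence $\IC_{\bar{\mu}_{n-1,1}}[1-n](\tfrac{1-n}{2})$ is an honest sheaf with $\overline{\bbQ}_\ell$ stalks, and by indecomposability in $D(\Fl_v)$ it is the constant sheaf. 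This argument is uniform across the parahoric types and the parities of $n$, and it is the key ingredient your proposal is missing. Once Theorem~\ref{stalk cohomology} is cited and the one-line weight computation is carried out, the rest of your write-up (the purity bookkeeping and the descent along $\pi$) goes through as in the paper.
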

\begin{proof}The first statement follows from Theorem \ref{Shi} and the local model diagram. We
need to show that
$\IC_{\bar{\mu}_{n-1,1}}\simeq\overline{\bbQ}_\ell[n-1](\frac{n-1}{2})$.
However, according to Theorem \ref{stalk cohomology}, we know that
$\IC_{\bar{\mu}_{n-1,1}}[1-n](\frac{1-n}{2})$ is a sheaf (for the
standard $t$-structure) rather than a complex, with each stalk
isomorphic to $\overline{\bbQ}_\ell$. As
$\IC_{\bar{\mu}_{n-1,1}}[1-n](\frac{1-n}{2})$ is indecomposable as
object in $D(\Fl_v)$, this forces
$\IC_{\bar{\mu}_{n-1,1}}[1-n](\frac{1-n}{2})\simeq\overline{\bbQ}_\ell$.
Now, since
$\pi^*\Psi_{K_p}\simeq\varphi^*\IC_{\bar{\mu}_{n-1,1}}[1-n](\frac{1-n}{2})$
and $\pi$ has geometrically connected fibers, the corollary follows.
\end{proof}

\begin{rmk}{\rm Concerning the part of the Frobenius trace, this corollary has been proven in \cite{Kr,PR2,Ri}.
Indeed, for the case $n$ is odd, and the special parahoric is
$\underline{G}_{v_0}$, this is a main result of \cite{Kr}. In this
case, $Sh_{K_p}$ is not semi-stable. For the case $n$ is odd and the
parahoric is $\underline{G}_{v_1}$, it is shown in \cite{Ri} that
$Sh_{K_p}$ is smooth. For the case $n$ is even, it is shown in
\cite{PR2} that $Sh_{K_p}$ is smooth.
 }\end{rmk}

Next, we consider the case $(r,s)=(2,2)$. Recall that the local
model diagram can be written as a morphism
\[Sh_{K_p}\to [\underline{G}_v\setminus\on{M}^{\on{loc}}_{K_p}],\]
where $[\underline{G}_v\setminus\on{M}^{\on{loc}}_{K_p}]$ denotes
the stack quotient. Therefore, the Schubert stratification on
$\on{M}^{\on{loc}}_{K_p}\otimes\overline{\bbF}_p$ induces a
stratification on $Sh_{K_p}\otimes\overline{\bbF}_p$, called the
Kottwitz-Rapoport (KR) stratification. In the case $(r,s)=(2,2)$,
the stratification has two strata $Sh_{K_p,b}$ and $Sh_{K_p,s}$. The
smaller one $Sh_{K_p,s}$ is zero-dimensional. Similarly to the
previous case, we have

\begin{cor}Let $(r,s)=(2,2)$. Then
$\Psi_{Sh_{K_p}}=\Psi_{Sh_{K_p}}^1+\Psi_{Sh_{K_p}}^2$. The inertial
action on $\Psi_{Sh_{K_p}}^1$ is trivial and as Weil sheaves,
$\Psi_{Sh_{K_p}}^1\simeq\bbQ_\ell$. The vanishing cycle
$\Phi_{Sh_{K_p}}(\bbQ_\ell)\simeq\Psi_{K_p}^2$. When we forget the
action of $\Gal(\overline{\bbQ}_p/\bbQ_p)$,
$\Psi_{Sh_{K_p}}^2=\sum_{x\in Sh_{K_p,s}}\delta_x[-4]$, where
$\delta_x$ is the delta sheaf supported at $x$. In addition, the
inertial action on $\delta_x$ factors through a non-trivial
quadratic character. In particular, for every $x\in
Sh_{K_p}(\bbF_{p^n})$, $\tr(\on{Frob}_x,\Psi_{Sh_{K_p}}^I)=1$.
\end{cor}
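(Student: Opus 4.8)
The proof should proceed exactly along the lines of the preceding corollary, combining Theorem \ref{Shi}, Theorem \ref{stalk cohomology} and the local model diagram. First I would reduce to the local model: since $\pi:\widetilde{Sh}_{K_p}\to Sh_{K_p}$ is a $\underline{G}_v\otimes_{\bbZ_p}\calO_{E_p}$-torsor (hence smooth with connected fibres) and $\varphi:\widetilde{Sh}_{K_p}\to\on{M}^{\on{loc}}_{K_p}$ is formally smooth, the isomorphism $\pi^*\Psi_{Sh_{K_p}}(\bbQ_\ell)\simeq\varphi^*\Psi_{\on{M}^{\on{loc}}_{K_p}}(\bbQ_\ell)$ lets one transfer any identification of $\Psi_{\on{M}^{\on{loc}}_{K_p}}$ back to $Sh_{K_p}$, connectedness of the fibres of $\pi$ ensuring that a constant summand descends to a constant summand. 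So it is enough to analyse $\Psi_{\on{M}^{\on{loc}}_{K_p}}$, and here I would specialise Theorem \ref{Shi}(3) to $(r,s)=(2,2)$, i.e. $n=4$, $m=2$, $m'\in\{0,2\}$. This gives a geometric decomposition $\Psi^{\on{geom}}\simeq\IC_{\bar\mu_{2,2}}\oplus\IC_{\bar\mu_{4,0}}$, with $I$ acting trivially on $\IC_{\bar\mu_{2,2}}$ (the case $m'=2$, $4\mid m-m'$) and through a nontrivial quadratic character on $\IC_{\bar\mu_{4,0}}$ (the case $m'=0$, $4\nmid m-m'$), and moreover $(\Psi_{\on{M}^{\on{loc}}_{K_p}})^{I}[rs](\frac{rs}{2})\simeq\IC_{\bar\mu_{2,2}}$.

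It then remains to identify the two summands concretely. For $\IC_{\bar\mu_{4,0}}$: since $(2\rho,\bar\mu_{4,0})=0$ the Schubert variety $\Fl_{v\bar\mu_{4,0}}$ is the single point $s_{\bar\mu_{4,0}}$, so $\IC_{\bar\mu_{4,0}}$ is the skyscraper $\overline{\bbQ}_\ell$ there; under the local model diagram this point is the closed (zero-dimensional) Kottwitz--Rapoport stratum $Sh_{K_p,s}$, so $\varphi^*$ followed by descent along $\pi$ turns the summand $\IC_{\bar\mu_{4,0}}[-rs](-\frac{rs}{2})$ into $\bigoplus_{x\in Sh_{K_p,s}}\delta_x[-4]$, carrying the quadratic $I$-action onto each $\delta_x$. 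For $\IC_{\bar\mu_{2,2}}$: the representation $W_{\bar\mu_{2,2}}$ of $(H^\vee)^I=\on{GSp}_4$ is the five-dimensional representation, each of whose $(T^\vee)^I$-weight spaces is one-dimensional, so by Theorem \ref{stalk cohomology} every $q$-analogue weight multiplicity $P_{\bar\nu}(W_{\bar\mu_{2,2}},q)$ is a monomial; as in the preceding corollary this forces $\IC_{\bar\mu_{2,2}}$, suitably shifted and twisted, to be a sheaf all of whose stalks are $\overline{\bbQ}_\ell$, and since it is indecomposable in $D(\Fl_v)$ it must be (a shift--twist of) the constant sheaf $\overline{\bbQ}_\ell$ — i.e. $\Fl_{v\bar\mu_{2,2}}$ is rationally smooth. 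Pulling back and descending, its contribution to $\Psi_{Sh_{K_p}}$ is the constant sheaf $\bbQ_\ell$ with trivial $I$-action; this is $\Psi^1_{Sh_{K_p}}$, and $\Psi^2_{Sh_{K_p}}:=\bigoplus_{x\in Sh_{K_p,s}}\delta_x[-4]$ is the other summand.

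Finally, the vanishing-cycle and trace statements. From the distinguished triangle relating $i_0^*\bbQ_\ell$, $\Psi_{Sh_{K_p}}$ and $\Phi_{Sh_{K_p}}(\bbQ_\ell)$ on the special fibre, and the fact that the specialisation map $i_0^*\bbQ_\ell\to\Psi_{Sh_{K_p}}$ is an isomorphism away from $Sh_{K_p,s}$ and identifies $i_0^*\bbQ_\ell$ with the summand $\Psi^1_{Sh_{K_p}}\simeq\bbQ_\ell$, one gets $\Phi_{Sh_{K_p}}(\bbQ_\ell)\simeq\Psi^2_{Sh_{K_p}}$. For the Frobenius trace, $\Psi^{I}_{Sh_{K_p}}=(\Psi^1_{Sh_{K_p}})^{I}\oplus(\Psi^2_{Sh_{K_p}})^{I}=\bbQ_\ell\oplus 0$, the second term vanishing because $I$ acts on each $\delta_x$ through a nontrivial character; hence $\tr(\on{Frob}_x,\Psi^{I}_{Sh_{K_p}})=\tr(\on{Frob}_x,\bbQ_\ell)=1$ for every $x\in Sh_{K_p}(\bbF_{p^n})$. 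The main obstacle is the middle step: checking that $\IC_{\bar\mu_{2,2}}$ is genuinely constant (equivalently, rational smoothness of $\Fl_{v\bar\mu_{2,2}}$) via the monomiality of the $q$-analogue weight multiplicities of the five-dimensional $\on{GSp}_4$-representation, together with the careful bookkeeping of the Tate twists and of the quadratic inertia character supplied by Theorem \ref{Shi}(3).
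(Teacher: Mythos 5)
Your proof is correct and follows precisely the route the paper intends: the paper only says the corollary holds "similarly to the previous case," and you have filled in the same chain of reasoning — specialize Theorem \ref{Shi}(3) to $m=2$, $m'\in\{0,2\}$; recognize $\Fl_{v\bar\mu_{4,0}}$ as a point (since $(2\rho,\bar\mu_{4,0})=0$); deduce constancy of $\IC_{\bar\mu_{2,2}}[-4](-2)$ from the monomiality of the $q$-weight multiplicities of the $5$-dimensional $\on{GSp}_4$-representation via Theorem \ref{stalk cohomology}, exactly as in the $(n-1,1)$ corollary; transport along the local model diagram; and read off the vanishing cycle and $I$-invariant trace. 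The one slightly implicit step worth flagging is that the specialization map $\overline{\bbQ}_\ell\to\Psi_{Sh_{K_p}}$ composed with the projection onto $\Psi^1\simeq\overline{\bbQ}_\ell$ is an isomorphism: this follows because it is an isomorphism over the open, smooth Kottwitz--Rapoport stratum and a map of constant sheaves on a connected scheme, which is the justification you give, so the argument is sound.
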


\appendix

\pagestyle{empty}
\section{Construction of the full Langlands dual group via the geometric Satake
correspondence\\
\medskip
By Timo Richarz, Xinwen Zhu}

\medskip

In the main body of the paper, we considered a reductive group $G$
over $F=k\ppart$ ($k$ algebraically closed), split over a tamely
ramified extension, and recovered $(H^\vee)^{\Gal(F^{s}/F)}$ by the
Tannakian formalism from a certain category of perverse sheaves
associated to $G$, where $H^\vee$ is the dual group of $G$, on which
$\Gal(F^{s}/F)$ acts via pinned automorphisms. In this appendix, we
take a different point of view to recover the full Langlands dual
group ${^LG}=H^\vee\rtimes\Gal(F^{s}/F)$ of $G$ by the Tannakian
formalism. The construction is easy but we can not find it in the
literature. Most proofs will be omitted or rather sketched since
they are very simple.

\medskip

Let us begin with a review of certain general nonsense of Tannakian
formalism. A similar discussion appears in \cite[Appendix 2]{HNY}.
Let $(\calC,\omega)$ be a neutralized Tannakian category over a
field $E$ of characteristic zero with fiber functor $\omega$. We
define a monoidal category $\Aut^\otimes(\calC,\omega)$ as follows:
objects are pairs $(\sigma,\al)$, where $\sigma:\calC\to\calC$ is a
tensor automorphism and $\al:\omega\circ\sigma\simeq\omega$ is a
natural isomorphism of tensor functors; morphisms between
$(\sigma,\al)$ and $(\sigma',\al')$ are natural tensor isomorphisms
between $\sigma$ and $\sigma'$ that are compatible with $\al,\al'$
in an obvious way. The monoidal structure is given by compositions.
Since $\omega$ is faithful, $\Aut^\otimes(\calC,\omega)$ is
(equivalent to) a set, and in fact is a group. For example,
$(\sigma,\al)=\id$ means that there is an isomorphism
$\varepsilon:\sigma\simeq\id$ of tensor functors such that
$\omega\varepsilon=\al$ (such $\varepsilon$ will be unique).

Let $H=\Aut^\otimes_{\calC}\omega$, the Tannakian group defined by
$(\calC,\omega)$. Let $\Aut(H)$ be the group of automorphisms of $H$
and $\on{Out}(H)$ be the group of outer automorphisms of $H$.

\begin{lem}\label{action}
There is a canonical action of $\Aut^\otimes(\calC,\omega)$ on $H$
by automorphisms. In addition, the map
$\Aut^\otimes(\calC,\omega)\to\Aut(H)$ induces
$[\Aut^\otimes(\calC)]\to \on{Out}(H)$, where
$[\Aut^\otimes(\calC)]$ is the group of isomorphism classes of
tensor automorphisms of $\calC$.
\end{lem}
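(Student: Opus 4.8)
The plan is to exhibit the action by an explicit formula and then to verify both assertions by elementary chases of commuting squares; at the end I record how the same conclusions drop out of Deligne--Milne's dictionary between tensor functors commuting with the fibre functors and group homomorphisms.

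\emph{Construction of the action.} For $(\sigma,\al)\in\Aut^\otimes(\calC,\omega)$ and $g=(g_X)_{X\in\calC}\in H=\Aut^\otimes_{\calC}\omega$ I would set
\[
\bigl(\phi_{(\sigma,\al)}(g)\bigr)_X \;=\; \al_X\circ g_{\sigma X}\circ\al_X^{-1}\;\colon\;\omega(X)\to\omega(X),\qquad X\in\calC .
\]
Naturality of $\phi_{(\sigma,\al)}(g)$ in $X$ follows from naturality of $g$ (applied to the morphisms $\sigma(f)$) together with naturality of $\al$; compatibility with the tensor constraints follows because $g$ is a tensor automorphism of $\omega$, $\sigma$ is a tensor functor and $\al$ is a tensor isomorphism. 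Hence $\phi_{(\sigma,\al)}(g)\in H$. Since $\phi_{(\sigma,\al)}$ is visibly a group homomorphism $H\to H$, and $(\sigma,\al)$ has an inverse in $\Aut^\otimes(\calC,\omega)$ built from a quasi-inverse of $\sigma$, $\phi_{(\sigma,\al)}$ is an automorphism of $H$. A direct computation with the composition law on $\Aut^\otimes(\calC,\omega)$ shows $(\sigma,\al)\mapsto\phi_{(\sigma,\al)}$ is a group homomorphism $\Aut^\otimes(\calC,\omega)\to\Aut(H)$; and if $\varepsilon\colon\sigma\Rightarrow\sigma'$ is a tensor isomorphism with $\al=\al'\circ\omega(\varepsilon)$, naturality of $g$ along $\varepsilon_X$ gives $\omega(\varepsilon_X)\,g_{\sigma X}\,\omega(\varepsilon_X)^{-1}=g_{\sigma'X}$, whence $\phi_{(\sigma,\al)}=\phi_{(\sigma',\al')}$. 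This last point is needed because $\Aut^\otimes(\calC,\omega)$ is a group only after passing to isomorphism classes of its objects.

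\emph{Passage to $\on{Out}(H)$.} Next I would isolate the dependence on $\al$: if $\al,\al'$ are two trivialisations of $\omega\circ\sigma$, then $h:=\al'\circ\al^{-1}$ is a tensor automorphism of $\omega$, i.e.\ an element of $H$, and the displayed formula at once gives $\phi_{(\sigma,\al')}=\on{Inn}(h)\circ\phi_{(\sigma,\al)}$; in particular $\phi_{(\id,\al)}=\on{Inn}(\al)$ for every trivialisation $\al$ of $\omega$. Combining this with the remark above, if the underlying tensor autoequivalence $\sigma$ of a pair $(\sigma,\al)$ is isomorphic to $\id_{\calC}$, then $(\sigma,\al)$ is isomorphic in $\Aut^\otimes(\calC,\omega)$ to some $(\id,\beta)$, so $\phi_{(\sigma,\al)}=\on{Inn}(\beta)$ is inner. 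The assignment $(\sigma,\al)\mapsto[\sigma]$ is an evident group homomorphism $\Aut^\otimes(\calC,\omega)\to[\Aut^\otimes(\calC)]$ whose kernel is exactly $\{(\sigma,\al):\sigma\simeq\id_{\calC}\}$; by what precedes this kernel is carried into $\on{Inn}(H)$ by $\phi$, so the composite $\Aut^\otimes(\calC,\omega)\xrightarrow{\phi}\Aut(H)\to\on{Out}(H)$ descends to the asserted map on $[\Aut^\otimes(\calC)]$ (a priori on the image of $\Aut^\otimes(\calC,\omega)$, which is all of $[\Aut^\otimes(\calC)]$ as soon as every autoequivalence of $\calC$ admits such a trivialisation, e.g.\ when any two fibre functors on $\calC$ are isomorphic).

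I do not expect a genuine obstacle here: everything reduces to routine verifications that certain squares commute, and the only step that repays a moment's care is the bookkeeping in the second part, namely that two pairs with isomorphic underlying autoequivalence have the same image in $\on{Out}(H)$ --- which is precisely the identity $\phi_{(\sigma,\al')}=\on{Inn}(\al'\circ\al^{-1})\circ\phi_{(\sigma,\al)}$. A reader preferring the high-level route may instead invoke \cite[Corollary 2.9]{DM}: under $\omega\colon\calC\simeq\Rep(H)$, a pair $(\sigma,\al)$ is the same datum as an exact faithful $E$-linear tensor functor $\Rep(H)\to\Rep(H)$ together with an identification of its composite with the fibre functor with the fibre functor itself, hence the same datum as an endomorphism of $H$; it is an automorphism since $\sigma$ is invertible, changing $\al$ alters it by an inner automorphism, and the isomorphism class of $\sigma$ alone pins down a well-defined class in $\on{Out}(H)$.
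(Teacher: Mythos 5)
Your construction coincides with the paper's: immediately after the lemma the paper writes the action on $R$-points as the conjugate $\al\circ(h\ast\id_\sigma)\circ\al^{-1}$, componentwise $\al_X\circ g_{\sigma X}\circ\al_X^{-1}$, which is exactly your formula, and it leaves the verifications (well-definedness on isomorphism classes, the homomorphism property, descent to $\on{Out}(H)$) to the reader. You carry those out correctly, and your closing caveat --- that the map on $[\Aut^\otimes(\calC)]$ is a priori defined only on the image of $\Aut^\otimes(\calC,\omega)$, which is everything precisely when every tensor autoequivalence $\sigma$ admits a tensor isomorphism $\omega\circ\sigma\simeq\omega$ --- is a real point the paper passes over.
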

The action of $(\sigma,\al)$ on $H$ is given as follows. Let $R$ be
a $E$-algebra and $h:\omega_R\simeq\omega_R$ be an $R$-point of $H$.
Then $(\sigma,\al)h$ is the following composition
\[\omega_R\stackrel{\al}{\leftarrow}\omega_R\circ\sigma\stackrel{h\circ\id}{\to}\omega_R\circ\sigma\stackrel{\al}{\rightarrow}\omega_R.\]

\begin{rmk}{\rm As is shown \cite{HNY}, $\Aut^\otimes(\calC,\omega)$ can
be upgraded into a \emph{fppf} sheaf on the category of affine
schemes over $E$, and as \emph{fppf} sheaves
$\Aut^\otimes(\calC,\omega)\simeq \Aut(H)$. We do not need this
fact.
 }\end{rmk}

Let $\Ga$ be an abstract group. We define an action of $\Ga$ on
$(\calC,\omega)$ to be a group homomorphism
$\on{act}:\Ga\to\Aut^\otimes(\calC,\omega)$. Assume that $\Ga$ acts
on $(\calC,\omega)$. We can then define $\calC^\Ga$, the category of
$\Ga$-equivariant objects in $\calC$ as follows: objects are
$(X,\{c_\ga\}_{\ga\in\Ga})$, where $X$ is an object in $\calC$ and
$c_\ga:\on{act}_\ga(X)\simeq X$ is an isomorphism, satisfying the
natural cocycle condition, i.e.
$c_{\ga'}\circ\on{act}_{\ga'}(c_\ga)=c_{\ga'\ga}$; the morphisms
between $(X,\{c_\ga\}_{\ga\in\Ga})$ and
$(X',\{c'_\ga\}_{\ga\in\Ga})$ are morphisms between $X$ and $X'$,
compatible with $c_\ga,c'_\ga$ in an obvious way.

\begin{lem}\label{Tannaka}
Let $\Ga$ be a group acting on $(\calC,\omega)$. Then the category
$\calC^\Ga$ is a neutral Tannakian category, with fiber functor
$\omega$. In addition, if $\Ga$ is a finite group, regarded as an
algebraic $E$-group, then the Tannakian group
$\Aut_{\calC^\Ga}^\otimes\omega$ is canonically isomorphic to
$H\rtimes\Ga$.
\end{lem}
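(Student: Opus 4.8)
The plan is to equip $\calC^\Ga$ with its Tannakian structure by hand and then recover its Tannakian group by comparing module categories along the forgetful functor $\Phi\colon\calC^\Ga\to\calC$, $(X,\{c_\delta\})\mapsto X$. Write $\on{act}_\ga=(\sigma_\ga,\al_\ga)$, where $\sigma_\ga\colon\calC\to\calC$ is a tensor automorphism and $\al_\ga\colon\omega\circ\sigma_\ga\xrightarrow{\sim}\omega$, and note that $\on{act}$ being a homomorphism means $\sigma_{\ga'\ga}=\sigma_{\ga'}\circ\sigma_\ga$ and, correspondingly, $\al_{\ga,X}\circ\al_{\ga',\sigma_\ga X}=\al_{\ga'\ga,X}$ on the nose. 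First I would check that $\calC^\Ga$ is neutral Tannakian: since each $\sigma_\ga$ is an exact automorphism, kernels, cokernels and finite direct sums of equivariant objects carry canonical equivariant structures, so $\calC^\Ga$ is $E$-linear abelian and $\Phi$ is exact and faithful; the tensor product $(X,\{c_\delta\})\otimes(X',\{c'_\delta\}):=(X\otimes X',\{c_\delta\otimes c'_\delta\})$ makes sense because the $\sigma_\delta$ are tensor functors, and for the same reason the constraints of $\calC$ are morphisms in $\calC^\Ga$ and duals exist, so $\calC^\Ga$ is rigid; since every $\sigma_\ga$ is the identity on $\on{End}_\calC(\mathbf 1)=E$ one gets $\on{End}_{\calC^\Ga}(\mathbf 1)=E$; and $\bar\omega:=\omega\circ\Phi$ is an $E$-linear exact faithful tensor functor to $\on{Vect}_E$, hence a fibre functor. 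By the Tannakian recognition theorem (see \cite{DM}), $\calC^\Ga$ is then neutral Tannakian with fibre functor $\bar\omega$; set $\tilde H:=\Aut^\otimes_{\calC^\Ga}\bar\omega$.

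Now assume $\Ga$ finite. Since $\Phi$ respects fibre functors it induces a homomorphism $\iota\colon H\to\tilde H$, and I would show it is a closed immersion via the criterion of \cite[Prop.~2.21]{DM}: for $X\in\calC$ the object $\on{Ind}(X):=\bigoplus_{\ga\in\Ga}\sigma_\ga(X)$, with $\Ga$ permuting the summands through $\sigma_\delta\sigma_\ga=\sigma_{\delta\ga}$, lies in $\calC^\Ga$ (this is where finiteness of $\Ga$ enters) and has $X$ as a direct summand of $\Phi(\on{Ind}(X))$, so every object of $\calC=\Rep(H)$ is a subquotient of the $\Phi$-image of an object of $\calC^\Ga$. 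For a splitting, define for $(X,\{c_\delta\})\in\calC^\Ga$ and $\ga\in\Ga$
\[
\rho_\ga(X,\{c_\delta\})\;:=\;\omega(c_\ga)\circ\al_{\ga,X}^{-1}\ \in\ \on{Aut}(\omega(X)).
\]
Naturality of $\al_\ga$ shows $\rho_\ga$ is functorial in $(X,\{c_\delta\})$; the fact that $\al_\ga$ is a tensor isomorphism shows $\rho_\ga$ is a tensor automorphism of $\bar\omega$; and the cocycle identity $c_{\ga'}\circ\sigma_{\ga'}(c_\ga)=c_{\ga'\ga}$ together with $\al_{\ga,X}\circ\al_{\ga',\sigma_\ga X}=\al_{\ga'\ga,X}$ gives $\rho_{\ga'}\circ\rho_\ga=\rho_{\ga'\ga}$. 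So $\rho\colon\Ga\to\tilde H$ is a homomorphism.

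To finish I would assemble the short exact sequence $1\to\iota(H)\to\tilde H\to\Ga\to1$ and read off the result. A direct computation from the definitions of $\rho_\ga$ and of the $\Ga$-action on $H$ in Lemma~\ref{action} shows $\rho_\ga\,\iota(h)\,\rho_\ga^{-1}=\iota(\on{act}_\ga(h))$, so $\iota(H)$ is normal and $(\iota,\rho)$ combine into a homomorphism $H\rtimes\Ga\to\tilde H$, the semidirect product being formed with the action of Lemma~\ref{action}. The quotient $\tilde H/\iota(H)$ corresponds, under the dictionary between normal subgroups and subcategories, to the full subcategory of $\calC^\Ga$ of objects on which $\iota(H)$ acts trivially, i.e.\ those $(X,\{c_\delta\})$ with $\Phi(X)\cong\mathbf 1^{\oplus n}$; since $\al_{\ga,\mathbf 1}=\on{id}$ and each $\sigma_\delta$ acts as the identity on $\on{Aut}_\calC(\mathbf 1^{\oplus n})=\GL_n(E)$, such a datum is exactly an $E$-linear representation of $\Ga$, so this subcategory is $\Rep_E(\Ga)$ and $\tilde H/\iota(H)\cong\Ga$. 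Evaluating on the constant objects $\mathbf 1\otimes_E V$ shows the composite $\Ga\xrightarrow{\rho}\tilde H\to\tilde H/\iota(H)\cong\Ga$ is the identity; hence the sequence is exact and split by $\rho$, and comparison with $1\to H\to H\rtimes\Ga\to\Ga\to1$ forces $H\rtimes\Ga\xrightarrow{\sim}\tilde H$.

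The verifications in the first paragraph, and that $\rho_\ga$ is a functorial tensor automorphism and $\rho$ a homomorphism compatible with Lemma~\ref{action}, are routine diagram chases — but they must be carried out carefully because the trivialising isomorphisms $\al_\ga$ intervene at every step. The main obstacle is the last paragraph, specifically producing the exact sequence $1\to H\to\tilde H\to\Ga\to1$: closedness of $\iota$ genuinely needs $\Ga$ finite (through $\on{Ind}$), and the delicate point is to pin the quotient $\tilde H/\iota(H)$ down as precisely $\Ga$ — not something larger — which comes down to the full faithfulness of the inclusion $\Rep_E(\Ga)\hookrightarrow\calC^\Ga$ of constant objects and its stability under subquotients.
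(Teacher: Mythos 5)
Your argument takes a genuinely different route from the paper. The paper works at the level of \emph{categories}: it directly constructs, from $((V,\rho),\{c_\gamma\})\in\Rep(H)^\Gamma$, a representation of $H\rtimes\Gamma$ on $V$ by the explicit formula $(h,\gamma)\mapsto\rho(h)\circ\alpha_{\gamma,V}\circ\omega(c_\gamma)^{-1}$, checks that this gives a tensor equivalence $\Rep(H)^\Gamma\xrightarrow{\sim}\Rep(H\rtimes\Gamma)$ compatible with the forgetful functors, and then reads off the group isomorphism from \cite[Prop.\ 2.8]{DM}. You instead assemble a split short exact sequence of Tannakian \emph{groups} $1\to\iota(H)\to\tilde H\to\Gamma\to 1$. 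Both routes are reasonable; yours makes the structure of $\tilde H$ more visible (closed immersion via $\on{Ind}$, the explicit section $\rho$), while the paper's is shorter and pushes the Tannakian general nonsense into a single citation.

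There is, however, a gap at exactly the point you flag as the ``main obstacle.'' You conclude that $\iota(H)$ is normal from the fact that $\rho(\Gamma)$ normalizes it. That inference requires knowing that $\tilde H$ is generated by $\iota(H)$ and $\rho(\Gamma)$, which is essentially the exactness you are trying to prove, so the step is circular. Relatedly, identifying the subcategory of $\iota(H)$-trivial objects with $\Rep_E(\Gamma)$ only shows that the \emph{normal closure} of $\iota(H)$ equals $\ker(\tilde H\to\Gamma)$; it does not by itself show $\iota(H)$ is that kernel. The cleanest repair is to sidestep the sequence: once you have the homomorphism $\Psi:H\rtimes\Gamma\to\tilde H$, $(h,\gamma)\mapsto\iota(h)\rho_\gamma$, verify the two criteria of \cite[Prop.\ 2.21]{DM} for $\Psi$ directly. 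It is a closed immersion because every $(H\rtimes\Gamma)$-representation is a subquotient of a $\Psi^*(\cdot)$, using $\on{Ind}(X)$ on the $H$-side and constant objects on the $\Gamma$-side; it is faithfully flat because $\Psi^*$ is fully faithful, via $\Hom_{\calC^\Gamma}(\calF,\calG)=\Hom_\calC(\Phi\calF,\Phi\calG)^\Gamma=\Hom_{H\rtimes\Gamma}(\Psi^*\calF,\Psi^*\calG)$, and because any $(H\rtimes\Gamma)$-subobject of $\Psi^*\calF$ is automatically an object of $\calC^\Gamma$ (equivariance of a subobject is a property, not extra data). This avoids having to argue about the quotient $\tilde H/\iota(H)$ at all.

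One further wrinkle to sort out before asserting that $(\iota,\rho)$ define a homomorphism out of $H\rtimes_{\on{act}}\Gamma$: with the conventions you record ($\alpha_{\gamma,X}\circ\alpha_{\gamma',\sigma_\gamma X}=\alpha_{\gamma'\gamma,X}$ and $c_{\gamma'\gamma}=c_{\gamma'}\circ\sigma_{\gamma'}(c_\gamma)$) and with $\rho_\gamma=\omega(c_\gamma)\circ\alpha_{\gamma,X}^{-1}$, naturality of $h$ at $c_\gamma$ gives $h_{\sigma_\gamma X}=\omega(c_\gamma)^{-1}h_X\omega(c_\gamma)$, so
\[(\on{act}_\gamma h)_X=\alpha_{\gamma,X}\,\omega(c_\gamma)^{-1}h_X\,\omega(c_\gamma)\,\alpha_{\gamma,X}^{-1}=\rho_{\gamma,X}^{-1}\,h_X\,\rho_{\gamma,X},\]
i.e.\ $\rho_\gamma\iota(h)\rho_\gamma^{-1}=\iota(\on{act}_{\gamma^{-1}}(h))$, not $\iota(\on{act}_\gamma(h))$. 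This is a left/right-action ambiguity already latent in Lemma~\ref{action} (the assignment $(\sigma,\alpha)\mapsto(h\mapsto\alpha\circ(h*\sigma)\circ\alpha^{-1})$ is in fact an anti-homomorphism for the composition you write down), so the conventions for $\rho$, for the conjugation relation, and for the semidirect product need to be made mutually consistent — e.g.\ by taking $\rho_\gamma=\alpha_{\gamma,X}\circ\omega(c_\gamma)^{-1}$ and keeping track of the opposite group structure — before the map $H\rtimes\Gamma\to\tilde H$ is literally a homomorphism.
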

\begin{proof}
The monoidal structure on $\calC^\Ga$ is defined as
\[(X,\{c_\ga\}_{\ga\in\Ga})\otimes (X',\{c'_\ga\}_{\ga\in\Ga})=(X'',\{c''_\ga\}_{\ga\in\Ga}),\]
where $X''=X\otimes X'$ and $c''_\ga:\on{act}_\ga(X'')\to X''$ is
the composition
\[\on{act}_\ga(X\otimes X')\simeq
\on{act}_\ga(X)\otimes\on{act}_\ga(X')\stackrel{c_\ga\otimes
c'_\ga}{\longrightarrow} X\otimes X'.\] This gives $\calC^\Ga$ the
structure of a Tannakian category. Now assume that $\Ga$ is finite,
and hence $H\rtimes\Ga$ is an affine group scheme. By \cite[Prop.
2.8]{DM}, it is enough to show that,
\[\Rep(H)^\Ga\stackrel{\simeq}{\longrightarrow}\Rep(H\rtimes\Ga)\]
as tensor categories compatible with the forgetful functors. Let
$((V,\rho),\{c_\ga\}_{\ga\in\Ga})\in\Rep(H)^\Ga$. Then we define
$(V,\tilde{\rho})\in\Rep(H\rtimes\Ga)$, for any $k$-algebra $R$
$(h,\ga)\in (H\rtimes\Ga)(R)$, by
\[(h,\ga)\longmapsto \rho(h)\circ\alpha_{g,R}(V)\circ\omega_R\circ c_\ga^{-1}\in\GL(V\otimes R),\]
where $\alpha_{g,R}:\omega_R\circ\sigma_g\simeq\omega_R$ is induced
by the action of $\Ga$ as above. Using the cocycle relation one
checks that this is indeed a representation, and that the map
defines the desired equivalence.
\end{proof}

\begin{rmk}{\rm  If $\Ga$ is not finite, then the category
$(\calC^\Ga,\omega)$ is still Tannakian, but
$\tilde{H}=\Aut^\otimes_{\calC^\Ga}\omega$ is no longer
$H\rtimes\Ga$ since the latter cannot be regarded as an affine group
scheme (sometimes $\tilde{H}$ is called the algebraic envelop of
$H\rtimes \Ga$). However, there is always a group homomorphism
\[H(E)\rtimes\Ga\to \tilde{H}(E).\]
Although this is not an isomorphism in general, we may still regard
$\omega(X)$ for $X\in\calC^\Ga$ as a representation of
$H(E)\rtimes\Ga$.
 }\end{rmk}

\medskip

Now, we assume that $G$ is a connected reductive group over
\emph{any} field $k$. We switch the notation to use $G^\vee$ to
denote the reductive group over $E=\overline{\bbQ}_\ell$ dual to $G$
in the sense of Langlands, i.e. the root datum of $G^\vee$ is dual
to the root datum of $G_{k^{s}}$. Up to the choice of a pinning
$(G^\vee,B^\vee,T^\vee,X^\vee)$ of $G^\vee$, we have an action of
$\Ga_k=\Gal(k^{s}/k)$ on $G^\vee$ via
\begin{equation}\label{pinned}
\Ga_k\to\on{Out}(G_{k^{s}})\simeq
\on{Out}(G^\vee)\simeq\Aut(G^\vee,B^\vee,T^\vee,X^\vee)\subset
\Aut(G^\vee).
\end{equation}
Then the Langlands dual group ${^L}G$ is defined to be
$G^\vee\rtimes\Ga_k$. Our goal is to recover this group via the
above Tannakian formalism.

Let $L^+G$ be the jet group of $G\otimes k[[t]]$ and $LG$ be the
loop group of $G\otimes k\ppart$. Recall that by definition, for
every $k$-algebra $R$, $L^+G(R)=G(R[[t]])$ and $LG(R)=G(R\ppart)$.
Let
\begin{equation}\label{affGrass}
\Gr_G=LG/L^+G
\end{equation}
be the affine Grassmannian of $G$ over $k$. Let $\Gr_G\otimes k^{s}$
be its base change to the separable closure of $k$. From \cite[Lemma
3.3]{Z}, formation of the affine Grassmannian commutes with
\'{e}tale base change, we have $\Gr_{G}\otimes k^{s}\simeq
\Gr_{G_{k^{s}}}$. Since $G_{k^{s}}$ is split, we can consider the
usual Satake category $\Sat$ on $\Gr_{G_{k^{s}}}$, i.e. the category
of $(L^+G\otimes k^s)$-equivariant perverse sheaves on $\Gr_G\otimes
k^s$, which is equivalent to $\Rep(G^\vee_{\overline{\bbQ}_\ell})$
via the geometric Satake correspondence of \cite{MV}. Note that the
Galois group $\Ga_k$ acts on $\Gr_{G}\otimes k^{s}$. For
$\ga\in\Ga_k$, the pullback functor $\ga^*:D(\Gr_G\otimes k^{s})\to
D(\Gr_G\otimes k^{s})$ clearly restricts to a functor
$\ga^*:\Sat\to\Sat$. In addition, there is a canonical isomorphism
$\al_\ga: \rH^*(\ga^*\calF)\simeq \rH^*(\calF)$.

\begin{lem}\label{action of Galois}
The assignment $\ga\mapsto (\ga^*,\al_\ga)$ defines an action of
$\Ga_k$ on $(\Sat, \rH^*)$.
\end{lem}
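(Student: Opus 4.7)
The plan is to verify, in order, that (i) each $\gamma^*$ is a tensor automorphism of $\Sat$, (ii) each $\alpha_\gamma$ is a tensor isomorphism between $\rH^* \circ \gamma^*$ and $\rH^*$, and (iii) the assignment $\gamma \mapsto (\gamma^*,\alpha_\gamma)$ respects composition, and hence defines a group homomorphism $\Gamma_k \to \Aut^\otimes(\Sat,\rH^*)$.

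For (i), I would first recall that the convolution product on $\Sat$ is constructed from the diagram
\[
\Gr_G \times \Gr_G \;\stackrel{p}{\longleftarrow}\; LG \times \Gr_G \;\stackrel{q}{\longrightarrow}\; LG \times^{L^+G} \Gr_G \;\stackrel{m}{\longrightarrow}\; \Gr_G
\]
via $\calF_1 \star \calF_2 = m_*(\calF_1 \tilde\boxtimes \calF_2)$, where every ind-scheme and every morphism is defined over $k$. Since the $\Gamma_k$-action on $\Gr_G \otimes k^s$ is by pullback along automorphisms of the base $\Spec k^s \to \Spec k$, proper base change and compatibility of the external twisted product with pullback along such automorphisms provide canonical isomorphisms $\gamma^*(\calF_1 \star \calF_2) \simeq \gamma^*\calF_1 \star \gamma^*\calF_2$. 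The same $k$-rationality ensures that $\gamma^*$ preserves $(L^+G \otimes k^s)$-equivariance (the Galois element acts on the equivariance structure in a matched way), so $\gamma^*$ restricts to an autoequivalence of $\Sat$; the hexagon/associativity/commutativity constraints all come from the same $k$-rational geometry and are therefore preserved.

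For (ii), the isomorphism $\alpha_\gamma: \rH^*(\Gr_G \otimes k^s, \gamma^*\calF) \simeq \rH^*(\Gr_G \otimes k^s, \calF)$ is simply the base change isomorphism for the Cartesian square defined by $\gamma$. What has to be checked is that it is compatible with the fiber functor's tensor structure. Recall that the tensor structure on $\rH^*$ arises from the fusion product on the Beilinson--Drinfeld Grassmannian over $\mathbb{A}^1 \times \mathbb{A}^1$ (or, equivalently, from the K\"unneth isomorphism combined with the convolution construction above), which again lives over $k$. The hard part, such as it is, consists in writing out the resulting square
\[
\begin{CD}
\rH^*(\gamma^*\calF_1) \otimes \rH^*(\gamma^*\calF_2) @>>> \rH^*(\gamma^*\calF_1 \star \gamma^*\calF_2) @>>> \rH^*(\gamma^*(\calF_1 \star \calF_2)) \\
@V{\alpha_\gamma \otimes \alpha_\gamma}VV  @.  @VV{\alpha_\gamma}V \\
\rH^*(\calF_1) \otimes \rH^*(\calF_2) @>>> \rH^*(\calF_1 \star \calF_2) @= \rH^*(\calF_1 \star \calF_2)
\end{CD}
\]
and noting that each square in this diagram commutes by the naturality of base change applied to the $k$-rational morphisms $p,q,m$ (and their fusion analogues).

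For (iii), the cocycle identity $\alpha_{\gamma_1\gamma_2} = \alpha_{\gamma_2} \circ \gamma_2^*(\alpha_{\gamma_1})$ and the equality $(\gamma_1\gamma_2)^* = \gamma_2^*\gamma_1^*$ are both formal consequences of the functoriality of pullback and of the base change isomorphism under composition. I expect the main (minor) obstacle to be bookkeeping: one must consistently distinguish the action of $\gamma \in \Gamma_k$ on $\Gr_G \otimes k^s$ (via the second factor) from its action on sheaves, and keep track of the fact that $\gamma^*$ is contravariant in $\gamma$ while the target group $\Aut^\otimes(\Sat,\rH^*)$ is defined with the composition convention for which this still produces a homomorphism (after the standard choice of conventions, or otherwise by replacing $\gamma \mapsto \gamma^*$ with $\gamma \mapsto (\gamma^{-1})^*$).
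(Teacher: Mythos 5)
The paper states this lemma without proof --- the appendix explicitly says that most proofs are omitted because they are very simple --- so there is no argument in the text to compare against. Your sketch is a correct fleshing-out of the routine verification: the convolution diagram, the $(L^+G\otimes k^s)$-equivariance data, and the fusion/Beilinson--Drinfeld construction giving the tensor structure on $\rH^*$ are all defined over $k$, so pullback along $\gamma$ and the accompanying base-change isomorphisms respect them canonically, and the cocycle identity for $\alpha_\gamma$ is the usual compatibility of pullback with composition. The one bookkeeping point you flag (contravariance of $\gamma\mapsto\gamma^*$ versus the group structure on $\Aut^\otimes(\Sat,\rH^*)$) is real but is resolved exactly as you say, by a consistent choice of conventions or by inserting $\gamma^{-1}$.
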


According to Lemma \ref{action}, there is a canonical action of
$\Ga_k$ on $G^\vee$, denoted by $\on{act}^{geom}$. And we can form
\[{^L}G^{geom}:=G^\vee\rtimes_{\on{act}^{geom}}\Ga_k,\]
which we will call the geometric Langlands dual group.

Now, our goal is to understand the relation between ${^L}G^{geom}$
and the usual Langlands dual group ${^L}G$. Recall that in Sect.
\ref{proof}, we explained that once we choose an ample line bundle
on $\Gr_G$, the geometric Satake isomorphism provides $G^\vee$ with
a canonical pinning $(G^\vee,B^\vee,T^\vee,X^\vee)$. Therefore,
there is an action of $\Ga_k$ on $G^\vee$ via \eqref{pinned},
denoted by $\on{act}^{alg}$. Then we can form the usual Langlands
dual group by ${^L}G^{alg}=G^\vee\rtimes_{\on{act}^{alg}}\Ga_k$. It
turns out that the difference between $\on{act}^{geom}$ and
$\on{act}^{alg}$ can be described explicitly.

Let
\[\on{cycl}:\Ga_k\to\bbZ_\ell^\times\]
be the cyclotomic character of $\Ga_k$ defined by the action of
$\Ga_k$ on the $\ell^\infty$-roots of unity of $k^{s}$. Let
$G^\vee_{\ad}$ be the adjoint group of $G^\vee$. Let $\rho$ be the
half sum of positive coroots of $G^\vee$, which gives rise to a
one-parameter group $\rho:\bbG_m\to G^\vee_\ad$. We define a map
\[\chi:\Ga_k\stackrel{\on{cycl}}{\to} \bbZ_\ell^\times\stackrel{\rho}{\to} G^\vee_{\ad}(\overline{\bbQ}_\ell),\]
which gives a map $\Ad_{\chi}:\Ga_k\to\Aut(G^\vee)$ to the inner
automorphism of $G^\vee$.
\begin{prop}\label{comparison}Let $(G^\vee, B^\vee, T^\vee, X^\vee)$
be the pinning defined by the geometric Satake isomorphism as in \S
\ref{proof}. We have $\on{act}^{geom}=\on{act}^{alg}\circ
\Ad_{\chi}$.
\end{prop}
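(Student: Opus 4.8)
The plan is to compare the two actions of $\Ga_k$ on $G^\vee$ by testing them on the canonical pinning $(G^\vee,B^\vee,T^\vee,X^\vee)$ that the geometric Satake isomorphism produces. Since $\on{act}^{alg}$ is by construction the pinned action, $\on{act}^{alg}(\ga)$ fixes this quadruple; hence it suffices to understand exactly how $\on{act}^{geom}(\ga)$ moves the quadruple. First I would observe that for every $\ga\in\Ga_k$ the autoequivalence $\ga^*:\Sat\to\Sat$ commutes with the cohomological grading on $\rH^*$ — this is because $\ga^*$ preserves the stratification of $\Gr_G\otimes k^s$ by $L^+G$-orbits (it permutes the $\bGr_\la$ among themselves, with $\dim$ unchanged since the $\bGr_\la$ are defined over $k$ up to the Galois twist), and the grading on $\rH^*(\calF)$ is read off from the dimensions of the supports via the semi-infinite orbits, which $\ga^*$ also merely permutes. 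Concretely, one can use that the grading on $\rH^*$ corresponds to the cocharacter $2\rho:\bbG_m\to T^\vee$ (as recalled in \S\ref{stalk}), and this cocharacter is canonically attached to $\rH^*$, hence preserved by $\on{act}^{geom}(\ga)$. Therefore $\on{act}^{geom}(\ga)$ fixes $2\rho$, so it normalizes its centralizer $T^\vee$; and since $B^\vee$ is determined by $X^\vee$ (Remark \ref{independence}), once we pin down the effect of $\on{act}^{geom}(\ga)$ on $X^\vee$ we will know $\on{act}^{geom}(\ga)$ up to an inner automorphism by $T^\vee$ commuting with $2\rho$, i.e. trivial.

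The key point is then: how does $\on{act}^{geom}(\ga)$ act on the pinning vector $X^\vee$? Recall $X^\vee$ is realized as the cup product with the Chern class $c(\calL)\in\rH^2(\Gr_G,\overline{\bbQ}_\ell)$ of an ample line bundle. Pullback along $\ga$ sends $c(\calL)$ to $c(\ga^*\calL)$, and although $\ga^*\calL$ may not equal $\calL$ as a line bundle, its Chern class differs from $c(\calL)$ only by the Galois twist on $\rH^2$: concretely, $\rH^2(\Gr_G,\overline{\bbQ}_\ell(1))$ is (after choosing an ample generator) canonically trivial as a Galois module, so $c(\ga^*\calL)=\on{cycl}(\ga)\cdot c(\calL)$ under the comparison isomorphism $\al_\ga$ (the Tate twist $(1)$ contributes exactly the cyclotomic character). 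Hence, transported through $\al_\ga$, the operator ``cup with $c(\calL)$'' on $\rH^*(\ga^*\calF)$ becomes $\on{cycl}(\ga)$ times the operator on $\rH^*(\calF)$; i.e. $\on{act}^{geom}(\ga)$ scales $X^\vee\in\frakg^\vee$ by $\on{cycl}(\ga)$. Thus $\on{act}^{geom}(\ga)$ fixes $T^\vee$, fixes $B^\vee$, and scales $X^\vee$ by $\on{cycl}(\ga)$. Comparing with the pinned action: the automorphism $\on{act}^{alg}(\ga)$ fixes all four, and the difference $\on{act}^{alg}(\ga)^{-1}\circ\on{act}^{geom}(\ga)$ is an automorphism fixing $B^\vee,T^\vee$ and scaling $X^\vee$ by $\on{cycl}(\ga)$. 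But scaling the pinning vector by $t\in\overline{\bbQ}_\ell^\times$ is exactly conjugation by $\rho(t)\in T^\vee_{\ad}$, since $[\rho,X_{\tilde a}]=X_{\tilde a}$ for each simple coroot contribution and $\rho$ is the cocharacter pairing to $1$ with every simple root. Therefore $\on{act}^{alg}(\ga)^{-1}\circ\on{act}^{geom}(\ga)=\Ad_{\rho(\on{cycl}(\ga))}=\Ad_{\chi(\ga)}$, which gives $\on{act}^{geom}=\on{act}^{alg}\circ\Ad_\chi$.

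The main obstacle I expect is making precise the Galois-equivariance statement about the Chern class — i.e. that $c(\ga^*\calL)=\on{cycl}(\ga)c(\calL)$ under $\al_\ga$, as opposed to merely up to a sign or up to an inner ambiguity. This requires being careful that $\rH^2(\Gr_G\otimes k^s,\overline{\bbQ}_\ell)$ is one-dimensional on each connected component with the Galois action on the ample cone being trivial, so that the entire Galois effect on $c(\calL)$ is the Tate twist coming from identifying $\rH^2$ with $\overline{\bbQ}_\ell(-1)^{\oplus}$; and that the isomorphism $\al_\ga$ built into the action of $\Ga_k$ on $(\Sat,\rH^*)$ is compatible with this identification. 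Once this is in hand, the rest is the elementary pinned-group computation sketched above (using Remark \ref{independence} to reduce everything to the principal $\SL_2$, so that all ambiguities by inner automorphisms centralizing $2\rho$ collapse). I would also remark, as the paper does in passing, that the proposition immediately yields the isomorphism \eqref{geom-to-alg} between the two semidirect products, $(g,\sigma)\mapsto(\Ad_{\chi(\sigma)^{-1}}g,\sigma)$, which is what is needed for Theorem \ref{main finite}.
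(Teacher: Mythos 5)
Your approach is the same as the paper's: observe that $\on{act}^{geom}(\ga)$ preserves the cohomological grading (hence the cocharacter $2\rho$ and hence $T^\vee$) and scales $X^\vee$ by the cyclotomic character, so that $\Ad_{\chi}^{-1}\circ\on{act}^{geom}(\ga)$ preserves the canonical pinning; then compare with $\on{act}^{alg}(\ga)$. The paper's proof is precisely this, stated more briefly.

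There is, however, a small gap at the end of your argument. You conclude that the automorphism $\on{act}^{alg}(\ga)^{-1}\circ\on{act}^{geom}(\ga)$ preserves $(B^\vee,T^\vee)$ and scales $X^\vee$ by $\on{cycl}(\ga)$, and then assert this forces it to equal $\Ad_{\rho(\on{cycl}(\ga))}$. But that is not quite forced: an automorphism of $G^\vee$ preserving $(B^\vee,T^\vee)$ and scaling $X^\vee$ by $t$ need only be of the form $\Ad_{\rho(t)}\circ\tau$ with $\tau$ a pinned automorphism, since every pinned automorphism also preserves $(B^\vee,T^\vee)$ and fixes $X^\vee$. Equivalently, your remark ``up to an inner automorphism by $T^\vee$ commuting with $2\rho$, i.e. trivial'' doesn't rule this out, because all of $T^\vee$ already commutes with $2\rho$, and the pinned automorphisms do too. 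To close the gap you must additionally check that $\Ad_{\chi}^{-1}\circ\on{act}^{geom}(\ga)$ and $\on{act}^{alg}(\ga)$ induce the same automorphism of the based root datum $(\xch(T^\vee),\Delta)$ (equivalently, have the same outer class), whence the pinned component $\tau$ is trivial. This is exactly the final sentence of the paper's proof, and it holds because (via Lemma \ref{action}) the outer class of $\on{act}^{geom}(\ga)$ depends only on $\ga^*$, which permutes the Schubert cells of $\Gr_G\otimes k^s$ in the same way $\psi(\ga)$ permutes dominant coweights. Your careful discussion of the Chern class and Tate twist is a useful fleshing out of a point the paper treats tersely, but beware that ``fixes $B^\vee, T^\vee$'' should read ``preserves'' throughout, and that $\on{act}^{alg}(\ga)$ preserves the pinning but is generally a nontrivial automorphism of $G^\vee$.
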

\begin{proof}Observe that the action of
$\Ga_k$ preserves the cohomological grading. In addition, $\Ga_k$
acts on $X^\vee$ through $\on{cycl}$ since $X^\vee$ is the Chern
class of the chosen ample line bundle on $\Gr_G$. Therefore,
$\Ad_{\chi^{-1}}\circ \on{act}^{geom}$ preserves $(G^\vee, B^\vee,
T^\vee, X^\vee)$. But since $\Ad_{\chi^{-1}}\circ \on{act}^{geom}$
and $\on{act}^{alg}$ act on the the based root datum $(\xch(T),
\Delta)$ by the same way, these two actions must coincide.
\end{proof}

\begin{rmk}{\rm (i) An interesting corollary of the above proposition is
that the $\on{act}^{geom}$ of $\Ga_k$ on $G^\vee$ only depends on
the quasi-split form of $G$, since the same is true for
$\on{act}^{alg}$.

(ii) The existence of these two actions $\on{act}^{geom}$ and
$\on{act}^{alg}$ is a geometric source of the two natural
normalizations for the Satake parameters. In addition, these two
actions are also parallel to the notions of $C$-algebraic and
$L$-algebraic, as recently introduced by Buzzard and Gee \cite{BG}.
 }\end{rmk}

\begin{cor}\label{geom-alg}
We have $
 {^L}G^{alg}\stackrel{\simeq}{\longrightarrow} {^L}G^{geom}$ given
 by
\[(g,\ga)\mapsto (\Ad_{\chi(\ga^{-1})}(g),\ga).\]\end{cor}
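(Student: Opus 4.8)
The plan is to deduce the statement formally from Proposition~\ref{comparison}, the content being simply that the two Galois actions $\on{act}^{alg}$ and $\on{act}^{geom}$ on $G^\vee$ differ by an inner twist, so the two semidirect products must be isomorphic, and to write the isomorphism down. Set $\alpha=\on{act}^{alg}$, $\beta=\on{act}^{geom}$, so ${^L}G^{alg}=G^\vee\rtimes_\alpha\Ga_k$, ${^L}G^{geom}=G^\vee\rtimes_\beta\Ga_k$, and by Proposition~\ref{comparison} one has $\beta_\ga=\alpha_\ga\circ\Ad_{\chi(\ga)}$ for all $\ga\in\Ga_k$. Two elementary observations about $\chi=\rho\circ\on{cycl}$ are what make everything go: first, $\chi$ is a \emph{group homomorphism} $\Ga_k\to G^\vee_\ad(\overline{\bbQ}_\ell)$ whose image lies in the commutative subgroup $\rho(\bbG_m)$; second, since $\rho$ is canonically attached to the based root datum while $\alpha$ acts by pinned automorphisms, $\alpha_\ga$ fixes $\rho(\bbG_m)$ pointwise, hence $\alpha_\ga$ commutes with $\Ad_{\chi(\delta)}$ for all $\ga,\delta$. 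Together these say $\chi$ is a $1$-cocycle of $\Ga_k$ valued in $G^\vee_\ad=\on{Inn}(G^\vee)$, and that $\beta$ is exactly the twist of $\alpha$ by the resulting inner class — which is precisely the situation in which $G^\vee\rtimes_\alpha\Ga_k\simeq G^\vee\rtimes_\beta\Ga_k$ (both are split extensions of $\Ga_k$ by $G^\vee$ inducing the same outer action, hence isomorphic as extensions).

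Concretely, I would first record that $(g,\ga)\mapsto(\Ad_{\chi(\ga^{-1})}(g),\ga)$ is well defined — conjugation by an element of $G^\vee_\ad$ is an inner automorphism of $G^\vee$, so it preserves $G^\vee$ — that it is a morphism of pro-algebraic groups (because $\on{cycl}$ is continuous and $\rho$ algebraic), that it is compatible with the projections to $\Ga_k$ and with the inclusions of $G^\vee$, and that it is visibly bijective, the inverse being $(g,\ga)\mapsto(\Ad_{\chi(\ga)}(g),\ga)$. The only substantive point is that the map intertwines the two multiplication laws; this I would check by unwinding $(g_1,\ga_1)(g_2,\ga_2)=(g_1\,\alpha_{\ga_1}(g_2),\ga_1\ga_2)$ on ${^L}G^{alg}$ against the corresponding law built from $\beta$ on ${^L}G^{geom}$, so that the claim reduces to an identity among elements of $G^\vee$ which follows from $\beta_{\ga_1}=\alpha_{\ga_1}\circ\Ad_{\chi(\ga_1)}$ together with the facts that $\chi$ is multiplicative, that the values of $\chi$ commute, and that $\alpha_{\ga_1}$ fixes $\chi(\ga_1)$ and commutes with $\Ad_{\chi(\ga_2)}$.

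The main — and essentially only — obstacle is this last bookkeeping, aggravated by the mild nuisance that $\chi$ takes values in $G^\vee_\ad$ rather than in $G^\vee$, so that the identification of ${^L}G^{alg}$ with ${^L}G^{geom}$ has to be realized through the conjugation action of $G^\vee_\ad$ on $G^\vee$ rather than by a translation; keeping careful track of which side the twist acts on, of the Galois action on the twisting parameter, and of the precise conventions for the semidirect-product multiplication is the place where attention is required. There is no deeper difficulty — consistent with the appendix's remark that these proofs are very simple — and indeed Proposition~\ref{comparison} already contains the whole of the mathematical input. (An alternative, marginally more conceptual route would be to realize both $L$-groups Tannakianly, identifying ${^L}G^{geom}$ with $\Aut^\otimes$ of the $\Ga_k$-equivariant Satake category via Lemmas~\ref{action} and~\ref{Tannaka} and ${^L}G^{alg}$ with the analogous group for the pinned $\Ga_k$-action on $\Rep(G^\vee)$, and then invoke Proposition~\ref{comparison} to compare the two actions; but the hands-on argument above is shorter.)
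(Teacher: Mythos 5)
The step you describe as ``essentially only bookkeeping'' is in fact where the argument breaks down: the map $(g,\ga)\mapsto(\Ad_{\chi(\ga)^{-1}}(g),\ga)$ is not a group homomorphism between the two semidirect products, and the identity you claim ``follows from'' Proposition~\ref{comparison} together with the commutation facts does not hold. Write $\alpha=\on{act}^{alg}$, $\beta=\on{act}^{geom}$, use $(g_1,\ga_1)(g_2,\ga_2)=(g_1\alpha_{\ga_1}(g_2),\ga_1\ga_2)$ on ${^L}G^{alg}$ and $\beta_\ga=\alpha_\ga\circ\Ad_{\chi(\ga)}$, and unwind both sides using every fact you list (multiplicativity and commutativity of $\chi$, and that the pinned $\alpha_\ga$ fixes $\chi$-values): the image of the product has first component
\[
\Ad_{\chi(\ga_1)^{-1}\chi(\ga_2)^{-1}}(g_1)\cdot \Ad_{\chi(\ga_1)^{-1}\chi(\ga_2)^{-1}}(\alpha_{\ga_1}(g_2)),
\]
while the product of the images in ${^L}G^{geom}$ has first component
\[
\Ad_{\chi(\ga_1)^{-1}}(g_1)\cdot \Ad_{\chi(\ga_1)\chi(\ga_2)^{-1}}(\alpha_{\ga_1}(g_2)).
\]
Already the $g_1$-factors differ by $\Ad_{\chi(\ga_2)^{-1}}$, which is a nontrivial automorphism since $\chi(\ga_2)$ is not central in $G^\vee_\ad$. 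For a minimal sanity check take $\alpha$ trivial and $c=\chi(\sigma)\ne 1$: then $(g_1,\sigma)(g_2,\sigma^{-1})=(g_1g_2,1)$ must map to $(g_1g_2,1)$, but the images multiply in ${^L}G^{geom}$ to $(\Ad_{c^{-1}}(g_1)\Ad_{c^{2}}(g_2),1)$, which equals $(g_1g_2,1)$ for all $g_1,g_2$ only when $c$ is central. Conjugation-twist by a cochain valued in $G^\vee_\ad$ simply does not relate the two group laws.

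The corollary itself is true, but the isomorphism is a right translation and needs one input beyond Proposition~\ref{comparison}: a lift of $\chi$ along $T^\vee\twoheadrightarrow T^\vee_\ad$ to a continuous homomorphism $\tilde\chi:\Ga_k\to T^\vee(\overline{\bbQ}_\ell)$ landing in the $\on{act}^{alg}$-fixed points. Such a lift exists because $\Ga_k$ is topologically cyclic and $T^\vee(\overline{\bbQ}_\ell)\to T^\vee_\ad(\overline{\bbQ}_\ell)$ is surjective on $\overline{\bbQ}_\ell$-points (one lifts $\chi$ of a topological generator and extends multiplicatively). With such a $\tilde\chi$, the map $\Phi(g,\ga)=(g\tilde\chi(\ga)^{-1},\ga)$ is a group isomorphism ${^L}G^{alg}\to{^L}G^{geom}$: one computes $\Phi(g_1\alpha_{\ga_1}(g_2),\ga_1\ga_2)=(g_1\alpha_{\ga_1}(g_2)\tilde\chi(\ga_2)^{-1}\tilde\chi(\ga_1)^{-1},\ga_1\ga_2)$, while $\Phi(g_1,\ga_1)\cdot\Phi(g_2,\ga_2)$ in ${^L}G^{geom}$ has first component $g_1\tilde\chi(\ga_1)^{-1}\cdot\tilde\chi(\ga_1)\alpha_{\ga_1}(g_2)\tilde\chi(\ga_2)^{-1}\tilde\chi(\ga_1)^{-1}$, and the middle $\tilde\chi(\ga_1)^{\pm1}$ now telescope as you were hoping. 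The paper states the corollary without proof, and the conjugation formula it displays appears to be a slip for this translation formula; the slip is harmless for the paper's uses of the corollary, which only invoke it through trace identities, but your proposed direct verification of the formula as written would not close.
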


Following \cite[2.6]{Bo}, we define the notation of the algebraic
representations of ${^L}G^{alg}$ as follows. For every $k'\subset
k^s$ finite over $k$ such that $G_{k'}$ is split, one can form the
``finite form" of Langlands dual group
$G^\vee\rtimes_{\on{act}^{alg}}\Gal(k'/k)$, which can be regarded as
an algebraic group over $\overline{\bbQ}_\ell$. Therefore, we may
consider
${^L}G^{alg}=\underleftarrow{\lim}G^\vee\rtimes_{\on{act}^{alg}}\Gal(k'/k)$
as a pro-algebraic group over $\overline{\bbQ}_\ell$, Then it makes
sense to talk about the category of algebraic representations
$\Rep({^L}G^{alg})$ of ${^L}G^{alg}$, which is the inductive limit
of $\Rep(G^\vee\rtimes_{\on{act}^{alg}}\Gal(k'/k))$.

Now, we consider certain categories of perverse sheaves. First,
according to the above discussions, we call the action of $\Ga_k$ on
$\Sat$ via $\ga\mapsto (\ga^*,\al_\ga)$ as in Lemma \ref{action of
Galois} the geometric action. We can also define an algebraic action
of $\Ga_k$ on $\Sat$ as $\ga\mapsto
(\ga^*,\chi(\ga)^{-1}\al_\ga\chi(\ga))$. Clearly, there is a
canonical isomorphism between $\Sat^{\Ga_k,geom}$ and
$\Sat^{\Ga_k,alg}$ sending $(\calF,\{c_\ga\}_{\ga\in\Ga_k})$ to
$(\calF,\{c_\ga\}_{\ga\in\Ga_k})$. The reason we distinguish them is
due to the following observation: since the algebraic action of
$\Ga_k$ on $G^\vee$ factors through $\Gal(k'/k)$ if $G_{k'}$ is
split, the algebraic action of $\Ga_k$ on $\Sat$ also factors
through $\Gal(k'/k)$, and therefore it makes sense to talk about
$\Sat^{\Gal(k'/k),alg}$, which is naturally a full category of
$\Sat^{\Ga_k,alg}$. In addition, according to Lemma \ref{Tannaka},
$\Sat^{\Gal(k'/k),alg}$ is equivalent to
$\Rep(G^\vee\rtimes_{\on{act}^{alg}}\Gal(k'/k))$. Now, let
$\Sat^{\Ga_k,alg,f}\subset\Sat^{\Ga_k,alg}$ be the full subcategory,
which is the union of all $\Sat^{\Gal(k'/k),alg}$. We obtain that
\[\rH^*:\Sat^{\Ga_k,alg,f}\simeq\Rep({^L}G^{alg}).\]
We next goal then is to identify $\Sat^{\Ga_k,alg,f}$ as a
subcategory of $\Sat^{\Ga_k,geom}$.

As $\Ga_k$ is a topological group, a natural guess would be the full
subcategory of $\Sat^{\Ga_k,geom,ct}$, consisting of objects on
which $\Ga_k$ acts continuously. Equivalent, let
$\calP_{L^+G}(\Gr_G)$ be the category of perverse sheaves on
$\Gr_G$.  Then the pullback functor to $\Gr_G\otimes k^s$ induces
$\calP_{L^+G}(\Gr_G)\simeq \Sat^{\Ga_k,geom,ct}$. However, it is not
the case that $\Sat^{\Ga_k,geom,ct}=\Sat^{\Ga_k,alg,f}$.

\begin{dfn}(i) Let $X$ be a smooth variety over $k$, we define a
constant sheaf to be a direct sum of
$(\overline{\bbQ}_\ell[1](\frac{1}{2}))^{\otimes \dim X}$.

(ii) We define $\calP^f_{L^+G}(\Gr_G)$ to be the full subcategory of
$\calP_{L^+G}(\Gr_G)$, consisting of those sheaves $\calF$, such
that there exists some $k'\supset k$ such that $\calF\otimes k'$ is
constant along each $(L^+G\otimes k')$-orbit.
\end{dfn}
\begin{rmk}{\rm
(i) The toy model is when $G=\{e\}$ is the trivial group. Then
$\calP_{L^+G}(\Gr_G)$ is the category $\Ga_k\Mod$ of continuous
representations of $\Ga_k$ while $\calP^f_{L^+G}(\Gr_G)$ is the
subcategory $\Ga_k\Mod^f$ consisting of representations of finite
quotients of $\Ga_k$. In particular, if $k$ is a finite field, we
can identify the latter as the category of semi-simple
$\Ga_k$-modules, pure of weight zero.

(ii) Observe that every object in $\calP_{L^+G}(\Gr_G)$ is of the
form $\oplus_i \IC_i\otimes \calL_i$, where $\IC_i$ is an
intersection cohomology sheaf on $\Gr_G$, and $\calL_i$ is a
representation of $\Ga_k$. Therefore,
$\calF\in\calP_{L^+G}^f(\Gr_G)$ if and only if all
$\calL_i\in\Ga_k\Mod^f$. In particular, if $k=\bbF_q$ is a finite
field, the category $\calP^f_{L^+G}(\Gr_G)$ is equivalent to the
category of semi-simple $L^+G$-equivariant perverse sheaves on
$\Gr_G$, pure of weight zero.
 }\end{rmk}

\begin{prop}Under the canonical isomorphism
$\Sat^{\Ga_k,geom}=\Sat^{\Ga_k,alg}$, we have the identification
\[\calP_{L^+G}^f(\Gr_G)=\Sat^{\Ga_k,alg,f}.\]
Therefore, we obtain an equivalence of tensor categories
\[\rH^*:\calP_{L^+G}^f(\Gr_G)\to \Rep({^L}G^{alg}).\]
\end{prop}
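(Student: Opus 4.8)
The plan is to reduce the statement to a comparison, extension field by extension field, of the two ``finiteness'' conditions involved. First I would recall two ingredients already in place: pullback to $\Gr_G\otimes k^{s}$ identifies $\calP_{L^+G}(\Gr_G)$ with $\Sat^{\Ga_k,geom,ct}$; and the geometric and algebraic actions of $\Ga_k$ on $\Sat$ have the \emph{same} underlying endofunctors $\ga\mapsto\ga^{*}$ (they differ only by the inner twist $\chi(\ga)$ on the fibre functor), so the categories $\Sat^{\Ga_k,geom}$ and $\Sat^{\Ga_k,alg}$ coincide on objects and morphisms --- this is the canonical isomorphism of the statement. What is \emph{not} the same is the notion of an equivariant structure being inflated from a finite quotient $\Gal(k'/k)$: for $k'/k$ finite with $G_{k'}$ split the \emph{algebraic} action of $\Gal(k^{s}/k')$ on $\Sat$ is trivial, since the pinned action of $\Ga_k$ on $G^{\vee}$ factors through $\Gal(k'/k)$, whereas the \emph{geometric} action of $\Gal(k^{s}/k')$ is the nontrivial inner automorphism $\Ad_{\chi}$, $\chi=\rho\circ\on{cycl}$.

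The core step I would carry out is: for each finite $k'/k$ with $G_{k'}$ split, the full subcategory $\calP^{k'}\subset\calP_{L^+G}(\Gr_G)$ of those $\calF$ with $\calF\otimes_k k'$ constant along every $(L^+G\otimes k')$-orbit equals $\Sat^{\Gal(k'/k),alg}$ inside $\Sat^{\Ga_k,geom}=\Sat^{\Ga_k,alg}$. Here I would use the structure of objects of $\calP_{L^+G}(\Gr_G)$ as sums $\bigoplus_i \IC_i\otimes\calL_i$, with $\IC_i$ an $L^+G$-equivariant intersection cohomology sheaf on $\Gr_G$ and $\calL_i$ a continuous $\Ga_k$-representation. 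Two facts combine: (a) the stalks of each $\IC_i$ along every orbit are of Tate type --- this is exactly the input used in Lemma \ref{semisimple} (Demazure resolutions, pavings by affine spaces), and it makes $\IC_i\otimes k'$ ``constant along orbits'' for every $k'$; hence $\calF\otimes k'$ is constant along the $(L^+G\otimes k')$-orbits iff every $\calL_i$ is trivialised over $k'$, i.e.\ $\calL_i|_{\Gal(k^{s}/k')}$ is trivial. (b) Via geometric Satake the cohomological grading on $\IC_i|_{\Gr_G\otimes k^{s}}$ is the grading by $2\rho$, so the Galois (arithmetic) part of its tautological descent datum restricted to $\Gal(k^{s}/k')$ acts through the Tate twists governed by $2\rho$, i.e.\ exactly as $\chi|_{\Gal(k^{s}/k')}$ by Proposition \ref{comparison}; therefore the \emph{algebraic} descent datum of $\calF$ --- the geometric one untwisted by $\chi$ --- is trivial on the $\IC_i$-factors and equals the given $\calL_i$-action on the multiplicity factors. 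Comparing (a) and (b), $\calF\in\Sat^{\Gal(k'/k),alg}$ iff every $\calL_i|_{\Gal(k^{s}/k')}$ is trivial, which is the same condition. The same comparison shows $\Sat^{\Ga_k,alg,f}\subseteq\Sat^{\Ga_k,geom,ct}=\calP_{L^+G}(\Gr_G)$: an algebraically $\Gal(k'/k)$-inflated structure has geometric descent datum $c_\ga = (\text{locally constant in }\bar\ga\in\Gal(k'/k))\cdot\chi(\ga)$, which is continuous.

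Taking the union over all finite $k'/k$ with $G_{k'}$ split (an increasing family, since base change preserves ``constant along orbits'' and enlarges the set of orbits compatibly) then gives $\calP^{f}_{L^+G}(\Gr_G)=\bigcup_{k'}\calP^{k'}=\bigcup_{k'}\Sat^{\Gal(k'/k),alg}=\Sat^{\Ga_k,alg,f}$, which is the first claim. For the second, I would invoke Lemma \ref{Tannaka} for each $k'$: $\rH^{*}$ gives a tensor equivalence $\Sat^{\Gal(k'/k),alg}\simeq\Rep(G^{\vee}\rtimes_{\on{act}^{alg}}\Gal(k'/k))$ compatible with the forgetful functors and with enlarging $k'$; passing to the filtered colimit yields the tensor equivalence $\rH^{*}:\Sat^{\Ga_k,alg,f}\simeq\Rep({^L}G^{alg})$ over $\overline{\bbQ}_\ell$, and composing it with $\calP^{f}_{L^+G}(\Gr_G)=\Sat^{\Ga_k,alg,f}$ finishes the proof.

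I expect the main obstacle to be the bookkeeping in step (b): pinning down that the inner twist $\chi=\rho\circ\on{cycl}$ separating $\on{act}^{geom}$ from $\on{act}^{alg}$ matches, exactly, the half-dimensional Tate twists built into the notion of a ``constant sheaf'', and checking carefully that $L^+G$-equivariant perverse sheaves on $\Gr_G$ really are of Tate type along the orbits so that ``constant along every orbit'' is the correct target condition. Once these normalisations are aligned, the rest is formal descent and the Tannakian dictionary.
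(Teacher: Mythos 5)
Your argument is correct, and it arrives at the same conclusion as the paper by essentially the same underlying idea, but the two proofs are organized differently and it is worth comparing them. The paper handles the identification $\calP_{L^+G,k'}(\Gr_G)=\Sat^{\Gal(k'/k),alg}$ by a reduction: base change to $k'$ reduces to the case when $G$ is split over $k$ and $k'=k$, where $\calP_{L^+G,k}(\Gr_G)\simeq\Sat=\Sat^{\Gal(k/k),alg}$ and the claim is declared clear. You instead argue in place using the structural decomposition $\calF\simeq\bigoplus_i\IC_i\otimes\calL_i$ from the remark preceding the proposition, and separate the problem into (a) the observation that the $\IC_i$ are of Tate type (parity vanishing plus purity along orbits), so the ``constant along orbits over $k'$'' condition is precisely the condition that each $\calL_i$ be trivial on $\Gal(k^s/k')$; and (b) the verification that the geometric descent datum of a weight-zero $\IC$ restricted to $\Gal(k^s/k')$ is exactly the inner twist by $\chi=\rho\circ\on{cycl}$ of Proposition \ref{comparison}, so that the untwisted, algebraic, descent datum is trivial on the $\IC_i$-factors. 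This makes explicit the piece the paper compresses into the word ``clear'' --- the compatibility between the $\chi$-twist that separates $\on{act}^{geom}$ from $\on{act}^{alg}$ and the half-dimensional Tate twists built into the definition of constant sheaf --- and you correctly flag it as the real content. Your passage to the colimit over $k'$ and the appeal to Lemma \ref{Tannaka} to get the second statement of the proposition match the paper. One small caveat: in (a) you should be slightly careful that ``constant along orbits'' as literally written in the definition (a direct sum of $(\overline{\bbQ}_\ell[1](\tfrac12))^{\otimes\dim X}$) must be interpreted loosely enough --- with shifts and the matching Tate twists allowed in each cohomological degree --- for the $\IC_i$ actually to be constant along the smaller orbits in their support; this is what the parity/purity of IC stalks provides, and is implicit in the paper's Remark (ii), but it is a normalization point that deserves the one-line remark you give it.
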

\begin{proof}Let $k'\supset k$ such that $G_{k'}$ splits. We denote $\calP_{L^+G,k'}(\Gr_G)$ the full subcategory
of $\calP^f_{L^+G}(\Gr_G)$ consisting of those $\calF$ such that
$\calF\otimes k'$ is constant along each $(L^+G\otimes k')$-orbit.
Then under
\[\calP_{L^+G}(\Gr_G)\to \Sat^{\Ga_k,geom}\simeq\Sat^{\Ga_k,alg},\]
$\calP_{L^+G,k'}(\Gr_G)$ maps to $\Sat^{\Gal(k'/k),alg}$. To see
this, one reduces to the case when $G$ is split over $k$ and $k'=k$.
In this case $\calP_{L^+G,k}(\Gr_G)\simeq\Sat$ and this statement is
clear.
\end{proof}

For every $\calF\in\calP_{L^+G}^f(\Gr_G)$, let us describe
$\rH^*(\calF)$ as a representation of ${^L}G^{alg}$ more explicitly.
First, as an object in $\Sat^{\Ga_k,geom}$, it is a natural
representation of ${^L}G^{geom}$, on which $G^\vee$ acts via the
usual geometric Satake isomorphism, and $\Ga_k$ acts via the natural
Galois action. Then the action of ${^L}G^{alg}$ is via
\eqref{geom-alg}.

In particular, if $\bGr_\mu$ is a Schubert variety in $\Gr_G$
defined over $k$ (i.e., the conjugacy class of the one-parameter
subgroup determined by $\mu$ is defined over $k$), then
$\IC_{\bGr_\mu}$ is an object in $\calP_{L^+G}^f(\Gr_G)$. Therefore,
$\rH^*(\IC_{\bGr_{\mu}})$ is a representation of
$G^\vee\rtimes_{\on{act}^{alg}}\Gal(k'/k)$, where $k'$ is the
splitting field of $G$. We thus obtain
\begin{cor}\label{extension}
Let $V_\mu$ be a representation of $G^\vee$ of highest weight $\mu$.
If the conjugacy class of the one parameter subgroup $\mu:\bbG_m\to
G_{k^s}$ is defined over $k$, then $V_\mu$ can be extended
canonically to a representation of
$G^\vee\rtimes_{\on{act}^{alg}}\Gal(k'/k)$, where $k'$ is the
splitting field of $G$.
\end{cor}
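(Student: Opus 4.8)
The plan is to realize $V_\mu$ as the hypercohomology of the intersection cohomology sheaf of a Schubert variety in $\Gr_G$ that is already defined over $k$, and then feed it into the equivalence $\rH^*:\calP^f_{L^+G}(\Gr_G)\simeq\Rep({}^LG^{alg})$ of the preceding Proposition.

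First I would show that the hypothesis forces the Schubert variety $\bGr_\mu$ to descend to $k$. The Galois group $\Ga_k$ acts on $\Gr_G\otimes k^s=\Gr_{G_{k^s}}$ and permutes the $L^+(G\otimes k^s)$-orbits according to its action on conjugacy classes of cocharacters of $G_{k^s}$; hence if the conjugacy class of $\mu$ is $\Ga_k$-fixed, the orbit through $t^\mu$ and its closure $\bGr_\mu$ are $\Ga_k$-stable. Being a $\Ga_k$-stable closed subscheme of the $k$-scheme $\Gr_G\otimes k^s$, $\bGr_\mu$ descends to a closed, $L^+G$-stable subscheme of $\Gr_G$, still denoted $\bGr_\mu$; since formation of intersection cohomology commutes with the field extension $k\subset k^s$, the sheaf $\IC_{\bGr_\mu}$ is an $L^+G$-equivariant object of $\calP_{L^+G}(\Gr_G)$ whose pullback to $k^s$ is $\IC_{\bGr_\mu/k^s}$. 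Over the (finite, Galois) splitting field $k'$ of $G$, the group $G_{k'}$ is split and $\bGr_\mu\otimes k'$ is an ordinary Schubert variety in $\Gr_{G_{k'}}$; in particular $\IC_{\bGr_\mu}\otimes k'$ is constant along each $(L^+G\otimes k')$-orbit, so by the Remark following the definition of $\calP^f_{L^+G}(\Gr_G)$ the local systems attached to $\IC_{\bGr_\mu}$ are trivialized over $k'$ and $\IC_{\bGr_\mu}\in\calP^f_{L^+G}(\Gr_G)$.

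Applying the Proposition, $\rH^*(\IC_{\bGr_\mu})$ acquires a canonical structure of representation of ${}^LG^{alg}=G^\vee\rtimes_{\on{act}^{alg}}\Ga_k$, which by construction factors through $G^\vee\rtimes_{\on{act}^{alg}}\Gal(k'/k)$. Restricting this representation along $G^\vee\hookrightarrow {}^LG^{alg}$ gives, again by construction of the Proposition's equivalence, the object of $\Sat$ underlying $\IC_{\bGr_\mu}\otimes k^s=\IC_{\bGr_\mu/k^s}$; under the geometric Satake correspondence $\Sat\simeq\Rep(G^\vee)$ of \cite{MV} for the split group $G_{k^s}$ this is exactly $V_\mu$. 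Thus $\rH^*(\IC_{\bGr_\mu})$ is the desired canonical extension of $V_\mu$ to $G^\vee\rtimes_{\on{act}^{alg}}\Gal(k'/k)$, canonicity being clear since the pinning (hence the equivalence of the Proposition) was fixed once and for all and $\bGr_\mu$ depends only on the conjugacy class of $\mu$.

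The step that requires the most care is the descent in the second paragraph: matching ``the conjugacy class of $\mu$ is defined over $k$'' with ``$\bGr_\mu$ is $\Ga_k$-stable'', checking that the descended IC sheaf actually lands in the \emph{finite} subcategory $\calP^f_{L^+G}(\Gr_G)$ rather than merely in $\calP_{L^+G}(\Gr_G)$, and keeping track of normalizations (purity of weight zero) so that the restriction to $G^\vee$ is $V_\mu$ on the nose. Everything else is a formal consequence of the preceding Proposition together with ordinary geometric Satake for the split form $G_{k^s}$.
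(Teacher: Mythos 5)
Your proof is correct and follows essentially the same route as the paper: the paper also observes that when the conjugacy class of $\mu$ is $\Ga_k$-fixed the Schubert variety $\bGr_\mu$ is defined over $k$, places $\IC_{\bGr_\mu}$ in $\calP^f_{L^+G}(\Gr_G)$ (indeed it is $\IC\otimes\overline{\bbQ}_\ell$ with trivial local system, so this is immediate from the remark preceding the Proposition), and then applies the Proposition's equivalence to read off $\rH^*(\IC_{\bGr_\mu})$ as the desired extension of $V_\mu$. Your write-up merely fills in the Galois-descent step that the paper treats as evident.
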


Now we specialize to the case that $k=\bbF_q$ is a finite field, so
that $G\otimes k[[t]]$ is a hyperspecial group scheme for the
unramified group $G_{k\ppart}$. Note that
${^L}G_{k\ppart}^{alg}={^L}G^{alg}$. We therefore obtain the
geometric Satake isomorphism for unramified groups.

\begin{thm}\label{unramified Satake}
Let $\calP^f_{L^+G}(Gr_G)$ be the category of semi-simple,
$L^+G$-equivariant perverse sheaves on $\Gr_G$, pure of weight zero.
Then we have an equivalence of tensor categories
\[\rH^*:\calP^f_{L^+G}(\Gr_G)\simeq \Rep({^L}G^{alg}).\]
\end{thm}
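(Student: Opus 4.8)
The plan is to derive the statement from the Proposition immediately preceding it, which already produces a tensor equivalence $\rH^*:\calP^f_{L^+G}(\Gr_G)\simeq\Rep({^L}G^{alg})$ over an arbitrary base field $k$ (via Lemma \ref{action of Galois}, Lemma \ref{Tannaka} and Proposition \ref{comparison}); for $k=\bbF_q$ nothing new needs to be constructed, only the two sides re-interpreted. First I would identify the pro-algebraic group ${^L}G^{alg}$ attached to $G/k$ (with the pinning furnished by geometric Satake) with the Langlands dual group ${^L}G_{k\ppart}^{alg}$ of the unramified group $G_{k\ppart}$: the root data agree by construction, and since $G$ extends to the hyperspecial group scheme $G\otimes k[[t]]$, the pinned action of $\Gal(F^s/F)$ on the dual group of $G_{k\ppart}$ factors through the unramified quotient $\Gal(\bar k/k)$, where it coincides with $\on{act}^{alg}$ for $G/k$. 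Hence $\Rep({^L}G^{alg})\simeq\Rep({^L}G_{k\ppart}^{alg})$ compatibly with the forgetful functors.

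Second I would show that, when $k=\bbF_q$, the category $\calP^f_{L^+G}(\Gr_G)$ agrees with the category of semisimple $L^+G$-equivariant perverse sheaves on $\Gr_G$ that are pure of weight zero. By parity vanishing (the hyperspecial case of Lemma \ref{semisimple}) together with the decomposition theorem, every $L^+G$-equivariant perverse sheaf on $\Gr_G$ has the form $\bigoplus_i \IC_i\otimes\calL_i$ with $\IC_i$ an intersection cohomology sheaf of a Schubert variety and $\calL_i$ a continuous $\ell$-adic representation of $\Gal(\bar k/k)$; such a sheaf is semisimple and pure of weight zero if and only if each $\calL_i$ is, and it lies in $\calP^f_{L^+G}(\Gr_G)$ if and only if each $\calL_i$ has finite monodromy. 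So I must check that a semisimple $\Gal(\bar k/k)$-module pure of weight zero factors through a finite quotient of $\Gal(\bar k/k)\simeq\hat{\bbZ}$, equivalently that each Frobenius eigenvalue is a root of unity; this is Kronecker's theorem applied to an algebraic number all of whose archimedean conjugates have absolute value $1$. The converse (a representation of a finite quotient is semisimple of weight zero) is immediate. This gives the claimed description of the left-hand side, and combined with the first step the theorem follows.

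The genuinely substantive content is upstream: Lemma \ref{Tannaka}, which upgrades a finite-group action on a neutralized Tannakian category to a semidirect product on its Tannakian group, and Proposition \ref{comparison}, which identifies the discrepancy $\on{act}^{geom}=\on{act}^{alg}\circ\Ad_\chi$ between the geometric and pinned Galois actions on $G^\vee$. Granting those, the only mild obstacle here is the bookkeeping of the previous paragraph — the equivalence of the "$f$" condition with semisimplicity plus purity of weight zero — whose nontrivial ingredient is precisely the Kronecker input above; once it is in place there is nothing more to prove.
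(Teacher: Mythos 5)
Your proposal follows the same route as the paper: derive the statement from the preceding Proposition (which already provides the tensor equivalence for general $k$ via Lemma \ref{action of Galois}, Lemma \ref{Tannaka} and Proposition \ref{comparison}), observe that ${^L}G^{alg}={^L}G^{alg}_{k\ppart}$ because the inertial group acts trivially on $G^\vee$ for an unramified group, and finally identify $\calP^f_{L^+G}(\Gr_G)$ in the Definition sense ("constant on orbits after some finite base change") with the description "semi-simple, pure of weight zero" that appears in the theorem statement. The paper carries out that last identification only by assertion, in Remark (ii), whereas you try to justify it --- and this is where the argument has a gap.

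The gap is in the appeal to Kronecker. Kronecker's theorem says that an algebraic \emph{integer} all of whose archimedean conjugates have absolute value $1$ is a root of unity. Purity of weight zero only tells you that each Frobenius eigenvalue $\alpha$ is an algebraic number with $|\iota(\alpha)|=1$ for every embedding $\iota:\overline{\bbQ}_\ell\hookrightarrow\bbC$; it does not by itself give integrality. For instance, $\alpha=(3+4i)/5$ satisfies the archimedean condition, is an $\ell$-adic unit for $\ell\neq 5$, and still is not a root of unity, so the corresponding one-dimensional $\Ga_k$-module is semi-simple and pure of weight zero yet has infinite monodromy. To close the argument you need an additional integrality input (for instance, restricting to sheaves of geometric origin, where Frobenius eigenvalues are $q$-Weil algebraic integers, or building integrality into the working definition of "pure of weight zero"). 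Since the paper's Remark (ii) simply states the identification without argument, this subtlety is latent in the paper as well; the rest of your outline --- the identification of Langlands dual groups and the decomposition of an $L^+G$-equivariant perverse sheaf as $\bigoplus_i\IC_i\otimes\calL_i$ --- matches the paper and is fine.
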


Let $\sigma$ be the Frobenius element in $\Ga_k$. Denote by
$\calH_G$ the Grothendieck ring of $\calP^f_{L^+G}(\Gr_G)$, tensored
with $\overline{\bbQ}_\ell$, and by $\calR_{^LG}$ the algebra
associated to $\Rep({^L}G^{alg})$. Theorem \ref{unramified Satake}
gives an isomorphism of algebras
\begin{equation}\label{uncat}
\Phi:\calH_G\to\calR_{{^L}G}.
\end{equation} Let $\rH_G$ be the
spherical Hecke algebra of compactly supported
bi-$G(\bbF_q[[t]])$-invariant functions. Let $\on{R}_{^LG}$ be the
algebra of $\overline{\bbQ}_\ell$-valued functions on $(G^\vee\times
\sigma)_{ss}$, generated by the characters of elements in
$\Rep({^L}G^{alg})$. Here, $(G^\vee\times \sigma)_{ss}$ is the set
of semi-simple elements in $G^\vee\times\sigma$, as defined in
\cite[Sect. 6]{Bo}. We have a surjective map of algebras $\Tr:
\calH_G\to\rH_G$ (resp. $\on{Ch}:\calR_{^LG}\to\on{R}_{^LG}$) given
by the trace of Frobenius (resp. by sending a representation to its
character).
\begin{lem}\label{classicalsatake}
The isomorphism \eqref{uncat} induces a unique isomorphism
\[\phi:\rH_G\to\on{R}_{^LG}.\]
such that $\on{Ch}\circ\Phi=\phi\circ\Tr$.
\end{lem}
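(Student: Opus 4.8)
The plan is to show that the isomorphism $\Phi$ of \eqref{uncat} carries $\ker\Tr$ onto $\ker\on{Ch}$. Since $\Tr$ and $\on{Ch}$ are surjective homomorphisms of $\overline{\bbQ}_\ell$-algebras and $\Phi$ is an isomorphism of algebras, this simultaneously produces $\phi$ (as the isomorphism induced by $\Phi$ on the quotients $\calH_G/\ker\Tr\simeq\rH_G$ and $\calR_{{}^LG}/\ker\on{Ch}\simeq\on{R}_{{}^LG}$) and forces it to be an isomorphism; uniqueness is immediate because $\Tr$ is surjective. So the whole content lies in identifying the two kernels, and for this one must understand how $\Tr$ and $\on{Ch}$ act on the classes of simple objects.

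First I would list the simple objects on both sides. Since $\rH^*$ is an equivalence of tensor categories (Theorem \ref{unramified Satake}), $\Phi$ matches the basis of $\calH_G$ given by the classes of simple objects of $\calP^f_{L^+G}(\Gr_G)$ with the basis of $\calR_{{}^LG}$ given by the classes of simple objects of $\Rep({}^LG^{alg})$. Using the structural description of $\calP^f_{L^+G}(\Gr_G)$ from the appendix together with Clifford theory for the (possibly disconnected) group $G^\vee\rtimes\widehat{\bbZ}$, these simple objects fall into two families: (a) $S_{\mu,\psi}:=\IC_{\bGr_\mu}\otimes\psi$, with $\mu$ ranging over $\sigma$-fixed dominant coweights and $\psi$ over finite-order characters of $\Gal(\overline{\bbF}_q/\bbF_q)$, matched under $\Phi$ with the extension $W_{\mu,\psi}:=W_\mu\otimes\psi$ of the irreducible $G^\vee$-module $W_\mu$ of highest weight $\mu$; and (b) objects $S_O$ induced from a $\sigma$-orbit $O$ of dominant coweights with $|O|\ge 2$, matched with an induced $G^\vee\rtimes\widehat{\bbZ}$-module $W_O$. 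For a family-(b) object, its pullback to $\overline{\bbF}_q$ is $\bigoplus_{\nu\in O}\IC_{\bGr_\nu}\otimes(\mathrm{twist})$ with Frobenius permuting the $|O|\ge 2$ summands cyclically; hence at every $\bbF_q$-point of $\Gr_G$ the Frobenius acts on the stalk by a block-cyclic matrix of length $\ge 2$, so $\Tr(S_O)=0$, and the identical block-cyclic computation on the coset $G^\vee\times\sigma$ gives $\on{Ch}(W_O)=0$.

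Next I would treat family (a). One has $\Tr(S_{\mu,\psi})=c_\psi\,\Tr(\IC_{\bGr_\mu})$ for a nonzero scalar $c_\psi$ depending only on $\psi$ (namely $\psi$ evaluated at the Frobenius of $\bbF_q$), and likewise $\on{Ch}(W_{\mu,\psi})=c_\psi\,\on{Ch}(W_\mu)$ with the \emph{same} scalar, since $\rH^*$ intertwines the geometric Frobenius at rational points with the action of $\sigma$ (Lemma \ref{action of Galois}, Proposition \ref{comparison}). Combining the surjectivity of $\Tr$, the vanishing on family (b), and this proportionality shows $\{\Tr(\IC_{\bGr_\mu})\}$, $\mu$ running over $\sigma$-fixed dominant coweights, spans $\rH_G$; as the number of such $\mu$ equals the number of $L^+G(\bbF_q)$-orbits on $\Gr_G(\bbF_q)$ (only $\sigma$-fixed orbits carry rational points, and each is a single orbit by Lang's theorem), which is $\dim_{\overline{\bbQ}_\ell}\rH_G$, this spanning family is a basis. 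Dually, $\{\on{Ch}(W_\mu)\}$ spans $\on{R}_{{}^LG}$, and it is linearly independent by the adaptation of \cite[\S6]{Bo} to $G^\vee\rtimes\widehat{\bbZ}$ already used for Proposition \ref{parallel}. It follows that $\ker\Tr=\on{span}\{[S_O]:|O|\ge 2\}+\on{span}\{[S_{\mu,\psi}]-c_\psi[S_{\mu,\mathbf{1}}]\}$, while $\ker\on{Ch}$ admits the identical description with $S$ replaced by $W$ and the same scalars $c_\psi$; since $\Phi$ sends $[S_O]$ to $[W_O]$ and $[S_{\mu,\psi}]$ to $[W_{\mu,\psi}]$, this yields $\Phi(\ker\Tr)=\ker\on{Ch}$, completing the argument.

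The step I expect to be the main obstacle is the linear independence of $\{\on{Ch}(W_\mu)\}$ in $\on{R}_{{}^LG}$: this is precisely where Borel's computation in \cite[\S6]{Bo} of semisimple conjugacy classes and their characters has to be transported to the disconnected twisted group $G^\vee\rtimes\widehat{\bbZ}$, as in the proof of Proposition \ref{parallel}, and it is what ultimately encodes the classical Satake isomorphism. A secondary point of care is the Clifford-theory bookkeeping needed to enumerate the simple objects and, for the family-(b) objects, to pin down their base change precisely enough for the block-cyclic vanishing to apply; and one must track the cyclotomic twist $\chi=\rho\circ\on{cycl}$ of Proposition \ref{comparison} when matching the scalars $c_\psi$, which is harmless (it is inner and acts by a scalar on each $W_\mu$) but has to be said. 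Everything else — the reduction in the first paragraph, the vanishing of the trace of a block-cyclic matrix, and the two dimension counts — is formal.
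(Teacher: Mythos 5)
Your plan---reduce to showing $\Phi(\ker\Tr)=\ker\on{Ch}$ and then identify the two kernels---is the paper's plan, but the execution is genuinely different. The paper packages each kernel as an \emph{ideal}: $\ker\Tr$ is generated by the relations $[\calF\otimes\calL]-\tr(\sigma,\calL)[\calF]$ and $\ker\on{Ch}$ by $[V\otimes\psi]-\psi(\sigma)[V]$, and the proof then consists of the single observation that $\Phi$ visibly matches these two generating families; no basis of $\rH_G$ or of $\on{R}_{{}^LG}$ is exhibited and no dimension count appears. You instead compute each kernel as a \emph{vector space}: Clifford-theoretic enumeration of simples in two families, a block-cyclic trace-vanishing argument for the induced family, the $c_\psi$-proportionality for the twisted-extension family, a dimension/basis count against the rational Schubert cells, and then a comparison of spanning sets. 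Your version can be read as filling in the paper's ``it is easy to see'' claims, and is where the Borel-style linear-independence input you flagged has to be invoked; the paper's is shorter precisely because the ideal-theoretic packaging sidesteps any need to identify a basis or invoke \cite[\S 6]{Bo} at this point. Both routes are legitimate and land in the same place.

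Two precision points in your write-up, neither fatal. First, Clifford theory for $G^\vee\rtimes\Gal(k'/k)$ gives, for each $\sigma$-orbit $O$ with $|O|\geq 2$, not a single $W_O$ but a family $\Ind(\tilde W_\mu\otimes\chi)$ indexed by characters $\chi$ of the stabilizer of $O$, with the parallel statement on the sheaf side; your block-cyclic vanishing applies to all of them, so the conclusion stands, but the enumeration as written is incomplete. Second, $\rH_G$ and $\on{R}_{{}^LG}$ are infinite-dimensional, so ``spanning family is a basis by dimension count'' must be run on the finite-dimensional subquotients supported on a fixed finite closed union of Schubert cells, and the Lang's-theorem step needs connectedness of the (jet-truncated) stabilizer; both are routine but should be said.
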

\begin{proof}Uniqueness is clear and we show the existence. For an object $X$ in either category, we denote by $[X]$ its class in the Grothendieck ring.
Then it is easy to see that the kernel of the map $\Tr:
\calR_{^LG}\to \on{R}_{^LG}$ is the ideal generated by elements of
the form $[V\otimes \psi]-\psi(\sigma)[V]$, where $V\in
\Rep({^L}G^{alg})$ and $\psi: \Ga_k\to \overline{\bbQ}_\ell^\times$
is a character of $\Ga_k$ factoring through a finite quotient. On
the other hand, the kernel of the map $\on{Ch}:\calH_G\to \rH_G$ is
the ideal generated by elements of the form
$[\calF\otimes\calL]-\tr(\sigma,\calL)[\calF]$, where $\calF\in
\calP^f_{L^+G}(\Gr_G)$, and $\calL$ is a rank one local system on
$\Spec \bbF_q$, pure of weight zero. But it is clear that these two
ideals match under $\Phi$. The lemma follows.
\end{proof}

Recall that by \cite[6.7]{Bo} the classical Satake isomorphism also
gives an isomorphism of algebras $\tilde{\phi}:\rH_G\simeq
\on{R}_{^LG}$. By tracking back the construction of geometric Satake
correspondence and the classical Satake isomorphism, one can show
that $\phi=\tilde{\phi}$. Indeed, if $G=T$ is a toruse, this is
clear. For general $G$, one observes that the fiber functor
decomposes as a direct sum of weight functors \cite[\S 3]{MV}:
$\calP_{L^+G}^f(\Gr_G)\to \calP_{L^+T}^f(\Gr_T)$, and under the
sheaf-function dictionary this corresponds the constant term map
$\on{CT}:\rH_G\to \rH_T$. Therefore, either $\phi$ or $\tilde{\phi}$
is uniquely determined by the following commutative diagram
\[\begin{CD}
\rH_G@>\sim >>\on{R}_{^LG}\\
@V\on{CT}VV@VV\res V\\
\rH_T@>\sim >>\on{R}_{^LT}
\end{CD}.\]
The general case follows.

\end{document}